\newtheorem{thm}{Theorem}[section]
\newtheorem{lem}[thm]{Lemma}
\newtheorem{prop}[thm]{Proposition}
\theoremstyle{definition}
\theoremstyle{remark}
\newtheorem{rem}[thm]{Remark}
\numberwithin{equation}{section}
\newcommand{\R}{\mathbb R}
\newcommand{\op}[1]{\operatorname{#1}}
\newcommand{\ed}{\end {document}}
\newcommand{\spp}{\operatorname{supp}}
\title[Norm inflation in $C^m$]
{Strong illposedness of the incompressible Euler equation in integer $C^m$ spaces}
\author[J. Bourgain]{Jean Bourgain}
\address[J. Bourgain]{School of Mathematics, Institute for Advanced Study, Princeton, NJ 08544, USA}
\email{bourgain@math.ias.edu}
\author[D. Li]{Dong Li}
\address[D. Li]{Department of Mathematics, University of British Columbia, Vancouver BC Canada V6T 1Z2}%
\email{dli@math.ubc.ca}
\begin{document}
\begin{abstract}
We consider the $d$-dimensional incompressible Euler equations. We show strong illposedness of velocity in
any $C^m$ spaces whenever $m\ge 1$ is an \emph{integer}. More precisely, we show for a set of initial data dense in
the $C^m$ topology, the corresponding solutions lose $C^m$ regularity instantaneously in time.
In the $C^1$ case,
our proof is based on an anisotropic Lagrangian deformation and a short-time flow expansion.
In the $C^m$, $m\ge 2$ case, we introduce a flow decoupling method which allows to tame the nonlinear
flow almost as a passive transport. The proofs also cover illposedness in Lipschitz spaces $C^{m-1,1}$ whenever
$m\ge 1$ is an integer.
\end{abstract}

\maketitle

\section{Introduction}
In this work we consider the $d$-dimensional incompressible Euler equation posed on the whole space:
\begin{align} \label{V_usual}
 \begin{cases}
  \partial_t u + (u\cdot \nabla) u =- \nabla p, \quad (t,x) \in \mathbb R \times \mathbb R^d, \\
  \nabla \cdot u =0, \\
  u \bigr|_{t=0}=u_0,
 \end{cases}
\end{align}
where $u=u(t,x)=(u_1(t,x),\cdots,u_d(t,x)):\, \mathbb R \times \mathbb R^d \to \mathbb R^d$
denotes the velocity of the fluid and $p=p(t,x):\, \mathbb R \times \mathbb R^d \to \mathbb R$ is the pressure.   The first equation in \eqref{V_usual} is just the usual Newton's law:
the LHS  describes the acceleration of the fluid in Eulerian frame, whereas the RHS represents the force (we assume there is no external forcing here
for simplicity). The second equation in \eqref{V_usual} is the usual incompressibility (divergence-free)
condition.\footnote{In
this work, without explicit mentioning, we shall always assume the initial velocity $u_0$ is divergence-free.}
It can be also regarded
as a constraint through which the pressure term emerges as a Lagrangian multiplier. The system \eqref{V_usual} has $(d+1)$ unknowns
($u_1,\cdots,u_d$ and $p$), $(d+1)$ equations and thus is formally self-consistent. To reduce the complexity of the system,
a standard way is to eliminate the pressure term by projecting equation \eqref{V_usual} into
 the space of divergence-free
vector fields. Alternatively one may use vorticity formulation. For example in 2D, define $\omega= \nabla^{\perp}
\cdot u$ (here $\nabla^{\perp}=(-\partial_{x_2},\partial_{x_1})$), then equation \eqref{V_usual} becomes
\begin{align*}
\partial_t \omega + (u\cdot \nabla)\omega=0,
\end{align*}
where under some suitable assumptions $u$ is recovered from $\omega$ by the Biot-Savart law:
\begin{align*}
u& = \Delta^{-1} \nabla^{\perp} \omega \notag \\
 & = \frac 1 {2\pi} \int_{\mathbb R^2} \frac {y^{\perp}}{|y|^2} \omega(x-y)dy, \quad y^{\perp}=(-y_2,y_1).
 \end{align*}
Note in this vorticity form, it is evident that for smooth solutions
 $\|\omega\|_p$ is preserved in time for all $1\le p\le \infty$. The conservation of $\|\omega\|_{\infty}$ is the key
 reason for global wellposedness in 2D. Analogously in 3D, one can introduce
 $\omega= \nabla \times u$, and the vorticity equation takes the form:
\begin{align*}
\partial_t \omega + (u\cdot \nabla) \omega =(\omega \cdot \nabla) u,
\end{align*}
with (again under suitable regularity assumptions)
\begin{align*}
u = -\Delta^{-1} \nabla \times \omega.
\end{align*}
Compared with 2D, the vorticity stretching term $(\omega\cdot \nabla)u$ is the main obstruction to global wellposedness
in 3D. In general dimensions $d\ge 3$, one can introduce the vorticity matrix $\Omega=Du -(Du)^T$ (here and below
for any matrix $A$ we denote by $A^T$ its
 matrix transpose) or in component-wise form $\Omega_{ij} = \partial_j u_i -\partial_i u_j$. Then
the vorticity equation reads
\begin{align*}
\partial_t \Omega + (u\cdot \nabla) \Omega = - (Du)^T \Omega - \Omega Du.
\end{align*}
Analogous Biot-Savart laws holds between $\Omega$ and $Du$.
In this work we will be primarily concerned with the local wellposedness issues of \eqref{V_usual} in
physical dimensions $d=2,3$. The generalization to higher dimensions is straightforward with some modification
in numerology. Our main objective is to study the illposedness in critical/borderline function spaces.

The wellposedness theory for Euler equations has been extensively investigated in many different types of
function spaces, spatial-temporal domains and boundary conditions (especially so for local wellposedness).
We shall not attempt to give a complete survey here and refer the interested
readers to Majda-Bertozzi \cite{MB}, Chemin \cite{Chemin_book}, Bahouri-Chemin-Danchin \cite{BCD_book},
 Constantin \cite{C07} and the references therein
for a more comprehensive account.   The first group of results dates back (at least) to
Lichtenstein \cite{L21} and Gunther \cite{Gun27} who considered
  local wellposedness in H\"older spaces $C^{k,\alpha}$ ($k\ge 1$ is an integer and $0<\alpha<1$).
Global wellposedness of classical solutions (in H\"older spaces) for 2D Euler was obtained
by Wolibner \cite{Wolibner33}. By topologizing the space of differeomorphisms with
Sobolev $H^s$, $s>d/2+1$ norms, Ebin and Marsden \cite{EM70} obtained wellposedness of the Euler equation
on general compact manifolds possibly with $C^{\infty}$ boundary. The generalization of $H^s$ to $W^{s,p}$ with $s>d/p+1$
was obtained by Bourguignon and Brezis \cite{BB74}. In the Euclidean setting Kato  \cite{Kato72}
proved local wellposedness of $d$-dimensional Euler in the space $C_t^0 H_x^m$ for initial velocity
$u_0\in H^m(\mathbb R^d)$ with integer $m>d/2+1$. In a later work Kato and Ponce
\cite{KP88} removed the restriction that $m$ is an integer and proved wellposedness results in
the general Sobolev space $W^{s,p}(\mathbb R^d)$ with real $s>d/p+1$ and $1<p<\infty$. In order
to deal with non-integer $s$, Kato-Ponce \cite{KP88} proved the
following commutator estimate
 for the nonlocal differential operator $J^s=(1-\Delta)^{s/2}$, $s>0$:
\begin{align}
\| J^s(fg)-f J^s g\|_{p} \lesssim_{d,s,p} \|D f\|_{\infty}  \|
J^{s-1} g\|_{p} +\| J^s f\|_p \| g\|_{\infty}, \qquad 1<p<\infty.\,
 \label{commutator_1}
\end{align}
(For $p=\infty$ a version of  the $L^{\infty}$ end-point Kato-Ponce inequality (conjectured
in \cite{GMN}) and several new Kato-Ponce type inequalities are proved in recent \cite{BL13_KP}.)
The aforementioned Sobolev spaces $W^{s,p}(\mathbb R^d)$, $s>d/p+1$ are sometimes called subcritical/non-borderline spaces (for $d$-dimensional Euler)
and the space $W^{d/p+1,p}(\mathbb R^d)$ is called critical/borderline. The index $s_{d,p}=d/p+1$ is critical in the sense
that it is the minimal requirement for closing the energy estimate.
To see this consider for simplicity the space $H^s(\mathbb R^d)$. By performing
an energy estimate on \eqref{V_usual} (here we neglect the usual mollification/regularization arguments), one
obtains
\begin{align}
\frac d {dt}( \|u\|_{H^s}^2) \lesssim \| Du\|_{\infty} \| u\|_{H^s}^2. \label{energy_Hs_bound}
\end{align}
Note here for non-integer $s$, one has to use some version of the
Kato-Ponce commutator estimate to derive the estimate above. To
close the energy estimate, one must choose $s$ such that
$\|Du\|_{\infty} \lesssim \| u\|_{H^s}$. In view of Sobolev
embedding $H^{d/2+\epsilon}(\mathbb R^d) \hookrightarrow
L^{\infty}(\mathbb R^d)$, one quickly deduces the requirement
 $s>1+d/2$ for $p=2$. In a similar vein one can deduce $s>1+d/p$ for general $1<p<\infty$.

More refined results are available in Besov type spaces which can accommodate $L^{\infty}$ end-point embedding.
Vishik \cite{Vishik98} (see also \cite{Vishik99}) constructed global solutions to 2D Euler in
Besov space $B^{ 2/p+1}_{p,1} (\mathbb R^2) $ with $1<p<\infty$.
Note that in 2D the regularity index
$s=2/p+1$ is critical (albeit at the expense of $l^1$-summation in dyadic frequency blocks).
For general dimension $d\ge 2$, Chae \cite{Chae04} obtained local wellposedness of
 Euler in critical Besov space $B^{d/p+1}_{p,1}(\mathbb R^d)$
with $1<p<\infty$. For the case $p=\infty$ the unique local solvability in
$B^{1}_{\infty,1}(\mathbb R^d)$, $d\ge 2$ was proved by Pak and
Park in \cite{PP04} by using a compactness argument.
Very recently Pak and Park \cite{PP13} extended their analysis to cover the case $p=1$, i.e. the Besov
space $B^{d+1}_{1,1}(\mathbb R^d)$. It should be pointed out that an important inequality used in these works is
a composition estimate of the form (due to Vishik, see e.g. Theorem 4.2 in \cite{Vishik99} for the case
$B^0_{\infty,1}$; see  also Proposition 3.1 in \cite{PP13}): for $0\le s<1$, $1\le p\le \infty$, $f \in B^s_{p,1}$ and $g:\mathbb R^d \to \mathbb R^d$
a bi-Lipschitz volume-preserving map,
\begin{align*}
\| f \circ g^{-1} \|_{B^s_{p,1}} \lesssim_{s,p,d}  (1+ \log (\|g \|_{\op{Lip}} \| g^{-1} \|_{\op{Lip}} ) )
\| f\|_{B^s_{p,1}}.
\end{align*}
A more general wellposedness theorem which incorporates all the above (local) results can be found in the book
\cite{BCD_book} (see e.g. Theorem 7.1 on pp293 therein): for any $1\le p,q\le \infty$, $s\in \mathbb R$
such that  $B^s_{p,q}
\hookrightarrow C^{0,1}$ (i.e. $s>d/p+1$ or $s=d/p+1$ and $q=1$) and divergence-free initial data $u_0 \in B^s_{p,q}$,
 one can construct a local solution $u, \nabla p \in L^{\infty}([-T, T]; B^s_{p,q})$ with
 $T\ge \operatorname{const}/\|u_0\|_{B^s_{p,q}}$; furthermore
 \begin{align}
 u, \nabla p \in C_t^0 B^s_{p,q}, \qquad \text{for $q<\infty$}, \label{BCD_result_e1}
 \end{align}
 (
 for $q=\infty$ it is weakly continuous in time). It is worthwhile pointing out that, if one insists on
 having the critical regularity $s=d/p+1$, then one must appeal to the strong Besov space $B^{d/p+1}_{p,1}$.
 No other wellposedness results were known in $B^{d/p+1}_{p,q}$ for $1<q\le \infty$.
  Oversimplifying quite a bit, a common theme in the above mentioned results is the following:
 one finds a Banach space $X$ such that
\begin{itemize}
\item[a)] $X \hookrightarrow C^{0,1}$;
\item[b)] A version of the Kato-Ponce commutator estimate or a Lagrangian-commutator estimate holds in $X$.
\end{itemize}
Here in point b), by a Lagrangian-commutator we meant an estimate such as commuting singular integral operators
with a transport (composition) map. One can even try to recast these wellposedness results into
 an abstract formalism allowing as much generality as possible.
 This is done for example in recent \cite{Le12} using a scale of Banach spaces satisfying 9 hypotheses.

Despite the plethora of results in subcritical function spaces, the situation with borderline spaces such
as $H^{ d/2+1}(\mathbb R^d)$ for $d$-dimensional Euler had remained unclear until very recently.
In \cite{CW_log,DL_log}, wellposedness in critical $H^{d/2+1}$ spaces were proved for
some logarithmically regularized Euler equations. However the analysis therein relies heavily on the
logarithm regularization and has no bearing on the general case without logarithm.
 In \cite{Ta10}, Takada constructed\footnote{Takada also constructed in \cite{Ta10}
 counterexamples for the case $s<d/p+1$.}
 several counterexamples (involving divergence-free vector fields) of Kato-Ponce-type
commutator estimates in
Besov $B^{d/p+1}_{p,q}(\mathbb R^d)$ (resp. Triebel-Lizorkin spaces $F^{d/p+1}_{p,q}(\mathbb R^d)$) for
 $1\le p\le \infty$, $1< q\le \infty$ (resp. for Triebel-Lizorkin:
$1<p<\infty$, $1\le q\le \infty$ or $p=q=\infty$). These counterexamples suggest that one should perhaps
expect a negative solution to the wellposedness problem in borderline spaces. To put things into perspective, we
now mention a couple of earlier results in such flavor. In studying measure-valued solutions for 3D Euler,
DiPerna and Majda \cite{DM87} considered a shear flow of the explicit form:
\begin{align*}
u(t,x)=(f(x_2),0,g(x_1-tf(x_2))), \quad x=(x_1,x_2,x_3),
\end{align*}
where $f$ and $g$ are given single-variable functions. In the literature this flow is also called the two and a half-dimensional
flow and it solves \eqref{V_usual} with pressure $p=0$. DiPerna and Lions used the above shear flow example
(see e.g. p152 of \cite{Lions_book}) to show that for every $1\le p<\infty$, $T>0$, $M>0$, there exists a smooth
solution satisfying
$\| u(0)\|_{W^{1,p}(\mathbb T^3)}=1$ and $\| u(T)\|_{W^{1,p}(\mathbb T^3)}>M$.
Bardos and Titi \cite{BT10} recently used this example to construct a weak
solution which is in the space $C^{\alpha}$ initially but does not belong
to any $C^{\beta}$ for any $t>0$ and $1>\beta>\alpha^2$. By using a similar argument (see Remark 1 therein)
one can also prove illposedness in the spaces $F^{1}_{\infty,2}$ and $B^1_{\infty,\infty}$.
More recently in \cite{MY12}, Misio{\l}ek and Yoneda revisited this shear flow example
and showed illposedness of 3D Euler in
the logarithmic Lipschitz space $\operatorname{LL}_{\alpha}(\mathbb R^d)$, $0<\alpha\le 1$,
which consists of continuous functions
such that
\begin{align*}
\| f\|_{\operatorname{LL}_{\alpha}} = \| f \|_{\infty} + \sup_{0<|x-y|<\frac 12}
\frac{|f(x)-f(y)|}{|x-y| |\log |x-y||^{\alpha}}<\infty.
\end{align*}

In a recent paper \cite{CS10}, Cheskidov and Shvydkoy proved\footnote{For Navier-Stokes it was shown that the solution
operator is discontinuous at $t=0$ in the metric $B^{-1}_{\infty,\infty}$.}  the solution operator of
$d$-dimensional Euler cannot be continuous in Besov spaces
$B^s_{r,\infty}(\mathbb T^d)$ (here $\mathbb T^d$ is the periodic torus)
where $s>0$ if $2<r\le \infty$ and $s>d(\frac 2r-1)$ if $1\le r\le 2$. More precisely, for initial
$u_0$ of the form (below $\vec{e}_1=(1,0,\cdots)$, $\vec{e}_2=(0,1,\cdots)$ denote standard unit vectors in $\mathbb R^d$):
\begin{align*}
u_0(x_1,\cdots,x_d)= \vec{e}_1 \cos x_2 + \vec{e}_2 \sum_{q=0}^{\infty} 2^{-qs} \cos(2^q x_1),
\end{align*}
they proved that the corresponding weak solution must satisfy
\begin{align*}
\limsup_{t\to 0+} \| u(t) - u_0\|_{B^s_{r,\infty}(\mathbb T^d)} \ge \delta=\delta(d,r,s)>0.
\end{align*}
Note here this result confirms\footnote{Although the result in \cite{CS10} is stated for the periodic torus $\mathbb T^d$,
as remarked by Cheskidov-Shvydkoy, there is no essential difficulty in extending it
to the whole space case.} that the weak in time continuity (for $q=\infty$) mentioned in the paragraph
after formula \eqref{BCD_result_e1} is essentially optimal.

As was already mentioned, illposedness/wellposedness in borderline spaces such as $W^{d/p+1,p}(\mathbb R^d)$ had
remained unsolved until very recently. There are some exciting evidences that the illposedness in critical spaces
can now be resolved in full generality. In \cite{BL13,BL13b}, we introduced a new strategy and proved the following
\medskip

\noindent \textbf{Theorem \cite{BL13,BL13b}}. Let the dimension $d=2,3$. The Euler
equation \eqref{V_usual} is \emph{strongly illposed} in the Sobolev space
$W^{d/p+1,p}  $ for any $1\le p<\infty$ or the Besov space
$B^{d/p+1}_{p,q} $ for any $1\le p<\infty$, $1<q\le \infty$.
\medskip

Here the meaning of ``strong illposedness'' needs some clarification. To allow
some generality for later discussions, let us denote by $X$ a general Banach space with norm $\|\cdot \|_X$.
For example $X=W^{d/p+1,p} (\mathbb R^d)$ or $B^{d/p+1}_{p,q}(\mathbb R^d)$.
Roughly speaking, the type of strong illposedness results proved in \cite{BL13,BL13b}
is the following: for any given smooth initial data $u^{(g)}$ and any $\epsilon>0$, one can find a nearby data $ u_0$, such
that
\begin{align*}
 \| u_0-u^{(g)}\|_X <\epsilon,
\end{align*}
and the nonlinear solution $u$ corresponding to $ u_0$ has the property that
\begin{align*}
 \op{ess-sup}_{0<t<t_0} \| u(t)\|_X =+\infty,
\end{align*}
for any $t_0>0$. In yet other words, we prove that the inflation of $\|\cdot \|_X$ norm is dense,
and jumps to infinity instantaneously in time. More precise statements of the results can be found  in
\cite{BL13,BL13b}.

Broadly speaking, the scheme developed in \cite{BL13,BL13b} consists of three steps:
\begin{enumerate}
 \item[Step 1.] Creation of large Lagrangian deformation. That is to say we find initial data (with bounded critical norm) such
 that corresponding flow map has large distortion within some well-controlled time interval.
 \item[Step 2.] Local inflation of critical norm. In this step one performs a (modulated) high frequency perturbation around the initial
 data chosen in step 1 such that the corresponding critical norm is inflated through the large Lagrangian map. Here it is crucial that
 the flow map remains essentially unchanged under the high frequency perturbation.
 \item[Step 3a.] Patching in unbounded domains. In this step one repeats the local construction infinitely many times in patches which stay
 essentially disjoint from each other during the time of evolution. A key point used in this step is finite speed propagation and decay
 of the (Riesz-type) interaction kernel.
 \item[Step 3b.] Patching in compact domains. It is this step which requires a delicate analysis. To accommodate infinitely many patches
 in a compact domain, one have to analyze the fine interactions of these local patches. The whole procedure is arranged in such a way that each
 patch has a local (very short) patch time during which it interacts very weakly with other patches; moreover, the critical norm on this patch
 is inflated in the short patch time.
\end{enumerate}

While the analysis in \cite{BL13,BL13b} settles the illposedness in Sobolev spaces $W^{d/p+1,p}(\mathbb R^d)$ and similar Besov spaces, it
leaves completely open the end-point cases $p=\infty$. This includes spaces such as $C^1$, $C^{0,1}$ and the like. As a matter of fact, a long standing
open problem is the wellposedness in $C^m$ spaces with $m\ge 1$ being an integer (cf. \cite{KKP14}).
The situation is especially intriguing in view of
the wellposedness results in $C^{m,\alpha}$ for any $m\ge 1$, $0<\alpha<1$.

The purpose of this work is to completely settle the illposedness in the
end-point spaces such as $C^m$ and Lipschitz spaces $C^{m-1,1}$, $m\ge 1$. A very rough formulation of our results is the following
\medskip

\textbf{Theorem.} Let the dimension $d=2,3$. The Euler equation is strongly illposed in $C^{m}(\mathbb R^d)$, $C^{m-1,1}(\mathbb R^d)$
for any $m\ge 1$ being an integer.
\medskip

In particular, the Euler equation is \emph{illposed} in $C^{2}$ or $C^{1,1}$! This is perhaps a bit surprising since we do have wellposedness in
$C^{1,\alpha}$ for any $0<\alpha<1$.  To prove the above theorem, a very tempting thought is to follow directly the scheme
developed in \cite{BL13} and prove the inflation through large Lagrangian deformation with
high frequency perturbation. As it turns out, this is not the case and some work is
needed (especially so for the borderline case $C^1$).

We now give precise formulation of the main results. To avoid dealing with the issue of life-span of
smooth solutions in 3D,
we shall just consider perturbations around local solutions which are obtained by standard energy method.
The perturbations
will be done in such a way that the lifespan of the perturbed solution is not altered too much from the original one.
As such the norm inflation will
be produced strictly within the lifespan of the constructed solution. For clarity of presentation we introduce
 a simple notation. For any given initial velocity $u^{(g)} \in C_c^{\infty}(\mathbb R^d)$, we denote by
\begin{align}
 T_0=T_0(u^{(g)}) = \frac {c_d} {(1+\|u^{(g)} \|_{H^{d/2+2}(\mathbb R^d)} )} \label{lifespan_t0}
\end{align}
as the local life-span of the corresponding local solution to the Euler equation which is obtained
 by the standard energy method. Here in the above $c_d$ is a constant depending only on the dimension $d$,
 and we choose the norm $\| u^{(g)} \|_{H^{d/2+2}}$ just for simplicity.\footnote{Of course any $H^{d/2+1+\epsilon}$ norm
 will also work with a possibly different constant $c_{d,\epsilon}$.}

\begin{thm}[Non-compact case for $C^m(\mathbb R^d)$, $m\ge 1$, $d=2,3$]\label{thm1}
Let $m\ge 1$ be an \emph{integer}. Let the dimension $d=2$ or $3$.
For any given  velocity $u^{(g)}\in C_c^{\infty}(\mathbb R^d)$ and any $\epsilon>0$,
we can find a $C^{\infty}$ perturbation $u^{(p)}:\mathbb R^d\to \mathbb R^d$ such that the following hold true:

\begin{enumerate}
 \item $\| u^{(p)}\|_{L^1(\mathbb R^d)} + \|u^{(p)}\|_{H^m(\mathbb R^d)}+ \|  u^{(p)}\|_{C^m(\mathbb R^d)}
 <\epsilon$.
 \item Let $u_0=u^{(g)} + u^{(p)}$ and $T_1=T_0(u^{(g)})/2$ (see \eqref{lifespan_t0}).
 There exists a unique classical solution $u=u(t,x)$ to the  Euler equation
 \begin{align*}
  \begin{cases}
   \partial_t u + (u\cdot \nabla) u =-\nabla p, \quad 0<t\le T_1, \, x \in \mathbb R^d,\\
   \nabla \cdot u =0,\\
   u \Bigr|_{t=0} = u_0,
  \end{cases}
 \end{align*}
satisfying
\begin{align*}
 \max_{0\le t\le T_1}\Bigl( \| \omega(t,\cdot) \|_{L^1}+\|\omega(t,\cdot)\|_{C^{m-1}}
 \Bigr)<\infty.
\end{align*}
Here $\omega=\nabla \times u$ is the vorticity. Furthermore $u\in C_t^0 H_x^m$ and $u(t,\cdot) \in
C^{\infty}(\mathbb R^d)$ for each $0\le t\le T_1$.

\item For any $0<t_0 \le T_1$, we have
\begin{align} \label{thm1_e1}
 \operatorname{ess-sup}_{0<t \le t_0} \| u(t,\cdot) \|_{C^m(\mathbb R^d) } =+\infty.
\end{align}
More precisely, for any $n=1,2,3,\cdots$, there exist $0<t_n^1<t_n^2<\frac 1n$, and open balls
$B_n = B(x_n,1) \subset \mathbb
R^d$, such that $u(t,\cdot) \in C^{\infty}(B_n)$ for $t\in [t_n^1,t_n^2]$, but
\begin{align*}
\| u(t,\cdot)\|_{C^m(B_n)} >n, \quad\forall\, t \in [t_n^1,t_n^2].
\end{align*}

\end{enumerate}

\end{thm}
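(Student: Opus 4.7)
The plan is to follow the three-step scheme of \cite{BL13,BL13b} with Step 3a for unbounded domains, adapted to the integer endpoint. I would pick centers $x_n\in\mathbb R^d$ with $|x_n|\to\infty$ so widely separated that, on the time scale $1/n$, finite propagation of the velocity and decay of the Biot--Savart kernel prevent the patches from interacting. Setting $u^{(p)}=\sum_{n\ge 1}u^{(p)}_n$, with each $u^{(p)}_n$ smooth, divergence-free, supported in $B(x_n,\tfrac12)$, and of norm at most $\epsilon 2^{-n}$ in $L^1\cap H^m\cap C^m$, condition (1) is immediate, and the standard energy method together with the approximate disjointness gives the existence part of (2) with $\omega\in L^\infty_t(L^1\cap C^{m-1})$. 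The whole problem reduces to constructing a \emph{single} patch that forces $\|u(t,\cdot)\|_{C^m(B(x_n,1))}>n$ on a short window $[t_n^1,t_n^2]\subset(0,1/n)$.

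For a single patch I would split $u^{(p)}_n=u^{(b)}_n+u^{(w)}_n$. The background $u^{(b)}_n$ is a smooth divergence-free shear-type field with $\|u^{(b)}_n\|_{C^m}$ bounded, whose local flow $\Phi_n$ realizes a strongly anisotropic Lagrangian deformation inside a sub-ball of $B(x_n,1)$: on a short interval $\tau\sim 1/\log A_n$ the Jacobian of $\Phi_n$ acquires eigenvalues whose ratio is of order $A_n$, while the Lipschitz constant of $u^{(b)}_n$ stays $O(1)$. On top of this I place a divergence-free high-frequency wave packet $u^{(w)}_n$ oscillating at frequency $A_n$ in the stretching direction, with amplitude tuned so that
\[
 \|u^{(w)}_n\|_{C^m}+\|u^{(w)}_n\|_{H^m}+\|u^{(w)}_n\|_{L^1}\lesssim \epsilon 2^{-n}.
\]
Its vorticity $\omega^{(w)}_n$ is chosen with bounded $C^{m-1}$ norm, noting that $\nabla u^{(w)}_n$ is $\omega^{(w)}_n$ composed with a matrix of second-order Riesz transforms, precisely the operator known to fail boundedness on $C^{m-1}$ at integer $m-1$ with a quantitative loss measured by the logarithm of the anisotropy of the level sets.

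The inflation mechanism then rests on the interplay of transport and Biot--Savart at the integer endpoint. Because the vorticity is transported, $\omega(t,\cdot)=\omega_0\circ\Phi^{-1}(t,\cdot)$, and smoothness of $\Phi$ preserves its $C^{m-1}$ norm; but the recovery $u(t,\cdot)=\Delta^{-1}\nabla^\perp\omega(t,\cdot)$ inflates in $C^m$ exactly because the Riesz transforms lose a $\log$ of the anisotropy of the level sets of $\omega(t,\cdot)$. For $m\ge 2$ the flow decoupling method announced in the abstract lets me treat the wave packet as almost passive: since $u^{(w)}_n$ is high-frequency and small in $L^\infty$, its contribution to the total flow is negligible against $\Phi_n$, giving $\omega(t,x)\approx \omega^{(w)}_n(0,\Phi_n^{-1}(t,x))$ on $[t_n^1,t_n^2]$, so the anisotropy of $\Phi_n^{-1}$ yields $\|u(t,\cdot)\|_{C^m(B_n)}\gtrsim \log A_n$, and choosing $A_n\gg e^n$ secures $>n$. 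For $m=1$ the decoupling is too crude, so I would substitute the short-time expansion
\[
 \Phi(t,x)=x+t\bigl(u^{(g)}+u^{(b)}_n+u^{(w)}_n\bigr)(x)+O(t^2)
\]
and track the oscillatory contribution of $u^{(w)}_n$ explicitly to verify it does not cancel the anisotropy produced by $u^{(b)}_n$. The same mechanism gives the $C^{m-1,1}$ variant, since the inflation quantity $\log A_n$ also beats the Lipschitz norm.

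The hard part will be $m=1$. There the $\log A_n$ margin from the Riesz failure is exactly the inflation one wants to produce, so any logarithmic cancellation in the short-time expansion — from the pressure, from self-interaction of the wave packet, or from the feedback of $u^{(w)}_n$ on its own transport — would destroy the argument. Closing this case requires a careful choice of the geometry of the background shear, the polarization of $u^{(w)}_n$, and the observation time $\tau\sim 1/\log A_n$, so that the full logarithm of anisotropy is realized by the genuine Euler flow and not just by the linearized transport. For $m\ge 2$ the extra derivatives give slack and the flow decoupling makes the remainder estimates essentially routine; the $C^{m-1,1}$ endpoint is then a corollary of the same bound.
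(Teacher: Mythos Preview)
Your overall architecture is right: the patching in the non-compact case follows the paper's Proposition~\ref{s7_prop1} (and its 3D analogue Proposition~\ref{s12_prop1}) almost verbatim, and you correctly identify that $m\ge 2$ uses a low/high split with flow decoupling while $m=1$ requires a short-time expansion. But the core local inflation mechanism you describe has a genuine gap.

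You assert that a background $u^{(b)}_n$ with $\|u^{(b)}_n\|_{C^m}$ bounded (in fact $\lesssim \epsilon 2^{-n}$, since the whole patch must be $C^m$-small) produces, on a time interval $\tau\sim 1/\log A_n$, a Lagrangian deformation whose Jacobian has eigenvalue ratio $\sim A_n$. This is impossible: Gronwall gives $\|D\Phi(t)\|_\infty\le e^{t\|Du\|_\infty}$, so with $\|Du^{(b)}_n\|_\infty=O(1)$ the eigenvalue ratio after time $\tau\sim 1/\log A_n$ is at most $1+O(1/\log A_n)$, not $A_n$. To reach ratio $A_n$ you would need time $\gtrsim \log A_n$ and $\|Du^{(b)}_n\|_\infty\gtrsim 1$, which for $m=1$ directly violates the $C^1$-smallness of the perturbation. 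More fundamentally, since the $C^m$ norm controls $\|Du\|_\infty$ for every $m\ge 1$, the ``large Lagrangian deformation from a critical-norm-bounded background'' mechanism of \cite{BL13} is unavailable here---this is precisely the obstruction flagged in the introduction. Your ``$\log A_n$ from Riesz failure on anisotropic level sets'' never ignites, because the level sets of the transported vorticity stay essentially round.

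The paper replaces large anisotropy by \emph{coherent summation over many scales}. For $m=1$ there is no background/wave-packet split at all: the patch is a single self-similar sum $\omega_0=\sum_{j\le M}(\Delta a)(2^j x)$, and the flow expansion $\lambda\phi(t,\lambda^{-1}x)=x+t\,\nabla^\perp a(x)+(\text{pressure})+O(t^3A^3M)$ of Lemma~\ref{lem_ga20} yields, after summing the $M$ scales, a main term $Mt\int(\Delta a)\,\nabla F\cdot\nabla^\perp a\,dx$. The inflation is $Mt$, coming from the number of scales, not from any $\log$ of deformation. Your instinct that the pressure is the dangerous term is exactly right: the naive bound gives $M^2t^2A^2$, which would swamp $Mt$; the paper fixes this by choosing $a$ to satisfy three moment conditions (Lemma~\ref{lem_gb10} and Lemma~\ref{lem_gb30}) that force $\Delta^{-1}\nabla\cdot((\Delta a)\nabla F)\in L^1$, recovering one power of $M$. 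For $m\ge 2$ the low/high split is as you describe, but the low part creates only an $O(t)$ departure from the identity, $A(t)=\mathrm{Id}+tB+O(t^2)$; what matters (Proposition~\ref{prop_A_nonzero}) is only that $A(t)\notin SO(2)$ for $t>0$. Each of the $\sqrt M$ high-frequency copies then contributes $\sim t$, and the sum gives $\sqrt M\cdot t$.
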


\begin{rem}
In 2D, since all smooth solutions are global, we can let $T_1=1$ or any finite positive constant. Similarly
in 3D, if one works with smooth axisymmetric without swirl slows, then one can also let $T_1=1$.
\end{rem}

\begin{rem}
Note that $C^m \subset C^{m-1,1}$ and for any $f\in C^m$, we have $\|f \|_{C^{m-1,1}} \sim \|f \|_{C^m}$.
therefore an immediate corollary of Theorem \ref{thm1} is the
strong illposedness in Lipschizt spaces $C^{m-1,1}$ for any integer $m\ge 1$.
Theorem \ref{thm2D_2Ck}--Theorem \ref{thm3D_1} below also has the corresponding Lipschitz space version. We omit such
statements which are essentially repetitions.
\end{rem}

\begin{rem}
In a recent preprint \cite{MY14}, Misio{\l}ek and Yoneda showed that the solution map for 2D Euler in $C^1$ norm cannot
be continuous. Their idea is to use the construction from \cite{BL13} to deduce a contradiction with a result of
Kato-Ponce \cite{KP86}.
In\footnote{We thank T.M. Elgindi for helpful communications.}\cite{EM}, Elgindi and Masmoudi consider the $C^1$ problem for $d$-dimensional Euler and some
closely related models. In particular by exploiting the velocity-pressure formulation
\eqref{intro_du_e1000a} (as opposed to the
vorticity formulation),
they prove that  one can find a smooth flow
such that $\|u(t=0)\|_{C^1}\ll 1$, yet
$\|u(t)\|_{C^1}$ becomes large in a short time. Note that with some work, their result combined with the patching argument
from \cite{BL13} can already be used to deduce a $C^1$-analogue of Theorem \ref{thm1}
which is already quite interesting. On the other hand, the above mentioned results do not cover
$C^m$ or $C^{m-1,1}$ for $m\ge 2$.
As was also mentioned in \cite{EM}, the compact domain case is a difficult
open problem. Our approach seems to be quite robust and work for all cases.
 See Theorem \ref{thm2D_2Ck}--\ref{thm3D_k} below for details.
\end{rem}

Our next result is concerned with the 2D case with compactly supported data. Needless to say, the results here can be easily
extended to cover the case of periodic boundary conditions. Due to the compactness of space, the solution
will have somewhat limited regularity.

\begin{thm}[2D $C^m$ case, $m\ge 1$ with compactly supported data]\label{thm2D_2Ck}
Let $k\ge 1$ be an integer and let $u^{(g)}\in C_c^{\infty}(\mathbb R^2)$ be any given velocity field.
 For
 any $\epsilon>0$, we can find a
perturbation $u^{(p)}:\mathbb R^2\to \mathbb R^2$ such that the
following hold true:

\begin{enumerate}
 \item $u^{(p)}$ is compactly supported in a ball of radius $\le 1$, $m$-times continuously differentiable and
 $$\| D^m u^{(p)} \|_{\infty}
 <\epsilon.$$

 \item Let $u_0= u^{(g)} + u^{(p)}$ and $\omega_0= \nabla^{\perp}\cdot u_0$. Consider the 2D Euler equation
(in vorticity form)
\begin{align*}
\begin{cases}
\partial_t \omega + (\Delta^{-1} \nabla^{\perp}\omega \cdot \nabla) \omega =0, \quad (t,x) \in \mathbb R \times
\mathbb R^2,\\
\omega\Bigr|_{t=0}=\omega_0.
\end{cases}
\end{align*}

There exists a unique time-global solution $\omega = \omega(t)$ to
the Euler equation satisfying $\omega(t) \in  L^{\infty} \cap \dot
H^{-1}$. Furthermore $\omega \in C_t^0 C_x^m$ and $u=\Delta^{-1}
\nabla^{\perp} \omega \in C_t^0 L_x^2 \cap C_t^0 C_x^{m-1,\alpha}$
for any $0<\alpha<1$.

\item $\omega(t)$ has additional local regularity in the following sense: there exists $x_* \in \mathbb R^2$ such
that for any $x\ne x_*$, there exists a neighborhood $N_x \ni x$,
$t_x >0$ such that $\omega(t, \cdot) \in C^{\infty} (N_x)$ for any
$0\le t \le t_x$.

\item For any $0<t_0 \le 1$, we have
\begin{align*}
 \operatorname{ess-sup}_{0<t \le t_0} \| D^m u(t,\cdot) \|_{\infty} =+\infty.
\end{align*}
More precisely, there exist $0<t_n^1<t_n^2 <\frac 1n$, open precompact sets $\Omega_n$, $n=1,2,3,\cdots$ such that
$u(t) \in C^{\infty}(\Omega_n)$ for all $0\le t \le t_n^2$, and
\begin{align*}
 \| D^m u(t,\cdot) \|_{L^\infty(\Omega_n)} >n, \quad \forall\, t\in[t_n^1,t_n^2].
\end{align*}

\end{enumerate}

\end{thm}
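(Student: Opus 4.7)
The proof will follow the three-step scheme (large Lagrangian deformation, local high-frequency perturbation, compact-domain patching) from the authors' earlier work on critical Sobolev spaces, now adapted to the $C^m$ norm in 2D. I take $u^{(p)}=\sum_{n\ge 1} u^{(p)}_n$ with each $u^{(p)}_n$ a smooth, compactly supported bump concentrated in a small ball $B_n\subset\mathbb R^2$, with the $B_n$ pairwise disjoint and clustering at some point $x_*\in\mathbb R^2$.

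\textbf{Step 1 (local inflation mechanism).} I would first isolate a self-contained local statement: given a smooth background $u^{(g)}$, parameters $A,\eta>0$, and a point $x_0$, produce a smooth compactly supported $\tilde u$ with $\|\tilde u\|_{C^m}<\eta$ such that on a short interval $[0,\tau]$ the corresponding Euler solution satisfies $\|D^m u(t)\|_{L^\infty(B(x_0,r))}>A$ for $t\in[\tau/2,\tau]$. The essential 2D inputs are the transport identity $\omega(t,\Phi_t(x))=\omega_0(x)$, which lets one track high-frequency oscillations of $\omega$ exactly, and the Biot--Savart formula together with the failure of $L^\infty$-boundedness of $D^m\Delta^{-1}\nabla^\perp$ on generic $C^{m-1}$ vorticities. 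One engineers a strong anisotropic Lagrangian stretching near $x_0$ (as in the abstract's ``anisotropic Lagrangian deformation and short-time flow expansion'' for $C^1$, supplemented by the flow decoupling device for $m\ge 2$), places a tall oscillatory vorticity aligned with the contracting direction, and reads off a logarithmic blow-up of $\|D^m u\|_\infty$ on a thin tube once it is re-expanded back to $O(r)$ scale.

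\textbf{Step 2 (rescaling and summability).} For each $n$, rescale the Step~1 construction into $B_n$ of radius $r_n\to 0$ with $B_n\to x_*$, tuning amplitude and high-frequency scale so that the self-inflation reaches level $n$ on a short window $[t_n^1,t_n^2]\subset(0,1/n)$, while $\|u^{(p)}_n\|_{C^m}\le \epsilon\,2^{-n}$. Pairwise-disjoint windows and fast-shrinking $r_n$ keep $u^{(p)}$ in $C^m$ with $\|u^{(p)}\|_{C^m}<\epsilon$ and compactly supported in a ball of radius $\le 1$. Global existence of $\omega\in L^\infty\cap\dot H^{-1}$ then follows from Yudovich theory once one verifies that $\sup_n\|\omega_n\|_\infty$ stays bounded (the inflation comes from the singular integral, not from $\omega$ itself) and that $\sum_n\|\omega_n\|_{\dot H^{-1}}^2<\infty$, a scaling constraint controllable via $r_n$.

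\textbf{Step 3 (compact-domain decoupling).} The hard part is to show that each patch $n$, on its own window $[t_n^1,t_n^2]$, evolves essentially as in the isolated Step~1 problem. In a compact domain one cannot invoke finite speed of propagation to separate patches, so one must quantify the velocity induced at $B_n$ by all other patches $B_m$, $m\ne n$, and show it is negligible compared to the self-induced stretching. Arranging each $\omega_m$ to have zero mean (and appropriate cancellations) yields faster-than-Riesz decay of the mutually induced velocity; combined with shrinking $r_n$ and $t_n^2-t_n^1$, a Gr\"onwall estimate on the perturbed flow map around $B_n$ shows that the Step~1 anisotropic stretching there is preserved up to negligible errors. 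The extra local regularity claim---that $\omega(t,\cdot)\in C^\infty(N_x)$ for every $x\ne x_*$ and small $t$---then follows because only finitely many patches meet any fixed neighborhood of a point $x\ne x_*$, and on such a neighborhood the local vorticity is smooth and is transported by a smooth local flow for short times.

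The principal obstacle is exactly the decoupling analysis in Step~3: one must simultaneously secure (i) $C^m$-summability of the perturbations, (ii) $\dot H^{-1}$-summability of the vorticities so that the Yudovich solution is defined globally, (iii) negligibility of cross-patch velocity contributions on each window, and (iv) robustness of the Step~1 inflation mechanism under these small but nontrivial external flow perturbations. All four translate into compatible inequalities between the sequences $r_n$, $t_n^1$, $t_n^2-t_n^1$, the amplitude $a_n$, and the high-frequency parameter $\lambda_n$ of each patch; producing such sequences explicitly is the technical core of the argument.
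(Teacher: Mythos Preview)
Your three-step skeleton matches the paper's architecture exactly: local $C^m$ inflation (Sections~4 and~6), patches clustering at a single point $x_*=(0,1)$ with centers $z_j=(0,1+1/j)$ and radii $\sim 2^{-j}$, and control of cross-patch interactions (Section~7). Your identification of Step~3 as the crux is correct. However, the decoupling mechanism you propose differs from the paper's, and in one place you misdiagnose the difficulty.

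You write that in a compact domain one cannot invoke finite transport speed to separate patches, and that instead one should arrange zero means and higher cancellations to get faster-than-Riesz decay of the mutually induced velocity. The paper does neither of these. It \emph{does} use finite transport speed: on the short window $[0,t_0]$ of the $n$-th patch, the supports of $\omega^A$ (earlier patches, below), $\omega^B$ (the current patch, near the origin after recentering), and $\omega^C$ (later patches, above) remain geometrically separated by a fixed gap $\sim R_0$. This makes the velocity $U^{\mathrm{ext}}$ induced by $\omega^A,\omega^C$ on the support of $\omega^B$ not merely small but \emph{smooth}, with uniform $H^{10}$ bounds (Lemma~7.4). The paper then redoes the local inflation calculation of Section~4 in the presence of an arbitrary smooth external drift (Lemma~7.3), showing that the leading $Mt$ term survives and the new error terms coming from $U^{\mathrm{ext}}$ are $O(Mt^2)$---no moment cancellations beyond those already present in the choice of the profile $a(x)$ are needed.

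The second point you miss is the \emph{recursive causality} built into Lemma~7.4: given $u^A$ (all earlier patches, fixed) and $\epsilon$, the lemma first produces $\psi^B$ and a threshold $\delta_0$, and then guarantees inflation for \emph{any} later perturbation $u^C$ with $\|u^C\|_2+\|Du^C\|_\infty<\delta_0$. This is what allows one to add patches one at a time without circularity: the $n$-th patch is designed knowing only patches $1,\dots,n-1$, yet is automatically robust to whatever patches $n+1,n+2,\dots$ are added afterward, provided they are small enough. Your Step~3 treats all $m\neq n$ symmetrically and would require knowing all patches in advance, which makes the parameter selection you describe at the end genuinely circular. The paper's asymmetric $A$/$B$/$C$ structure resolves this cleanly.
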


Our next two theorems are about illposedness in 3D with compactly supported data. As is well known, the lifespan
of smooth solutions corresponding to general smooth initial data is a major open problem. The usual
energy estimate (see \eqref{energy_Hs_bound}) easily implies that the solution is global if one can have
a priori control of $C^1$ norm of the solution. Due to this reason, the inflation of  the critical
$C^1$-norm is a bit delicate since one can sometimes confuse it with a likely finite time blowup.\footnote{At least
some work is needed to rule out this latter possibility.}

For simplicity we shall primarily work
with a class of special flows known as axisymmetric flows without swirl (see below). In some sense this is a matter
of taste: we want to separate the study of critical/borderline norm inflation (which is a local question) from
that of finite-time blowup/global in time regularity (i.e. a global question).
More concretely the reason of choosing axisymmetric without swirl flows
is two-fold: a) their properties are akin to 2D flows and more amenable to analysis; b) these flows generally generate
\emph{global} solutions for which lifespan is not an issue (as individual flows). Here in point b), we should stress
that in the study of $C^1$ norm inflation, there is still some issue with the control of  lifespan since we will
work with axisymmetric without swirl flows with low regularity.

As it turns out,
in our construction
there are some subtle technical differences between the
$C^1$ case and $C^m$, $m\ge 2$ case. In the $C^1$ case, the constructed solutions have limited regularity (but they
can be shown to be unique). Roughly
speaking, the $L^q$-norm of vorticity is allowed to diverge like $\log\log q$ as $q\to\infty$. In the $C^m$, $m\ge 2$
case, the solutions behave much better. In particular, the vorticity remains bounded in $C^{m-1}$ norm on the whole
time interval $[0,1]$.

For clarity of presentation, we shall state the results as two separate theorems. The reader should not be surprised
to find some repetition  in the statement of the results.
The first is about illposedness in $C^1$ in 3D with compactly supported  data.
For simplicity we consider vector fields on $\mathbb R^3$ with some symmetry. Let $u^{(g)} \in C_c^{\infty}
(\mathbb R^3)$ be axisymmetric without swirl, i.e.
\begin{align*}
u^{(g)}(x)=u^{(g)}(x_1,x_2,z)= u^{(g),r}(r,z) e_r+u^{(g),z}(r,z) e_z,\quad r=\sqrt{x_1^2+x_2^2},
\end{align*}
where $e_r=\frac 1 r (x_1,x_2,0)$, $e_z=(0,0,1)$.

\begin{thm}[3D $C^1$ case with compactly supported data]\label{thm3D_1}
Let $u^{(g)}\in C_c^{\infty}(\mathbb R^3)$ be any given velocity field which is axisymmetric without swirl.
For any
such $u^{(g)}$ and any $\epsilon>0$, we can find a
perturbation $u^{(p)}:\mathbb R^3\to \mathbb R^3$ such that the
following hold true:

\begin{enumerate}
 \item $u^{(p)}$ is axisymmetric without swirl, compactly supported in a ball of radius $\le 1$, continuously differentiable and
 $$\| D u^{(p)} \|_{\infty}
 <\epsilon.$$

 \item Let $u_0= u^{(g)} + u^{(p)}$ and $\omega_0= \nabla \times u_0$. Consider the 3D Euler equation
in vorticity form
\begin{align*}
\begin{cases}
\partial_t \omega + (u\cdot \nabla) \omega = (\omega \cdot \nabla) u, \quad (t,x) \in \mathbb R \times
\mathbb R^3,\\
u=-\Delta^{-1} \nabla \times \omega, \\
\omega\Bigr|_{t=0}=\omega_0.
\end{cases}
\end{align*}

There exists a unique local solution $\omega = \omega(t,x)$ to the Euler equation on the time interval
$[0,1]$ satisfying
\begin{align*}
\sup_{0\le t\le 1} \|\omega(t,\cdot) \|_{X_{\theta}} <\infty,
\end{align*}
where
\begin{align*}
\| \omega \|_{X_{\theta}} :=\sup_{2\le q<\infty} \frac{\|\omega\|_q} {\theta(q)}
\end{align*}
and $\theta(q)= \log\log (q+100)$. Furthermore $\omega \in C([0,1], X_{\theta})$ and
$u\in C_t^0 L_x^2 \cap C_t^0 C_x^{\alpha}$ for any $0<\alpha<1$.

\item $\omega(t)$ is compactly supported, i.e. for some constant $R_1>0$,
\begin{align*}
\op{supp}(\omega(t,\cdot)) \subset B(0,R_1), \quad\forall\, 0\le t\le 1.
\end{align*}

\item $\omega(t)$ has additional local regularity in the following sense: there exists $x_* \in \mathbb R^3$ such
that for any $x\ne x_*$, there exists a neighborhood $N_x \ni x$,
$t_x >0$ such that $\omega(t, \cdot) \in C^{\infty} (N_x)$ for any
$0\le t \le t_x$.

\item For any $0<t_0 \le 1$, we have
\begin{align*}
 \operatorname{ess-sup}_{0<t \le t_0} \| Du(t,\cdot) \|_{\infty} =+\infty.
\end{align*}
More precisely, there exist $0<t_n^1<t_n^2 <\frac 1n$, open precompact sets $\Omega_n$, $n=1,2,3,\cdots$ such that
$u(t) \in C^{\infty}(\Omega_n)$ for all $0\le t \le t_n^2$, and
\begin{align*}
 \| D u(t,\cdot) \|_{L^\infty(\Omega_n)} >n, \quad \forall\, t\in[t_n^1,t_n^2].
\end{align*}

\end{enumerate}

\end{thm}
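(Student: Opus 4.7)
The plan is to construct the perturbation $u^{(p)}$ as an infinite superposition of small axisymmetric-without-swirl ``eddies'' accumulating geometrically at a single off-axis point $x_{*} \in \mathbb{R}^3$, tuned so that each eddy produces a short-time inflation of $\|Du\|_{\infty}$ on its own patch while the whole configuration retains enough global control to fit in the weak space $X_{\theta}$ on $[0,1]$. The axisymmetric-without-swirl hypothesis is essential: writing $\omega = \omega^{\theta} e_{\theta}$, the quantity $\omega^{\theta}/r$ is transported by the flow and the meridional Biot--Savart law reduces things to something structurally akin to 2D, with the twist that $\omega^{\theta}$ itself can amplify when mass is pushed toward the axis. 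Before anything else I would assemble: a meridional Biot--Savart estimate giving a modulus of continuity on $u$ of Osgood type in terms of $\|\omega\|_{X_{\theta}}$, so that unique Lagrangian flows exist even for $\omega_{0}\in X_{\theta}$; the conservation law for $\omega^{\theta}/r$; and the logarithmic bound $\|Du\|_{\infty}\lesssim \|\omega\|_{q}\cdot q$ extracted from Calder\'on--Zygmund theory, which explains why $\|Du\|_{\infty}$ can become large even when the transported quantity $\omega^{\theta}/r$ stays bounded.

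\textbf{Single-patch building block.} For each $n$ I would build a smooth, compactly supported, axisymmetric-without-swirl ``eddy'' $u_{n}^{(p)}$ with $\|Du_{n}^{(p)}\|_{\infty}\ll 1$, localized in a small annular tube around a base point $x_{n}\to x_{*}$, in the spirit of the scheme of \cite{BL13,BL13b}: a smooth background shear produces a large anisotropic Lagrangian deformation on a short patch window $[t_{n}^{1}, t_{n}^{2}]$, while a high-frequency modulation sitting on top of it is stretched by this deformation into a still narrower structure whose Biot--Savart transform exceeds $n$ in $C^{1}$-norm on a designated ball $\Omega_{n}$ throughout $[t_{n}^{1}, t_{n}^{2}]$. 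Two ingredients inherited from the existing theory drive this step: the anisotropic Lagrangian deformation estimate adapted to axisymmetric-without-swirl flows, and the observation that a high-frequency perturbation on this scale does not perceptibly disturb the background flow map that produces its stretching.

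\textbf{Compact-domain patching and global solution.} Setting $u_{0}=u^{(g)}+\sum_{n} u_{n}^{(p)}$, I would tune amplitudes, spatial scales, and patch windows so that: (a)~the supports of the $u_{n}^{(p)}$ remain essentially disjoint on their own patch windows (finite propagation plus small individual $C^{1}$-norms); (b)~the influence on $\Omega_{n}$ during $[t_{n}^{1}, t_{n}^{2}]$ of all other patches $u_{m}^{(p)}$, $m\neq n$, is a $C^{\infty}$ perturbation of small $C^{1}$-norm on $\Omega_{n}$ that does not spoil the single-patch inflation; and (c)~the total vorticity satisfies $\|\omega_{0}\|_{q}\lesssim \log\log(q+100)$, so $\omega_{0}\in X_{\theta}$. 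The axisymmetric-without-swirl Yudovich-type theory, using the Osgood modulus from Step~1, then yields a unique solution on $[0,1]$ with $\omega\in C([0,1]; X_{\theta})$ and $\omega(t,\cdot)$ compactly supported by finite propagation. Inflation on $\Omega_{n}$ during $[t_{n}^{1}, t_{n}^{2}]$ is a consequence of the single-patch analysis combined with (b). The local regularity claim away from $x_{*}$ follows because any fixed $x\neq x_{*}$ lies outside the supports of all but finitely many patches, each of which stays smooth on a neighborhood of $x$ for some positive time; standard propagation of $C^{\infty}$-regularity along a smooth flow then provides the required $N_{x}$ and $t_{x}$.

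\textbf{Main obstacle.} The most delicate step is (b): the axisymmetric 3D Biot--Savart kernel is only mildly better than the full 3D Riesz kernel and cannot be spatially truncated, so contributions from distant patches to the velocity on $\Omega_{n}$ are long-range. To prevent them from destroying the anisotropic Lagrangian deformation inside $\Omega_{n}$ during its short window, one must arrange the sequence $x_{n}\to x_{*}$ strongly geometrically, with amplitudes, scales, and patch times decaying fast enough relative to the kernel's decay, while still maintaining a uniform $\log\log$-control on $\|\omega_{0}\|_{q}$ and keeping the $C^{1}$-norm of $u^{(p)}$ below $\varepsilon$. This balancing, together with the uniqueness theory in $X_{\theta}$, is the heart of the proof; Steps~1--2 and the smoothness and inflation conclusions in Step~3 are, by contrast, relatively routine once the patch geometry has been fixed.
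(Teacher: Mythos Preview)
Your global architecture matches the paper: axisymmetric-without-swirl patches accumulating at a single point $x_*$, existence and uniqueness via a Yudovich-type theory in $X_\theta$ with $\theta(q)=\log\log(q+100)$, and correct identification of the inter-patch Biot--Savart interaction as the delicate point. (Minor correction: the paper takes $x_*=(0,0,1)$ on the axis, with each patch's vorticity supported in a small annulus nearby.)

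The real gap is your single-patch mechanism. You describe the local $C^1$ inflation as ``smooth background shear producing large Lagrangian deformation $+$ high-frequency modulation that does not disturb the background flow map'', i.e.\ the flow-decoupling scheme of \cite{BL13,BL13b}. The paper states explicitly in the introduction that this approach does \emph{not} go through for $C^1$, and reserves it for $C^m$, $m\ge 2$. The obstruction is simple scaling: decoupling at the level of $Du$ would require $\|\omega-\tilde\omega\|_\infty\ll 1$, but in the $C^1$-critical scaling the high-frequency vorticity $\omega_0^{(h)}=\sum_j(\Delta a)(2^jx)$ has $\|\omega_0^{(h)}\|_\infty\sim 1$, not small (contrast $C^m$, $m\ge 2$, where $\|\omega_0^{(h)}\|_\infty\sim 2^{-(m-1)M}$). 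So the high-frequency part genuinely disturbs the flow and cannot be treated as passive.

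What the paper actually does for $C^1$ (Section~\ref{sec_3DC1}, and Lemma~\ref{lem_hc2} for the patched version) is different: take a \emph{single} self-similar sum $\psi_0=\sum_{j\le M}2^{-3j}a_3(2^j(r,z))$, write $(\partial_z u^z)(t,0)$ as an integral against the kernel $F_3$, and Taylor-expand the full nonlinear characteristics to second order in $t$. The leading term is $Mt\cdot\int\nabla F_3\cdot H\,(b_3/|x'|)\,dx$; the dangerous error is the pressure contribution, which naively is $O(M^2t^2A^2)$ and would swamp the main term. The key idea you are missing is that one can choose the profile $a_3$ to satisfy a moment condition (equation~\eqref{ge_69_8.9}) forcing $\Delta^{-1}\nabla\cdot(\nabla F_3\,b_3/|x'|)\in L^1(\mathbb R^3)$, which knocks the pressure error down to $O(Mt^2A^2)$. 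Only then does choosing $t\sim K/M$ with $1\ll K\ll M$ yield inflation. This pressure-cancellation step, not Lagrangian-deformation decoupling, is the engine of the $C^1$ case.
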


\begin{rem}
We should point it out that the uniqueness of the constructed solution is not an issue in Theorem \ref{thm3D_1}.
In \cite{Yu63} Yudovich proved the existence and uniqueness of weak solutions to 2D Euler in bounded domains
for $L^{\infty}$ vorticity data. He then improved (see \cite{Yu95}) the uniqueness result (still for bounded domain
 in  dimensions $d\ge 2$) by allowing vorticty $\omega \in \cap_{p_0 \le p<\infty} L^p$ to grow like
 $\| \omega \|_p \le C \theta (p)$ with $\theta(p)$ increasing  slowly in $p$ (such as
$\theta(p)=\log p$). Here one should note that in the uniqueness result (for $d\ge 3$), the bound on $\|\omega(t)\|_p$ is assumed
to hold for all $0\le t\le T$ where $T>0$ is the lifespan of the solution under consideration.
In Theorem \ref{thm3D_1}, our constructed solution have $\|\omega\|_p$ which grows like $\log\log p$ for all
$0\le t\le 1$. Therefore uniqueness
is guaranteed by using Yudovich's result. For completeness we also mention that in the
Besov setting Vishik \cite{Vishik99} proved the uniqueness of weak solutions to
 Euler (in
$\mathbb R^d$, $d\ge 2$) under the following assumptions:
\begin{itemize}
\item  $\omega \in L^{p_0}$, $1<p_0<d$,
\item For some $a(k)>0$ with the property
\begin{align*}
\int_1^{\infty} \frac 1 {a(k)} d k =+\infty,
\end{align*}
it holds that
\begin{align*}
\Bigl|\sum_{j=2}^{k} \| P_{2^j} \omega \|_{\infty} \Bigr| \le \operatorname{const} \cdot a(k), \qquad \forall\, k\ge 4.
\end{align*}
\end{itemize}
Here $P_{2^j}$ is the Littlewood-Paley projector adapted to the frequency $|\xi| \sim 2^j$. Again for $d\ge 3$
the above bounds are assumed to hold for all $0\le t \le T$ (i.e. not just on initial data).

\end{rem}

We finally state the $3D$ $C^m$ case for $m\ge 2$.

\begin{thm}[3D  $C^m$ case, $m\ge 2$ with compactly supported data]\label{thm3D_k}
Let $k\ge 2$ be an \emph{integer}. Let $u^{(g)}\in C_c^{\infty}(\mathbb R^3)$ be any given
 velocity field which is axisymmetric without swirl. For any $\epsilon>0$, we can find a
perturbation $u^{(p)}:\mathbb R^3\to \mathbb R^3$ such that the
following hold true:

\begin{enumerate}
 \item $u^{(p)}$ is axisymmetric without swirl, compactly supported in a ball of radius $\le 1$, $m$-times continuously differentiable and
 $$\| D^m u^{(p)} \|_{\infty}
 <\epsilon.$$

 \item Let $u_0= u^{(g)} + u^{(p)}$ and $\omega_0= \nabla \times u_0$. Consider the 3D Euler equation
in vorticity form
\begin{align*}
\begin{cases}
\partial_t \omega + (u\cdot \nabla) \omega = (\omega \cdot \nabla) u, \quad (t,x) \in \mathbb R \times
\mathbb R^3,\\
u=-\Delta^{-1} \nabla \times \omega, \\
\omega\Bigr|_{t=0}=\omega_0.
\end{cases}
\end{align*}

There exists a unique local solution $\omega = \omega(t,x)$ to the Euler equation on the time interval
$[0,1]$ satisfying
\begin{align*}
\sup_{0\le t\le 1} (\|\omega(t,\cdot) \|_{2} +\| \omega(t,\cdot)\|_{{\infty}}) <\infty.
\end{align*}
Furthermore $\omega \in C_t^0 C_x^{m-1}$ and $u\in C_t^0 L_x^2 \cap
C_t^0 C_x^{m-1,\alpha}$ for any $0<\alpha<1$.

\item $\omega(t)$ is compactly supported, i.e. for some constant $R_1>0$,
\begin{align*}
\op{supp}(\omega(t,\cdot)) \subset B(0,R_1), \quad\forall\, 0\le t\le 1.
\end{align*}

\item $\omega$ has additional local regularity in the following sense: there exists $x_* \in \mathbb R^3$ such
that for any $x\ne x_*$, there exists a neighborhood $N_x \ni x$,
$t_x >0$ such that $\omega(t, \cdot) \in C^{\infty} (N_x)$ for any
$0\le t \le t_x$.

\item For any $0<t_0 \le 1$, we have
\begin{align*}
 \operatorname{ess-sup}_{0<t \le t_0} \| D^m u(t,\cdot) \|_{\infty} =+\infty.
\end{align*}
More precisely, there exist $0<t_n^1<t_n^2 <\frac 1n$, open precompact sets $\Omega_n$, $n=1,2,3,\cdots$ such that
$u(t) \in C^{\infty}(\Omega_n)$ for all $0\le t \le t_n^2$, and
\begin{align*}
 \| D^m u(t,\cdot) \|_{L^\infty(\Omega_n)} >n, \quad \forall\, t\in[t_n^1,t_n^2].
\end{align*}

\end{enumerate}

\end{thm}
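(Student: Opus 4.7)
The plan is to follow the three-step scheme from \cite{BL13,BL13b} — creation of large Lagrangian deformation, local norm inflation by a high-frequency modulation, and patching in a compact domain — with the crucial novelty that the flow decoupling method replaces the Kato-Ponce-type commutator analysis used in the Sobolev setting. Throughout, all perturbations are chosen in the axisymmetric-without-swirl class; this preserves the symmetry for all time, reduces the vorticity equation to a 2D transport for $\omega^\theta/r$, and places us in the regime where global existence and uniqueness for vorticity in $L^\infty\cap L^2$ are classical. The global bound $\omega\in C_t^0 C_x^{m-1}$ stated in the theorem will be a byproduct of the construction rather than an a priori input.

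A single building block consists of a smooth axisymmetric background perturbation $u^{(B)}$, concentrated in a thin toroidal patch around $(r_0,z_0)$ with $r_0$ bounded away from $0$ and $\infty$, whose flow map $\Phi^B$ implements an anisotropic hyperbolic deformation with stretching factor $A\gg 1$ over a short time window, plus a compactly supported high-frequency vorticity packet $\omega^{(P)}$ oscillating at scale $\delta_0\ll A^{-1}$ only in the contracting direction. The packet is chosen with appropriate parity symmetry and with $\|\omega^{(P)}\|_{C^{m-1}}$ small (so it contributes negligibly to the initial $C^m$ velocity norm of $u^{(p)}$) and with $\|\omega^{(P)}\|_{L^1\cap L^\infty}$ smaller still (so the velocity it generates is Lipschitz-small). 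At $t=0$ the parity forces cancellation in the Biot-Savart integral that controls $\|D^m u\|_\infty$, but the hyperbolic flow $\Phi^B$ destroys this symmetry on the inflation window, and the resulting anisotropic oscillation of $\omega(t)$ produces the characteristic Calder\'on-Zygmund logarithmic loss in $D^m u(t)$, yielding $\|D^m u(t)\|_\infty\gtrsim \log A$ on the window while $\|\omega(t)\|_{C^{m-1}}$ remains uniformly bounded.

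The flow decoupling step justifies replacing the true nonlinear flow $\Phi$ by the decoupled background flow $\Phi^B$ on the inflation window: the $L^1\cap L^\infty$ and $C^{m-1}$ smallness of $\omega^{(P)}$ imply that the velocity it generates is small in $\op{Lip}$ up to a logarithmic factor absorbable in the choice $\delta_0\ll A^{-1}$, and a Gronwall argument gives $\|\Phi-\Phi^B\|_\infty\ll \delta_0/A$ on the window. Hence $\omega^{(P)}$ is transported almost passively by $\Phi^B$, and the inflation estimate of the previous paragraph survives for the genuine solution. The back-reaction of the packet-generated velocity on the background is quadratic in the small parameters and thus controllable.

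Finally, one places a countable family of such blocks $(u^{(B,n)},\omega^{(P,n)})$ in a sequence of shrinking, pairwise disjoint toroidal patches converging to a single circle through $x_*$, with parameters $A_n\to\infty$ and disjoint activation windows $[t_n^1,t_n^2]\subset(0,1/n)$. The patches are chosen small enough and sufficiently well separated that they remain disjoint under all the background flows on $[0,1]$ and that the Biot-Savart cross-influence of block $j$ on block $n$ is dominated by the intrinsic distortion of block $n$. Requiring $\sum_n \|\omega^{(P,n)}\|_{L^1\cap L^\infty\cap C^{m-1}}<\infty$ yields the uniform bound $\omega\in C_t^0 C_x^{m-1}$ on $[0,1]$, and hence $u\in C_t^0 C_x^{m-1,\alpha}$, global existence, and uniqueness; the additional $C^\infty$ regularity at points $x\ne x_*$ holds because only finitely many blocks touch any fixed neighborhood. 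The chief obstacle in the execution will be the quantitative flow decoupling: the very Calder\'on-Zygmund logarithm that drives the $C^m$ inflation also threatens to spoil the Lipschitz closeness of $\Phi$ and $\Phi^B$, which forces a carefully engineered separation of scales — $\delta_{0,n}$ must be chosen far smaller than the spatial scale of $u^{(B,n)}$ — consistently across all blocks.
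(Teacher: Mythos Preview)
Your overall architecture---flow decoupling to reduce the nonlinear Euler flow to passive transport of a high-frequency packet by a low-frequency background, followed by compact patching around an accumulation point---matches the paper's. But the local inflation mechanism you describe is not the one the paper uses, and as stated it has a real gap.

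You propose a \emph{single} high-frequency packet $\omega^{(P)}$ at one scale $\delta_0$, transported by a background whose flow map has a \emph{large} hyperbolic stretching factor $A\gg 1$, and you claim this yields $\|D^m u(t)\|_\infty\gtrsim \log A$ via the Calder\'on--Zygmund logarithmic loss while $\|\omega(t)\|_{C^{m-1}}$ stays bounded. Two problems. First, the CZ $\log$-loss on $L^\infty$ is an operator-norm phenomenon that is realized only by functions with many scales present simultaneously; for a single smooth packet stretched to an $A\times A^{-1}$ ellipse the Riesz transform of $D^{m-1}\omega(t)$ remains comparable to $\|D^{m-1}\omega(t)\|_\infty$ with no $\log A$ gain, so either $\|D^{m-1}\omega(t)\|_\infty$ is large (violating your $C^{m-1}$ bound) or $\|D^m u(t)\|_\infty$ is not. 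Second, and more structurally: producing stretching $A\gg 1$ in a time window of length $\tau_n<1/n$ requires $\|Du^{(B,n)}\|_\infty\gtrsim n\log A_n$, and if $u^{(B,n)}$ is smooth at spatial scale $L_n$ then $\|D^m u^{(B,n)}\|_\infty\sim n\log A_n/L_n^{m-1}$. Keeping this below $\epsilon$ forces $L_n\gtrsim (n\log A_n/\epsilon)^{1/(m-1)}\to\infty$, which is incompatible with placing block $n$ in a shrinking patch near $x_*$.

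The paper avoids both issues by \emph{not} making the background deformation large. The low-frequency part $\psi_0^{(l)}$ is chosen only so that $D\phi^{(l)}(t,0)=\mathrm{Id}+t\cdot(\text{nonzero traceless matrix})+O(t^2)$ stays $O(1)$-close to the identity; in particular its $C^m$ norm can be made arbitrarily small. The high-frequency part is a lacunary sum
\[
\psi_0^{(h)}(r,z)=\sum_{M\le j\le M+\sqrt M}2^{-(m+2)j}a_0\bigl(2^j(r,z)\bigr),
\]
so that $\omega_0^{(h)}$ consists of $\sqrt M$ disjointly supported dyadic bumps, each with $C^{m-1}$ norm $O(1)$. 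After transport by $\phi^{(l)}$, each bump contributes a fixed nonzero amount (coming from $A(t)\notin SO(2)$, cf.~Proposition~\ref{prop_A_nonzero}) of order $t$ to $(\partial_z^m u^z)(t,0)$, and the $\sqrt M$ contributions add: the inflation is $\sim\sqrt M\cdot t$, with $M$ a free parameter that can be taken enormous to beat all the small amplitudes forced by the patching. This is what ``many scales'' buys you that a single packet cannot: the logarithmic CZ divergence is manufactured by hand as a finite Riemann sum over $\sqrt M$ dyadic shells, rather than extracted from a large anisotropy.
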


We now give a brief overview of the proof and go over some key technical points.

\underline{The ``mechanism" of $C^m$ norm inflation}. We begin with
some heuristics. Consider the $C^1$ case. There are at least two
ways to see why the $C^1$ norm should inflate.

The first is through vorticity formulation. Consider for simplicity 2D Euler which reads
\begin{align*}
\begin{cases}
\partial_t \omega +(u\cdot \nabla) \omega=0,\\
u=\Delta^{-1} \nabla^{\perp} \omega,\\
\omega \Bigr|_{t=0} =\omega_0.
\end{cases}
\end{align*}
Define the characteristic line
\begin{align*}
\begin{cases}
\partial_t \phi(t,x) = u(t,\phi(t,x)), \\
\phi(0,x)=x.
\end{cases}
\end{align*}
Then $\omega(t) = \omega_0 \circ \phi(t)^{-1}$, where $\phi(t)^{-1}$ is the inverse map of $\phi(t)$. Since
$u(t)=\Delta^{-1} \nabla^{\perp} \omega(t)$, we then get
\begin{align*}
Du(t) = \Delta^{-1} \nabla^{\perp} D  ( \omega_0 \circ \phi(t)^{-1} ).
\end{align*}
Note that $\mathcal R=\Delta^{-1} \nabla^{\perp} D$ is a Riesz-type operator. One can even consider a slightly
more general expression:
\begin{align*}
X=\mathcal R ( \omega_0 \circ \phi),
\end{align*}
where $\phi$ is some general map. The assumption is that $\|Du_0\|_{\infty} \lesssim 1$,
$\mathcal R=\Delta^{-1}D \nabla^{\perp}$ (i.e. we do not work with more general Riesz operators) and one would like to
show $\|X\|_{\infty}$ is large. Note that the situation here is a bit delicate. For example if $\phi$ is the
identity map or any linear orthogonal transformation on $\mathbb R^2$, then since Laplacian commutes with any such
transformations, one gets
$$\|X\|_{\infty} \lesssim \|Du_0\|_{\infty} \lesssim 1.$$
Therefore to produce large $\|X\|_{\infty}$ norm, one must look for
it within a class of special $\phi$ such that the Riesz transform
does not act ``isotropically". A preliminary calculation (see
Proposition \ref{prop_A_nonzero})  shows that any non-orthogonal
(anisotropic) transformations can be used to produced large
$\|X\|_{\infty}$ norm (of course it must conspire with special
$\omega_0$ to achieve this). The main idea of the proof is to
``steer" the nonlinear Euler flow map such that it behaves
an-isotropically in the above sense.

The second way to see $C^1$ norm inflation is through the original
velocity formulation. Let $\partial$ be any one of the derivatives
$\partial_j$, $1\le j \le d$. Then taking $\partial$ on both sides
of \eqref{V_usual} gives
\begin{align*}
\partial_t ( \partial u ) + (u\cdot \nabla ) ( \partial u) + (\partial u \cdot \nabla)u
=- \partial \nabla p.
\end{align*}
Since $-\Delta p =\nabla \cdot \Bigl( ( u\cdot \nabla) u\Bigr)= Du \otimes Du$ (here $Du\otimes Du$ denotes
the summation $\sum_{i,j} \partial_i u_j \partial_j u_i$), we can solve for pressure and plug it into the above
to get

\begin{align} \label{intro_du_e1000a}
\partial_t ( \partial u )& + (u\cdot \nabla ) ( \partial u) + (\partial u \cdot \nabla)u \notag \\
&= \underbrace{\Delta^{-1} \nabla \partial}_{\mathcal R_{ij}}( Du \otimes Du).
\end{align}
Assume now\footnote{The key idea in \cite{BL13} is that instead of controlling some quantity $Y$ directly,
one can assume first it is bounded, and then work toward a contradiction or an estimate. This line of thought is also
exploited in the current work.}
 on some time interval $[0,T]$, one has $\|Du \|_{\infty} \lesssim 1$.  With this a priori assumption, one can then
 work toward a contradiction by using \eqref{intro_du_e1000a}. Indeed in \eqref{intro_du_e1000a}, the second
 term is transport which preserves $L^{\infty}$ and is harmless; the third terms is also OK since it is assumed
 to be bounded; the term on the RHS, however, has a Riesz transform in the front. Since $\mathcal R_{ij}$ is
 unbounded on $L^{\infty}$, this term can be used to produce large $L^{\infty}$ growth and hence $C^1$ norm inflation
 (of velocity).

We now sketch the main steps of the proof. In this short introduction we shall only explain the 2D case.
The 3D case is a little bit more involved especially in the $C^1$ case.

\texttt{Step 1a}: Local inflation of 2D $C^1$ norm. We take initial stream, velocity and vorticity in the form:
\begin{align}
 \psi_0(x)  & = \sum_{ j\le M} 2^{-2j} a(2^j x), \notag \\
  u_0(x) & = \sum_{j \le M} 2^{-j} (\nabla^{\perp} a)(2^j x), \notag \\
  \omega_0(x) & = \sum_{j \le M} ({\Delta a})(2^j x), \notag
\end{align}
where the function $a\in C_c^{\infty}(\mathbb R^2)$ is odd in the
$x_1$ and $x_2$ variables and is supported in $|x|\sim 1$. In some
sense the function $a(x)$ will be the main degree of freedom in our
construction. We then consider one of the entries of $(Du)(t,0)$
which is the quantity $(\partial_1 u_1)(t,0)$. Let $\phi=\phi(t,x)$
be the forward characteristic line:
\begin{align*}
\begin{cases}
\partial_t \phi(t,x) = u(t,\phi(t,x)),\\
\phi(0,x)=x.
\end{cases}
\end{align*}
Then since vorticity in 2D is transported along the characteristic line,  we get (after some algebra)
\begin{align*}
\partial_1 u_1(t,0) \sim \sum_{j\le M} \int_{\mathbb R^2} \Delta a(x) F(2^j \phi(t,2^{-j}x) ) dx,
\end{align*}
where $F(z_1,z_2)=z_1z_2/|z|^4$ is essentially the kernel of the Riesz transform
$\Delta^{-1}\partial_1 \partial_2$. We then make a short-time flow expansion (see Lemma \ref{lem_ga20}
and the derivations in Section \ref{sec:2DC1})
which gives for $\lambda =2^j$, $x \in \op{supp}(a)$,
\begin{align*}
\lambda \phi(t,\frac 1 {\lambda} x) & =x+ t \nabla^{\perp} a(x)  \notag \\
& \quad + \int_0^t (t-\tau) (\nabla p)(\tau, \frac x {\lambda} ) d\tau +\op{error}.
\end{align*}
It follows that
\begin{align*}
|(\partial_1 u_1)(t,0)| & \gtrsim Mt \underbrace{\int_{\mathbb R^2}
\Delta a(x) \nabla F(x)
\cdot \nabla^{\perp}a(x) dx}_{\op{I}_{\op{main}}} \notag \\
%& \quad + \sum_{j\le M} \int_{\mathbb R^2} \Delta a(x) \nabla F(x) \cdot
%\int_0^t(t-\tau) 2^j (\nabla p)(\tau, 2^{-j} x) d\tau \notag \\
& \quad + \op{error}.
\end{align*}
It is possible to choose $a(x)$ such that $\op{I}_{\op{main}}$ does
not vanish. Thus the main part of $(\partial_1 u_1)(t,0)$ is roughly
of order $Mt$. However, as it turns out, in the ``error" term,
 the contribution of the pressure is not small unless
 we make a judicious choice of the function $a(x)$
 (see Lemma \ref{lem_gb10}).   Define $A= \| Du\|_{L_t^\infty L_x^{\infty}}$.
 With a little bit work we have
\begin{align*}
|\op{error}|\lesssim t^2 A^2 M +t^3 A^3 M^2   + t^4 M^3
+ t^6 A^6 M^3.
\end{align*}
Now discuss two cases. In both cases we shall choose
$M\gg 1$.  If $A\gg \log M$, then we are done (which is precisely inflation of $C^1$ norm); Otherwise
we have $A\lesssim \log M$.  Now choose $t=K/M$
with $1\ll K\ll M$, $KA \ll M$. This renders the error terms under control, and thus
\begin{align*}
|(\partial_1 u_1)(K/M,0)| \ge \op{const}
\cdot K \gg 1,
\end{align*}
 achieving the desired local $C^1$-norm inflation.

\texttt{Step 1b}: Local inflation of 2D $C^k$, $k\ge 2$ norm. Without loss of generality consider $k=2$.
We shall adopt a different strategy from the $C^1$ case. The idea is to do a high-low frequency splitting and
 ``decouple" the flow. Take initial stream
function in  the form
\begin{align*}
\psi_0(x) = \psi_0^{(l)}(x) + \psi_0^{(h)}(x),
\end{align*}
where the low frequency part $\psi_0^{(l)}$ will be used to ``steer"
the flow, and
\begin{align*}
\psi_0^{(h)}(x) = \sum_{M\le j\le M+\sqrt M}
2^{-3j} a_0(2^jx).
\end{align*}
Here the function $a_0 \in C_c^{\infty}(\mathbb R^2)$ is supported in $|x| \sim 1$. Define $\omega^{(l)}$, $\omega$ respectively as solutions to the following systems:
\begin{align*}
 \begin{cases}
  \partial_t \omega^{(l)} + (u^{(l)} \cdot \nabla) \omega^{(l)} =0, \\
  u^{(l)} = \Delta^{-1} \nabla^{\perp}\omega^{(l)}, \\
  \omega^{(l)}\Bigr|_{t=0}=\omega_0^{(l)}=\Delta \psi_0^{(l)};
 \end{cases}
\end{align*}
\begin{align*}
 \begin{cases}
  \partial_t \omega + (u\cdot \nabla) \omega=0, \\
  u= \Delta^{-1} \nabla^{\perp} \omega,\\
  \omega \Bigr|_{t=0}= \omega_0=\Delta \psi_0;
 \end{cases}
\end{align*}

Let $\tilde \omega$ solve the \emph{linear} system
\begin{align*}
 \begin{cases}
  \partial_t \tilde \omega + (u^{(l)} \cdot \nabla) \tilde \omega =0, \\
  \tilde \omega \Bigr|_{t=0}= \omega_0.
 \end{cases}
\end{align*}

Define $\tilde u= \Delta^{-1} \nabla^{\perp} \tilde \omega$. We then show that
(see Lemma \ref{lem_gd1}) the nonlinear flow can be decoupled:
\begin{align*}
\| D^2 u - D^2 \tilde u \|_{L_t^{\infty}L_x^{\infty}} \ll 1.
\end{align*}

Let $\tilde \omega^{(h)}$ solves
\begin{align*}
 \begin{cases}
  \partial_t \tilde \omega^{(h)} + (u^{(l)} \cdot \nabla) \tilde \omega^{(h)} =0, \\
  \tilde \omega^{(h)} \Bigr|_{t=0}= \omega_0^{(h)}.
 \end{cases}
\end{align*}

Denote $\tilde u^{(h)} = \Delta^{-1} \nabla^{\perp} \tilde \omega^{(h)}$. Then since
$\tilde \omega= \tilde \omega^{(h)} + \omega^{(l)}$, we get
\begin{align*}
\| D^2 u - D^2 \tilde u^{(h)} \|_{L_t^{\infty}L_x^{\infty}} \lesssim
1.
\end{align*}
(here we are regarding the flow $\omega^{(l)}$ as $O(1)$ in terms of estimates.)
Therefore as far as $C^2$ norm is concerned, we have
\begin{align*}
  & \text{nonlinear Euler flow with initial data $\omega_0^{(l)}+\omega_0^{(h)}$} \\
\approx &\; \text{linear flow driven by $u^{(l)}$ with $\omega_0^{(h)}$ as initial data}
\end{align*}
In the literature, this is often called a passive\footnote{In the
``active scalar" case, the transport will be self-induced. The 2D
Euler equation itself is already an example: $\partial_t \omega
+u\cdot \nabla \omega =0$ and $u$ is related to $\omega$ by the
relation $u=\Delta^{-1} \nabla^{\perp} \omega$.}
 scalar since the vorticity $\tilde \omega^{(h)}$ is transported by
a given external flow.

Define the flow map
\begin{align*}
\begin{cases}
\partial_t \phi^{(l)}(t,x) = u^{(l)}(t,\phi^{(l)}(t,x)), \\
\phi^{(l)}(0,x)=x.
\end{cases}
\end{align*}

To produce $C^2$-norm inflation, it suffices to give a good lower bound of
 the quantity (below $\tilde u^{(h)}=(\tilde u^{(h)}_1, \tilde u^{(h)}_2)$)
\begin{align*}
X(t)=(\partial_{11} \tilde u^{(h)}_2)(t,\phi^{(l)}(t,0)).
\end{align*}
As it turns out, for $M\gg 1$, one has
\begin{align*}
|X(t)| &\gtrsim \sqrt M \underbrace{\int_{\mathbb R^2} F_0( (D\phi^{(l)})(t,0) x) (\Delta a_0)(x) dx}_{I_{m}(t)} \notag \\
& \quad + \text{negligible error},
\end{align*}
where
\begin{align*}
F_0(x_1,x_2)=\frac{x_1^3-3x_1 x_2^2}{|x|^6}.
\end{align*}
We then choose a good low frequency data $\omega_0^{(l)}$ such that for $t=0+$, the matrix
 $D\phi^{(l)}(t,0)$ does not belong to $SO(2)$ (see Proposition \ref{prop_A_nonzero}).
 Accordingly we choose a good  function $a_0$
such  that
\begin{align*}
I_{m}(t) \gtrsim t.
\end{align*}
This in turn yields the desired local $C^2$ inflation since $M$ can be taken large.

\texttt{Step 2:} Patching for 2D. There are two cases: unbounded
case and compact domain case. The unbounded case is easy. One can
just revisit a patching argument already appeared in our earlier
work \cite{BL13} with some minor modifications. The nice feature of
the construction is that solution will be smooth in each patches. In
the compact domain case, we have to analyze the interactions between
patches. For this we have to build several auxiliary lemmas (see
Section \ref{sec_2D_patch}) to control the perturbation errors
whilst still producing norm inflation. For the $C^1$ case the main
building block is Lemma \ref{lem_ha1} which in some sense
streamlines the argument. On the other hand, patching for the $C^m$,
$m\ge 2$ case is easier in view of flow decoupling (see the end of
Section \ref{sec_2D_patch} for more details).

\subsection*{Acknowledgements}
J. Bourgain was supported in part by NSF No. DMS-1301619.
D. Li was supported  by an Nserc discovery grant.

\section{Notation and Preliminaries}
In this section we collect some notation and preliminary results used in this paper.

%\begin{itemize}
%\item

 For any function $f:\; \mathbb R^d\to
\mathbb R$, we use $\|f\|_{L^p}$ or sometimes $\|f\|_p$ to denote
the  usual Lebesgue $L^p$ norm  for $1 \le p \le
\infty$. For any vector valued function $f:\; \mathbb R^d \to \mathbb R^m$, we still
use the same notation $\|f\|_p$ or $\|f \|_{L^p(\mathbb R^d)}$ instead of $\|f\|_{L^p(\mathbb R^d)^m}$ to denote the sum of
$L^p$-norm of all its components.

For any vector-valued function $f:\, \mathbb R^d \to \mathbb R^k$, we shall denote by $Df$ the Jacobian matrix whose
entries are given by $(Df)_{ij}=\partial_j f_i$. We shall adopt the usual multi-index notation. For example, for
a multi-index $\beta=(\beta_1,\cdots,\beta_d)$, $\beta_i \ge 0$ and $\beta_i \in \mathbb Z$, we denote
$|\beta|=\beta_1+\cdots+\beta_d$, and
\begin{align*}
D^{\beta} = \partial_{x_1}^{\beta_1}\cdots \partial_{x_d}^{\beta_d}.
\end{align*}
Sometimes we write $\partial_{x_i}$ simply as $\partial_i$ whenever there is no obvious confusion.

%\item

 For any two quantities $X$ and $Y$, we denote $X \lesssim Y$ if
$X \le C Y$ for some harmless constant $C>0$. Similarly $X \gtrsim Y$ if $X
\ge CY$ for some $C>0$. We denote $X \sim Y$ if $X\lesssim Y$ and $Y
\lesssim X$. We shall write $X\lesssim_{Z_1,Z_2,\cdots, Z_k} Y$ if
$X \le CY$ and the constant $C$ depends on the quantities
$(Z_1,\cdots, Z_k)$. Similarly we define $\gtrsim_{Z_1,\cdots, Z_k}$
and $\sim_{Z_1,\cdots,Z_k}$. We shall use the notation $X\le C(\alpha_1,\cdots,\alpha_k) Y$ to
explicitly specify the dependence of the constant $C$ on the quantities/parameters $\alpha_1$,
$\cdots$, $\alpha_k$.

%\item

We shall denote by $X+$ any quantity of the form $X+\epsilon$ for any $\epsilon>0$.
 The notation $X-$ is similarly
defined. The notation $X+$ and $X-$ is particularly handy when
making some computations with ``critical/borderline" thresholds such
as exponents in some inequalities.

%\item

For any center $x_0 \in \R^d$ and radius $R>0$, we shall use the notation
 $B(x_0,R) := \{ x \in \R^d: |x-x_0| < R \}$ to
denote the open Euclidean ball. More generally for any set $A\subset \mathbb R^d$, we denote
\begin{align}
B(A,R):=\{y \in \R^d:\; |y-x|<R\text{ for some $x\in A$} \}. \label{def_ball}
\end{align}

%\item

For any integer $k\ge 0$ and any open set $U\subset \mathbb R^d$,
we use the notation $C^k_{\op{loc}}(U)$ to denote the set of continuous functions on $U$ whose  derivatives
$D^{\beta} u$, $|\beta|\le k$ are all continuous in $U$. Following the convention, the space $C^{\infty}(U)$ is defined
 as $C^{\infty}(U)= \bigcap_{k=1}^{\infty} C^k_{\op{loc}}(U)$. The space of smooth functions with compact support
in $U$ is denoted by $C_c^{\infty}(U)$. The Banach space $C^k(U)$ consists of functions
$u\in C^k_{\op{loc}}(U)$ with the norm
\begin{align*}
\| u\|_{C^k(U)}:= \sum_{|\beta|\le k} \| D^{\beta} u\|_{L^{\infty}(U) } <\infty.
\end{align*}
For any $0<\gamma\le 1$, the H\"older semi-norm $\|\cdot\|_{\dot C^{\gamma}}$ is defined by
\begin{align*}
\| u \|_{\dot C^{\gamma}(U)} := \sup_{\substack{x \ne y \\ x, y \in U}} \frac{|u(x)-u(y)|}{|x-y|^{\gamma}}.
\end{align*}
Also
\begin{align*}
&\| u \|_{C^{\gamma}(U)} := \| u\|_{L^{\infty}(U)} + \| u\|_{\dot C^{\gamma}(U)},\\
&\| u\|_{C^{k,\gamma}(U)} :=\| u\|_{C^{k}(U)} + \sum_{|\beta|=k} \| D^{\beta} u \|_{\dot C^{\gamma}(U)}.
\end{align*}
The H\"older space $C^{k,\gamma}(U)$ consists of functions $u\in C^k(U)$ with finite $C^{k,\gamma}$-norm.
Note that  the space $C^{k,1}$ is usually called Lipschitz. Sometimes the norm $\|\cdot\|_{C^{0,1}}$ is denoted
by $\|\cdot \|_{\op{Lip}}$.

In the introduction, we have mentioned a couple of results using Besov spaces. A convenient way to introduce these spaces is to use the Littlewood--Paley frequency projection operators. Let $\varphi(\xi)$
be a smooth bump function supported in the ball $|\xi| \leq 2$ and
equal to one on the ball $|\xi| \leq 1$. For any real number $N>0$ and $f\in \mathcal S^{\prime} (\mathbb R^d)$, define the frequency localized (LP) projection operators:
\begin{align*}
\widehat{P_{\le N }f}(\xi) &:=  \varphi(\xi/N )\hat f (\xi), \notag\\
\widehat{P_{> N }f}(\xi) &:=  [1-\varphi(\xi/N)]\hat f (\xi), \notag\\
\widehat{P_N f}(\xi) &:=  [\varphi(\xi/N) - \varphi (2 \xi /N )] \hat
f (\xi). %\label{lp_def}
\end{align*}
Similarly one can define $P_{<N}$, $P_{\geq N }$, and $P_{M < \cdot
\leq N} := P_{\leq N} - P_{\leq M}$, whenever $N>M>0$ are real numbers. We will usually
use these operators when $M$ and $N$ are dyadic numbers. The summation over $N$ or $M$ are
understood to be over dyadic numbers. Occasionally for convenience of notation we allow $M$ and $N$ not to be a power of
$2$.

%We recall the following Bernstein estimates:  for any $1\le %p\le q\le \infty$, $s\in \mathbb R$,
%\begin{align}
%& \| |\nabla|^s P_N f \|_{L_x^p(\mathbb R^d)} \sim N^s \| %P_N f \|_{L_x^p(\mathbb R^d)}, \notag \\
%&\| P_{\le N} f\|_{L_x^q (\mathbb R^d)} \lesssim_d %N^{d(\frac 1p -\frac 1q)} \| P_{\le N} f\|_{L_x^p(\mathbb %R^d)}, \notag \\
%&\| P_{N} f\|_{L_x^q (\mathbb R^d)} \lesssim_d N^{d(\frac 1p %-\frac 1q)} \| P_{N} f\|_{L_x^p(\mathbb R^d)}. \notag
%\end{align}

For any $s\in \mathbb R$, $1\le p,q\le \infty$,  the homogeneous Besov seminorm $\|\cdot \|_{\dot B^s_{p,q}}$ is
defined  as
\begin{align*}
 \| f \|_{\dot B^{s}_{p,q}}
  := \begin{cases}
  \Bigl(\sum_{N>0}   N^{sq}\| P_N f \|_{L^p(\mathbb R^d)}^q \Bigr)^{\frac 1q}, \qquad \text{if $1\le q<\infty$,} \\
  \sup_{N>0} N^{s} \| P_N f \|_{L^p(\mathbb R^d)}, \qquad \text{if $q=\infty$.}
\end{cases}
\end{align*}
The inhomogeneous Besov norm $\|f \|_{B^{s}_{p,q}}$ of $f \in\mathcal S^{\prime}(\mathbb R^d)$ is
\begin{align*}
 \|f \|_{B^s_{p,q}} = \| f \|_p + \| f \|_{\dot B^s_{p,q}}.
\end{align*}

Sometimes the space $B^s_{\infty,\infty}(\mathbb R^d)$ is also called the
Zygmund space denoted by $C_*^s(\mathbb R^d) $. If $s=k+\gamma$, $k\ge 0$ is an integer, and $0<\gamma<1$, then
$C_*^s$ coincides with the H\"older space
$C^{k,\gamma}$ defined earlier.

%\item
In some parts of this paper (see the sections concerning 3D axisymmetric flows), we will need to
use Lorentz spaces. For convenience we recall the definitions here.
For a measurable function $f$, the nonincreasing rearrangement $f^*$ is defined by
\begin{align*}
f^*(t) = \inf \Bigl\{s:\, \operatorname{Leb}(x:\, |f(x)|>s) \le t \Bigr\}.
\end{align*}
For $1\le p,q<\infty$, the Lorentz space $L^{p,q}$ is the set of functions $f$ which satisfy
\begin{align*}
\| f \|_{L^{p,q}}:= \Bigl( \int_0^{\infty} (t^{\frac 1p} f^*(t) )^q \frac {dt} t\Bigr)^{\frac 1q}
<\infty.
\end{align*}
For $q=\infty$, $L^{p,\infty}$ is the set of functions such that
\begin{align*}
\| f \|_{L^{p,\infty}}=\sup_{t>0}t^{\frac 1p} f^*(t) <\infty.
\end{align*}
For $p=\infty$, we set $L^{\infty,q}=L^{\infty}$ for all $1\le q\le \infty$.
Note that $L^{p,p}=L^p$. For $1<p<\infty$, the space $L^{p,q}$ coincides with the real interpolation
from Lebesgue spaces.

Denote by $\op{SO}(d)$ the set of $d\times d$ orthogonal matrices with determinant $1$ (the matrix entries are
real-valued). It is  well-known that the usual Laplacian on $\mathbb R^d$ is invariant under any orthogonal
transformations (or put it slightly differently, it commutes with any orthogonal transformations). This property is highlighted
in the following proposition. For simplicity we just state the case dimension $d=2$.

\begin{prop} \label{prop_A_nonzero}
Let $K_0(x)= \log|x|$ for $0\ne x \in \mathbb R^2$. Fix any integer $m\ge 1$ and denote
$K_m(x) = \partial_{1}^m K_0(x)$. Consider any linear map $T:\, x \to Ax$ with $\det (A)=1$. If
$A\notin SO(2)$, then there exists $x_0 \in \mathbb R^2$ with $|x_0|=1$, such that
\begin{align*}
\Bigl( \Delta( K_m \circ T) \Bigr)(x_0) \ne 0.
\end{align*}
\end{prop}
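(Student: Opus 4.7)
My plan is to reduce the identity $\Delta(K_m \circ T) \equiv 0$ to a purely algebraic condition on $M := A A^T$ and then use the explicit complex formula for $K_m$ to rule out every $M \ne I$. By the chain rule and the symmetry of the Hessian, a direct calculation gives
\[
\Delta(K_m \circ T)(x) = \operatorname{tr}\bigl( M \cdot (D^2 K_m)(Ax) \bigr).
\]
Since $K_0 = \log|x|$ is harmonic on $\mathbb{R}^2 \setminus \{0\}$, so is $K_m = \partial_1^m K_0$, and therefore $\operatorname{tr}(D^2 K_m) \equiv 0$; only the traceless part $\tilde M := M - \tfrac{1}{2}\operatorname{tr}(M)\, I$ of $M$ contributes. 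Writing $\tilde M = \begin{pmatrix} a & b \\ b & -a \end{pmatrix}$, harmonicity then yields
\[
\Delta(K_m \circ T)(x) = 2a\, (\partial_1^2 K_m)(Ax) + 2b\, (\partial_1 \partial_2 K_m)(Ax).
\]

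The next step is to evaluate the two derivatives on the right-hand side using the complex variable $z = y_1 + i y_2$ together with $K_0 = \operatorname{Re} \log z$. Since $\partial_z^j \partial_{\bar z}^k \log|z| = 0$ whenever both $j, k \ge 1$, the iterated real derivatives of $\log|z|$ decouple into a pure $\partial_z$-piece and a pure $\partial_{\bar z}$-piece, and a short induction gives, for $k \ge 1$,
\[
\partial_1^k \log|z| = 2 c_k \operatorname{Re}(z^{-k}), \qquad \partial_1^{k-1}\partial_2 \log|z| = -2 c_k \operatorname{Im}(z^{-k}),
\]
with $c_k = \tfrac{1}{2}(-1)^{k-1}(k-1)!$. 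Substituting $k = m+2$ into the previous display produces
\[
\Delta(K_m \circ T)(x) = C_m \operatorname{Re}\!\bigl[(a + i b)\, w^{-(m+2)}\bigr], \qquad w = (Ax)_1 + i (Ax)_2,
\]
with a nonzero constant $C_m$. This is an angular function in $\arg w$ of pure frequency $m+2$, and it vanishes identically in $x$ if and only if $a + i b = 0$, i.e.\ $\tilde M = 0$, i.e.\ $M$ is a (positive) multiple of $I$. Combined with $\det M = (\det A)^2 = 1$, this forces $M = I$, hence $A \in O(2)$, and together with $\det A = 1$ this gives $A \in SO(2)$, contradicting the hypothesis.

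Consequently, when $A \notin SO(2)$ the function $\Delta(K_m \circ T)$ is a nontrivial spherical harmonic in the direction of $w = A x$, so it fails to vanish at some $y_0 \in \mathbb{R}^2 \setminus \{0\}$. Because the expression is $(-m-2)$-homogeneous in $x$, we may rescale the preimage $A^{-1} y_0$ to lie on the unit circle, yielding the required $x_0$ with $|x_0| = 1$. I expect the only mildly delicate step to be the bookkeeping of the higher partial derivatives of $\log|z|$; the $\partial_z / \partial_{\bar z}$ splitting trivializes this, and the remainder of the argument is linear algebra (the characterization $A \in SO(2) \iff A A^T = I$ together with $\det A = 1$) and homogeneity.
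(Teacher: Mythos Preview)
Your proof is correct and arguably cleaner than the paper's. Both arguments begin with the same chain-rule computation, arriving at
\[
\Delta(K_m\circ T)=(M_{11}-M_{22})\,(\partial_{11}K_m)\circ T+2M_{12}\,(\partial_{12}K_m)\circ T,\qquad M=AA^T,
\]
but they diverge from there. The paper performs a QR decomposition to reduce to upper-triangular $A$ and then runs a two-case analysis (diagonal versus strictly upper-triangular), verifying by hand in each case that the relevant derivative of $K_m$ is not identically zero. You instead pass to the complex variable $z=y_1+iy_2$, use the $\partial_z/\partial_{\bar z}$ splitting to obtain the exact closed form
\[
\Delta(K_m\circ T)(x)=C_m\operatorname{Re}\bigl[(a+ib)\,w^{-(m+2)}\bigr],
\]
and read off immediately that this vanishes identically iff $a+ib=0$ iff $AA^T=I$. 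This is more conceptual: it identifies the expression as a pure angular harmonic of frequency $m+2$, handles all cases at once, and makes the role of the condition $A\in SO(2)$ transparent through the characterization $AA^T=I$, $\det A=1$. The paper's approach is more elementary (no complex analysis) but requires explicit induction on the structure of the real partial derivatives of $\log|x|$.

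One cosmetic remark: in your final paragraph you find a nonvanishing point $y_0$ and then speak of ``rescaling the preimage $A^{-1}y_0$.'' Since $\Delta(K_m\circ T)$ is already a function of $x$, you can simply take $x_0=y_0/|y_0|$ directly by the $(-m-2)$-homogeneity; the detour through $A^{-1}$ is unnecessary (and slightly muddles the two coordinate systems).
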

\begin{rem}
Since $\Delta K_0 (x) =0$ for $x\ne 0$, easy to check that $\Delta ( K_m \circ T) =0$  (on $\mathbb R^2\setminus\{0\}$)
if $T$ is an orthogonal
transformation. The easiest case is of course the identity transformation.
\end{rem}

\begin{proof}[Proof of Proposition \ref{prop_A_nonzero}]
Denote
\begin{align*}
A= \begin{pmatrix}
a_{11} \quad a_{12} \\
a_{21} \quad a_{22}
\end{pmatrix}.
\end{align*}
Since $\det (A)=1$, easy to check that $T$ is bijective on $\mathbb R^2$.

Then a simple computation (by using the fact $\Delta K_m=0$ on $\mathbb R^2\setminus\{0\}$) gives
\begin{align*}
\Delta ( K_m \circ T) &= (a_{11}^2 +a_{12}^2 -a_{21}^2 -a_{22}^2) (\partial_{11} K_m) \circ T \notag \\
& \qquad + 2 (a_{11} a_{21} +a_{12} a_{22}) (\partial_{12} K_m) \circ T,
\end{align*}
where the above identity holds on $\mathbb R^2 \setminus\{0\}$.

By using the QR decomposition and the fact that Laplacian commutes with $Q\in SO(2)$, we can assume
$A$ is upper triangular, i.e. $a_{21}=0$. Since $\det (A)=1$, we can write $a_{11}=\lambda$ ($\lambda \ne 0$),
$a_{22}=\lambda^{-1}$, and thus
\begin{align*}
\Delta( K_m \circ T) &= (\lambda^2 +a_{12}^2 -\lambda^{-2}) (\partial_{11} K_m) \circ T \notag \\
&\qquad + 2 (a_{12} \cdot \lambda^{-1}) (\partial_{12} K_m) \circ T.
\end{align*}

Now discuss two cases.

Case 1: $a_{12}=0$. Then clearly $\lambda \ne 1$ (otherwise $A\in SO(2)$).

By a simple induction on $m$, it is easy to check that
\begin{align*}
\partial_{11} K_m = \frac{c_{m+2} x_1^{m+2} + x_2 g_{m+1} (x)} {|x|^{2(m+2)}},
\end{align*}
where $c_{m+2} \ne 0$ and $g_{m+1}$ is a polynomial homogeneous of degree $m+1$. Clearly $(\partial_{11} K_m)(x_1=1,x_2=0)
\ne 0$.

Case 2: $a_{12} \ne 0$. Then we only need to show that $\partial_{12} K_m$ does not completely vanish on
$\mathbb R^2 \setminus \{0\}$. Assume not, i.e. $\partial_{2} \partial_1 K_m \equiv 0$ on $\mathbb R^2 \setminus\{0\}$.
Then we get $\partial_1 K_m (x) = f(x_1)$, for some function $f$. Taking $x=(x_1,x_2)$, with $x_1$ fixed and $|x_2|\to\infty$,
we get $\partial_1 K_m (x) \to 0$ and hence $f(x_1) \equiv 0$. Therefore $\partial_1 K_m(x)\equiv 0$. This is
clearly impossible, since $\partial_1 K_m$ has the form
\begin{align*}
\partial_{1} K_m = \frac{c_{m+1} x_1^{m+1} + x_2 g_{m} (x)} {|x|^{2(m+1)}},
\end{align*}
where $c_{m+1} \ne 0$, and $g_m$ is a polynomial homogeneous of degree $m$.

Therefore in both cases, we can find nonzero vector $y_0$, such that $$\Bigl( \Delta(K_m \circ T) \Bigr)(y_0) \ne 0.$$
Letting $x_0=y_0/|y_0|$ then gives the result.
\end{proof}

\subsection{Local wellposedness of 3D Euler in $C^{1,\alpha}$, $0<\alpha<1$}
As was already mentioned in the introduction, the purpose of this paper is to show illposedness of
incompressible Euler in integer $C^k$ spaces. As such, it is fairly instructive to review the standard
local wellposedness theory in $C^{1,\alpha}$, $0<\alpha<1$ and understand the limitations of standard methods
 at the endpoints
$\alpha=0$ and $\alpha=1$. In this subsection we give a short review
of such a result in 3D. For a textbook account, one can see for example Chapter 4 of \cite{MB} (or
Chapter 7 of \cite{BCD_book} for a more detailed discussion in favor of Fourier analysis). The wellposedness theory
in \cite{MB} uses the (non-local) particle-trajectory method and assumes the initial vorticity $\omega_0$ is compactly
supported. Here we shall assume $u_0 \in C^{1,\alpha}(\mathbb R^3)$ and\footnote{In view of uniqueness results,
some form of integrability of $\omega_0$ is usually needed. Typically one assumes $\omega_0 \in L^p$ for some $1<p<d$.
Alternatively one can pose assumptions (at the level of velocity) for $u_0$ or make use of Besov spaces such as in \cite{BCD_book}. For example in \cite{PP04}
one has uniqueness
of solutions (for velocity) in $B^1_{\infty, 1}(\mathbb R^d)$.
 Here for simplicity of presentation
we just add the (natural) assumption $u_0 \in L^2(\mathbb R^3)$. Of course this assumption can be dropped.
}
 $u_0 \in L^2(\mathbb R^3)$. The presentation below is just a variation of the theme of standard arguments.
 Therefore it will be somewhat sketchy (but we hope it is self-contained and still conveys the main ideas).
In the whole proof, there are only two technical points: 1) Estimate of H\"older norm under a bi-Lipschitz
map; 2) Boundedness of Riesz-type singular integral operators in H\"older spaces.

We begin with a rather simple lemma. Note that
it holds for any $0\le \alpha \le 1$.

\begin{lem} \label{lem_preli_1}
Let $0\le \alpha \le 1$. Let $\phi:\,\mathbb R^d\to \mathbb R^d$ be
a bi-Lipschitz map such that $\det(D\phi)\equiv 1$. Then for any $f
\in C^{\alpha}(\mathbb R^d)$,  we have
\begin{align}
& \| f \circ \phi \|_{C^{\alpha}} \le
\|f \|_{C^{\alpha}} (1+\|  D\phi\|_{\infty}^{\alpha}), \label{lem_preli_1_e1} \\
& \| f \|_{C^{\alpha}} \lesssim_d \|f \circ \phi \|_{C^{\alpha}}
(1+\|D \phi\|_{\infty}^{(d-1)\alpha}).
\label{lem_preli_1_e2}
\end{align}
\end{lem}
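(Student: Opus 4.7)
The plan is to prove the two bounds in sequence, with the first being essentially direct from the definitions and the second reducing to the first via an adjugate/Cramer's-rule trick that exploits the volume-preserving hypothesis $\det(D\phi)\equiv 1$.

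For the first estimate \eqref{lem_preli_1_e1}, I would simply unwind the $C^{\alpha}$ norm. Since $\phi$ is bi-Lipschitz, it is a bijection of $\mathbb R^d$, so $\|f\circ\phi\|_{L^\infty}=\|f\|_{L^\infty}$. For the Hölder seminorm, I would use the Lipschitz bound $|\phi(x)-\phi(y)|\le \|D\phi\|_\infty|x-y|$ (valid via Rademacher since $\phi$ is Lipschitz, with $\|D\phi\|_\infty$ interpreted as the a.e.\ bound on the weak derivative), and estimate
\[
\frac{|f(\phi(x))-f(\phi(y))|}{|x-y|^\alpha}\le \|f\|_{\dot C^\alpha}\frac{|\phi(x)-\phi(y)|^\alpha}{|x-y|^\alpha}\le \|f\|_{\dot C^\alpha}\|D\phi\|_\infty^\alpha,
\]
and then combine with the $L^\infty$ identity to produce \eqref{lem_preli_1_e1}.

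For the second estimate \eqref{lem_preli_1_e2}, the naive move is to apply \eqref{lem_preli_1_e1} with $\phi$ replaced by $\phi^{-1}$ and $f$ replaced by $f\circ\phi$, which gives $\|f\|_{C^\alpha}\le \|f\circ\phi\|_{C^\alpha}(1+\|D\phi^{-1}\|_\infty^\alpha)$. The whole point is then to convert the undesired $\|D\phi^{-1}\|_\infty$ into a power of $\|D\phi\|_\infty$, and this is where the volume-preservation enters. Using the inverse function relation $D\phi^{-1}=(D\phi)^{-1}\circ\phi^{-1}$ together with Cramer's rule,
\[
(D\phi)^{-1}=\frac{\op{adj}(D\phi)}{\det(D\phi)}=\op{adj}(D\phi),
\]
since $\det(D\phi)\equiv 1$. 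The entries of $\op{adj}(D\phi)$ are $(d-1)\times(d-1)$ minors of $D\phi$, so they are bounded (pointwise) by $C_d\|D\phi\|_\infty^{d-1}$. This yields $\|D\phi^{-1}\|_\infty\le C_d\|D\phi\|_\infty^{d-1}$, and substituting into the naive bound produces \eqref{lem_preli_1_e2}.

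There is no real obstacle here; the only minor technical point is that bi-Lipschitzness of $\phi$ only gives differentiability almost everywhere, so one must phrase the chain rule / mean value argument using Rademacher's theorem and interpret all $\|D\phi\|_\infty$ bounds in the essential-supremum sense. Also one should note that $\phi^{-1}$ inherits the hypotheses (it is bi-Lipschitz and $\det(D\phi^{-1})\equiv 1$ a.e.\ by the chain rule), which legitimizes applying \eqref{lem_preli_1_e1} with the roles of $\phi$ and $\phi^{-1}$ exchanged.
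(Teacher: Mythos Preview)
Your proof is correct and follows essentially the same route as the paper: direct estimate for \eqref{lem_preli_1_e1}, then reduce \eqref{lem_preli_1_e2} to the first inequality applied to $\phi^{-1}$ and use $\det(D\phi)\equiv 1$ together with the adjugate formula to bound $\|D\phi^{-1}\|_\infty\lesssim_d\|D\phi\|_\infty^{d-1}$. The paper does not explicitly invoke Rademacher, but your remark on that point is a fair technical caveat.
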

\begin{proof}[Proof of Lemma \ref{lem_preli_1}]
The inequalities
obviously hold for $\alpha=0$.
So assume $0<\alpha\le 1$. Clearly \eqref{lem_preli_1_e1}
just follows from the
estimates:
\begin{align*}
&\| f \circ \phi\|_{\infty} \le \|f \|_{\infty}, \\
& \| f \circ \phi \|_{\dot C^{\alpha}}
\le \| f \|_{\dot C^{\alpha}} \| D
\phi \|_{\infty}^{\alpha}.
\end{align*}
For \eqref{lem_preli_1_e2}, one can use the identity
$f= f \circ \phi \circ \phi^{-1}$ to get,
\begin{align*}
\| f \|_{\dot C^{\alpha}}
\le \| f \circ \phi \|_{\dot C^{\alpha}}
\| D ( \phi^{-1} ) \|_{\infty}^{\alpha}.
\end{align*}
By using $\phi^{-1} \circ \phi = \op{Id}$ and differentiation, we have $\| D (\phi^{-1} ) \|_{\infty}
\le \| (D \phi)^{-1} \|_{\infty}$ (here $(D\phi)^{-1}$
is the matrix inverse of $D\phi$). Since $\det (D\phi)
\equiv 1$, we get (below $\op{adj}$ denotes the adjugate
matrix),
\begin{align*}
(D\phi)^{-1} = \op{adj}\bigl( (D\phi) \bigr).
\end{align*}
Thus
\begin{align*}
\| (D \phi)^{-1} \|_{\infty} \lesssim_d
\| D \phi \|_{\infty}^{d-1}.
\end{align*}
Therefore \eqref{lem_preli_1_e2} follows.

\end{proof}

\begin{thm}[Local wellposedness of 3D Euler in $C^{1,\alpha}$, $0<\alpha<1$] \label{preli_thm_3D_1}
Let $0<\alpha<1$. Consider the 3D Euler equation \eqref{V_usual}. Assume the initial velocity $u_0
\in L^2(\mathbb R^3) \cap C^{1,\alpha}(\mathbb R^3)$.
There there exist $T_0=T_0(\|u_0\|_{C^{1,\alpha}(\mathbb R^3)}+\|u_0\|_{L^2(\mathbb R^3)} )>0$ and a unique local solution
$u\in L^{\infty}_t([0,T_0];C^{1,\alpha}_x(\mathbb R^3)) \cap C_t^0([0,T_0];L_x^2(\mathbb R^3))$.
Also $\nabla p \in L^{\infty}_t([0,T_0];C^{1,\alpha}_x(\mathbb R^3)) \cap C_t^0([0,T_0];L_x^2(\mathbb R^3))$.
\end{thm}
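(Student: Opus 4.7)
The plan is to use the vorticity formulation combined with the particle-trajectory method, in the spirit of Chapter 4 of \cite{MB}. Set $\omega=\nabla\times u$, so that $\partial_t\omega+(u\cdot\nabla)\omega=(\omega\cdot\nabla)u$, and recover velocity via $u=-\Delta^{-1}\nabla\times\omega$. The key analytic fact is that for $0<\alpha<1$ the Riesz-type operator $\omega\mapsto Du$ is bounded on $C^\alpha$ (a property that fails at both endpoints $\alpha=0$ and $\alpha=1$), so combined with conservation of the $L^2$ energy,
\begin{align*}
\|u(t)\|_{C^{1,\alpha}}\lesssim \|u_0\|_{L^2}+\|\omega(t)\|_{C^\alpha}.
\end{align*}

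I would then set up a Picard iteration as follows. Define $u^{(0)}\equiv u_0$ and inductively let $\phi^{(n)}_t$ be the flow map of $u^{(n-1)}$, transport the vorticity via the Cauchy formula $\omega^{(n)}(t,\phi^{(n)}_t(x))=D\phi^{(n)}_t(x)\,\omega_0(x)$, and recover $u^{(n)}$ via Biot--Savart. Differentiating $\partial_t\phi^{(n)}_t=u^{(n-1)}(t,\phi^{(n)}_t)$ along with the Biot--Savart bound above gives the Gronwall-type estimate
\begin{align*}
\|D\phi^{(n)}_t\|_\infty\le \exp\Bigl(C\int_0^t(\|u_0\|_{L^2}+\|\omega^{(n-1)}(s)\|_{C^\alpha})\,ds\Bigr),
\end{align*}
and applying Lemma \ref{lem_preli_1} to the Cauchy formula (using that $\det D\phi^{(n)}_t\equiv 1$ from $\nabla\cdot u^{(n-1)}=0$) produces a corresponding bound on $\|\omega^{(n)}(t)\|_{C^\alpha}$ in terms of $\|D\phi^{(n)}_t\|_\infty$, $\|D\phi^{(n)}_t\|_{C^\alpha}$, and $\|\omega_0\|_{C^\alpha}$. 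A standard bootstrap closes these inequalities uniformly in $n$ on a short interval $[0,T_0]$ with $T_0$ depending only on $\|u_0\|_{L^2}+\|u_0\|_{C^{1,\alpha}}$. The iterates are then shown to form a Cauchy sequence in the weaker norm $C^0_tL^2_x$ (via an energy estimate on $u^{(n+1)}-u^{(n)}$); the uniform $C^{1,\alpha}$ bound together with Arzel\`a--Ascoli passes to the limit, yielding a solution $u\in L^\infty_tC^{1,\alpha}_x\cap C^0_tL^2_x$. The pressure estimate follows from solving $-\Delta p=\nabla\cdot((u\cdot\nabla)u)$ and invoking the same $C^\alpha$ Riesz bounds. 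Uniqueness is obtained by an $L^2$ energy estimate for the difference of two solutions, using the Lipschitz bound inherent in $C^{1,\alpha}$.

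The main obstacle is the propagation of $C^\alpha$ regularity of $\omega$ along the flow: the stretching term $(\omega\cdot\nabla)u$ in 3D would degrade regularity unless $\phi_t$ is bi-Lipschitz with quantitatively controlled constants. Lemma \ref{lem_preli_1} applied to $\omega_t=(D\phi_t\,\omega_0)\circ\phi_t^{-1}$ is exactly what enables the closure, crucially exploiting $\det D\phi_t\equiv 1$. At both endpoints this mechanism collapses: at $\alpha=0$ one cannot bound $\|Du\|_\infty$ by $\|\omega\|_\infty$, and at $\alpha=1$ Lipschitz regularity is not preserved by the singular integrals in Biot--Savart, which is precisely the obstruction that the main results of this paper turn into genuine illposedness.
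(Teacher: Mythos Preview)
Your proposal is correct and shares the paper's core ingredients: the vorticity formulation along Lagrangian trajectories, Lemma~\ref{lem_preli_1} for transporting H\"older regularity under a volume-preserving bi-Lipschitz map, the Biot--Savart estimate $\|Du\|_{C^\alpha}\lesssim\|\omega\|_{C^\alpha}+\|u\|_2$ (valid precisely for $0<\alpha<1$), and an $L^2$ contraction/uniqueness argument. The difference is in the approximation scheme. You run a Picard iteration at the $C^{1,\alpha}$ level using the Cauchy formula $\omega^{(n)}=(D\phi^{(n)}_t\,\omega_0)\circ(\phi^{(n)}_t)^{-1}$; this is the \cite{MB} route and obliges you to propagate $\|D\phi^{(n)}_t\|_{C^\alpha}$ in addition to $\|D\phi^{(n)}_t\|_\infty$. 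The paper instead mollifies the data ($u_0^{(N)}=P_{\le N}u_0$), invokes the ready-made $H^\infty$ local theory to produce genuine smooth solutions, and derives the a~priori $C^\alpha$ bound on vorticity from the integral representation $W(t,\Phi(t,x))=W_0(x)+\int_0^t(W\cdot\nabla U)(s,\Phi(s,x))\,ds$; this needs only $\|D\Phi\|_\infty$ via Lemma~\ref{lem_preli_1}, so the bookkeeping is lighter. Both schemes close on the same time interval and pass to the limit by the same $L^2$ Cauchy argument. Your discussion of the endpoint obstructions is accurate and matches the paper's remark following the proof.
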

\begin{rem}
It can be shown that $u(t)$ and $\nabla p(t)$ is weakly continuous
in time with values in $C_x^{1,\alpha}$. Of course we should point
it out that the  lifespan of the local solution is governed by
critical quantities such as $\int_0^t \|(Du)(s,\cdot)\|_{\infty} ds
$, $\int_0^t \|\omega(s,\cdot)\|_{\infty}ds $ and so on in the
spirit of the usual Beale-Kato-Majda criteria.
\end{rem}

\begin{proof}[Proof of Theorem \ref{preli_thm_3D_1}]
We proceed in several steps.

\texttt{Step 1}: Uniqueness. The uniqueness proof actually (almost) contains the contraction argument needed later. Hence we present it first. Let $u^{A}$ and
$u^B$ be two solutions corresponding to the same initial data $u_0$. Set $\eta= u^{A}-u^B$, and we have
\begin{align*}
\begin{cases}
\partial_t \eta + (\eta\cdot \nabla )u^A + (u^B\cdot
\nabla )\eta
=-\nabla(p^A-p^B), \\
\eta(0)=0.
\end{cases}
\end{align*}
Since $\|\nabla u^A\|_{\infty} \lesssim 1$,
an $L^2$ estimate
 on $\eta$ then gives
$\eta\equiv 0$.

\texttt{Step 2}: A priori estimate. Assume $W$ is a smooth
solution to
\begin{align*}
\begin{cases}
\partial_t W + (U\cdot \nabla )W = (W\cdot \nabla) U,\\
U=-\Delta^{-1} \nabla \times W,\\
W\Bigr|_{t=0}=W_0 \in H_x^{\infty}(\mathbb R^3) =
\bigcap_{l=0}^{\infty} H_x^l(\mathbb R^3).
\end{cases}
\end{align*}
Define the forward characteristics
\begin{align*}
\begin{cases}
\partial_t  \Phi(t,x) = U(t, \Phi(t,x)),\\
\Phi(0,x)=x.
\end{cases}
\end{align*}
Then  easy to check that
\begin{align} \label{lem_preli_1_e20a}
\| D \Phi(t) \|_{\infty} \le e^{\int_0^t \|DU(s)\|_{\infty} ds }.
\end{align}
On the characteristics,
\begin{align*}
W(t,\Phi(t,x))= W_0(x)+
\int_0^t (W\cdot \nabla U)(s, \Phi(s,x)) ds.
\end{align*}
By Lemma \ref{lem_preli_1},
\begin{align}
\|W(t)\|_{C^{\alpha}}
& \le (1+\|D\Phi(t)\|_{\infty}^{2\alpha})
\Bigl( \|W_0\|_{C^{\alpha}}  \notag \\
& \qquad + \int_0^t \|W\cdot \nabla U\|_{C^{\alpha}}
(1+\|D \Phi(s) \|_{\infty}^{2\alpha}) ds \Bigr).
\label{lem_preli_1_e20b}
\end{align}
By splitting into low and high frequencies and
the assumption $0<\alpha<1$, easy to
check that
\begin{align}
\|D U(t)\|_{C^{\alpha}} &\lesssim
\|W(t) \|_{C^{\alpha}} + \|U(t)\|_2 \notag \\
& \lesssim \|W(t)\|_{C^{\alpha}} + \|U_0\|_2.
\label{lem_preli_1_e20c}
\end{align}
Define
\begin{align*}
A(t) = e^{\int_0^t (3+\|W(s) \|_{C^{\alpha}} )^2 ds }.
\end{align*}
By \eqref{lem_preli_1_e20a} and \eqref{lem_preli_1_e20c}, one has
$\|D \Phi(t)\|_{\infty} \lesssim  A(t)$ (here we suppress the dependence of constants on $\|U_0\|_2$).
From \eqref{lem_preli_1_e20b}, we obtain
\begin{align*}
A^{\prime}(t) &\lesssim A(t) A(t)^{4\alpha}
\Bigl( \|W_0\|_{C^{\alpha}}  \notag \\
&\quad + \int_0^t (\|U_0\|_2+\|W\|_{C^{\alpha}})^2
ds (1+A(t)^{2\alpha}) \Bigr)^2 \notag \\
& \lesssim A(t)^{1+4\alpha}(
\|W_0\|_{C^{\alpha}}^2 +A(t)^{4\alpha+1}).
\end{align*}
Clearly this implies for some $T_0=T_0(\|W_0\|_2
+\|W_0\|_{C^{\alpha}})$, we have the estimate
\begin{align*}
\|W(t)\|_{C^{\alpha}} \lesssim \|W_0\|_{C^{\alpha}}
+\|W_0\|_2,\quad \forall
\, 0\le t\le T_0.
\end{align*}

\texttt{Step 3}: Mollification and contraction.
For each dyadic $N\ge 2$, let $u_0^{(N)}=P_{\le N} u_0$ ($P_{\le N}$ is the Littlewood-Paley projector
projected to frequency $|\xi|\lesssim N$)
and define $u^{(N)}$ to be the corresponding solution to the Euler equation.\footnote{Here we appeal to
the local wellposedness theory in Sobolev $H^m$ spaces. Easy to check that $u_0^{(N)} \in H^{\infty}_x$ and $u^{(N)}(t)$ is smooth.}  Denote $\omega^{(N)}
=\nabla \times u^{(N)}$. By using the estimates
in Step 2, we easily deduce that for some
$T_0=T_0(\|u_0\|_{C^{1,\alpha}}+ \|u_0\|_2)>0$,
$\omega^{(N)}$ have at least life span $[0,T_0]$ on which
\begin{align*}
\sup_{N\ge 2} \sup_{0\le t\le T_0} \|\omega^{(N)}
(t) \|_{C^{\alpha}} \lesssim \|u_0\|_{C^{1,\alpha}}
+ \| u_0\|_2.
\end{align*}
With the uniform estimates in hand, we can then perform a contraction argument
in $L_x^2$ (similar to Step 1) and show that $u^{(N)}$
is Cauchy in $C_t^0([0,T_0];L_x^2(\mathbb R^3))$ and converges to the limit solution $u$. The H\"older regularity of $u$ and $\nabla p$
can be easily checked (one just takes any two points
$x\ne y$ and send $N$ to infinity as usual). We omit
routine details.
\end{proof}

\begin{rem}
As is clear from the above proof, the assumption
$0<\alpha<1$ is only used in \eqref{lem_preli_1_e20c},
where we need to bound $DU$ in terms of the
vorticity $W$. This is a manifestation of the
unboundedness of Riesz-type operators in Lipschitz
spaces.
\end{rem}

\section{Estimate of the flow map for the $2D$ $C^1$ case}
Let
\begin{align*}
 \begin{cases}
  \partial_t \phi(t,x) = u(t,\phi(t,x)), \\
  \phi(0,x)=x,
 \end{cases}
\end{align*}
where $u$ is a smooth solution to
\begin{align*}
 \begin{cases}
  \partial_t \omega + (u\cdot \nabla) \omega=0, \\
  u=\Delta^{-1} \nabla^{\perp} \omega,\\
  \omega \Bigr|_{t=0} = \omega_0.
 \end{cases}
\end{align*}
Denote the initial velocity $u_0=\Delta^{-1} \nabla^{\perp} \omega_0$.
Assume on the time interval $[0,T_0]$, $T_0\le 1$, we have
\begin{itemize}
 \item $\max_{0\le t\le T_0} \|Du(t,\cdot)\|_{\infty} =A\ge 1$;
 \item $u(t,0)\equiv 0$;
 \item $\|\omega_0 \|_{\infty} +\|\omega_0\|_1\lesssim 1$.
\end{itemize}

Then

\begin{lem}[Rough Control of the flow map] \label{lem_ga10}
 For any $\lambda \ge 3$, $0\le t \le T_0$, $|x|\lesssim 1$, we have
 \begin{align*}
  &  |\lambda \phi(t,\frac 1 {\lambda} x) - x - t \lambda u_0(\frac 1 {\lambda } x ) | \notag \\
  \lesssim & \; A^2 t^2 e^{tA}  \log(3+ \|u_0\|_{H^3}).
 \end{align*}
\end{lem}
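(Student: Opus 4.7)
The plan is to estimate the displacement directly from the integrated characteristic ODE. Writing
\[
\lambda \phi(t, x/\lambda) - x - t\lambda u_0(x/\lambda) = \lambda \int_0^t \bigl[u(s, \phi(s, x/\lambda)) - u_0(x/\lambda)\bigr] ds,
\]
I split the integrand as $\mathrm{A}(s) + \mathrm{B}(s)$, where $\mathrm{A}(s) = u(s, \phi(s, x/\lambda)) - u(s, x/\lambda)$ captures the displacement by the flow, and $\mathrm{B}(s) = u(s, x/\lambda) - u_0(x/\lambda)$ captures the time-change of the velocity at the rescaled point.

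For $\mathrm{A}$, I use $\|Du\|_\infty \le A$ together with $u(s,0) \equiv 0$. Gronwall applied to the characteristic ODE gives $|\phi(s, x/\lambda)| \le (|x|/\lambda) e^{As}$, and hence $|\phi(s, x/\lambda) - x/\lambda| \le (|x|/\lambda)(e^{As}-1)$. Therefore $|\lambda \int_0^t \mathrm{A}(s)\, ds| \le |x|(e^{At}-1-At) \lesssim A^2 t^2 e^{At}$.

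For $\mathrm{B}$, I use the momentum equation $\partial_\tau u = -(u\cdot\nabla)u - \nabla p$ to write $\mathrm{B}(s) = -\int_0^s [(u\cdot\nabla)u + \nabla p](\tau, x/\lambda)\, d\tau$. The convective contribution satisfies $|(u\cdot\nabla)u(\tau, x/\lambda)| \le |u(\tau, x/\lambda)|\, \|Du\|_\infty \le A^2 |x|/\lambda$ (again exploiting $u(\tau, 0) = 0$), and hence contributes at most $\lesssim A^2 t^2$ after the two time-integrations and the overall factor of $\lambda$. For the pressure term, the key observation is that $u(\tau, 0) \equiv 0$ combined with the momentum equation evaluated at the origin forces $\nabla p(\tau, 0) = 0$. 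The mean value theorem then yields $|\nabla p(\tau, x/\lambda)| \le (|x|/\lambda)\, \|D^2 p(\tau)\|_\infty$, so after Fubini the pressure contribution to $\lambda \int_0^t \mathrm{B}(s)\, ds$ is bounded by $|x| \int_0^t (t-\tau)\, \|D^2 p(\tau)\|_\infty\, d\tau$.

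To bound $\|D^2 p(\tau)\|_\infty$, I would apply a Kozono--Taniuchi-type logarithmic interpolation. Since $-\Delta p = \partial_i u_j \partial_j u_i$, Calder\'on--Zygmund theory gives $\|D^2 p\|_{\mathrm{BMO}} \lesssim A^2$, while Moser's inequality gives $\|D^2 p\|_{H^2} \lesssim A\|u\|_{H^3}$; combined with the standard 2D energy estimate $\|u(\tau)\|_{H^3} \lesssim \|u_0\|_{H^3} e^{cA\tau}$, this produces $\|D^2 p(\tau)\|_\infty \lesssim A^2 \log(3+\|u_0\|_{H^3}) + A^3 \tau$. The $(t-\tau)$-integral contributes at most $A^2 t^2 \log(3+\|u_0\|_{H^3}) + A^3 t^3$, with the second term absorbed into $A^2 t^2 e^{At}$ via $At \le e^{At}$. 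Summing all contributions gives the claim. The main obstacle is the pressure term in $\mathrm{B}$: without the vanishing $\nabla p(\tau, 0) = 0$ one would pick up a stray factor of $\lambda$, and the logarithmic Sobolev interpolation is essential to avoid polynomial dependence on $\|u_0\|_{H^3}$.
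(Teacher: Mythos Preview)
Your proof is correct, and the core analytic ingredient---the vanishing $\nabla p(\tau,0)=0$, the mean-value step, and the log-interpolation bound $\|D^2 p\|_\infty \lesssim A^2 \log(3+\|u\|_{H^3})$---is exactly what the paper uses. The difference lies in the decomposition. The paper works in Lagrangian form: it writes
\[
\phi(t,y)-y-tu_0(y)=\int_0^t (t-\tau)\,\partial_\tau\bigl(u(\tau,\phi(\tau,y))\bigr)\,d\tau
= -\int_0^t (t-\tau)\,(\nabla p)(\tau,\phi(\tau,y))\,d\tau,
\]
so the convective term never appears and one only needs to bound $\lambda(\nabla p)(\tau,\phi(\tau,x/\lambda))$, picking up the $e^{tA}$ factor from $|\phi(\tau,x/\lambda)|\le |x|\lambda^{-1}e^{\tau A}$. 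Your Eulerian splitting $\mathrm{A}+\mathrm{B}$ instead separates out the spatial displacement (which supplies the $e^{tA}$) and treats the convective and pressure pieces of $\partial_\tau u$ at the \emph{fixed} rescaled point $x/\lambda$. This costs you the extra (but harmless) convective estimate and the $A^3 t^3$ correction from $\|u(\tau)\|_{H^3}\lesssim \|u_0\|_{H^3}e^{cA\tau}$, both of which absorb into the final bound. The paper's route is slightly more economical; yours has the minor advantage that the pressure is evaluated at a fixed Eulerian point rather than along the trajectory, which makes the later refinement (Lemma~\ref{lem_ga20}) a bit more direct since that lemma subtracts precisely the pressure term at $x/\lambda$.
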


\begin{proof}[Proof of Lemma \ref{lem_ga10}]
By using the definition of the characteristic line, we have
\begin{align*}
 \phi(t,x) - x & = \int_0^t u(s,\phi(s,x)) ds \notag \\
 & = u_0(x) t + \int_0^t \int_0^s \partial_{\tau} (u(\tau,\phi(\tau,x))) d\tau ds \notag \\
 & = u_0(x) t + \int_0^t (t-\tau) \partial_{\tau} ( u(\tau, \phi(\tau,x) ) ) d\tau.
\end{align*}

Now since
\begin{align*}
 \partial_t u + (u\cdot \nabla) u = -\nabla p,
\end{align*}
we get
\begin{align*}
 \partial_t \bigl( u(t,\phi(t,x)) \bigr) = -(\nabla p)(t,\phi(t,x)).
\end{align*}
We now only need to estimate
\begin{align*}
 \| \lambda (\nabla p)(t, \phi(t,\frac x {\lambda})) \|_{L_x^{\infty}(|x|\lesssim 1)}.
\end{align*}
But this follows from Lemma \ref{lem_ga10a} below.

\end{proof}

 \begin{lem} \label{lem_ga10a}
 For all $\lambda \ge 3$, $0\le t \le T_0$,
  \begin{align*}
  \| \lambda (\nabla p) (t, \phi(t,\frac x {\lambda})) \|_{L_x^{\infty}(|x|\lesssim 1)}
   \lesssim A^2 e^{tA } \log(3+\|u_0\|_{H^3}).
  \end{align*}
 \end{lem}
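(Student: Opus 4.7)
My plan rests on two structural observations. First, since $u(t,0)\equiv 0$ on $[0,T_0]$, the origin is a stagnation point: $\phi(t,0)\equiv 0$ and, by evaluating the momentum equation at $x=0$, also $\nabla p(t,0)\equiv 0$. Second, by Gronwall applied to the characteristic ODE using the Lipschitz bound $\|Du\|_\infty\le A$, the point $y:=\phi(t,x/\lambda)$ satisfies $|y|\le e^{tA}/\lambda$ for $|x|\lesssim 1$. Combining these with the fundamental theorem of calculus,
\[
\lambda\,|\nabla p(t,y)|=\lambda\,|\nabla p(t,y)-\nabla p(t,0)|\le \lambda\,|y|\,\|D\nabla p(t,\cdot)\|_\infty\le e^{tA}\,\|D\nabla p(t,\cdot)\|_\infty.
\]
The problem therefore reduces to bounding $\|D\nabla p(t,\cdot)\|_\infty$ by a multiple of $A^2\log(3+\|u_0\|_{H^3})$, up to constants and subdominant factors that can be absorbed into the outer $e^{tA}$.

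For this, I use $-\Delta p=Q$ with $Q=\sum_{i,j}\partial_i u_j\,\partial_j u_i$, so $D\nabla p=RQ$ where $R$ is a matrix of zeroth-order Calder\'on--Zygmund operators $\partial_i\partial_j(-\Delta)^{-1}$. The key ingredient is the log-interpolation (Brezis--Wainger / Kato--Beale--Majda type) estimate
\[
\|RQ\|_\infty\lesssim \|Q\|_2+\|Q\|_\infty\bigl(1+\log^+(\|Q\|_{H^s}/\|Q\|_\infty)\bigr),\qquad s>1,
\]
proved via a Littlewood--Paley split $Q=P_{\le N}Q+P_{>N}Q$ and optimization in $N$; the $L^2$ term handles the $L^\infty$--BMO defect of $R$. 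Using $\|\omega(t)\|_2=\|\omega_0\|_2\lesssim 1$ (from transport plus $\|\omega_0\|_\infty+\|\omega_0\|_1\lesssim 1$), the product rule, and the standard $H^3$ energy inequality on $[0,T_0]$,
\[
\|u(t)\|_{H^3}\le \|u_0\|_{H^3}\exp\Bigl(C\int_0^t\|Du\|_\infty\,ds\Bigr)\le \|u_0\|_{H^3}e^{CtA},
\]
one bounds $\|Q\|_\infty\le 2A^2$, $\|Q\|_2\lesssim A\|\omega\|_2\lesssim A$, and $\|Q\|_{H^2}\lesssim A\,\|u\|_{H^3}\lesssim A\,\|u_0\|_{H^3}e^{CtA}$. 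Plugging in yields $\|D\nabla p\|_\infty\lesssim A^2\bigl(\log(3+\|u_0\|_{H^3})+tA\bigr)$, and since $tA\le e^{tA}$ the last summand is absorbed into the outer $e^{tA}$, completing the estimate.

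The main obstacle is exactly this $L^\infty$ estimate for the zeroth-order operator $RQ$. A naive direct kernel argument, writing $\nabla p(t,y)-\nabla p(t,0)=\int(K(y-z)-K(-z))Q(t,z)\,dz$ and splitting over $\{|z|\le 2|y|\}$, $\{2|y|\le|z|\le 1\}$, $\{|z|\ge 1\}$, produces a factor of $\log(1/|y|)\sim\log\lambda$. Since the conclusion must be uniform for all $\lambda\ge 3$, such a $\log\lambda$ would be fatal: it must be traded for $\log\|u_0\|_{H^3}$, and the log-interpolation estimate above is precisely the mechanism that exchanges scale information for higher-regularity information via the Littlewood--Paley cutoff $N$. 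Once that is granted, the remainder is routine energy-type bookkeeping.
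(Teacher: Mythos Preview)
Your proposal is correct and follows essentially the same route as the paper: both use $(\nabla p)(t,0)=0$, the Gronwall bound $\lambda|\phi(t,x/\lambda)|\le e^{tA}$, and the log-interpolation estimate $\|D^2 p\|_\infty\lesssim A^2\log(3+\|u\|_{H^3})$ applied to $D^2 p=\mathcal R_{ij}(\partial u\,\partial u)$. The only cosmetic difference is that you track the $CtA$ correction coming from $\|u(t)\|_{H^3}\le \|u_0\|_{H^3}e^{CtA}$, whereas the paper simply writes $\log(3+\|u\|_{H^3})\lesssim\log(3+\|u_0\|_{H^3})$; your absorption of this term via ``$tA\le e^{tA}$'' strictly speaking turns $e^{tA}$ into $e^{CtA}$, but since the implicit constants in the lemma are unspecified and the downstream applications only use $tA\ll 1$, this is harmless.
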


\begin{proof}[Proof of Lemma \ref{lem_ga10a}]
 Easy to check $(\nabla p)(t,0) \equiv 0$ (this follows from $u(t,0)\equiv 0$). Then
 \begin{align}
  | (\nabla p(t, \phi(t,\frac x {\lambda})) | & \le | (\nabla p)(t,\phi(t,\frac x {\lambda})) -(\nabla p)(t,0)| \notag \\
  & \lesssim \| D^2 p(t,\cdot)\|_{L_x^{\infty}} \cdot |\phi(t,\frac x {\lambda})|. \label{ga100_1}
 \end{align}
Observe
\begin{align*}
 \lambda |\phi(t,\frac x {\lambda})| = \lambda |\phi(t,\frac x{\lambda}) -\phi(t,0)| \le \|D\phi\|_{\infty} |x|.
\end{align*}
Since (below $\operatorname{Id}$ denotes the identity matrix)
\begin{align} \label{dphi_eq1}
\begin{cases}
\partial_t D\phi = Du D\phi, \\
 D\phi(0,x)=\operatorname{Id},
 \end{cases}
\end{align}
easy to prove that
\begin{align*}
 \| D \phi(t)\|_{\infty} \le e^{t A}.
\end{align*}
Hence
\begin{align} \label{ga100_2}
 |\lambda \phi(t,\frac x {\lambda}) |_{L_x^{\infty}(|x|\le 1)} \le e^{tA}.
\end{align}

Now we estimate $\|D^2 p(t)\|_{\infty}$. Recall
\begin{align*}
 - \Delta p = \nabla \cdot ( (u\cdot \nabla) u ) = \partial_j u_k \partial_k u_j = O((\partial u)^2).
\end{align*}
Obviously (here $\mathcal R_{ij}$ denotes Riesz transform)
\begin{align*}
 D^2 p = \mathcal R_{ij} ( O((\partial u)^2)).
\end{align*}
Since by assumption $\| \partial u \|_{\infty} \lesssim A$, the usual log-interpolation inequality then gives
\begin{align}
 \| D^2 p\|_{\infty} & \lesssim A^2 \log(3+ \|u\|_{H^3}) \notag \\
 & \lesssim A^2 \log (3+ \|u_0\|_{H^3}). \label{ga100_3}
\end{align}
Plugging \eqref{ga100_3} and \eqref{ga100_2} into \eqref{ga100_1} then gives the result.

\end{proof}

\begin{lem}[Better Control of the flow map] \label{lem_ga20}
 For any $\lambda \ge 3$, $0\le t \le T_0$, $|x|\lesssim 1$, we have
 \begin{align*}
  &  |\lambda \phi(t,\frac 1 {\lambda} x) - x - t \lambda u_0(\frac 1 {\lambda } x )-\lambda \int_0^t (t-\tau)
  (\nabla p)(\tau, \frac {x} \lambda) d\tau | \notag \\
  \lesssim & t^3 A^3 e^{tA}\log(3+\|u_0\|_{H^3})
 \end{align*}
\end{lem}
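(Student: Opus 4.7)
The plan is to sharpen the reasoning of Lemma \ref{lem_ga10} by replacing the pressure along the flow with the pressure at the initial position and estimating the resulting commutator term. As in that proof, starting from $\partial_t u + (u\cdot \nabla) u = -\nabla p$ and integrating twice along the characteristic line yields
\begin{align*}
\phi(t,y) - y = t\,u_0(y) - \int_0^t (t-\tau)\,(\nabla p)(\tau, \phi(\tau,y))\,d\tau.
\end{align*}
Setting $y=x/\lambda$ and multiplying by $\lambda$, the quantity inside the absolute value in the statement is exactly
\begin{align*}
-\lambda \int_0^t (t-\tau) \Bigl[(\nabla p)(\tau, \phi(\tau, x/\lambda)) - (\nabla p)(\tau, x/\lambda)\Bigr] d\tau.
\end{align*}

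The first step is then to bound the integrand by $\|D^2 p(\tau,\cdot)\|_\infty \cdot |\phi(\tau,x/\lambda) - x/\lambda|$ via the fundamental theorem of calculus, and to reuse the bound $\|D^2 p(\tau)\|_\infty \lesssim A^2 \log(3+\|u_0\|_{H^3})$ already established in Lemma \ref{lem_ga10a}. The crucial new ingredient is a sharper estimate on $|\phi(\tau, x/\lambda) - x/\lambda|$ that carries a factor $1/\lambda$ to absorb the outer factor of $\lambda$.

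To obtain this factor, I would exploit the assumption $u(t,0)\equiv 0$: then $|u(s,\phi(s,y))| \le A |\phi(s,y)|$, so Gr\"onwall applied to $|\phi(s,y)| \le |y| + A\int_0^s |\phi(\sigma,y)|\,d\sigma$ gives $|\phi(s,y)| \le |y| e^{sA}$ and consequently
\begin{align*}
|\phi(\tau, x/\lambda) - x/\lambda| \;\lesssim\; \tfrac{|x|}{\lambda}(e^{\tau A}-1) \;\lesssim\; \tfrac{\tau A e^{\tau A}}{\lambda},
\end{align*}
uniformly in $|x|\lesssim 1$. Plugging these two bounds in gives
\begin{align*}
\lambda \int_0^t (t-\tau)\, A^2 \log(3+\|u_0\|_{H^3}) \cdot \tfrac{\tau A e^{\tau A}}{\lambda}\,d\tau
\;\lesssim\; A^3 e^{tA} \log(3+\|u_0\|_{H^3}) \int_0^t (t-\tau)\tau\,d\tau,
\end{align*}
and the final integral is $t^3/6$, which delivers exactly the stated bound.

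The only mild subtlety is making sure the $1/\lambda$ gain is genuinely available; this relies on $u(t,0)=0$ (so that $\phi$ fixes the origin) together with $|x|\lesssim 1$, both of which are in the standing hypotheses. Everything else is a direct combination of Lemma \ref{lem_ga10a} with Gr\"onwall, so I expect no serious obstacle beyond checking the constants in the final integration.
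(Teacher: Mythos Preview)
Your proof is correct and follows essentially the same approach as the paper: both isolate the difference $(\nabla p)(\tau,\phi(\tau,x/\lambda)) - (\nabla p)(\tau,x/\lambda)$, bound it by $\|D^2 p\|_\infty$ times the displacement, and invoke the $A^2\log(3+\|u_0\|_{H^3})$ estimate from Lemma~\ref{lem_ga10a}. The only cosmetic difference is that the paper controls $|\lambda\phi(\tau,x/\lambda)-x|$ via the Jacobian estimate $\|D\phi(\tau)-\operatorname{Id}\|_\infty\lesssim \tau A e^{\tau A}$, whereas you obtain the equivalent bound directly from Gr\"onwall on $|\phi(s,y)|$ using $u(t,0)=0$; both arguments rely on the stagnation point and yield the same $\tau A e^{\tau A}/\lambda$ factor.
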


\begin{proof}[Proof of Lemma \ref{lem_ga20}]
 We have
 \begin{align*}
   &\lambda | (\nabla p)(t, \phi(t,\frac x {\lambda}))- (\nabla p)(t, \frac x {\lambda})| \notag \\
  \lesssim & \|D^2 p \|_{\infty} | \lambda \phi(t, \frac {x} {\lambda}) -x|.
 \end{align*}

Now easy to check that (below $\operatorname{Id}$ denotes the identity matrix, also recall \eqref{dphi_eq1})
\begin{align*}
 &\| \lambda \phi(t, \frac x {\lambda}) -x \|_{L_x^{\infty}(|x| \lesssim 1)} \notag \\
\lesssim & \| (D\phi)(t, z)-\operatorname{Id}\|_{L_z^{\infty}(|z| \lesssim 1)} \notag \\
\lesssim & t\| \partial_t (D \phi) \|_{\infty} \notag \\
\lesssim & t\| Du (t)\|_{\infty} \|D\phi(t)\|_{\infty} \lesssim At e^{tA}.
\end{align*}

By \eqref{ga100_3},
\begin{align*}
 \| D^2 p\|_{\infty} \lesssim A^2 \log(3+\|u_0\|_{H^3}).
\end{align*}

Hence
 \begin{align*}
  &  |\lambda \phi(t,\frac 1 {\lambda} x) - x - t \lambda u_0(\frac 1 {\lambda } x )-\lambda \int_0^t (t-\tau)
  (\nabla p)(\tau, \frac {x} \lambda) d\tau | \notag \\
\lesssim & t^3 A^3 e^{tA}\log(3+\|u_0\|_{H^3}).
 \end{align*}

\end{proof}

\section{The main argument for local $2D$ $C^1$ norm inflation} \label{sec:2DC1}
Take initial data in the following form:
\begin{align}
 \psi_0(x)  & = \sum_{ 100 \le j\le M} 2^{-2j} a(2^j x), \notag \\
  u_0(x) & = \sum_{100 \le j \le M} 2^{-j} (\nabla^{\perp} a)(2^j x), \notag \\
  \omega_0(x) & = \sum_{100 \le j \le M} ({\Delta a})(2^j x),    \label{h1}
\end{align}
where $\psi_0$, $u_0$, $\omega_0$ are stream function, velocity and
vorticity respectively. The assumptions on $M$ and the function $a$
will be specified later. For the moment, we assume
$\operatorname{supp}(a) \subset\{x:\, \lambda_1 < |x|<\lambda_2 \}$
with $\lambda_2<2\lambda_1$, $\lambda_1\sim 1$, $\lambda_2 \sim 1$.
This is just to ensure that the functions $a(2^j x)$ have
non-overlapping supports. Also to simplify matters, assume $a(x)$ is
odd in $x_1$ and $x_2$(i.e. $a(x_1,x_2)=-a(-x_1,x_2)=-a(x_1,-x_2)$,
for any $x=(x_1,x_2)$). Easy to check\footnote{In later sections
(cf. Section 7), the symmetry assumption is removed by tracing the
flow of origin in time. Here for simplicity of presentation, we
consider the case that origin is a stagnation point.} that the
$u(t,0)\equiv 0$, i.e. the origin is a stagnation point. Also
$\phi(t,0)\equiv 0$.

We will work with the quantity $(\partial_1 u_1)(t,0)$. Recall that $\omega(t,\phi(t,x))= \omega_0(x)$,
$u=\Delta^{-1}\nabla^{\perp}\omega=(-\Delta^{-1} \partial_2 \omega, \Delta^{-1} \partial_1 \omega)$, therefore
\begin{align*}
 (\partial_1 u_1)(t,0) & = -(\Delta^{-1}\partial_1 \partial_2 \omega)(t,0) \\
 & = \frac 1 {\pi}\int_{\mathbb R^2} \omega(t,x) \frac{x_1 x_2} {|x|^4} dx \notag \\
 & = \frac 1 {\pi } \int_{\mathbb R^2} \omega_0(x) F(\phi(t,x)) dx,
\end{align*}
where we have denoted
\begin{align*}
 F(z) = \frac {z_1 z_2} { |z|^4}, \quad z=(z_1,z_2) \in \mathbb R^2.
\end{align*}

By using \eqref{h1}, we get (below we denote $\Delta a = b$ for simplicity)
\begin{align*}
 \pi (\partial_1 u_1)(t,0) &= \sum_{100\le j\le M} \int_{\mathbb R^2}  b(2^j x) F(\phi(t,x)) dx \notag \\
 & = \sum_{100 \le j\le M} \int_{\mathbb R^2} b(x) F(2^j \phi(t,2^{-j} x) ) dx.
\end{align*}
Here we have made a change of variable $x \to 2^{-j} x$ and absorbed the scaling factors into $F$.

Observe that, for $\lambda= 2^j$, $100\le j\le M$, $x \in \operatorname{supp}(a)$, we have by \eqref{h1} (since $u_0$ have
non-overlapping supports),
\begin{align*}
 \lambda u_0(\frac x {\lambda}) = (\nabla^{\perp} a)(x).
\end{align*}
Easy to check that $\log ( \|u_0\|_{H^3}) \sim M$.
Therefore by Lemma \ref{lem_ga20}, for $\lambda=2^j$, $x\in \operatorname{supp}(a)$,
\begin{align*}
 \lambda \phi(t,\frac 1 {\lambda} x) &= x + t \lambda u_0(\frac 1 {\lambda} x ) + \lambda \int_0^t
 (t-\tau) (\nabla p)(\tau, \frac x {\lambda}) d\tau \notag \\
 & \qquad + O(t^3 A^3 e^{tA} \log(3+\|u_0\|_{H^3})  ) \notag \\
 & =x + t (\nabla^{\perp} a)(x)+  \lambda \int_0^t
 (t-\tau) (\nabla p)(\tau, \frac x {\lambda}) d\tau \notag \\
 & \qquad + O(t^3A^3 e^{tA} M).
\end{align*}

We shall take $t = K/M$, where $1\ll K\ll M$ and $AK \ll M$. For such a small $t$, easy to check that $2^j \phi(t,2^{-j}x) \sim |x| \sim 1 $ for
$x\in \operatorname{supp}(a)$. Then Taylor expanding $F$ around the point $x$ (with $x \in \operatorname{supp}(a)$) gives
\begin{align*}
&F(\lambda \phi(t,\frac x {\lambda}))  \notag \\
=& F(x) + (\nabla F)(x) \cdot \bigl( t \nabla^{\perp} a(x) +
\lambda \int_0^t (t-\tau) (\nabla p)(\tau,\frac x {\lambda})
d\tau \notag \\
& \qquad + O(t^3 A^3 M  ) \bigr) \notag \\
& + O( \|D^2 F\|_{L_x^{\infty}{(|x|\sim 1)}} ) \cdot O(t^2 +(t^2 M)^2 + t^6 A^6 M^2 ).
\end{align*}

Clearly then
\begin{align}
  & \pi(\partial_1 u_1)(t,0) \notag \\
 = & \sum_{100\le j\le M} \int_{\mathbb R^2} b(x) F(x) dx \label{gb1} \\
 & + Mt \int_{\mathbb R^2} b(x) (\nabla F)(x) \cdot (\nabla^{\perp} a)(x) dx \label{gb2} \\
 & + \sum_{100\le j\le M} \int_{\mathbb R^2} b(x) (\nabla F)(x)\cdot \int_0^t (t-\tau) 2^j (\nabla p)(\tau, 2^{-j} x) d\tau \label{gb3} \\
 & + \operatorname{error}, \notag
\end{align}
where
\begin{align*}
\| \operatorname{error}\|_{\infty} \lesssim  t^3 A^3 M^2  + t^2 M + t^4 M^3
+ t^6 A^6 M^3.
\end{align*}

The contribution of the ``error'' term is negligible as long as we choose  $t= K/M$, $K\ll M$.

For the first term \eqref{gb1}, easy to check that it is zero since
$b(x)=\Delta a(x)$ and $F=\op{const}\cdot \partial_1\partial_2
(\log|x|)$.

We now show that the third term \eqref{gb3} has the bound
\begin{align*}
 \| \eqref{gb3} \|_{\infty} \lesssim t^2 A^2 M.
\end{align*}
The crucial point here is that we get $M$ instead of $M^2$.  This bound obviously follows from the following estimate:

\begin{lem} \label{lem_gb10}
Let $a(x)$ satisfy \eqref{gb_40b}--\eqref{gb_40d}, then
for any $0\le \tau \le t$,
\begin{align}
 |\sum_{100 \le j \le M} \int_{\mathbb R^2} b(x) (\nabla F)(x) \cdot 2^j (\nabla p)(\tau, 2^{-j}x) dx |
 \lesssim A^2 M.  \notag
\end{align}

\end{lem}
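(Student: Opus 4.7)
The plan is to integrate by parts in $z$ (moving $\Delta$ off of $a$) and then pair the resulting pieces against $D^2p$ using the sharp $L^\infty\to\mathrm{BMO}$ estimate for Calder\'on--Zygmund operators rather than the $L^\infty$ bound. Writing $f := \partial_iu_j\partial_ju_i$ so that $-\Delta p = f$, one has $\|f\|_\infty\lesssim A^2$ and therefore
\[
\|D^2 p(\tau,\cdot)\|_{\mathrm{BMO}}\lesssim A^2,
\]
which is log-free in contrast with the pointwise bound $\|D^2p\|_\infty\lesssim A^2\log(3+\|u_0\|_{H^3})\lesssim A^2 M$ that comes from \eqref{ga100_3}. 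The latter only yields $A^2M^2$ after summing in $j$; the extra factor $M$ is recovered by pairing $D^2p$ against a density of vanishing mean, which conditions \eqref{gb_40b}--\eqref{gb_40d} on $a$ are designed to guarantee.

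Since $b=\Delta a$ is compactly supported in the annulus $\lambda_1<|z|<\lambda_2$ (which avoids the origin) and $\Delta\nabla F\equiv 0$ there, integration by parts gives
\begin{align*}
I_j & := \int b(z)\,\nabla F(z)\cdot 2^j\nabla p(\tau,2^{-j}z)\,dz \\
& = 2\int a(z)\,D^2F(z):D^2p(\tau,2^{-j}z)\,dz \;-\; 2^{-j}\int a(z)\,\nabla F(z)\cdot\nabla f(\tau,2^{-j}z)\,dz,
\end{align*}
where the chain rule yields $\Delta_z[2^j\nabla p(\tau,2^{-j}z)] = -2^{-j}\nabla f(\tau,2^{-j}z)$. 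Denote the two pieces $I_j^{(1)}$ and $I_j^{(2)}$.

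For $I_j^{(1)}$, the conditions \eqref{gb_40b}--\eqref{gb_40d} translate (after a short integration-by-parts computation) into $\int a(z)\,\partial_i\partial_k F(z)\,dz=0$ for every $i,k\in\{1,2\}$, so for any constant matrix $C_j$ one may replace $D^2p(\tau,2^{-j}z)$ by $D^2p(\tau,2^{-j}z)-C_j$ inside $I_j^{(1)}$. Taking $C_j$ to be the average of $D^2p(\tau,\cdot)$ over the ball $B_j := B(0,\lambda_2 2^{-j})$ (which contains the $2^{-j}$-rescaled support of $a$) and changing variables $y = 2^{-j}z$ produces
\[
|I_j^{(1)}|\lesssim 2^{2j}\int_{B_j}|D^2p(\tau,y)-C_j|\,dy \lesssim 2^{2j}\,|B_j|\,\|D^2p(\tau)\|_{\mathrm{BMO}} \lesssim A^2,
\]
using $|B_j|\sim 2^{-2j}$. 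Summing over $100\le j\le M$ gives $\sum_j|I_j^{(1)}|\lesssim A^2 M$.

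For $\sum_j I_j^{(2)}$, I would rescale $z=2^j y$ (using that $\nabla F$ is $-3$-homogeneous) and sum in $j$ to identify
\[
\sum_j I_j^{(2)} = -\int \psi_0(y)\,\nabla F(y)\cdot\nabla f(\tau,y)\,dy = \int (\nabla\psi_0\cdot\nabla F)(y)\,f(\tau,y)\,dy,
\]
the second equality by integration by parts in $y$ using $\Delta F\equiv 0$ on $\mathrm{supp}(\psi_0)$. On each dyadic shell $|y|\sim 2^{-j}$ one has $|\nabla\psi_0|\sim 2^{-j}$ and $|\nabla F|\sim 2^{3j}$, so the shell contributes $O(1)$ to $\int|\nabla\psi_0\cdot\nabla F|\,dy$; summing over the $O(M)$ active shells and using $\|f\|_\infty\lesssim A^2$ gives $|\sum_j I_j^{(2)}|\lesssim A^2 M$. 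Combining both estimates proves the lemma. The main obstacle is the logarithmic loss in $\|D^2p\|_\infty$: without the orthogonality conditions on $a$ one only obtains $A^2M^2$, and it is precisely by feeding the mean-zero density $a\,D^2F$ into a BMO-oscillation bound on a ball of radius $\sim 2^{-j}$ that the missing factor of $M$ is recovered.
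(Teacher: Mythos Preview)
Your proof is correct and takes a genuinely different route from the paper's.

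The paper does \emph{not} integrate by parts to move $\Delta$ off $b=\Delta a$. Instead it exploits the representation $\nabla p=\Delta^{-1}\nabla\nabla\cdot((u\cdot\nabla)u)$ and the fact that $\Delta^{-1}\partial_i\partial_j$ commutes with dilations to sum in $j$ first: $\sum_j 2^j(\nabla p)(2^{-j}x)=\Delta^{-1}\nabla H$ with $H(x)=\sum_j (\partial u)(2^{-j}x)\otimes(\partial u)(2^{-j}x)$, and the key point is the trivial bound $\|H\|_\infty\lesssim MA^2$. Then it moves $\Delta^{-1}\nabla\cdot$ by duality onto the fixed test function $b\,\nabla F$ and shows $\Delta^{-1}\nabla\cdot(b\,\nabla F)\in L^1(\mathbb R^2)$; the moment conditions \eqref{gb_40b}--\eqref{gb_40d} enter precisely to force the $|x|^{-3}$ decay of this potential at infinity. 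No BMO estimate is used.

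Your approach instead reads the same conditions as $\int a\,\partial_i\partial_k F=0$ (your short computation $\int \Delta a\cdot x_i\partial_k F=2\int a\,\partial_{ik}F$ is correct since $\Delta F=0$ on $\mathrm{supp}(a)$) and uses them as a mean-zero cancellation against a BMO function on dyadic balls. This is arguably more elementary: it avoids analyzing tail decay of a nonlocal potential and uses only the standard $L^\infty\to\mathrm{BMO}$ mapping of Riesz transforms. The paper's approach, on the other hand, is more portable within the paper's scheme: the same $L^1$-condition on $\Delta^{-1}\nabla\cdot(b\,\nabla F)$ (and its $\nabla^\perp$ variant) reappears verbatim in the patching Lemma~\ref{lem_ha0_1} and in the 3D analogue, so establishing it once pays off multiple times. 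Both arguments hinge on the same algebraic identity linking \eqref{gb_40b}--\eqref{gb_40d} to the vanishing second moments of $a$ against $D^2F$.
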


\begin{proof}[Proof of Lemma \ref{lem_gb10}]
We suppress the time dependence in this proof. For example we write $p(x)$ instead of $p(\tau,x)$.
Recall that
\begin{align*}
 \nabla p = \Delta^{-1} \nabla \nabla \cdot ( (u\cdot \nabla) u ).
\end{align*}
Since $\Delta^{-1} \partial_i \partial_j$ is a homogeneous operator which commutes with scaling, we get
\begin{align*}
 \sum_j 2^j (\nabla p)(2^{-j} x) &= \Delta^{-1} \nabla \nabla \cdot ( \sum_j 2^j u(2^{-j}x) \cdot (\nabla u) (2^{-j} x) ) \notag \\
 &= \Delta^{-1} \nabla \Bigl(  \underbrace{\sum_j  (\partial u) (2^{-j} x) (\partial u) (2^{-j} x)}_H  \Big).
\end{align*}
Note that by assumption $\|\partial u \|_{\infty} \lesssim A$, and hence
\begin{align*}
 \| H \|_{\infty} \lesssim M A^2.
\end{align*}

Now observe
  \begin{align*}
    &\sum_{100 \le j\le M} \int_{\mathbb R^2} b(x) (\nabla F)(x) \cdot 2^j (\nabla p)(\tau, 2^{-j}x) dx  \notag \\
   =  & \int_{\mathbb R^2} b(x) (\nabla F)(x) \cdot \Delta^{-1}\nabla H dx\notag \\
   = -& \int_{\mathbb R^2} \Delta^{-1}\nabla \cdot (b(x) (\nabla F)(x))  H dx.
  \end{align*}

We only need to show that $\Delta^{-1} \nabla \cdot ( b \nabla F) \in L^1_x(\mathbb R^2)$. Clearly for $|x| \lesssim 1$,
\begin{align*}
 \| \Delta^{-1} \nabla \cdot ( b \nabla F) \|_{L_x^1{(|x| \lesssim 1)}} \lesssim \| \Delta^{-1} \nabla \cdot ( b \nabla F) \|_{L_x^{\infty}}
 \lesssim 1.
\end{align*}
Only need to check the regime $|x| \gg 1$. Consider a general vector function $g=(g_1,g_2) \in C_c^{\infty}(\mathbb R^2)$, easy to check that
if
\begin{align}
&\int_{\mathbb R^2} g(y) dy=0,  \label{lem_gb10_e1}\\
&\int_{\mathbb R^2} y\cdot g(y)dy=0, \;\;
\int_{\mathbb R^2} (x\cdot y) (x\cdot g(y))dy =0, \qquad \forall\, x \in \mathbb R^2, \label{lem_gb10_e2}
\end{align}
then
\begin{align*}
 (\Delta^{-1} \nabla \cdot g)(x) = O(|x|^{-3}), \quad |x|\gg 1,
\end{align*}

Therefore we only need to check the conditions \eqref{lem_gb10_e1}--\eqref{lem_gb10_e2}. Note that condition \eqref{lem_gb10_e2} is equivalent to
the following
\begin{align}
 \int_{\mathbb R^2} y_1 g_1(y) dy = \int_{\mathbb R^2} y_2 g_2 (y) dy =0,  \label{lem_gb10_e4}\\
 \int_{\mathbb R^2} (y_1 g_2(y) + y_2 g_1(y)) dy =0.  \label{lem_gb10_e5}
\end{align}

Now recall $g(y)=  (\Delta a)(y) \nabla F(y)$, $ F(y) = \operatorname{const} \cdot \partial_1\partial_2 (\log |y|)$.
Easy to check \eqref{lem_gb10_e1} is always satisfied.  Then we just require
\begin{align*}
 &\int_{\mathbb R^2} (\Delta a)(y) y_1 \partial_1 F(y) dy =0, \\
 &\int_{\mathbb R^2} (\Delta a)(y) y_2\partial_2 F(y) dy =0,\\
 &\int_{\mathbb R^2} (\Delta a)(y) ( y_1 \partial_2 F(y) + y_2 \partial_1 F(y)) dy =0.
\end{align*}
Therefore the lemma is proved.
\end{proof}

\begin{lem}[Existence of the function $a(x)$] \label{lem_gb30}
Let $A$ be radial and $A \in C_c^{\infty}(x:\, \rho_1
<|x|<\rho_2<\infty )$ for some $\rho_1$, $\rho_2>0$. Let
\begin{align*}
a(x) = A(x) x_1^3 x_2.
\end{align*}
There exists $A(x)$ such that
\begin{align}
 &\int_{\mathbb R^2} (\Delta a)(x) (\nabla F)(x) \cdot \nabla^{\perp} a(x) dx  >0, \label{gb_40a}\\
 & \int_{\mathbb R^2} (\Delta a)(x) x_1 \partial_1 F(x) dx =0, \label{gb_40b}\\
 & \int_{\mathbb R^2} (\Delta a)(x) x_2 \partial_2 F(x) dx=0, \label{gb_40c}\\
 & \int_{\mathbb R^2} (\Delta a)(x) (x_1 \partial_2 F(x) + x_2 \partial_1 F(x) )dx =0. \label{gb_40d}
\end{align}
In fact, instead of \eqref{gb_40d}, $a(x)$ satisfies the stronger condition:
\begin{align}
\int_{\mathbb R^2} (\Delta a)(x) x_1 \partial_2 F(x) dx =\int_{\mathbb R^2}
(\Delta a)(x)  x_2 \partial_1 F(x) dx =0. \label{gb_40d_p}
\end{align}
\end{lem}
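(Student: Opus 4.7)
The plan is to exploit a radial--angular decomposition. With $A$ radial and $a(x) = A(|x|) x_1^3 x_2$, I would write $a = h(r)\, g(\theta)$ where $h(r) = A(r)\, r^4$ and $g(\theta) = \cos^3\theta \sin\theta = \tfrac14\sin 2\theta + \tfrac18\sin 4\theta$. A direct calculation in polar coordinates gives $F = \tfrac{\sin 2\theta}{2 r^2}$ and $\nabla F = r^{-3}(-\sin 3\theta, \cos 3\theta)$, so $\nabla F$ lives purely in the angular modes $\sin 3\theta, \cos 3\theta$. Mode-by-mode application of the polar Laplacian yields $\Delta a = \tfrac{P(r)}{4}\sin 2\theta + \tfrac{Q(r)}{8}\sin 4\theta$ with $P = h'' + h'/r - 4h/r^2$ and $Q = h'' + h'/r - 16 h/r^2$.

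With this set-up the three vanishing conditions decouple cleanly. Condition \eqref{gb_40d_p} holds for any radial $A$ by angular parity alone: the integrands only involve products of the form $\sin k\theta \cdot \cos l\theta$ with $k \in \{2, 4\}$, $l \in \{2, 4\}$, each of which integrates to zero on $[0, 2\pi]$. For \eqref{gb_40b} and \eqref{gb_40c} I would carry out the $\theta$-integral using orthogonality of $\sin k\theta$, reducing the two identities to $\int_0^\infty P/r\,dr = 0$ and $\int_0^\infty Q/r\,dr = 0$. Two integrations by parts, with no boundary contributions thanks to $\spp h \subset (\rho_1,\rho_2)$, give $\int h''/r\,dr = \int h'/r^2\,dr = 2\int h/r^3\,dr$, which forces $\int P/r\,dr = 0$ \emph{automatically} (this is the manifestation of $F$ being harmonic on $\mathbb R^2 \setminus \{0\}$), while $\int Q/r\,dr = -12 \int h/r^3\,dr = -12\int_0^\infty r A(r)\,dr$. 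So \eqref{gb_40b} and \eqref{gb_40c} collapse to the single linear constraint $\int_0^\infty r A(r)\,dr = 0$.

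For the main integral \eqref{gb_40a}, I would expand in polar, isolate the surviving angular modes via orthogonality, and then consolidate the radial integral through a chain of integrations by parts using $2h'h'' = (h'^2)'$, $2hh' = (h^2)'$ and $2hh'' = (h^2)'' - 2(h')^2$. The output will be
\begin{align*}
\int (\Delta a)\, \nabla F \cdot \nabla^{\perp} a\,dx = \frac{3\pi}{32}\int_0^\infty \frac{r^2 (h'(r))^2 - 12\, h(r)^2}{r^5}\,dr.
\end{align*}
Passing to logarithmic radial coordinates via $r = e^s$, $H(s) = h(e^s)$, and then defining $G(s) := e^{-2s} H(s)$, I would check that the quadratic form transforms into
\begin{align*}
\frac{3\pi}{32}\int_{-\infty}^{\infty}\bigl((G'(s))^2 - 8\, G(s)^2\bigr)\,ds,
\end{align*}
while the linear constraint $\int_0^\infty r A(r)\,dr = 0$ becomes simply $\int_{\mathbb R} G(s)\,ds = 0$.

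The lemma is then reduced to finding $G \in C_c^\infty(I)$ on an interval $I = (\log\rho_1, \log\rho_2)$ (which I can take as long as I like by adjusting $\rho_1, \rho_2$) with $\int G = 0$ and $\int_I ((G')^2 - 8 G^2)\,ds > 0$. I would simply take $G(s) = G_0(\lambda(s - s_0))$ for a fixed nontrivial bump $G_0 \in C_c^\infty(\mathbb R)$ with $\int G_0 = 0$ (e.g. the derivative of any bump), and $\lambda$ large; this gives $\int ((G')^2 - 8 G^2)\,ds = \lambda \int (G_0')^2\,du - 8 \lambda^{-1} \int G_0^2\,du$, which is strictly positive once $\lambda$ is large enough. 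Reverting to $A(r) = h(r)/r^4$ produces the required function. The main technical obstacle is the IBP chain that consolidates \eqref{gb_40a} into the clean $(G')^2 - 8 G^2$ form; once that identity is in hand the existence question is a transparent one-dimensional Poincar\'e-type inequality that is defeated by any sufficiently oscillatory zero-mean test function.
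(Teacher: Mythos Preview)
Your proposal is correct and follows essentially the same route as the paper. Both arguments reduce \eqref{gb_40b}--\eqref{gb_40c} to the single linear constraint $\int_0^\infty r A(r)\,dr = 0$, verify \eqref{gb_40d_p} by parity, and reduce \eqref{gb_40a} (after integration by parts) to the quadratic condition $\int_0^\infty (A')^2 r^5\,dr - 12\int_0^\infty A^2 r^3\,dr > 0$, which is exactly your $\frac{3\pi}{32}\int \frac{r^2(h')^2 - 12 h^2}{r^5}\,dr$ rewritten in terms of $A = h/r^4$. The only difference is packaging: you organize the computation via the Fourier decomposition $g(\theta) = \tfrac14\sin 2\theta + \tfrac18\sin 4\theta$ and then pass to log-radial coordinates to reach the clean form $\int ((G')^2 - 8G^2)\,ds$, whereas the paper works directly in $A$ and takes $A(r) = r^{-1} m((r-1)/\epsilon)$ with a mean-zero bump $m$ and $\epsilon$ small. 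Your rescaling $G(s) = G_0(\lambda(s-s_0))$ with $\lambda$ large is the log-coordinate version of the paper's $\epsilon \to 0$; both make the derivative term dominate.
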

\begin{rem}
Of course one can choose other possible forms of $a$ to make
\eqref{gb_40a}--\eqref{gb_40d} hold. It is quite possible that a
simpler choice of $a(x)$ is available.
\end{rem}

\begin{proof}[Proof of Lemma \ref{lem_gb30}]
This is just a calculation. Denote $r=|x|$. Since $A$ is radial, we
shall write $A(r)$ and denote $A^{\prime}$, $A^{\prime\prime}$ the
radial derivatives. Then clearly
\begin{align*}
&\Delta a = ( A^{\prime\prime}+ \frac 9 r A^{\prime} ) x_1^3 x_2 + 6 A x_1 x_2, \\
&\partial_1 F = \frac {x_2} {r^6} (r^2-4x_1^2), \\
& \partial_2 F= \frac {x_1} {r^6} (r^2-4x_2^2),\\
& -\partial_2 a = - (A^\prime \cdot \frac{x_2^2} r x_1^3 + A x_1^3),\\
& \partial_1 a =  A^{\prime} \cdot \frac{x_1^4} r x_2 + 3 A x_1^2
x_2.
\end{align*}

Using parity, easy to check \eqref{gb_40d}  and \eqref{gb_40d_p}
hold.

In terms of $A(x)$, the conditions \eqref{gb_40b} and \eqref{gb_40c}
are
\begin{align*}
& \int_{\mathbb R^2} \Bigl( (A^{\prime\prime} + \frac 9 r A^{\prime}
)x_1^3 x_2 + 6 A x_1 x_2) \cdot x_1 \cdot \frac{x_2} {r^6}
(r^2-4x_1^2) dx_1 dx_2=0, \\
& \int_{\mathbb R^2} \Bigl( (A^{\prime\prime} + \frac 9 r A^{\prime}
)x_1^3 x_2 + 6 A x_1 x_2) \cdot x_2 \cdot \frac{x_1} {r^6}
(r^2-4x_2^2) dx_1 dx_2=0.
\end{align*}

By using polar coordinates and a short computation, we get the
equivalent conditions
\begin{align*}
& \int_0^{\infty} r (8A + 9r A^{\prime} + r^2 A^{\prime\prime}) dr
=0, \\
& \int_0^{\infty} r (24 A +9r A^{\prime} + r^2 A^{\prime\prime} )
dr=0.
\end{align*}
Integrating by parts in $r$, we get the equivalent condition
\begin{align}
\int_0^{\infty} A(r) rdr =0. \label{fvC_1}
\end{align}
This single condition guarantees that \eqref{gb_40b} and
\eqref{gb_40c} hold.

On the other hand,
\begin{align*}
 &\eqref{gb_40a}\\
  =& -\int_{\mathbb R^2}
 \Bigl( (A^{\prime\prime} + \frac 9 r A^{\prime}) x_1^3 x_2
 + 6 A x_1 x_2 \Bigr)\cdot( Ax_1^3+A^{\prime} \cdot
 \frac{x_2^2}r \cdot x_1^3) \cdot
 \frac{x_2(r^2-4x_1^2)} {r^6} dx \notag \\
 & + \int_{\mathbb R^2}
 \Bigl( (A^{\prime\prime} + \frac 9 r A^{\prime}) x_1^3 x_2
 + 6 A x_1 x_2 \Bigr)\cdot( 3Ax_1^2 x_2+A^{\prime} \cdot
 \frac{x_1}r \cdot x_1^3x_2) \cdot
 \frac{x_1(r^2-4x_2^2)} {r^6} dx. \notag
 \end{align*}

 A slightly involved
computation using polar coordinates yields that
\begin{align*}
\eqref{gb_40a}= \frac 1 {32} \pi \int_0^{\infty} r^4 (33 A
A^{\prime} + 9 (A^{\prime})^2 r + 3 A A^{\prime\prime} r +
A^{\prime} A^{\prime\prime} r^2) dr.
\end{align*}

After several integrating by parts, we get the equivalent condition
\begin{align}
\int_0^{\infty} (A^{\prime})^2 r^5 dr - 12 \int_0^{\infty} A^2 r^3
dr >0. \label{fvC_2}
\end{align}

We now only need to choose $A(x)$ such that \eqref{fvC_1} and
\eqref{fvC_2} hold. To this end, let $m \in C_c^{\infty}(\mathbb R)$
be  such that $\int m(x) dx =0$. Define
\begin{align*}
A(r) = \frac 1 r m( \frac {r-1} {\epsilon}).
\end{align*}
Easy to check that \eqref{fvC_1} hold. On the other hand for
$\epsilon$ small, we have
\begin{align*}
\int_0^{\infty} (A^{\prime})^2 r^5 dr \gtrsim \epsilon^{-1},
\end{align*}
whereas
\begin{align*}
\int_0^{\infty} A^2 r^3 dr \lesssim \epsilon.
\end{align*}

Easy to see that \eqref{fvC_2} holds for sufficiently small
$\epsilon$.

\end{proof}

Now denote $C_1=\frac {3\pi}2 \int_0^{\infty} \frac{(A^{\prime}(r))^2} {r}dr$ as in \eqref{gb_40a}. Then
\begin{align*}
 &\pi (\partial_1 u_1)(t,0) \notag \\
  \ge & Mt C_1 - O(t^2 A^2 M)  \notag \\
 & \quad - O(t^3 A^3 M^2  + t^2 M + t^4 M^3
+ t^6 A^6 M^3).
\end{align*}
Choosing $t=K/M$ with $1\ll K\ll M$, $ KA\ll M$, then obviously yields the bound
\begin{align*}
 (\partial_1 u_1)(K/M,0) \ge K C_1/2.
\end{align*}

This gives the desired inflation of $C^1$ norm.

\section{$2D$ $C^m$, $m\ge 2$ case: decoupling of the flow map} \label{sec:2DCm_flow}
In this section we will state and prove a basic flow decoupling lemma which is the key to
obtaining norm inflation for $C^m$ norms when $m\ge 2$. To simplify the presentation, we
shall just give the details for the case $m=2$, i.e. $C^2$ case. The general case $C^m$, $m\ge 3$
is a simple change of numerology and we leave it to interested readers.

Consider the following systems:
\begin{align*}
 \begin{cases}
  \partial_t \omega^{(l)} + (u^{(l)} \cdot \nabla) \omega^{(l)} =0, \\
  u^{(l)} = \Delta^{-1} \nabla^{\perp}\omega^{(l)}, \\
  \omega^{(l)}\Bigr|_{t=0}=\omega_0^{(l)};
 \end{cases}
\end{align*}

\begin{align*}
 \begin{cases}
  \partial_t \omega + (u\cdot \nabla) \omega=0, \\
  u= \Delta^{-1} \nabla^{\perp} \omega,\\
  \omega \Bigr|_{t=0}= \omega_0^{(l)} + \omega_0^{(h)};
 \end{cases}
\end{align*}

Let $\tilde \omega$ solve the \emph{linear} system
\begin{align*}
 \begin{cases}
  \partial_t \tilde \omega + (u^{(l)} \cdot \nabla) \tilde \omega =0, \\
  \tilde \omega \Bigr|_{t=0}= \omega_0=\omega_0^{(l)} + \omega_0^{(h)}.
 \end{cases}
\end{align*}

Let $\tilde u= \Delta^{-1} \nabla^{\perp} \tilde \omega $ be the velocity corresponding to $\tilde \omega$. Let
$u_0^{(l)}$, $u_0^{(h)}$ be the velocities corresponding to $\omega_0^{(l)}$, $\omega_0^{(h)}$ respectively.
We assume $0<T_0 \lesssim 1$, and
\begin{align*}
 \| u_0^{(l)}\|_2 + \| u_0^{(h)} \|_2 + \| \omega_0^{(l)} \|_{\infty} + \| \omega_0^{(h)} \|_{\infty} \lesssim 1.
\end{align*}
These conditions guarantee that on the time interval $[0,T_0]$,
\begin{align} \label{tmp_lemgd1_cond_Hk_e1}
 \|\tilde u(t) \|_{H^k}+\| u(t) \|_{H^k}
 \lesssim C_k (1+\| u_0^{(l)}\|_{H^k} + \|u_0^{(h)} \|_{H^k} ), \qquad k\ge 3,
\end{align}
where $C_k$ is some constant depending on $k$.

The following lemma gives quantitative estimate of the difference between $u$ and $\tilde u$. The advantage of this
simple lemma is that it gives us the flexibility to decouple the flow map.

\begin{lem} \label{lem_gd1}
\begin{align*}
 &\max_{0\le t \le T_0} \|D^2u(t,\cdot) -D^2 \tilde u (t,\cdot)\|_{\infty} \notag \\
& \lesssim
\bigl( \log(\|u_0^{(l)}\|_{H^4}+\|u_0^{(h)}\|_{H^4}+3 ) \bigr)^2
 e^{ \|Du^{(l)}\|_{L_t^{\infty}L_x^{\infty}([0,T_0])} T_0}
 (T_0
 %\max_{0\le t\le T_0} \|D\omega \|_{\infty}
(1+\|D\omega_0\|_{\infty})
 )
  \| \omega_0^{(h)}\|_{\infty} \notag \\
 &+  \bigl( \log(\|u_0^{(l)}\|_{H^4}+\|u_0^{(h)}\|_{H^4}+3 ) \bigr)^2
 e^{ 2\|Du^{(l)}\|_{L_t^{\infty}L_x^{\infty}([0,T_0])} T_0}
 (T_0
 %\max_{0\le t\le T_0} \|D\omega \|_{\infty}
(1+\|D\omega_0\|_{\infty})
 )^3
  \| u_0^{(h)}\|_{2} \notag \\
&+(\|D^2\omega_0 \|_{\infty}+3)\bigl( \log(\|u_0^{(l)}\|_{H^4}+\|u_0^{(h)}\|_{H^4}+3 ) \bigr)\notag \\
& \qquad \cdot e^{ 2\|Du^{(l)}\|_{L_t^{\infty}L_x^{\infty}([0,T_0])} T_0}
 (T_0
 %\max_{0\le t\le T_0} \|D\omega \|_{\infty}
(1+\|D\omega_0\|_{\infty})
 )
  \| u_0^{(h)}\|_{2} \notag \\
 &+(\|D^2\omega_0 \|_{\infty}+3)\bigl( \log(\|u_0^{(l)}\|_{H^4}+\|u_0^{(h)}\|_{H^4}+3 ) \bigr) \notag \\
 & \qquad \cdot e^{ \frac 32\|Du^{(l)}\|_{L_t^{\infty}L_x^{\infty}([0,T_0])} T_0}
 T_0
  \| u_0^{(h)}\|_{2}^{\frac 12} \|\omega_0^{(h)}\|_2^{\frac 12}.
\end{align*}

\end{lem}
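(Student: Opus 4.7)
The plan is to set up a transport equation for the difference $\eta := \omega - \tilde\omega$, estimate it along the characteristics of $u^{(l)}$, and then recover $\|D^2 u - D^2\tilde u\|_\infty$ by a log-interpolation applied to $\eta$. Subtracting the equations for $\omega$ and $\tilde\omega$ gives
\begin{align*}
\partial_t\eta + (u^{(l)}\cdot\nabla)\eta = -((u - u^{(l)})\cdot\nabla)\omega, \qquad \eta(0) = 0.
\end{align*}
Introducing the auxiliary passive field $v := \tilde\omega - \omega^{(l)}$ (which solves the linear transport with initial data $\omega_0^{(h)}$ and velocity $u^{(l)}$), one has $\omega - \omega^{(l)} = \eta + v$ and therefore $u - u^{(l)} = \Delta^{-1}\nabla^\perp\eta + \tilde u^{(h)}$ with $\tilde u^{(h)} := \Delta^{-1}\nabla^\perp v$. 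This decomposition isolates the ``known'' passive part $\tilde u^{(h)}$ and reduces everything to estimates on $\eta$.

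The a priori bounds I would then collect on $[0,T_0]$: the 2D vorticity transport identities $\|\omega(t)\|_\infty = \|\omega_0\|_\infty$ and $\|v(t)\|_\infty = \|\omega_0^{(h)}\|_\infty$; the Gronwall bounds $\|D\omega(t)\|_\infty \le \|D\omega_0\|_\infty e^{\int_0^t\|Du\|_\infty ds}$ and similarly for $\|D^2\omega\|_\infty$; the Beale-Kato-Majda-type log-interpolation, combined with \eqref{tmp_lemgd1_cond_Hk_e1}, giving $\|Du(t)\|_\infty$ and $\|Du^{(l)}(t)\|_\infty$ bounded by $\Lambda := \log(\|u_0^{(l)}\|_{H^4} + \|u_0^{(h)}\|_{H^4} + 3)$; the $L^2$ difference estimate $\|u(t) - u^{(l)}(t)\|_2 \lesssim \|u_0^{(h)}\|_2\, e^{\int_0^t\|Du\|_\infty ds}$ obtained by a standard energy estimate on the difference of the two Euler equations; and the 2D Biot-Savart log-interpolations
\begin{align*}
\|\Delta^{-1}\nabla^\perp f\|_\infty &\lesssim \|f\|_\infty \Lambda + \|f\|_2, \\
\|D^2\Delta^{-1}\nabla^\perp f\|_\infty &\lesssim \|Df\|_\infty \Lambda + \|f\|_2,
\end{align*}
which make it legitimate to trade an $L^\infty$ norm of a Riesz-transformed object for an $L^\infty$ norm of the input plus an $L^2$ piece, at the cost of a single log.

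Integrating the equation for $\eta$ along the characteristics of $u^{(l)}$ yields $\|\eta(t)\|_\infty \le \int_0^t \|u - u^{(l)}\|_\infty \|D\omega\|_\infty ds$; bounding $\|u - u^{(l)}\|_\infty \lesssim (\|\eta\|_\infty + \|\omega_0^{(h)}\|_\infty)\Lambda + \|u - u^{(l)}\|_2$ via the decomposition above and the first Biot-Savart inequality closes a Gronwall loop that produces the prefactor $T_0(1 + \|D\omega_0\|_\infty)e^{\|Du^{(l)}\|T_0}$ multiplied either by $\|\omega_0^{(h)}\|_\infty \Lambda$ or by $\|u_0^{(h)}\|_2$. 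A parallel Gronwall argument on $\|D\eta(t)\|_\infty$, obtained by differentiating the $\eta$-equation and using $\|D^2\omega\|_\infty \lesssim (\|D^2\omega_0\|_\infty + \ldots) e^{\|Du^{(l)}\|T_0}$, accounts for the last two terms in the claim; the mixed factor $\|u_0^{(h)}\|_2^{1/2}\|\omega_0^{(h)}\|_2^{1/2}$ in the fourth term arises from a Gagliardo-Nirenberg interpolation $\|\tilde u^{(h)}\|_\infty \lesssim \|\tilde u^{(h)}\|_2^{1/2}\|v\|_2^{1/2}$ at one step. The final output $\|D^2(u - \tilde u)\|_\infty$ is then obtained by applying the second Biot-Savart interpolation to $\eta$, turning the Gronwall bound on $\|D\eta\|_\infty$ into the desired bound and accounting for both the $\Lambda^2$ and $\Lambda$ contributions.

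The main obstacle is the bookkeeping of nested log-losses. One log arises inside the Gronwall when estimating $\|u - u^{(l)}\|_\infty$ via Biot-Savart, a second from the concluding log-interpolation that converts $\|D\eta\|_\infty$ into $\|D^2(u - \tilde u)\|_\infty$, and one must be careful that the high-Sobolev norm appearing inside each log is controlled uniformly on $[0,T_0]$ by initial data through \eqref{tmp_lemgd1_cond_Hk_e1}; otherwise a time-dependent log inside the exponential prefactor would spoil the clean form of the stated estimate and risk double-counting the exponential $e^{\|Du^{(l)}\|T_0}$.
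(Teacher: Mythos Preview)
Your overall strategy matches the paper's: an $L^2$ estimate on $u-u^{(l)}$, transport/maximum-principle estimates at the vorticity level, a differentiated transport equation for $\eta=\omega-\tilde\omega$ to control $\|D\eta\|_\infty$, and a final log-interpolation to pass to $\|D^2(u-\tilde u)\|_\infty$. The ingredients you list (conservation of $\|\omega\|_\infty$, Gronwall control of $\|D\omega\|_\infty$ and $\|D^2\omega\|_\infty$, the log-interpolation with Sobolev norms bounded by \eqref{tmp_lemgd1_cond_Hk_e1}) are exactly the ones the paper uses.

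There is one organizational difference worth flagging. You run a Gronwall loop on $\|\eta\|_\infty$ by feeding $\|u-u^{(l)}\|_\infty \lesssim (\|\eta\|_\infty+\|\omega_0^{(h)}\|_\infty)\Lambda + \|u-u^{(l)}\|_2$ back into the transport estimate for $\eta$. That linear feedback produces an extra factor $e^{C\,T_0\Lambda\,\|D\omega\|_\infty}$ which is \emph{not} in the stated bound. The paper sidesteps this by never estimating $\|\eta\|_\infty$: it works instead with $\omega-\omega^{(l)}$ (which satisfies its own transport equation with drift $u^{(l)}$ and forcing $(u-u^{(l)})\cdot\nabla\omega$) and closes via the Gagliardo--Nirenberg interpolation
\[
\|u-u^{(l)}\|_\infty \lesssim \|u-u^{(l)}\|_2^{1/2}\,\|\omega-\omega^{(l)}\|_\infty^{1/2},
\]
so that the feedback is \emph{sublinear} and can be absorbed by Young's inequality rather than Gronwall. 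This is what generates the clean algebraic factors $(T_0(1+\|D\omega_0\|_\infty))$ and $(T_0(1+\|D\omega_0\|_\infty))^3$ with only the exponential $e^{c\|Du^{(l)}\|T_0}$ coming from the $L^2$ step. Your use of GN for $\|\tilde u^{(h)}\|_\infty$ is correct and matches the paper's source for the mixed $\|u_0^{(h)}\|_2^{1/2}\|\omega_0^{(h)}\|_2^{1/2}$ term; you just want to use the same GN interpolation for the full difference $u-u^{(l)}$ rather than a log-interpolation there. For the intended application (Remark~\ref{rem_gd1}) the extra exponential is harmless since $T_0\|D\omega_0\|_\infty\Lambda\lesssim 1$, but to reproduce the lemma as stated you should follow the paper's algebraic closure.
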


\begin{rem} \label{rem_gd1}
In our later construction, we shall take  $u_0^{(h)} = \nabla^{\perp} \psi_0^{(h)}$ roughly of the form
\begin{align*}
& \psi_0^{(h)}(x) \sim \sum_{M\le j\le M+\sqrt M} 2^{-3j} a_0(2^j x),\\
& u_0^{(h)} (x) \sim \sum_{M\le j\le M+\sqrt M} 2^{-2j} (\nabla^{\perp} a_0)(2^j x).
 \end{align*}
Then $\|D^2 \omega_0\|_{\infty} \lesssim 2^{M+\sqrt{M}}$,  $\log(\|u_0\|_{H^4}) \lesssim M$,
$\|u_0^{(h)}\|_2 \lesssim 2^{-3M}$, $\|\omega_0^{(h)}\|_2 \lesssim 2^{-2M}$. Clearly it gives
\begin{align*}
 \max_{0\le t\le T_0} \| D^2 u(t) -D^2 \tilde u(t,\cdot)\|_{\infty} \ll 1,
\end{align*}
 i.e. we can decouple the flow map. Note also that $\tilde u(t)=u^{(l)}(t) + {\tilde u}^{(h)}(t)$, where
 ${\tilde u}^{(h)} = \Delta^{-1} \nabla^{\perp} {\tilde \omega}^{(h)}$, and ${\tilde \omega}^{(h)}$ solves
 \begin{align*}
\begin{cases}
 \partial_t {\tilde \omega}^{(h)}(t) + u^{(l)}(t) \cdot \nabla {\tilde \omega}^{(h)}(t) =0,\\
 {\tilde \omega}^{(h)} \Bigr|_{t=0} = \omega^{(h)}_0.
 \end{cases}
 \end{align*}
In producing $C^2$-norm inflation, we only need to examine ${\tilde u}^{(h)}(t)$.
\end{rem}

\begin{rem}
In the proof of Lemma \ref{lem_gd1} below, we shall often (sometimes without explicit mentioning)
make use of the following simple (maximum principle)
estimate:
namely if $u$ is a smooth solution to the linear equation
\begin{align*}
\begin{cases}
\partial_t u + (a \cdot \nabla) u = f,\\
u(0)=u_0,
\end{cases}
\end{align*}
then
\begin{align}
\|u(t,\cdot)\|_{\infty} \le \|u_0\|_{\infty} + \int_0^t \| f(s,\cdot) \|_{\infty}ds. \label{infty_mp_e1}
\end{align}
Sometimes one writes the above inequality formally as a differential
inequality:
\begin{align*}
\partial_t \|u\|_{\infty} \le \|f\|_{\infty}.
\end{align*}

\end{rem}

\begin{proof}[Proof of Lemma \ref{lem_gd1}]
This is a rather standard energy estimate. We sketch the details.

\underline{Estimate of $\|u^{(l)}-u\|_2$}: Since
\begin{align*}
 \begin{cases}
  \partial_t u^{(l)} + (u^{(l)} \cdot \nabla) u^{(l)} =  - \nabla p^{(l)}, \\
  \partial_t u + (u\cdot \nabla) u = - \nabla p,
 \end{cases}
\end{align*}
we get
\begin{align*}
 \partial_t (u^{(l)} -u) + ( (u^{(l)} -u) \cdot \nabla ) u^{(l)} + (u\cdot \nabla) (u^{(l)} -u) =- \nabla (p^{(l)} -p).
\end{align*}
Clearly
\begin{align*}
 \frac 12 | \partial_t ( \| u^{(l)} - u \|_2^2 ) | \le \| u^{(l)} - u\|_2^2 \| D u^{(l)} \|_{\infty}.
\end{align*}
Therefore
\begin{align*}
 \max_{0\le t \le T_0} \| u^{(l)}(t) -u(t)\|_2 \le e^{T_0 \| Du^{(l)}\|_{L_t^{\infty} L_x^{\infty}[0,T_0]}} \| u_0^{(h)} \|_2.
\end{align*}

\underline{Estimate of $\|u^{(l)}-u\|_{\infty}$, $\| \omega-\omega^{(l)}\|_{\infty}$}:

From the equation
\begin{align*}
 \partial_t (\omega-\omega^{(l)}) + (u-u^{(l)}) \cdot \nabla \omega + u^{(l)} \cdot \nabla (\omega -\omega^{(l)})=0,
\end{align*}
we get (by using \eqref{infty_mp_e1})
\begin{align*}
 \max_{0\le t \le T_0} \| \omega -\omega^{(l)} \|_{\infty}
 \le \| \omega_0^{(h)} \|_{\infty} + ( \max_{0\le t \le T_0} \|u-u^{(l)}\|_{\infty}  )
 \cdot \max_{0\le t \le T_0} \|D \omega \|_{\infty}\cdot T_0.
\end{align*}

By interpolation,
\begin{align*}
 \| u - u^{(l)} \|_{\infty} \lesssim \| u -u^{(l)} \|_2^{\frac 12} \| \omega -\omega^{(l)} \|_{\infty}^{\frac 12}.
\end{align*}
Therefore
\begin{align*}
 &\max_{0\le t\le T_0} \| \omega -\omega^{(l)}\|_{\infty} \\
 \lesssim & \| \omega_0^{(h)} \|_{\infty}
 + \bigl( \max_{0\le t\le T_0} \|D\omega\|_{\infty} \cdot T_0 \bigr)^2
 \cdot e^{T_0 \|Du^{(l)}\|_{L_t^{\infty}L_x^{\infty}([0,T_0])} } \| u_0^{(h)}\|_2.
\end{align*}

Also
\begin{align*}
  & \max_{0\le t \le T_0} \| u-u^{(l)} \|_{\infty} \notag \\
\lesssim & e^{\frac 12 T_0 \| Du^{(l)} \|_{L_t^{\infty}L_x^{\infty}([0,T_0])} } \|u_0^{(h)}\|_2^{\frac 12} \|\omega_0^{(h)}\|_{\infty}^{\frac12}
\notag \\
& \quad + e^{T_0 \| Du^{(l)} \|_{L_t^{\infty}L_x^{\infty}([0,T_0])}} T_0 \cdot \max_{0\le t\le T_0}\|D\omega\|_{\infty}
\cdot \| u_0^{(h)}\|_2.
\end{align*}

\underline{Estimate of $\|D(\omega-\tilde \omega)\|_{\infty}$}:

Set $\eta=\omega-\tilde \omega$. Then
\begin{align*}
 \partial_t \eta + (u-u^{(l)}) \cdot \nabla \omega + u^{(l)} \cdot \nabla \eta =0.
\end{align*}
Therefore
\begin{align*}
 \partial_t D\eta + D(u-u^{(l)}) \cdot \nabla \omega + (u-u^{(l)}) \cdot \nabla (D\omega)
 + Du^{(l)} \cdot \nabla \eta + u^{(l)} \cdot \nabla (D\eta)=0.
\end{align*}
To bound $\|D\eta\|_{\infty}$, by using \eqref{infty_mp_e1}, we only need to treat the second, third and fourth term above. Note that
the forth term gives the integrating factor bounded by $e^{\|Du^{(l)}\|_{L_t^{\infty}L_x^{\infty}([0,T_0])} T_0}$. Therefore
we only need to treat the second and the third.

Note that
\begin{align*}
 \| D(u-u^{(l)}) \cdot \nabla \omega \|_{\infty} & \lesssim \| D \omega \|_{\infty} \| D(u-u^{(l)}) \|_{\infty} \notag \\
 & \lesssim \| D \omega \|_{\infty} \| \omega -\omega^{(l)} \|_{\infty} \log ( \|u_0^{(l)}\|_{H^3} + \| u_0^{(h)} \|_{H^3}+3)
\end{align*}
Also
\begin{align*}
 \| (u-u^{(l)})\cdot \nabla (D\omega )\|_{\infty} \lesssim \max_{0\le t \le T_0} \| u-u^{(l)} \|_{\infty}
 \cdot (\| D^2 \omega_0\|_{\infty}+3).
\end{align*}
Here we used the estimate
\begin{align} \label{tmp_d2omega_e1}
\|D^2 \omega (t)\|_{\infty} \lesssim \| D^2 \omega_0 \|_{\infty} +3.
\end{align}
which can be easily checked
 using the
usual log-Gronwall inequality
(together with the fact $\|u_0\|_2+ \|\omega_0\|_{\infty} \lesssim 1$ and $0\le t\lesssim 1$). We postpone
the proof of \eqref{tmp_d2omega_e1} to the end of this proof.

Thus
\begin{align}
  & \| D(\omega -\tilde \omega ) \|_{\infty} \notag \\
\lesssim & \log(\|u_0^{(l)}\|_{H^3}+\|u_0^{(h)}\|_{H^3}+3 )
 e^{ \|Du^{(l)}\|_{L_t^{\infty}L_x^{\infty}([0,T_0])} T_0}
 (T_0  \max_{0\le t\le T_0} \|D\omega \|_{\infty} )
  \| \omega_0^{(h)}\|_{\infty} \notag \\
 &+  \log(\|u_0^{(l)}\|_{H^3}+\|u_0^{(h)}\|_{H^3}+3 )
 e^{ 2\|Du^{(l)}\|_{L_t^{\infty}L_x^{\infty}([0,T_0])} T_0}
 (T_0  \max_{0\le t\le T_0} \|D\omega \|_{\infty} )^3
  \| u_0^{(h)}\|_{2} \notag \\
&+(\|D^2\omega_0 \|_{\infty}+3)
 e^{ 2\|Du^{(l)}\|_{L_t^{\infty}L_x^{\infty}([0,T_0])} T_0}
 (T_0  \max_{0\le t\le T_0} \|D\omega \|_{\infty} )
  \| u_0^{(h)}\|_{2} \notag \\
 &+(\|D^2\omega_0 \|_{\infty}+3)
 e^{ \frac 32\|Du^{(l)}\|_{L_t^{\infty}L_x^{\infty}([0,T_0])} T_0}
 T_0
  \| u_0^{(h)}\|_{2}^{\frac 12} \|\omega_0^{(h)}\|_2^{\frac 12}. \label{tmp_domega_tmp1}
\end{align}

Similar to \eqref{tmp_d2omega_e1}, one can also easily check that
\begin{align*}
\max_{0\le t \le T_0} \| D\omega(t)\|_{\infty} \lesssim \|D\omega_0\|_{\infty} +1.
\end{align*}

Plugging the above into the RHS of \eqref{tmp_domega_tmp1} then yields a bound for $\|D(\omega-\tilde \omega)\|_{\infty}$
expressed in terms of initial data only.

Finally to bound $\|D^2(u-\tilde u)\|_{\infty}$, we just note that $D^2(u-\tilde u) =
\Delta^{-1} \nabla^{\perp}D^2 (\omega -\tilde \omega)$.
The usual log-interpolation inequality
\begin{align*}
\| D^2 (u-\tilde u )\|_{\infty} \lesssim \| D (\omega-\tilde \omega)\|_{\infty} \log (
\| u\|_{H^4} +\|\tilde u\|_{H^4} )
\end{align*}
together with \eqref{tmp_lemgd1_cond_Hk_e1} then yields the desired bound for $\|D^2(u-\tilde u)\|_{\infty}$.

\underline{Proof of the estimate \eqref{tmp_d2omega_e1}}:

Denote $\partial^2$ as any one of $\partial^2_{x_i x_j}$, $i=1,2$. Then
\begin{align*}
&\partial_t \partial^2 \omega + \Delta^{-1} \nabla^{\perp}\partial^2 \omega \cdot \nabla \omega
+ \Delta^{-1} \nabla^{\perp}\partial \omega \cdot \nabla \partial \omega  \notag \\
& \qquad+ (\Delta^{-1} \nabla^{\perp} \omega
\cdot \nabla) (\partial^2 \omega) =0.
\end{align*}
 Then clearly by \eqref{infty_mp_e1}, we have
\begin{align*}
&\partial_t (\| \partial^2 \omega(t)\|_{\infty}) \notag \\
\lesssim  &\| \Delta^{-1} \nabla^{\perp} \partial^2 \omega\|_{\infty}
\| D\omega \|_{\infty} +\| \Delta^{-1} \nabla^{\perp} \partial \omega\|_{\infty} \| D^2 \omega \|_{\infty}.
\end{align*}
By interpolation, easy to check
\begin{align*}
\|\Delta^{-1}\nabla^{\perp} \partial^2 \omega \|_{\infty}
\lesssim \|\omega\|_{\infty}^{\frac 12} \| D^2 \omega\|_{\infty}^{\frac 12}.
\end{align*}
By using the log-interpolation, we have
\begin{align*}
&\|\Delta^{-1} \nabla^{\perp} \partial \omega \|_{\infty} \notag \\
\lesssim &\;\| \omega \|_{\infty} \log(3+ \|\omega\|_2 +
\|D^2 \omega\|_{\infty}) \notag \\
\lesssim  &\|\omega \|_{\infty} \log(3+\|D^2\omega\|_{\infty}).
\end{align*}
One can then arrive at
\begin{align*}
\partial_t (\|D^2 \omega\|_{\infty}) \lesssim  \|\omega\|_{\infty} \|D^2 \omega\|_{\infty}
\log( 3+\|D^2 \omega \|_{\infty}).
\end{align*}
Integrating in time then gives the desired estimate \eqref{tmp_d2omega_e1}.

\end{proof}

\section{Local norm inflation for the $2D$ $C^m$, $m\ge 2$ case} \label{sec_local_2Dcm}
The bulk of this section is on the norm inflation for 2D $C^2$ case. At the end we
sketch how to do the general $C^m$, $m\ge 2$ case.

\subsection{The case for $C^2$.}

\begin{align*}
\;
\end{align*}
We choose initial stream function in the form
\begin{align}
\psi_0(x) = \psi_0^{(l)}(x) + \psi_0^{(h)}(x), \notag
\end{align}
where $\psi_0^{(l)}$ will "generate" the desired Lagrangian deformation and $\psi_0^{(h)}$ has the expansion
\begin{align}
\psi_0^{(h)}(x) = \sum_{M\le j \le M+\sqrt M} 2^{-3j} a_0(2^{j} x). \notag
\end{align}
The corresponding velocity and vorticity have the form:
\begin{align}
&u_0^{(h)}(x) = \sum_{M\le j \le M+\sqrt M} 2^{-2j} (\nabla^{\perp} a_0)(2^j x), \notag \\
&\omega_0^{(h)}(x) = \sum_{M\le j \le M+\sqrt M} 2^{-j} (\Delta a_0)(2^j x). \notag
\end{align}
We shall choose $M\gg 1$. More detailed assumptions on $\psi_0^{(l)}$ and $a_0$ will become clear later.
For the moment we assume $\operatorname{supp}(a_0) \subset \{x:\, \rho_0<|x|<\rho_1\}$ for some positive
numbers $0<\rho_0<\rho_1<2\rho_0$ so that the functions $a_0(2^jx)$ have non-overlapping supports.
Denote $u_0^{(l)}= \nabla^{\perp} \psi_0^{(l)}$ and $\omega_0^{(l)}=\Delta \psi_0^{(l)}$.
We assume $\operatorname{supp}(\omega_0^{(l)}) \subset \{x: 0<\rho_2<|x|<\rho_3<\infty\}$
for some $\rho_2>0$, $\rho_3>0$.  This is just to ensure
that it is compactly supported away from the origin.

Let $u(t)$, $\omega(t)$ be the velocity and vorticity corresponding to the 2D Euler flow with
initial data $u_0=u_0^{(l)}+u_0^{(h)}$, $\omega_0=\omega_0^{(l)} + \omega_0^{(h)}$.

To get norm inflation, it suffices to examine any one of the entries $D^2u(t)$. In particular we will consider
$\partial_{11} u_2 = \partial_{11} (\Delta^{-1} \partial_1 \omega) = \Delta^{-1} \partial_{1}^3 \omega$.

%Since $\omega(t)$ is supported away from the origin, we have
%\begin{align*}
%(\partial_{11} u_2)(t,0) =  \frac 1 {\pi} \int_{\mathbb R^2} F_0(x) \omega(t,x)dx,
%\end{align*}
Define
\begin{align*}
F_0(x) = \frac{x_1^3-3x_1 x_2^2} {|x|^6}.
\end{align*}
%The function $\frac 1 {\pi} F_0(x)$ will correspond to the kernel
%$\Delta^{-1}\partial_1^3$.

Let
\begin{align*}
\begin{cases}
\partial_t \omega^{(l)} + u^{(l)} \cdot \nabla \omega^{(l)} =0, \\
u^{(l)}=\Delta^{-1} \nabla^{\perp} \omega^{(l)},\\
\omega^{(l)}\Bigr|_{t=0} =\omega_0^{(l)}.
\end{cases}
\end{align*}
and let $W$ solve the \emph{linear} system:
\begin{align*}
\begin{cases}
\partial_t W + u^{(l)} \cdot \nabla W =0, \\
W\Bigr|_{t=0} =\omega_0^{(h)}.
\end{cases}
\end{align*}

Define the flow map
\begin{align*}
\begin{cases}
\partial_t \phi^{(l)}(t,x) = u^{(l)}(t,\phi^{(l)}(t,x)), \\
\phi^{(l)}(0,x)=x.
\end{cases}
\end{align*}

By Lemma \ref{lem_gd1}
 and Remark \ref{rem_gd1},  the main part of $\pi(\partial_{11} u_2)(t,\phi^{(l)}(t,0) )$ is given
by the quantity
\begin{align*}
 & \int_{\mathbb R^2} F_0(x-\phi^{(l)}(t,0))  W(t,x) dx \notag \\
 = & \int_{\mathbb R^2} F_0(\phi^{(l)}(t,x)-\phi^{(l)}(t,0) ) \omega_0^{(h)}(x) dx \notag \\
 = & \sum_{M\le j\le M+\sqrt M} \int_{\mathbb R^2} F_0(2^{j} \tilde \phi^{(l)}(t,2^{-j}x)) (\Delta a_0)(x) dx,
\end{align*}
where $\tilde \phi^{(l)} (t,x) = \phi^{(l)}(t,x)-\phi^{(l)}(t,0)$.

Now note that $\phi^{(l)}$ is completely fixed and \emph{independent of $M$}. It is not difficult to
check that as $\lambda \to \infty$,
\begin{align*}
 &\int_{\mathbb R^2} F_0(\lambda \tilde \phi^{(l)}(t,\lambda^{-1} x)) (\Delta a_0)(x) dx \notag \\
\to & \int_{\mathbb R^2} F_0( A(t) x) (\Delta a_0)(x) dx,
\end{align*}
where
\begin{align*}
A(t) = (D\phi^{(l)})(t,0).
\end{align*}

Clearly now we only need to choose $\psi_0^{(l)}$ such that for some small $0<t\lesssim 1$,
\begin{align*}
\int_{\mathbb R^2} F_0(A(t) x) (\Delta a_0)(x) dx >0.
\end{align*}
To do this one can just choose\footnote{For example one can just
take $\psi_0^{(l)}$ to be a suitable odd function of $x_1$ and
$x_2$.} $\psi_0^{(l)}$ such that for small $t$,
\begin{align*}
A(t) = \begin{pmatrix}
 {r(t)} \quad 0 \\
 0\quad \frac 1 {r(t)}
\end{pmatrix},
\end{align*}
with $r(0)=1$, $r(t)=1+$ for $t=0+$. Consider for $r>1$,
\begin{align*}
 & \int_{\mathbb R^2} F_0(rx_1,\frac 1r x_2) \Delta a_0(x) dx \notag \\
 = & \int_{\mathbb R^2} \Delta( F_0(rx_1,\frac 1 r x_2) ) a_0(x)dx.
 \end{align*}
It is not difficult to check that (as it should be) $\Delta F_0=0$ and,
\begin{align*}
 &\partial_r \Bigl(  \Delta(F_0(rx_1, \frac 1 r x_2) \Bigr) \Bigr|_{r=1} \notag \\
=&\; 48 (x_1^5 - 10 x_1^3 x_2^2 + 5 x_1 x_2^4)/(x_1^2 + x_2^2)^5 =:H_0(x).
\end{align*}
Obviously we only need to choose $a_0\in C_c^{\infty}$ to be sufficiently localized near the
point $(x_1,x_2)=(1,0)$ so that
\begin{align*}
\int_{\mathbb R^2} H_0(x) a_0(x) dx >0.
\end{align*}
For sufficiently small $t$, we then have
\begin{align*}
 & \int_{\mathbb R^2} F_0(A(t) x) (\Delta a_0)(x) dx \notag \\
 > & \int_{\mathbb R^2} H_0(x) a_0(x) dx (r(t)-1).
 \end{align*}

Clearly then for $M$ sufficiently large,
\begin{align*}
 & \pi |(\partial_{11} u_2)(t,0)| > \sqrt M \cdot \operatorname{const}.
 \end{align*}
 This gives the desired local $C^2$ norm inflation.

\subsection{The case for $C^m$, $m\ge 2$}
$$\;$$

As was already mentioned, the general case $C^m$ is a simple change of numerology. Therefore
we shall be rather brief and only sketch the needed modifications.

The initial stream function is sought in the form
\begin{align}
\psi_0(x) = \psi_0^{(l)}(x) + \psi_0^{(h)}(x), \notag
\end{align}
where again $\psi_0^{(l)}$ will produce the  Lagrangian deformation and
\begin{align}
\psi_0^{(h)}(x) = \sum_{M\le j \le M+\sqrt M} 2^{-(m+1) j} a_0(2^{j} x). \notag
\end{align}
The corresponding velocity and vorticity then have the form:
\begin{align}
&u_0^{(h)}(x) = \sum_{M\le j \le M+\sqrt M} 2^{-mj} (\nabla^{\perp} a_0)(2^j x), \notag \\
&\omega_0^{(h)}(x) = \sum_{M\le j \le M+\sqrt M} 2^{-(m-1)j} (\Delta a_0)(2^j x). \notag
\end{align}

We will consider
$\partial_{1}^m u_2 = \partial_{1}^{m} (\Delta^{-1} \partial_1 \omega) =
\Delta^{-1} \partial_{1}^{m+1} \omega$
and examine the quantity\footnote{Without loss of generality we consider the case $\phi^{(l)}(t,0)\equiv 0$.
Otherwise we can just shift to the point $\phi^{(l)}(t,0)$ as in the $C^2$ case.}
\begin{align*}
(\partial_{1}^m u_2)(t,0) =  \int_{\mathbb R^2} F_m(x) \omega(t,x)dx,
\end{align*}
where
\begin{align*}
F_m(x) &= \op{const} \cdot \partial_1^{m+1} ( \log |x| ) \notag \\
&= \op{const} \cdot |x|^{-(2m+2)} ( x_1^{m+1} + x_2 F_{\op{low}}(x) ),
\end{align*}
where $F_{\op{low}}(x)$ collects terms of the form $x_1^l x_2^{m-l}$ with $l\le m$.

Now similar to the $C^2$ case, we need to consider the quantity
\begin{align*}
H=\partial_r \Delta\biggl( F_m(rx_1, \frac 1 r x_2) \biggr) \Bigr|_{r=1}.
\end{align*}

Since
\begin{align*}
&\Delta \biggl( F_m(rx_1, \frac 1 r x_2) \biggr) \notag \\
= & r^2 (\partial_1^2 F_m)(rx_1,\frac 1 r x_2) + \frac 1 {r^2} (\partial_2^2 F_m)(rx_1,\frac 1r x_2),
\end{align*}
then easy to check that
\begin{align*}
H= \Bigl( 2(\partial_1^2-\partial_2^2) +x_1 \partial_1 \Delta-x_2 \partial_2 \Delta\Bigr)F_m(x_1,x_2).
\end{align*}
Note that by definition, $\Delta F_m =0$ for $|x|\ne 0$ and $a_0(x)$ will be chosen to be localized near
$(x_1,x_2)=(1,0)$. Therefore on the support of $a_0$, we have
\begin{align*}
H&= 4 \partial_1^2 F_m (x_1,x_2) \notag \\
& = \op{const} \cdot |x|^{-(2m+6)} ( x_1^{m+3} + x_2 \tilde F_{\op{low}}(x) ),
\end{align*}
where $\tilde F_{\op{low}}$ is a polynomial homogeneous of degree $m+2$. Obviously $H(x_1=1,x_2=0)\ne 0$ and
we can choose $a_0$ sufficiently localized near $(x_1,x_2)=(1,0)$ such that
\begin{align*}
\int H(x_1,x_2) a_0(x) dx \ne 0.
\end{align*}
After this modification, the rest of the argument proceeds in a very much similar way as in the $C^2$ case. We omit further
details.
%\frac{x_1^3-3x_1 x_2^2} {|x|^6}.

\section{patching for 2D $C^m$, $m\ge 1$ case} \label{sec_2D_patch}
In this section we will establish several lemmas needed for the patching of local constructions in
previous sections. The bulk of this section will be occupied with the 2D $C^1$ case. At the very
end we sketch the patching argument for general $C^m$, $m\ge 2$ case. Note that the latter
case is considerably simpler in view of flow decoupling.

We first state a simple control of flow map lemma.
Let $U^{(o)} (t,x)$ be a smooth velocity field and consider
\begin{align*}
\begin{cases}
\partial_t \Phi^{(o)}(t,x) = U^{(o)} (t, \Phi^{(o)}(t,x)),\\
\Phi^{(o)} (0,x)=x.
\end{cases}
\end{align*}
Assume on some time interval $[0,T_1]$, $T_1>0$,
%\begin{itemize}
%\item $U(t,0)\equiv 0$, $0\le t\le T_1$;
%\item $\max_{0\le t\le T_1} \| DU(t,\cdot)\|_{\infty}= A_1>0$.
%\end{itemize}
\begin{align}
\max_{0\le t\le T_1} \| D U^{(o)} (t,\cdot) \|_{\infty} =A_1 >0. \notag
\end{align}
Note that here we \emph{do not} assume $U^{(o)} (t,0)=0$.

Denote
\begin{align}
& \Phi(t,x)= \Phi^{(o)}(t,x) - \Phi^{(o)} (t,0), \notag\\
& U(t,x) = U^{(o)} (t,x+\Phi^{(o)} (t,0)) - U^{(o)} (t, \Phi^{(o)} (t,0)). \notag
\end{align}

Then
\begin{lem}\label{lem_hc0}
For any $\lambda\ge 3$, $|x| \lesssim 1$, and any $0\le t\le T_1$,
\begin{align}
 & \Bigl| \lambda  \Phi(t,\lambda^{-1} x) - x - \lambda t  U(0,\lambda^{-1} x) -\int_0^t (t-\tau) \lambda
 (\partial_{\tau}  U)(\tau,\lambda^{-1} x) d\tau \Bigr| \notag\\
 \lesssim & \; A_1^2 t^2 e^{tA_1}, \label{lem_hc0_st_e1}
 \end{align}
 and
 \begin{align}
  & \Bigl|\int_0^t (t-\tau) \lambda (\partial_{\tau}  U)(\tau, \lambda^{-1} x) d\tau \Bigr| \notag \\
\lesssim & \; (A_1t+A_1^2 t^2) e^{tA_1}. \label{lem_hc0_st_e2}
\end{align}
\end{lem}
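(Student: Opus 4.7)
The plan is to reduce the lemma to a shifted characteristic ODE that pins the origin. Differentiating $\Phi(t,x)=\Phi^{(o)}(t,x)-\Phi^{(o)}(t,0)$ and setting $y=\Phi(t,x)$ shows that
\begin{align*}
\partial_t \Phi(t,x)=U(t,\Phi(t,x)),\qquad \Phi(0,x)=x,
\end{align*}
with $U(t,0)=0$. Because $U(t,\cdot)$ is a spatial translate of $U^{(o)}(t,\cdot)$ (up to a constant), one immediately has $\|DU(t,\cdot)\|_\infty\le A_1$ and, by the standard Gronwall argument on \eqref{dphi_eq1}, $\|D\Phi(t,\cdot)\|_\infty\le e^{tA_1}$. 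The setup is then completely parallel to the one in Lemma \ref{lem_ga10}.

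The key algebraic identity is a Fubini manipulation of the time derivative of $U$: writing $U(\tau,\lambda^{-1}x)=U(0,\lambda^{-1}x)+\int_0^\tau(\partial_s U)(s,\lambda^{-1}x)\,ds$ and integrating in $\tau$ yields
\begin{align*}
\int_0^t (t-\tau)\,\lambda(\partial_\tau U)(\tau,\lambda^{-1}x)\,d\tau = \int_0^t \lambda U(\tau,\lambda^{-1}x)\,d\tau - t\lambda U(0,\lambda^{-1}x).
\end{align*}
For \eqref{lem_hc0_st_e2} I would estimate the right-hand side termwise using $|\lambda U(\tau,\lambda^{-1}x)|=|\lambda[U(\tau,\lambda^{-1}x)-U(\tau,0)]|\le A_1|x|\lesssim A_1$, which gives a bound of order $A_1 t$ and is absorbed by $(A_1 t+A_1^2 t^2)e^{tA_1}$.

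For \eqref{lem_hc0_st_e1} the same identity converts the expression to be bounded into
\begin{align*}
\lambda\Phi(t,\lambda^{-1}x) - x - \int_0^t \lambda U(\tau,\lambda^{-1}x)\,d\tau = \lambda\int_0^t\bigl[U(\tau,\Phi(\tau,\lambda^{-1}x))-U(\tau,\lambda^{-1}x)\bigr]\,d\tau,
\end{align*}
after inserting the ODE $\lambda\Phi(t,\lambda^{-1}x)-x=\int_0^t\lambda U(\tau,\Phi(\tau,\lambda^{-1}x))\,d\tau$. The Lipschitz bound on $U$ dominates the integrand by $\lambda A_1|\Phi(\tau,\lambda^{-1}x)-\lambda^{-1}x|$, and since $\Phi(\tau,0)=0$ one estimates
\begin{align*}
|\Phi(\tau,\lambda^{-1}x)-\lambda^{-1}x|\le \int_0^\tau|U(s,\Phi(s,\lambda^{-1}x))|\,ds\le \int_0^\tau A_1 e^{sA_1}\lambda^{-1}|x|\,ds\lesssim \lambda^{-1}\tau A_1 e^{\tau A_1}.
\end{align*}
Feeding this back gives an integrand of size $A_1^2\tau e^{\tau A_1}$; integration over $[0,t]$ produces the claimed bound $A_1^2 t^2 e^{tA_1}$.

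No substantive obstacle is anticipated: both estimates follow from the Fubini identity and the usual Gronwall bound on $D\Phi$, once one makes the harmless cosmetic shift to the pinned flow $\Phi$ where $U$ vanishes at the origin. The only point worth emphasizing is that the same identity simultaneously produces (1) (where the nonlinear displacement enters) and (2) (where it does not).
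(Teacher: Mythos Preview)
Your proposal is correct and follows essentially the same approach as the paper: reduce to the pinned flow $\Phi$ with $U(t,0)=0$, use the integration-by-parts identity $\int_0^t U(\tau,y)\,d\tau = tU(0,y)+\int_0^t(t-\tau)\partial_\tau U\,d\tau$ to isolate the nonlinear remainder $\int_0^t[U(\tau,\Phi)-U(\tau,y)]\,d\tau$, and bound that remainder via the Lipschitz constant $A_1$ and the Gronwall bound $\|D\Phi\|_\infty\le e^{tA_1}$. The only cosmetic difference is that the paper bounds $|\lambda\Phi(\tau,\lambda^{-1}x)-x|$ via $\|D\Phi(\tau,\cdot)-\operatorname{Id}\|_\infty\lesssim A_1\tau e^{\tau A_1}$, whereas you integrate $|U(s,\Phi)|\le A_1|\Phi|\le A_1 e^{sA_1}\lambda^{-1}|x|$ directly; for \eqref{lem_hc0_st_e2} your direct use of $|\lambda U(\tau,\lambda^{-1}x)|\lesssim A_1$ is in fact slightly cleaner than the paper's route (which deduces \eqref{lem_hc0_st_e2} from \eqref{lem_hc0_st_e1} together with \eqref{lem_hc0_e5} and \eqref{lem_hc0_e7}).
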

\begin{rem}
The weak estimate \eqref{lem_hc0_st_e2} will be used in controlling error terms (in the proof of Lemma
\ref{lem_ha0_1}) when
making the Taylor expansion later.
\end{rem}

\begin{proof}[Proof of Lemma \ref{lem_hc0}]
By definition, it is easy to check that
\begin{align}\notag
\begin{cases}
\partial_t \Phi(t,x) = U(t, \Phi(t,x)), \\
\Phi(0,x) =x.
\end{cases}
\end{align}
and $U(t,0)\equiv 0$, $\Phi(t,0)\equiv 0$. Also $\max_{0\le t\le T_1} \|DU(t,\cdot)\|_{\infty}=A_1$.

Now for any $y\in \mathbb R^d$, we have
\begin{align}
\Phi(t,y)-y  %& = \int_0^t U(\tau, \Phi(\tau, y)) d\tau \notag \\
& = \int_0^t U(\tau, y) d\tau + \int_0^t \Bigl( U(\tau,\Phi(\tau,y)) -U(\tau, y) \Bigr) d\tau \notag\\
& = U(0,y) t \notag \\
& \quad + \int_0^t (t-\tau) (\partial_{\tau} U)(\tau, y) d\tau \notag \\
& \quad + \int_0^t \Bigl( U(\tau,\Phi(\tau,y)) -U(\tau, y) \Bigr) d\tau. \notag
\end{align}
It remains to estimate the quantity
\begin{align*}
Q_1=\int_0^t \sup_{0\le \tau\le t} \sup_{\;\lambda\ge 1, \, |x|\lesssim 1} \lambda |U(\tau,\Phi(\tau, \lambda^{-1} x))
- U(\tau, \lambda^{-1} x)| d\tau.
\end{align*}
Clearly
\begin{align*}
Q_1 &\le t \|DU\|_{\infty} \sup_{0\le \tau \le t}
\sup_{\lambda\ge 1, |x|\lesssim 1} |\lambda \Phi(t,\lambda^{-1} x) - x| \notag \\
& \le t A_1 \sup_{0\le \tau \le t} \sup_{\lambda\ge 1, |x|\lesssim 1} |\lambda \Phi(\tau,\lambda^{-1} x) - x|. \notag
\end{align*}

Now consider $F(x) = \lambda \Phi(\tau,\lambda^{-1} x) - x$. Note that $F(0)=0$ (here we used $\Phi(\tau,0)=0$). Therefore
\begin{align*}
\| F(x)\|_{L_x^{\infty}(|x|\lesssim 1)} &=\|F(x)-F(0)\|_{L_x^{\infty}(|x|\lesssim 1)}
\lesssim \| DF(x) \|_{L_x^{\infty}(|x|\lesssim 1)} \notag \\
& \lesssim \| D\Phi(\tau,z)-\op{Id}\|_{L_z^{\infty}(|z| \lesssim 1)}.
\end{align*}

Since $\partial_\tau D\Phi= (DU)(\tau,\Phi(\tau,z)) D\Phi$, it is easy to show that $\|D\Phi(\tau,\cdot)
\|_{\infty} \le e^{\tau A_1}$ and
$\| \partial_\tau D \Phi\|_{\infty} \le A_1 e^{\tau A_1}$. Since $D\Phi(0,z)=\op{Id}$, we get
\begin{align*}
\| D \Phi(t,z) -\op{Id} \|_{L_z^{\infty}} & \lesssim t \sup_{0\le \tau \le t} \| \partial_\tau D\Phi(\tau,
z)\|_{L_z^{\infty}} \notag \\
 &\lesssim A_1 t e^{tA_1}.
\end{align*}

Hence
\begin{align}
&\sup_{0\le \tau \le t} \sup_{\lambda\ge 1, |x|\lesssim 1} |\lambda \Phi(\tau,\lambda^{-1} x) - x| \notag \\
\lesssim & A_1 t e^{tA_1} \label{lem_hc0_e5}
\end{align}
and
\begin{align*}
Q_1 \le A_1^2 t^2 e^{tA_1}.
\end{align*}
This settles the estimate \eqref{lem_hc0_st_e1}.

Since $U(0,0)=0$, we have
\begin{align} \label{lem_hc0_e7}
|\lambda U(0,\lambda^{-1}x) |_{L_x^{\infty}(|x|\lesssim 1)} \lesssim \| DU \|_{\infty} \lesssim A_1.
\end{align}
The estimate \eqref{lem_hc0_st_e2} then follows from \eqref{lem_hc0_st_e1}, \eqref{lem_hc0_e5} and
\eqref{lem_hc0_e7}.

\end{proof}

Let $U^{\op{ext}}=U^{\op{ext}}(t,x)$ be a given smooth velocity field. Consider 2D Euler in vorticity
form
\begin{align*}
\begin{cases}
\partial_t \omega + (\Delta^{-1} \nabla^{\perp} \omega + U^{\op{ext}})\cdot \nabla \omega=0, \\
\omega\Bigr|_{t=0}=\omega_0,
\end{cases}
\end{align*}
and let $u_0 = \Delta^{-1} \nabla^{\perp} \omega_0$. Define the characteristic line
\begin{align*}
\begin{cases}
\partial_t \phi(t,x) = (\Delta^{-1} \nabla^{\perp} \omega + U^{\op{ext}})(t, \phi(t,x)), \\
\phi(0,x)=x.
\end{cases}
\end{align*}
Assume on some time interval $[0,T_0]$, $T_0\le 1$, we have
\begin{itemize}
\item $\max_{0\le t\le T_0} (1+\| (D \Delta^{-1} \nabla^{\perp} \omega)(t,\cdot)\|_{\infty}) =A \ge 1$;
\item $\max_{0\le t\le T_0} \bigl( \| U^{\op{ext}}(t,\cdot)\|_{H^{10}}+ \| D \partial_t U^{\op{ext}} (t,\cdot)\|_{\infty}
\bigr) \lesssim 1$.
\item $\max_{0\le t\le T_0} \| \Delta^{-1} \nabla^{\perp} \omega(t,\cdot) \|_{\infty} \lesssim 1$.
\end{itemize}

Denote
\begin{align*}
U_1^{\op{ext}}(t,x)= U^{\op{ext}}(t, x+\phi(t,0)) - U^{\op{ext}}(t,\phi(t,0)).
\end{align*}

Then

\begin{lem} \label{lem_ha0}
For any $\lambda \ge 3$, $0\le t\le T_0$, $|x| \lesssim 1$, we have
\begin{align}
  \Bigl|  \int_0^t (t-\tau) \lambda \partial_{\tau} U_1^{\op{ext}}(\tau, \lambda^{-1} x) d\tau \Bigr|
 \lesssim  t^2. \label{lem_ha0_s0}
\end{align}

\end{lem}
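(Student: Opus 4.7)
The plan is to exploit the fact that $U_1^{\op{ext}}(\tau, 0) \equiv 0$ by construction, hence $\partial_\tau U_1^{\op{ext}}(\tau, 0) \equiv 0$ as well. This gives the extra factor of $|x|$ that absorbs the $\lambda$ in front; the rest is a one-line pointwise bound on $\partial_\tau U_1^{\op{ext}}$ followed by integration in $\tau$.

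First I would differentiate $U_1^{\op{ext}}$ in time explicitly. Writing $y(\tau) := \phi(\tau, 0)$, one obtains
\begin{align*}
\partial_\tau U_1^{\op{ext}}(\tau, x) &= \bigl[(\partial_\tau U^{\op{ext}})(\tau, x+y(\tau)) - (\partial_\tau U^{\op{ext}})(\tau, y(\tau))\bigr] \\
&\quad + \bigl[(DU^{\op{ext}})(\tau, x+y(\tau)) - (DU^{\op{ext}})(\tau, y(\tau))\bigr] \cdot y'(\tau).
\end{align*}
Both bracketed differences vanish at $x=0$, so by the mean value inequality each is bounded by the corresponding second-order quantity times $|x|$.

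Next I would collect the pointwise bounds. The hypothesis $\|D \partial_t U^{\op{ext}}\|_\infty \lesssim 1$ handles the first bracket, while Sobolev embedding applied to $\|U^{\op{ext}}\|_{H^{10}} \lesssim 1$ yields $\|D^2 U^{\op{ext}}\|_\infty \lesssim 1$ for the second bracket. The factor $y'(\tau)$ equals $(\Delta^{-1}\nabla^\perp \omega + U^{\op{ext}})(\tau, y(\tau))$, which is $\lesssim 1$ by the third assumption together with $\|U^{\op{ext}}\|_\infty \lesssim \|U^{\op{ext}}\|_{H^{10}} \lesssim 1$. Combining these estimates,
\begin{align*}
|\partial_\tau U_1^{\op{ext}}(\tau, x)| \lesssim |x|, \qquad 0\le \tau\le T_0,\ x\in \mathbb R^2.
\end{align*}
Substituting $x = \lambda^{-1} \tilde x$ with $|\tilde x| \lesssim 1$ yields $|\lambda \partial_\tau U_1^{\op{ext}}(\tau, \lambda^{-1} \tilde x)| \lesssim 1$, uniformly in $\lambda \ge 3$ and $\tau \in [0, T_0]$.

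Finally I would integrate against $(t-\tau)$ on $[0, t]$ to get the claimed $O(t^2)$ bound. The one mild subtlety is that the time derivative $\partial_t U^{\op{ext}}$ only needs to be controlled in $C^1$ in space (hence the hypothesis $\|D\partial_t U^{\op{ext}}\|_\infty \lesssim 1$), rather than through some higher $H^s$ norm; this is precisely why the recentering trick $U_1^{\op{ext}} = U^{\op{ext}}(\cdot, \cdot + \phi(\cdot, 0)) - U^{\op{ext}}(\cdot, \phi(\cdot, 0))$ is crucial—it produces the vanishing at $x=0$ that makes the $\lambda$ out front harmless.
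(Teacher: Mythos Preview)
Your proposal is correct and follows essentially the same approach as the paper: both exploit the vanishing $\partial_\tau U_1^{\op{ext}}(\tau,0)\equiv 0$, compute $\partial_\tau U_1^{\op{ext}}$ explicitly as the sum of a $\partial_\tau U^{\op{ext}}$-difference and a $(DU^{\op{ext}})\cdot \partial_\tau\phi(\tau,0)$-difference, bound each via the mean value inequality using $\|D\partial_t U^{\op{ext}}\|_\infty\lesssim 1$, $\|D^2 U^{\op{ext}}\|_\infty\lesssim 1$ (from $H^{10}$), and $|\partial_\tau\phi(\tau,0)|\lesssim 1$, and then integrate. The only cosmetic difference is that the paper phrases the key step as bounding $\|D\partial_\tau U_1^{\op{ext}}\|_{L^\infty}$ directly, whereas you apply the mean value inequality to each bracketed difference separately; the content is identical.
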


\begin{proof}[Proof of Lemma \ref{lem_ha0}]
Observe $U_1^{\op{ext}}(t,0)\equiv 0$. Therefore for $|x|\lesssim 1$ and $\lambda \ge 3$,
\begin{align*}
  &\lambda | (\partial_{\tau} U_1^{\op{ext}})(\tau, \lambda^{-1} x) | \notag \\
= & \lambda | (\partial_{\tau} U_1^{\op{ext}})(\tau, \lambda^{-1} x) - (\partial_{\tau} U_1^{\op{ext}})(\tau,0)
| \notag \\
\lesssim & \| D \partial_{\tau} U_1^{\op{ext}} (\tau, z) \|_{L_z^{\infty}(|z|\lesssim 1)}.\notag
\end{align*}

By using the definition of $U_1^{\op{ext}}$, we have
\begin{align*}
\partial_{\tau} U_1^{\op{ext}}(\tau, x)& = (\partial_{\tau} U^{\op{ext}})(\tau, x+\phi(\tau,0))
- (\partial_{\tau} U^{\op{ext}} )(\tau, \phi(\tau,0)) \notag \\
& \quad + \partial_{\tau} \phi(\tau,0) \cdot (\nabla U^{\op{ext}})(\tau, x+\phi(\tau,0)) \notag \\
& \quad -\partial_{\tau} \phi(\tau,0) \cdot (\nabla U^{\op{ext}})(\tau, \phi(\tau,0)). \notag
\end{align*}

By using the assumption on $\omega$ and $U^{\op{ext}}$, we have
\begin{align*}
|\partial_{\tau} \phi(\tau, 0)|
&\lesssim \| \Delta^{-1} \nabla^{\perp} \omega\|_{\infty} + \| U^{\op{ext}} \|_{\infty} \notag \\
& \lesssim 1.
\end{align*}

Therefore
\begin{align*}
\| D \partial_{\tau} U_1^{\op{ext}} \|_{\infty} &\lesssim \| \partial_{\tau} D U^{\op{ext}} \|_{\infty}
+ \| D^2 U^{\op{ext}} \|_{\infty} \notag \\
&\lesssim 1.
\end{align*}

\end{proof}

Now under the same assumptions as in Lemma \ref{lem_ha0}, we further assume the initial $\omega_0$ is taken
in terms of the stream function $\psi_0$ as follows:
\begin{align*}
& \psi_0(x)=\sum_{ \sqrt M \le j \le M}2^{-2j} a(2^jx), \\
& \omega_0(x)=\sum_{\sqrt M \le j \le M} (\Delta a)(2^j x),
\end{align*}
where the function $a(x)$ is the same as in Section \ref{sec:2DC1} (see Lemma \ref{lem_gb30}).

Then
\begin{lem}\label{lem_ha0_1}
Assume for some $\rho>0$,
\begin{align}
U^{\op{ext}}(0, x)=0, \qquad \text{for $|x|<\rho$}. \label{lem_ha0_1_e00}
\end{align}

For any $0<t \le T_0$,  we have the estimate
\begin{align*}
|(\Delta^{-1} \partial_{12} \omega)(t,\phi(t,0) )| \gtrsim Mt - M t^2 A^2 -M(A^2 t^2 +A^4 t^4) e^{2tA}.
\end{align*}

\end{lem}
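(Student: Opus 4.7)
The plan is to mirror the main computation of Section \ref{sec:2DC1}, with the fixed stagnation point $0$ replaced by the Lagrangian trajectory $\phi(t,0)$ and Lemma \ref{lem_hc0} used in place of Lemma \ref{lem_ga20}. By Biot--Savart and $F(z)=z_1z_2/|z|^4$,
\begin{align*}
(\Delta^{-1}\partial_{12}\omega)(t,\phi(t,0)) = \tfrac{1}{\pi}\int_{\mathbb{R}^2} F(y-\phi(t,0))\,\omega(t,y)\,dy.
\end{align*}
Assuming $U^{\op{ext}}$ is divergence-free (as is implicit in the patching framework), vorticity transport $\omega(t,\phi(t,x))=\omega_0(x)$ together with $\det D\phi\equiv 1$ yields, after rescaling each dyadic piece of $\omega_0$,
\begin{align*}
(\Delta^{-1}\partial_{12}\omega)(t,\phi(t,0))= \tfrac{1}{\pi}\sum_{\sqrt{M}\le j\le M}\int_{\mathbb{R}^2} (\Delta a)(x)\,F\bigl(\lambda_j\Phi(t,\lambda_j^{-1}x)\bigr)\,dx,
\end{align*}
with $\lambda_j=2^j$ and $\Phi(t,x):=\phi(t,x)-\phi(t,0)$.

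Next, applying Lemma \ref{lem_hc0} with $U^{(o)}:=\Delta^{-1}\nabla^\perp\omega+U^{\op{ext}}$ (so $A_1\lesssim A$ by the hypotheses) expands
\begin{align*}
\lambda_j\Phi(t,\lambda_j^{-1}x) = x + t\lambda_j U(0,\lambda_j^{-1}x) + I_j(t,x) + R_{1,j}(t,x),
\end{align*}
where $I_j:=\int_0^t(t-\tau)\lambda_j(\partial_\tau U)(\tau,\lambda_j^{-1}x)\,d\tau$, $U$ is the shifted drift, and $|R_{1,j}|\lesssim A^2 t^2 e^{tA}$. The hypothesis \eqref{lem_ha0_1_e00} kills $U^{\op{ext}}(0,\cdot)$ near $\phi(0,0)=0$, so for $j\ge\sqrt M$ the dyadic structure of $u_0$ forces $\lambda_j U(0,\lambda_j^{-1}x)=(\nabla^\perp a)(x)$ on $\op{supp}(a)$. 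Taylor expanding $F$ around $x$ (which stays in a fixed annulus, so $D^2 F$ is bounded) and using the weak bound \eqref{lem_hc0_st_e2}, $|I_j|\lesssim (At+A^2t^2)e^{tA}$, the quadratic remainder together with the $R_{1,j}$ contribution is bounded by $O(M(A^2 t^2+A^4 t^4)e^{2tA})$. The zeroth-order sum $\sum_j\int(\Delta a)F\,dx$ vanishes by $\Delta\log|x|\equiv 0$ and integration by parts, and the first-order piece in $t\nabla^\perp a$ produces $Mt\int(\Delta a)(\nabla F)\cdot(\nabla^\perp a)\,dx \ge c_0 M t$ by Lemma \ref{lem_gb30}.

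The main obstacle is controlling the first-order contribution from $I_j$, paired against $(\Delta a)\nabla F$ and summed over $j$; this is the shifted and externally driven analog of Lemma \ref{lem_gb10} and must yield $O(MA^2 t^2)$ rather than the naive $O(MA^2 t^2 \log M)$. Split $U = U_1^{\text{dyn}}+U_1^{\op{ext}}$, where $U_1^{\text{dyn}}(\tau,y):=v(\tau,y+\phi(\tau,0))-v(\tau,\phi(\tau,0))$ with $v:=\Delta^{-1}\nabla^\perp\omega$. The external piece is handled directly by Lemma \ref{lem_ha0}, contributing $O(Mt^2)\le O(MA^2 t^2)$ after summation. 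For the dynamical piece, the identity $\partial_\tau v = -\Delta^{-1}\nabla^\perp\nabla\cdot(U^{(o)}\omega)$ (using div-free $U^{(o)}$) recasts $\sum_j\lambda_j(\partial_\tau U_1^{\text{dyn}})(\tau,\lambda_j^{-1}x)$ as a Riesz-type operator applied to a bilinear field whose $L^\infty$-norm is $O(A^2 M)$, by exactly the commutation-with-dilation trick of Lemma \ref{lem_gb10}. Pairing with $(\Delta a)\nabla F$ and invoking the moment conditions \eqref{gb_40b}--\eqref{gb_40d_p} (which, after integration by parts, place $\Delta^{-1}\nabla\cdot((\Delta a)\nabla F)$ in $L^1(\mathbb{R}^2)$ with $O(|x|^{-3})$ decay) gives $O(A^2 M)$ per time slice, and the weight $\int_0^t(t-\tau)\,d\tau\sim t^2$ produces the desired $O(MA^2 t^2)$. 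The shift by $\phi(\tau,0)$ is a pure spatial translation at each fixed $\tau$ and so does not spoil the dilation commutation; any residual shift-induced terms arising from $\partial_\tau\phi(\tau,0)$ hitting $v$ are lower order and are absorbed either into $R_{1,j}$ or into the Lemma \ref{lem_ha0} bound. Collecting the main term with these error estimates yields the claimed lower bound.
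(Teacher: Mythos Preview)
Your overall architecture is exactly right and matches the paper: expand the flow map via Lemma~\ref{lem_hc0}, Taylor expand $F$, extract the main $Mt$ term from Lemma~\ref{lem_gb30}, bound the external contribution by Lemma~\ref{lem_ha0}, and control the remaining ``dynamical'' piece by a Lemma~\ref{lem_gb10}-type argument.  The gap is in that last step.

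You write $\partial_\tau v=-\Delta^{-1}\nabla^\perp\nabla\cdot(U^{(o)}\omega)$ and claim that the dilation trick turns $\sum_j\lambda_j(\partial_\tau U_1^{\text{dyn}})(\tau,\lambda_j^{-1}x)$ into a Riesz operator acting on a field with $L^\infty$ norm $O(A^2M)$.  But $\Delta^{-1}\nabla^\perp\nabla\cdot$ is homogeneous of degree zero, so after the dilation commutation the inner field is $\sum_j\lambda_j\,(U^{(o)}\omega)(\lambda_j^{-1}\cdot+\phi(\tau,0))$, and the unabsorbed factor $\lambda_j=2^j$ makes this blow up; if instead you move one derivative inside and use the degree $-1$ operator $\Delta^{-1}\nabla^\perp$ on $U^{(o)}\cdot\nabla\omega$, the inner field becomes $\sum_j(U^{(o)}\cdot\nabla\omega)(\lambda_j^{-1}\cdot+\phi)$, whose $L^\infty$ bound is $M\|U^{(o)}\|_\infty\|\nabla\omega\|_\infty$.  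Since $\nabla\omega\sim D^2u$ and for the chosen data $\|D\omega_0\|_\infty\sim 2^M$, this is not $O(A^2M)$.  The bilinear structure \emph{matters}: in Lemma~\ref{lem_gb10} the field was $(\partial u)(\partial u)$, genuinely bounded by $A^2$; your field $U^{(o)}\omega$ is (velocity)$\times$(vorticity), one order of differentiation short.

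The paper avoids this by passing to the \emph{velocity} equation rather than staying at the vorticity level.  From $\nabla^\perp\cdot\bigl(\partial_t u+(u+U^{\op{ext}})\cdot\nabla u\bigr)=-\sum_j(\nabla^\perp U^{\op{ext}}_j)\cdot\partial_j u$ one obtains
\[
\partial_t u=-\bigl((u+U^{\op{ext}})\cdot\nabla\bigr)u-\nabla p-\Delta^{-1}\nabla^\perp\Bigl(\textstyle\sum_j(\nabla^\perp U^{\op{ext}}_j)\cdot\partial_j u\Bigr).
\]
This cleanly separates a \emph{local} advection piece from two \emph{nonlocal} pieces.  The advection piece, after subtracting the value at $\phi(\tau,0)$ and scaling, is $\lambda_j\bigl[(u+U^{\op{ext}})(\phi)-(u+U^{\op{ext}})(\lambda_j^{-1}x+\phi)\bigr]\cdot(\nabla u)(\lambda_j^{-1}x+\phi)$, bounded pointwise by $A\cdot A=A^2$ via the mean-value theorem, so its total contribution is $O(Mt^2A^2)$ without any Riesz-$L^1$ machinery.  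The pressure term has exactly the Lemma~\ref{lem_gb10} structure ($\Delta^{-1}\nabla$ on $(\partial u)^2$); the remaining $\Delta^{-1}\nabla^\perp$ term involves $\partial U^{\op{ext}}\cdot\partial u$, again $O(A)$ in $L^\infty$, and needs the stronger moment condition \eqref{gb_40d_p} (not just \eqref{gb_40d}) to place $\Delta^{-1}\nabla^\perp\cdot((\Delta a)\nabla F)$ in $L^1$.  The constant-in-$x$ pieces $(\nabla p)(\phi)$ and its analogue vanish after integration against $(\Delta a)\partial_lF$ because $\int(\Delta a)\partial_lF=0$.  Your ``residual shift-induced terms'' remark glosses over exactly this decomposition; the $\partial_\tau\phi(\tau,0)\cdot\nabla v$ terms are not lower order---they are part of what produces the advection piece and must be handled as above.
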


\begin{proof}[Proof of Lemma \ref{lem_ha0_1}]
This is similar to the argument in Section \ref{sec:2DC1}. Therefore we only sketch the details. Recall
$F(x)=x_1x_2/|x|^4$. By Lemma \ref{lem_hc0}, we have (note that the term involving $U^{\op{ext}}(0,\lambda^{-1}x)$
drops out due to the assumption \eqref{lem_ha0_1_e00} and by taking $M$ sufficiently large),
\begin{align}
\pi (\Delta^{-1} & \partial_{12} \omega)(t,\phi(t,0) ) \notag \\
&= \sum_{\sqrt M \le j\le M}
\int F(2^j (\phi(t,2^{-j}x)-\phi(t,0) )) \Delta a(x) dx \notag \\
& \ge \sum_{\sqrt M\le j\le M} t \int \nabla F(x) \cdot \nabla^{\perp} a(x)  \Delta a(x) dx  \notag \\
&  \; + \sum_{\sqrt M \le j \le M} \int \nabla F(x) \cdot
\int_0^t (t-\tau) 2^j (\partial_{\tau} U_2)(\tau, 2^{-j}
 x) d\tau \Delta a (x) dx \label{lem_ha0_e10a} \\
&  \; + \sum_{\sqrt M \le j \le M} \int \nabla F(x) \cdot
\int_0^t (t-\tau) 2^j (\partial_{\tau} U_1)(\tau, 2^{-j}
 x) d\tau \Delta a (x) dx \label{lem_ha0_e10b} \\
 & \;+ \op{error}, \notag
\end{align}
where
\begin{align*}
U_1(\tau,x) &= (\Delta^{-1} \nabla^{\perp} \omega)(\tau,x+\phi(\tau,0)) -
(\Delta^{-1} \nabla^{\perp} \omega) (\tau, \phi(\tau,0)), \notag \\
U_2(\tau,x) & = U^{\op{ext}}(\tau, x+\phi(\tau,0)) - U^{\op{ext}}(\tau, \phi(\tau,0));
\end{align*}
and
\begin{align*}
\| \op{error} \|_{\infty} \lesssim M(A^2 t^2 +A^4 t^4) e^{2tA}.
\end{align*}
By Lemma \ref{lem_ha0}, we have
\begin{align*}
| \eqref{lem_ha0_e10a} | \lesssim M t^2.
\end{align*}
Therefore we only need to deal with \eqref{lem_ha0_e10b}.  For this denote
$u(t) = \Delta^{-1}\nabla^{\perp}  \omega(t)$.
Then since $\nabla^{\perp} \cdot u(t) = \omega(t)$, we get the equation
\begin{align*}
\nabla^{\perp}\cdot \Bigl( \partial_t u + ( u+ U^{\op{ext}})\cdot \nabla u \Bigr) = - \sum_{j=1}^2 (\nabla^{\perp}
U^{\op{ext}}_j)\cdot (\partial_j u).
\end{align*}
Here we denote $U^{\op{ext}}=(U^{\op{ext}}_1, U^{\op{ext}}_2)$, and we have used the fact that in $2D$,
if $\nabla \cdot u=0$, then
\begin{align*}
\nabla^{\perp} \cdot \Bigl(  (u \cdot \nabla) u \Bigr) = (u\cdot \nabla )( \nabla^{\perp} \cdot u).
\end{align*}
Thus we get
\begin{align*}
\partial_t u + (u+ U^{\op{ext}} ) \cdot \nabla u = -\nabla p  -\Delta^{-1} \nabla^{\perp}
\Bigl( \sum_{j=1}^2 ( \nabla^{\perp} U^{\op{ext}}_j) \cdot (\partial_j u) \Bigr).
\end{align*}

In terms of $u$, we have
\begin{align}
(\partial_{\tau} U_1)(\tau, x) &= (\partial_{\tau} u)(\tau, x+\phi(\tau,0) ) +
\partial_{\tau} \phi(\tau,0) \cdot  (\nabla u)(\tau, x+\phi(\tau,0)) \notag \\
&\quad  -(\partial_{\tau} u)(\tau, \phi(\tau,0) ) -
\partial_{\tau} \phi(\tau,0) \cdot  (\nabla u)(\tau, \phi(\tau,0)). \notag\\
& = \bigl( (u+U^{\op{ext}})(\tau, \phi(\tau,0)) - (u+U^{\op{ext}})(\tau, x+ \phi(\tau,0))
\bigr)\cdot (\nabla u)(\tau,x+\phi(\tau,0)) \label{lem_ha0_e20a} \\
& \quad - (\nabla p)(\tau, x+\phi(\tau,0)) \label{lem_ha0_e20aa}\\
&\quad +(\nabla p)(\tau, \phi(\tau,0))  \label{lem_ha0_e20b} \\
& \quad -\Biggl( -\Delta^{-1} \nabla^{\perp}
\Bigl( \sum_{j=1}^2 ( \nabla^{\perp} U^{\op{ext}}_j) \cdot (\partial_j u) \Bigr)
\Biggr)(\tau, x+\phi(\tau,0)) \label{lem_ha0_e20c}\\
& \quad +\Biggl( -\Delta^{-1} \nabla^{\perp}
\Bigl( \sum_{j=1}^2 ( \nabla^{\perp} U^{\op{ext}}_j) \cdot (\partial_j u) \Bigr)
\Biggr)(\tau, \phi(\tau,0)). \label{lem_ha0_e20d}
\end{align}

Observe that for $\lambda =2^j$,
\begin{align*}
&\lambda \| (u+ U^{\op{ext}})(\tau, \lambda^{-1} x +\phi(\tau,0))-
(u+ U^{\op{ext}})(\tau, \phi(\tau,0)) \|_{L_x^{\infty}(|x| \lesssim 1)} \notag\\
\lesssim & \|Du \|_{\infty} + \| D U^{\op{ext}} \|_{\infty}  \lesssim A+1 \lesssim A.
\end{align*}
Therefore the contribution of the term \eqref{lem_ha0_e20a} in \eqref{lem_ha0_e10b} is
\begin{align*}
\lesssim M t^2 A^2
\end{align*}
which is OK.

Now for the pressure term \eqref{lem_ha0_e20aa}, we may proceed the
same way as in the proof of Lemma \ref{lem_gb10} (note that
translation by $\phi(\tau,0)$ does not affect the argument), and its
contribution is
\begin{align*}
\lesssim M t^2 A^2.
\end{align*}

The contribution of \eqref{lem_ha0_e20b} to \eqref{lem_ha0_e10a} is
zero since for any $l=1,2$,
\begin{align*}
\int_{\mathbb R^2} \partial_{x_l} F(x) \Delta a (x) dx = \int_{\mathbb R^2} \partial_{x_l} \Delta F(x) a(x) dx =0.
\end{align*}
(Recall that $F$ corresponds to $\Delta^{-1} \partial_1 \partial_2$ and $a$ is supported away from zero.)

By the same reason, the contribution of \eqref{lem_ha0_e20d} to \eqref{lem_ha0_e10a} is also zero.

Finally for the term \eqref{lem_ha0_e20c}, denote $x_0=\phi(\tau,0)$ and note that
\begin{align*}
  &\lambda \Bigl(\Delta^{-1} \nabla^{\perp} ( \nabla^{\perp} U_j^{\op{ext}} \cdot \partial_j u )
  \Bigr)(t, \lambda^{-1} x+x_0) \notag \\
= & \Bigl((\Delta^{-1} \nabla^{\perp}) \Bigl( \nabla^{\perp} U_j^{\op{ext}} (\lambda^{-1} y)
\cdot \partial_j u (\lambda^{-1} y)   \Bigr) \Bigr)(x+ \lambda x_0) \notag \\
= & \Bigl((\Delta^{-1} \nabla^{\perp}) \Bigl( \nabla^{\perp} U_j^{\op{ext}} (\lambda^{-1} y -x_0)
\cdot \partial_j u (\lambda^{-1} y-x_0)   \Bigr) \Bigr)(x), \notag
\end{align*}
where in  the above we used the notation $(\partial_j u)(\lambda^{-1} y)$ to stress that the scaling is
done before applying the non-local operator $\Delta^{-1} \nabla^{\perp}$.

After the above switch, one can proceed in a similar fashion as in the $\nabla p$ case. Note that we
need to check the condition $\Delta^{-1} \nabla^{\perp} \cdot ( \nabla F \Delta a ) \in L_x^1$. The only
difference is that the conditions \eqref{lem_gb10_e4} and \eqref{lem_gb10_e5} become\footnote{One just
need to replace $g(y)$ therein by $g(y)^{\perp}=(-g_2(y),g_1(y))$.}
\begin{align*}
&\int  y_1 g_2(y) dy= \int y_2 g_1(y) dy=0,\\
& \int (-y_1 g_1(y) + y_2 g_2(y) ) dy =0.
\end{align*}
This only introduces the stronger condition \eqref{gb_40d_p} in Lemma \ref{lem_gb30} which is obviously
satisfied.

Hence the term \eqref{lem_ha0_e20d} also gives the contribution
\begin{align*}
\lesssim M t^2 A^2.
\end{align*}

\end{proof}

\begin{lem}[Patching lemma for 2D $C^1$ case] \label{lem_ha1}
Suppose $u^A \in C_c^{\infty}(\mathbb R^2)$ is a given velocity field on $\mathbb R^2$ such that
 for some $R_0>0$,
\begin{align*}
\operatorname{supp}{(u^A)} \subset\{x=(x_1,x_2):\,  x_2 <-2R_0 \};
\end{align*}

Then for any $0<\epsilon<\frac{R_0}{100}$, one can find $0<\delta_0=\delta_0(u^A, \epsilon,R_0)<R_0$,
$0<t_0=t_0(u^A,\epsilon,R_0)<\epsilon$, and $\psi^B \in C_c^{\infty}(B(0,\epsilon))$ ($\psi^B$
depends only on $(u^A, \epsilon, R_0)$) with the property
$$\|\psi^B \|_{\infty}+ \|D^2 \psi^B \|_{\infty} <\epsilon$$

such that for any velocity field $u^C \in C_c^{\infty}(\mathbb R^2)$ with the properties:
\begin{itemize}
%\item $\psi^C$ is odd in $x_1$;
\item $\operatorname{supp}(u^C) \subset \{ x=(x_1,x_2):\, x_2 >R_0\}$;
\item $\|u^C \|_{2} + \|D u^C \|_{\infty} <\delta_0$;
\end{itemize}
the following hold true:

Consider the 2D Euler equation (in velocity form) for $u=(u_1,u_2)$:
\begin{align*}
\begin{cases}
\partial_t u + (u\cdot \nabla) u + \nabla p =0, \\
\nabla \cdot u=0,\\
u\Bigr|_{t=0} = u^A +u^B + u^C,
\end{cases}
\end{align*}
where  $u^B= \nabla^{\perp} \psi^B$.
Consider also the corresponding vorticity equation
\begin{align*}
\begin{cases}
\partial_t \omega + (u\cdot \nabla) \omega =0, \\
u= \Delta^{-1} \nabla^{\perp} \omega,\\
\omega \Bigr|_{t=0} = \omega_0^A + \omega_0^B + \omega_0^C,
\end{cases}
\end{align*}
where $\omega_0^A = \nabla^{\perp}\cdot u^A$, $\omega_0^B=\Delta \psi^B$, $\omega_0^C = \nabla^{\perp}
\cdot u^C$. Then the smooth
solution $u=u(t,x)$ with vorticity $\omega =\omega(t,x)$ satisfies the following properties:

\begin{enumerate}
\item for any $0\le t\le t_0$, we have the decomposition
\begin{align*}
\omega(t,x) = \omega^A(t,x) + \omega^B (t,x) + \omega^C(t,x),
\end{align*}
where
\begin{align}
& \operatorname{supp}(\omega^A(t)) \subset B( \operatorname{supp}(u^A), \frac 1 8 R_0); \notag\\
& \operatorname{supp}(\omega^B(t)) \subset B(0, \epsilon+ \frac 18 R_0); \notag\\
& \operatorname{supp}(\omega^C(t)) \subset B(\operatorname{supp}(u^C), \frac 18 R_0). \label{lem_ha1_e3}
\end{align}

\item there exist $0<t_1<t_2<t_0$, such that for any $t\in [t_1,t_2]$, we have
\begin{align} \label{lem_ha1_e4_a9}
\| (D\Delta^{-1} \nabla^{\perp}\omega^B)(t,x) \|_{L_x^{\infty}(|x| \le \epsilon + \frac 18 R_0)} > \frac 2 {\epsilon},
\end{align}
and
\begin{align} \label{lem_ha1_e4_a10}
&\| (D\Delta^{-1}\nabla^{\perp} \omega^A)(t,x) \|_{L_x^{\infty}(|x| \le \epsilon + \frac 18 R_0)}
+\| (D\Delta^{-1}\nabla^{\perp} \omega^B)(t,x) \|_{L_x^{\infty}(|x| > \epsilon + \frac 18 R_0)} \notag \\
&\quad +\| (D\Delta^{-1}\nabla^{\perp} \omega^C)(t,x) \|_{L_x^{\infty}(|x| \le \epsilon + \frac 18 R_0)}
\lesssim_{R_0,u_0^A} 1.
\end{align}
Consequently,
\begin{align} \label{lem_ha1_e4}
\| (Du)(t,x) \|_{L_x^{\infty}(|x| \le \epsilon + \frac 18 R_0)} > \frac 1 {\epsilon}.
\end{align}

\end{enumerate}

\end{lem}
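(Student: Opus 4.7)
The plan is to choose $\psi^B$ as a rescaled, localized version of the high-frequency stream function from Section \ref{sec:2DC1}, and then invoke Lemma \ref{lem_ha0_1} on the $\omega^B$ piece of the solution with the velocity induced by $\omega^A, \omega^C$ playing the role of $U^{\operatorname{ext}}$. Concretely, let $a \in C_c^\infty(\mathbb{R}^2)$ be the function furnished by Lemma \ref{lem_gb30}, and set
\begin{align*}
\psi^B(x) = \epsilon^4 \sum_{\sqrt{M} \le j \le M} 2^{-2j}\, a\bigl(2^j x/\epsilon\bigr),
\end{align*}
where $M = M(\epsilon, u^A, R_0) \gg 1$ is taken polynomial in $1/\epsilon$ (say $M \sim \epsilon^{-6}$). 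The dyadic summands have disjoint supports contained in $B(0,\epsilon)$, and a direct computation gives $\|\psi^B\|_\infty + \|D^2\psi^B\|_\infty \lesssim \epsilon^2 < \epsilon$, meeting the smallness constraint. The 2D Euler scaling $\omega \mapsto \mu\omega(\mu t, x/\lambda)$ with $\mu = \epsilon^2,\,\lambda = \epsilon$ reduces the intrinsic dynamics of $\omega^B$ to the unscaled construction of Section \ref{sec:2DC1} at parameter $M$, multiplied by an amplitude factor $\epsilon^2$ and with rescaled time $t_{\operatorname{orig}} = \epsilon^2 t$.

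For the full solution, I would define $\omega^A(t),\omega^B(t),\omega^C(t)$ as pure transports of the respective initial data along the common Lagrangian flow $\phi$ of $u$; since the initial supports are disjoint, they remain disjoint for all time. A Biot--Savart estimate together with conservation of $\|\omega\|_p$ yields $\|u(t)\|_\infty \lesssim_{u^A} 1 + \delta_0 + \epsilon^2$, and finite propagation gives $|\phi(t,x)-x| \le C(u^A) t$. Choosing $t_0$ sufficiently small (depending on $u^A, R_0, \epsilon$) shrinks the displacement below $R_0/8$, establishing \eqref{lem_ha1_e3}.

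To prove \eqref{lem_ha1_e4_a9}, observe that $\omega^B$ satisfies
\begin{align*}
\partial_t \omega^B + (\Delta^{-1}\nabla^\perp\omega^B + U^{\operatorname{ext}})\cdot\nabla\omega^B = 0, \qquad U^{\operatorname{ext}} := \Delta^{-1}\nabla^\perp(\omega^A + \omega^C).
\end{align*}
Because $\omega^A(t)$ and $\omega^C(t)$ stay at distance $\ge R_0/2$ from $B(0,\epsilon+R_0/8)$, the field $U^{\operatorname{ext}}$ is smooth near the origin, and $U^{\operatorname{ext}}(0,x) = u^A(x)+u^C(x)=0$ for $|x|<R_0$ by the support hypotheses; the remaining quantitative assumptions of Lemma \ref{lem_ha0_1} on $\|U^{\operatorname{ext}}\|_{H^{10}}$ and $\|D\partial_t U^{\operatorname{ext}}\|_\infty$ follow from the $C_c^\infty$ regularity of $u^A, u^C$ together with Biot--Savart estimates across the $R_0/2$ gap. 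Applying the rescaled Lemma \ref{lem_ha0_1} produces
\begin{align*}
\bigl|(\Delta^{-1}\partial_{12}\omega^B)(t,\phi(t,0))\bigr| \gtrsim \epsilon^2\bigl(M t_{\operatorname{orig}} - M t_{\operatorname{orig}}^2 A^2 - M(A^2 t_{\operatorname{orig}}^2 + A^4 t_{\operatorname{orig}}^4)e^{2t_{\operatorname{orig}} A}\bigr),
\end{align*}
where $A = 1 + \|Du^B(t)\|_\infty$. The standard dichotomy (either $A \gg \log M$, which already gives the required inflation; or $A \lesssim \log M$, making the error terms negligible once $t_{\operatorname{orig}} = K/M$ with $K \sim \epsilon^{-3}$) yields $\|Du^B(t)\|_\infty > 2/\epsilon$ on a subinterval $[t_1,t_2]\subset[0,t_0]$. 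The complementary bound \eqref{lem_ha1_e4_a10} then follows by a direct Biot--Savart estimate exploiting the $R_0/2$-separation of the relevant supports.

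The main obstacle is orchestrating the hierarchy of scales so that the strong smallness requirement on $\psi^B$ (amplitude $\lesssim \epsilon^4$) remains compatible with producing inflation above the threshold $1/\epsilon$ within time $<\epsilon$; this is what fixes the choices $M \sim \epsilon^{-6}$, $K \sim \epsilon^{-3}$ above, and it requires the bootstrap on $A$ to be executed carefully over the full interval $[0,t_0]$. A secondary delicate point is the choice of $\delta_0 = \delta_0(u^A,\epsilon,R_0)$, which must be small enough that the \emph{a priori unknown} field $u^C$ perturbs neither $U^{\operatorname{ext}}$ nor the Lagrangian flow near the origin appreciably, uniformly over the admissible class of $u^C$.
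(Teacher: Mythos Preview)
Your overall strategy matches the paper's: define $\omega^A,\omega^B,\omega^C$ as transports of the three initial pieces along the common flow, use finite transport speed for the support statements, bound the cross Biot--Savart contributions by the $R_0$-gap, and reduce the inflation of $\omega^B$ to Lemma~\ref{lem_ha0_1} with the velocity from the other two patches playing the role of $U^{\operatorname{ext}}$. Two points, however, deserve correction.

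First, the spatial rescaling by $\epsilon$ in your choice of $\psi^B$ is unnecessary and is the source of the awkward ``rescaled Lemma~\ref{lem_ha0_1}''. The paper simply takes
\[
\psi^B(x)=\sum_{\sqrt M\le j\le M}2^{-2j}a(2^jx),
\]
whose support already lies in $|x|\lesssim 2^{-\sqrt M}\ll\epsilon$ once $M$ is large; no $x\mapsto x/\epsilon$ is needed. Your concern about $\|D^2\psi^B\|_\infty<\epsilon$ is legitimate (the paper glosses over it), but it is addressed by a single amplitude prefactor $c\lesssim\epsilon$, after which Lemma~\ref{lem_ha0_1} applies \emph{directly} in physical variables with the obvious modification that the main term becomes $c^2Mt$ and the error terms pick up a single factor of $c$. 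The formula you wrote, $\epsilon^2(Mt_{\operatorname{orig}}-Mt_{\operatorname{orig}}^2A^2-\cdots)$ with $t_{\operatorname{orig}}=\epsilon^2t$, is not what one obtains: the errors do not rescale this favorably because $U^{\operatorname{ext}}$ does not participate in the Euler scaling. In particular your numerology $M\sim\epsilon^{-6}$, $K\sim\epsilon^{-3}$ does not close; one needs $M$ somewhat larger once the prefactor is tracked correctly.

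Second, you set $U^{\operatorname{ext}}=\Delta^{-1}\nabla^\perp(\omega^A+\omega^C)$ without a cutoff. This field decays only like $|x|^{-1}$ and hence fails the hypothesis $\|U^{\operatorname{ext}}\|_{H^{10}}\lesssim 1$ of Lemma~\ref{lem_ha0_1}. The paper inserts a smooth cutoff $\chi_{<\epsilon+\frac1{16}R_0}$; with it, all derivatives of $U^{\operatorname{ext}}$ on the support of $\omega^B$ are controlled solely by $\|\omega^A\|_1+\|\omega^C\|_1$ through the smooth (separated) Biot--Savart kernel, and this is what makes the bounds uniform over the admissible class of $u^C$ (for which only $\|u^C\|_2+\|Du^C\|_\infty<\delta_0$ is assumed, not higher regularity).
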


\begin{proof}[Proof of Lemma \ref{lem_ha1}]
We take $\psi_0^B$ in the form
\begin{align*}
\psi_0^B(x) = \sum_{ \sqrt M \le j \le M} 2^{-2j} a(2^jx),
\end{align*}
where $a(x)$ is the same as in Lemma \ref{lem_ha0_1} and $M$ will be taken sufficiently large.

Introduce $\omega^A(t)$ such that
\begin{align*}
\begin{cases}
\partial_t \omega^A(t,x) + (u(t,x) \cdot \nabla) \omega^A(t,x)=0, \\
\omega^A \Bigr|_{t=0} = \omega_0^A.
\end{cases}
\end{align*}
Define $\omega^B(t)$, $\omega^C(t)$ similarly (with initial values $\omega_0^B$ and $\omega_0^C$ respectively).
By using finite speed transport and choosing $t_0$ sufficiently small, one can easily make for any
$0\le t\le t_0$,
\begin{align*}
& \operatorname{supp}(\omega^A(t)) \subset B( \operatorname{supp}(u^A), \frac 1 {16} R_0); \notag\\
& \operatorname{supp}(\omega^B(t)) \subset B(0, \epsilon+ \frac 1{16} R_0); \notag\\
& \operatorname{supp}(\omega^C(t)) \subset B(\operatorname{supp}(u^C), \frac 1{16} R_0). \notag
\end{align*}
Note that here we take $\frac 1{16} R_0$ (instead of $\frac 18 R_0$ in \eqref{lem_ha1_e3}) to have a little bit
more room to play.

Now $\omega=\omega^A+\omega^B + \omega^C$, we have
\begin{align*}
Du(t) = D\Delta^{-1} \nabla^{\perp} \omega^A(t) + D \Delta^{-1} \nabla^{\perp} \omega^B(t) +
D\Delta^{-1} \nabla^{\perp} \omega^C(t).
\end{align*}
Since the support of $\omega^A(t)$ remains at a distance at least $R_0$
from the support of $\omega^B(t)$, easy to check that for any $0\le t\le t_0$,
\begin{align*}
\| D \Delta^{-1} \nabla^{\perp} \omega^A(t) \|_{L_x^{\infty}(|x|<\epsilon+\frac 18 R_0)}
\lesssim_{u^A, R_0} 1.
\end{align*}
Similarly
\begin{align*}
\| D \Delta^{-1} \nabla^{\perp} \omega^C(t) \|_{L_x^{\infty}(|x|<\epsilon+\frac 18 R_0)}
\lesssim_{R_0} 1.
\end{align*}
Also since $\op{supp}(\omega^B(t)) \subset B(0,\epsilon+\frac 1 {16} R_0)$, easy to check that
\begin{align*}
\| D \Delta^{-1} \nabla^{\perp} \omega^B (t) \|_{L_x^{\infty}(|x|\ge \epsilon+\frac 18 R_0)} \lesssim_{R_0} 1.
\end{align*}
Therefore the main "mass" of $\| Du(t) \|_{L_x^{\infty} (|x| < \epsilon+ \frac 18 R_0)}$ is
given by $\| D \Delta^{-1} \nabla^{\perp}\omega^B(t)\|_{\infty}$ which is concentrated in the region $B(0,\epsilon+\frac 18R_0)$.

To get \eqref{lem_ha1_e4}, we only need to examine the equation for $\omega^B(t)$. Clearly it has the form
\begin{align*}
\begin{cases}
\partial_t \omega^B(t,x) + ( \Delta^{-1} \nabla^{\perp} \omega^B(t) + U^{\op{ext}}(t,x)) \cdot \nabla
\omega^B(t,x) =0, \\
\omega^B \Bigr|_{t=0} = \omega_0^B,
\end{cases}
\end{align*}
where
\begin{align*}
U^{\op{ext}}(t) = \chi_{ < \epsilon+\frac 1{16} R_0} \cdot \Bigl((\Delta^{-1}\nabla^{\perp} \omega^A) (t,x) +
(\Delta^{-1} \nabla^{\perp}\omega^C)(t,x))
\Bigr).
\end{align*}
Here $\chi_{<\epsilon+\frac 1{16}R_0} \in C_c^{\infty}(\mathbb R^2)$ is such that
$\chi_{<\epsilon+\frac 1{16}R_0}(x)=1$ for $|x|<\epsilon+\frac 1 {16}R_0$ and $\chi_{<\epsilon+\frac 1{16}R_0}
(x)=0$ for $|x|\ge \epsilon+\frac 1 8R_0$. Easy to verify that
\begin{align*}
\| U^{\op{ext} }(t,x) \|_{H^{10}}+ \| D\partial_t  U^{\op{ext}}(t) \|_{\infty} \lesssim_{R_0} 1.
\end{align*}
We can then apply Lemma \ref{lem_ha0_1} to get the result. We omit further routine details.

\end{proof}

We now sketch the patching argument for general $C^m$, $m\ge 2$. To avoid unnecessary numerology we will just
explain the $C^2$ case. We just need to modify the inflation argument from Section \ref{sec_local_2Dcm} and
still produce $C^2$ norm inflation for the system
\begin{align*}
\begin{cases}
\partial_t \omega + (U^{\op{ext}}(t,x) + u \cdot \nabla) \omega =0, \\
u=\Delta^{-1} \nabla^{\perp} \omega,\\
\omega \Bigr|_{t=0}=\omega_0,
\end{cases}
\end{align*}
where $U^{\op{ext}}$ is smooth (it represents contribution of velocity from other patches)
 and $U^{\op{ext}}(0,x)=0$ for $x$ in a small neighborhood of the origin.

Take $\omega_0= \Delta \psi_0$, $\psi_0=\psi_0^{(l)}+\psi_0^{(h)}$, where
\begin{align}
\psi_0^{(h)}(x) = \sum_{M\le j \le M+\sqrt M} 2^{-3j} a_0(2^{j} x). \notag
\end{align}

Let $\omega^{(l)}$ and $\omega^{(h)}$ solve respectively
\begin{align*}
\begin{cases}
\partial_t \omega^{(l)} + (U^{\op{ext}} + \Delta^{-1} \nabla^{\perp} \omega^{(l)}\cdot
\nabla) \omega^{(l)} =0,\\
\omega^{(l)} \Bigr|_{t=0} =\omega_0^{(l)} =\Delta \psi_0^{(l)};
\end{cases}\\
\begin{cases}
\partial_t \omega^{(h)} + (U^{\op{ext}} + \Delta^{-1} \nabla^{\perp} \omega^{(l)}\cdot
\nabla) \omega^{(h)} =0,\\
\omega^{(h)} \Bigr|_{t=0} =\omega_0^{(h)} =\Delta \psi_0^{(h)};
\end{cases}
\end{align*}

By using a similar argument to that in Lemma \ref{lem_gd1}, it is not difficult to check
that we can also get flow decoupling (in the presence of smooth $U^{\op{ext}}$) such that
\begin{align*}
 \|D^2 u(t)-D^2 \tilde u(t)\|_{\infty} \ll 1,
 \end{align*}
 where $\tilde u= \Delta^{-1} \nabla^{\perp} \omega^{(h)}$.

Similar to Section \ref{sec_local_2Dcm}, we then only need to examine the quantity
\begin{align*}
Q:=\int F_0(A(t) x) ) \Delta a_0(x) dx,
\end{align*}
where $A(t) = (D \phi^{(l)})(t,0)$, and
\begin{align*}
\begin{cases}
\partial_t \phi^{(l)}(t,x) = (U^{\op{ext}} + \Delta^{-1} \nabla^{\perp} \omega^{(l)})(t,\phi^{(l)}(t,x)),\\
\phi^{(l)}(0,x)=x.
\end{cases}
\end{align*}

Since
\begin{align*}
\partial_t D \phi^{(l)}(t,x) = (DU^{\op{ext}} + D \Delta^{-1} \nabla^{\perp} \omega^{(l)} )(t,\phi^{(l)}(t,x)) D\phi^{(l)}(t,x),
\end{align*}
we get
\begin{align*}
(\partial_t D\phi^{(l)})(t,0) = (D\nabla^{\perp} \psi_0^{(l)})(x=0) + O(t).
\end{align*}
Note here we used the assumption $U^{\op{ext}}(0,x)=0$ for $x$ in a neighborhood of the origin. Hence
\begin{align*}
D \phi^{(l)}(t,0) = \op{Id} + t(D\nabla^{\perp} \psi_0^{(l)})(0) +O(t^2).
\end{align*}

Obviously then
\begin{align*}
Q = t \int \nabla F_0(x) \cdot (D \nabla^{\perp} \psi_0^{(l)}(0) x) \Delta a_0(x) dx +O(t^2).
\end{align*}
Now a little thinking shows that
clearly we are back to the situation in Section \ref{sec_local_2Dcm} where $U^{\op{ext}}=0$. The same
choice of $\psi_0^{(l)}$ and $a_0$ would work. Therefore we are done.

\section{proof of Theorem \ref{thm1} for $d=2$ and proof of Theorem \ref{thm2D_2Ck}}

\begin{lem} \label{s7_lem1}
Let $f^i\in C_c^{\infty}(\mathbb R^2)$ with $\op{supp}(f^i) \subset B(0,1)$, $i=1,2$.
Let $W$ and $\omega^i$, $i=1,2$ be smooth solutions to the following systems:
\begin{align}
&\begin{cases}
\partial_t W + (\Delta^{-1} \nabla^{\perp} W \cdot \nabla) W =0, \\
W \Bigr|_{t=0} =W_0(x) = f^1(x) + f^2(x-x_W);
\end{cases}
\notag\\
&\begin{cases}
\partial_t \omega^i + (\Delta^{-1} \nabla^{\perp} \omega^i\cdot\nabla) \omega^i=0,\\
\omega^i \Bigr|_{t=0} =f^i(x),
\end{cases}
\notag
\end{align}
where $i=1,2$, $x_W \in \mathbb R^2$ is a vector parameter to control the distance between
supports of $f^1$ and (translated) $f^2$.

For any $\epsilon>0$ and integer $k_0\ge 3$, there exists
$R_{\epsilon}=R_{\epsilon}(\epsilon,k_0, \|f^1\|_{H^{k_0+3}(\mathbb R^2)},
\|f^2\|_{H^{k_0+3}(\mathbb R^2)})>0$ sufficiently large, such that if $|x_W| \ge R_{\epsilon}$, then the following
hold:

\begin{enumerate}
\item For any $0\le t\le 1$, $W(t,x)$ admits the decomposition
\begin{align} \label{s7_lem1_e1}
W(t,x) = W^1(t,x) + W^2(t,x-x_W),
\end{align}
where
\begin{align*}
& \op{supp}(W^1) \subset B(\op{supp}(f^1), R_0)=:B_1,\\
& \op{supp}(W^2) \subset B(\spp (f^2), R_0)=: B_2,
\end{align*}
and
\begin{align*}
R_0 = A_1\cdot (\|f^1\|_1 +\|f^1\|_{\infty} + \|f^2\|_1 +\|f^2\|_{\infty}).
\end{align*}
Here $A_1>0$ is an absolute constant.\footnote{It is roughly the same constant occurring in the
inequality $\|\Delta^{-1} \nabla^{\perp} f \|_{L^{\infty}(\mathbb R^2)} \le A_1
(\|f\|_{L^1(\mathbb R^2)} + \|f\|_{L^{\infty}(\mathbb R^2)})$.}

Note that $\spp (\omega^i(t,\cdot) ) \subset B_i$, $i=1,2$ for all $0\le t\le 1$ as well.

\item All $H^k$, $k\ge 2$ norms of $W^i$ can be bounded (almost) purely in terms of  initial data in
the $i^{th}$ patch:
for $i=1,2$, and any $k\ge 2$,
\begin{align} \label{s7_lem1_e2}
\max_{0\le t\le 1} \| W^i (t,\cdot)\|_{H^k} \le C(k,\|f^1\|_{\infty}, \|f^2\|_{\infty}) \|f^i\|_{H^k}.
\end{align}

\item $\omega^i$ and $W^i$ are close:
\begin{align} \label{s7_lem1_e3}
\max_{0\le t\le 1} \| \omega^i(t) -W^i(t) \|_{H^{k_0} } <\epsilon.
\end{align}

\end{enumerate}

\end{lem}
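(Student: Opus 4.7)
The plan is to build the decomposition \eqref{s7_lem1_e1} by \emph{linearizing} the transport structure of $W$, then use the far separation $|x_W|\ge R_\epsilon$ together with the decay of the Biot--Savart kernel to compare each linearized piece to the isolated evolution $\omega^i$. Concretely, let $u:=\Delta^{-1}\nabla^{\perp}W$ be the full velocity, and let $\widetilde W^1,\widetilde W^2$ solve the \emph{linear} passive transport equations
\begin{align*}
\partial_t \widetilde W^1 + u\cdot \nabla \widetilde W^1=0, \quad \widetilde W^1\Bigr|_{t=0}=f^1(x),\\
\partial_t \widetilde W^2 + u\cdot \nabla \widetilde W^2=0, \quad \widetilde W^2\Bigr|_{t=0}=f^2(x-x_W).
\end{align*}
Linearity of transport and uniqueness force $W=\widetilde W^1+\widetilde W^2$, so setting $W^1(t,x)=\widetilde W^1(t,x)$ and $W^2(t,x)=\widetilde W^2(t,x+x_W)$ gives the desired splitting. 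Standard 2D well-posedness for $W$ with $W_0\in H^{k_0+3}$ (together with conservation of $\|W\|_\infty$ and $\|W\|_1$) provides uniform smoothness of $u$ on $[0,1]$ in terms of $\|f^1\|_{H^{k_0+3}}+\|f^2\|_{H^{k_0+3}}$.

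For item (1), I would use the standard Biot--Savart $L^\infty$ bound
$\|u(t)\|_\infty\le A_1(\|W(t)\|_1+\|W(t)\|_\infty)=A_1(\|f^1\|_1+\|f^1\|_\infty+\|f^2\|_1+\|f^2\|_\infty)$ (the latter equality by conservation of $L^1$ and $L^\infty$ norms under transport and disjointness of initial supports), and then invoke finite speed of propagation along the flow of $u$: the support of $\widetilde W^i(t,\cdot)$ is contained in the $A_1 t$-neighborhood of the initial support. For $t\le 1$ and $|x_W|\ge R_\epsilon$ chosen large enough, this sits in $B_i$ and the two supports remain disjoint. The same argument, applied to the $\omega^i$ evolution (whose velocity is controlled by the same $L^1\cap L^\infty$ bound), gives $\spp(\omega^i(t,\cdot))\subset B_i$.

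For item (2), I would write $\widetilde W^i=f^i\circ \phi^{-1}$ where $\phi$ is the flow of $u$, and bootstrap $H^k$ bounds via the standard Kato--Ponce commutator estimate applied to $\partial_t \widetilde W^i+u\cdot\nabla \widetilde W^i=0$:
\begin{align*}
\frac{d}{dt}\|\widetilde W^i\|_{H^k}^2 \lesssim \|\nabla u\|_\infty \|\widetilde W^i\|_{H^k}^2+\|u\|_{H^k}\|\nabla \widetilde W^i\|_\infty \|\widetilde W^i\|_{H^k}.
\end{align*}
The double-exponential Wolibner--Beale--Kato--Majda control on $\|\nabla u\|_\infty$ on $[0,1]$ depends only on $\|\omega_0\|_\infty=\max(\|f^1\|_\infty,\|f^2\|_\infty)$, while $\|\nabla \widetilde W^i\|_\infty\lesssim \|f^i\|_{C^1}\|D\phi^{-1}\|_\infty$ is controlled by $\|f^i\|_{H^k}$ (for $k\ge 3$, via Sobolev embedding) times a flow factor depending only on $\|f^1\|_\infty,\|f^2\|_\infty$. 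An inductive Gr\"onwall on $k$ then gives \eqref{s7_lem1_e2} with $C$ of the advertised form; the word ``(almost)'' allows the constant to absorb cross-dependence through the flow.

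For item (3), which is the main step, I would set $\eta^i:=\widetilde W^i-\omega^i$ and $u^i:=\Delta^{-1}\nabla^{\perp}\omega^i$. Then
\begin{align*}
\partial_t \eta^i+u^i\cdot \nabla \eta^i = -(u-u^i)\cdot \nabla \widetilde W^i,\qquad \eta^i\bigr|_{t=0}=0.
\end{align*}
The crucial point is that $u-u^i=\Delta^{-1}\nabla^{\perp}(W-\omega^i)$, and on $B_i$ the dominant contribution is the Biot--Savart kernel $\sim \nabla^{\perp}(\log|\cdot|)/2\pi$ convolved with vorticity supported in $B_{3-i}$, at spatial distance $\gtrsim |x_W|$. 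Since $\nabla^{\perp}\log|x|$ decays like $|x|^{-1}$, one gets
\begin{align*}
\|u-u^i\|_{C^{k_0}(B_i)}\lesssim |x_W|^{-1}\,(\|f^1\|_1+\|f^2\|_1+\|\widetilde W^{3-i}-\omega^{3-i}\|_1),
\end{align*}
and a parallel decay holds for derivatives with an extra factor from differentiating the kernel (still $|x_W|^{-1}$ because the kernel's decay is always one degree faster than its derivatives dictate on a fixed bounded region). A standard $H^{k_0}$ energy estimate on the transport equation for $\eta^i$ (using item (2) to control $\|\widetilde W^i\|_{H^{k_0+1}}$ and the commutator estimate for $[J^{k_0},u^i\cdot\nabla]$) then yields
\begin{align*}
\max_{0\le t\le 1}\|\eta^i(t)\|_{H^{k_0}}\lesssim |x_W|^{-1}\cdot C\bigl(k_0,\|f^1\|_{H^{k_0+3}},\|f^2\|_{H^{k_0+3}}\bigr),
\end{align*}
which is less than $\epsilon$ once $R_\epsilon$ is chosen sufficiently large in terms of $\epsilon,k_0,\|f^1\|_{H^{k_0+3}},\|f^2\|_{H^{k_0+3}}$.

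The main obstacle I anticipate is keeping the constants in the $H^k$ bound of item (2) factored so that the linear dependence is genuinely on $\|f^i\|_{H^k}$ alone; this requires treating the commutator term $\|u\|_{H^k}\|\nabla\widetilde W^i\|_\infty$ via an inductive Gr\"onwall on the index $k$, using that $\|u\|_{H^k}\lesssim \|W\|_{H^{k-1}}+\|W\|_2\le \|\widetilde W^1\|_{H^{k-1}}+\|\widetilde W^2\|_{H^{k-1}}+\|W\|_2$, so that once the bound has been established for $k-1$ on both pieces, the $k$-th bound for $\widetilde W^i$ closes. The Biot--Savart decay step in item (3) is technically the key new ingredient but is conceptually routine.
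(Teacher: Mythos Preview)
Your decomposition $W=\widetilde W^1+\widetilde W^2$ via linear transport by the full velocity $u$ is exactly the paper's construction, and your argument for item (1) matches the paper's.

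There is a genuine gap in your approach to item (2). You propose to run the Kato--Ponce commutator estimate on $\partial_t\widetilde W^i+u\cdot\nabla\widetilde W^i=0$ with the \emph{full} velocity $u$, which produces the term $\|u\|_{H^k}\|\nabla\widetilde W^i\|_\infty$. Since $\|u\|_{H^k}\sim\|W\|_{H^{k-1}}\le\|\widetilde W^1\|_{H^{k-1}}+\|\widetilde W^2\|_{H^{k-1}}$, your inductive step feeds $\|f^{3-i}\|_{H^{k-1}}$ into the Gr\"onwall for $\|\widetilde W^i\|_{H^k}$, and the resulting bound has the form $C(k,\|f^1\|_{H^{k-1}},\|f^2\|_{H^{k-1}})\|f^i\|_{H^k}$ rather than $C(k,\|f^1\|_\infty,\|f^2\|_\infty)\|f^i\|_{H^k}$. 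This is not what the lemma states, and the clean $L^\infty$-only dependence is actually used downstream (Proposition~\ref{s7_prop1}, estimate \eqref{s7_pr1_3a}) when infinitely many patches with only uniformly bounded $L^\infty$ norms are glued. The paper avoids this by \emph{splitting the drift}: on $\op{supp}(\widetilde W^1)$ one writes $u=\Delta^{-1}\nabla^\perp\widetilde W^1+V^{\op{ext}}$ with $V^{\op{ext}}=\chi_{B_1}\,\Delta^{-1}\nabla^\perp(\widetilde W^2(\cdot-x_W))$, and uses that the Biot--Savart kernel and all its derivatives, evaluated across the gap $\gtrsim|x_W|$, are bounded; hence $\|D^kV^{\op{ext}}\|_\infty\lesssim_k\|\widetilde W^2(t)\|_1=\|f^2\|_1\lesssim\|f^2\|_\infty$ for every $k$. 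The energy estimate then sees only the self-interaction (standard 2D Euler, constant depending on $\|f^1\|_\infty$) plus a drift whose $C^k$ norm depends only on $\|f^2\|_\infty$. You already invoke exactly this kernel-decay mechanism in item (3); the point is that it must also be used in item (2).

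For item (3), your displayed bound $\|u-u^i\|_{C^{k_0}(B_i)}\lesssim|x_W|^{-1}(\cdots)$ is not correct as written: $u-u^i=\Delta^{-1}\nabla^\perp\eta^i+\Delta^{-1}\nabla^\perp(\widetilde W^{3-i}(\cdot-x_W))$, and the first piece carries no $|x_W|^{-1}$ smallness---it must be absorbed into the Gr\"onwall, not into the forcing. The paper sidesteps the high-order commutator altogether by doing an $L^2$ estimate on $\eta$ (where $\|\Delta^{-1}\nabla^\perp\eta\|_{2+}\lesssim\|\eta\|_2$ via compact support) to get $\max_{0\le t\le 1}\|\eta\|_2\lesssim|x_W|^{-1}$, and then interpolating against the uniform $H^k$ bounds from item (2) to reach $H^{k_0}$. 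This is both simpler and explains why the hypothesis is stated in terms of $\|f^i\|_{H^{k_0+3}}$.
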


\begin{proof}[Proof of Lemma \ref{s7_lem1}]
This is more or less straightforward. Let $W^1$ be the solution to the \emph{linear} system:
\begin{align*}
\begin{cases}
\partial_t W^1 + \Delta^{-1} \nabla^{\perp} W\cdot \nabla W^1 =0, \quad 0<t\le 1,\\
W^1 \Bigr|_{t=0}=f^1.
\end{cases}
\end{align*}
Also let $W^2$ solve the system
\begin{align*}
\begin{cases}
\partial_t W^2 + (\Delta^{-1} \nabla^{\perp} W)(t,x+x_W) \cdot \nabla W^2 =0, \quad 0<t\le 1,\\
W^2 \Bigr|_{t=0}=f^2.
\end{cases}
\end{align*}
By using finite speed propagation, the inequality $\| \Delta^{-1}
\nabla^{\perp} u\|_{L^\infty(\mathbb R^2)} \lesssim \|u
\|_{L^1(\mathbb R^2)} + \|u\|_{L^{\infty}(\mathbb R^2)}$ (and the
conservation of $L^1$ and $L^{\infty}$ norms of vorticity), we
easily obtain \eqref{s7_lem1_e1}.

The estimate \eqref{s7_lem1_e2} follows from simple energy estimates. For example, consider the equation
for $W^1$:
\begin{align*}
\partial_t W^1 + (\Delta^{-1} \nabla^{\perp} W \cdot \nabla) W^1=0.
\end{align*}
The drift term $\Delta^{-1} \nabla^{\perp} W$ naturally splits as
\begin{align*}
\Delta^{-1} \nabla^{\perp} W = \Delta^{-1} \nabla^{\perp} W^1 + \Delta^{-1} \nabla^{\perp} ( W^2(\cdot-x_W)).
\end{align*}
We only need to treat the second term above and estimate its contribution (in the energy estimate).
Since $\spp (W^i) \subset B_i$, we can insert smooth cut-offs and write:
\begin{align}
 V^{\op{ext}} &= \chi_{|x|<R_0+100} \Delta^{-1} \nabla^{\perp} ( W^2(\cdot -x_W) ) \notag\\
= & \chi_{|x|<R_0+100} \int_{\mathbb R^2} K(x-y) \chi_{|y-x_W|<R_0+100} W^2(y-x_W) dy \notag\\
= & \chi_{|x|<R_0+100} \int_{\mathbb R^2} K(x-y)\chi_{|x-y|>\frac 12 |x_W|} W^2(y-x_W) dy, \label{s7_lem1_p1_e3}
\end{align}
where $K$ is the kernel corresponding to $\Delta^{-1} \nabla^{\perp}$ and $|x_W|$ is taken sufficiently large.
Note that $K(z) \sim |z|^{-1}$ for $|z|\gtrsim 1$. Obviously then for any integer $k\ge 1$,
\begin{align*}
\| D^k V^{\op{ext}} \|_{\infty} &\lesssim_k \|W^2 (t,\cdot) \|_1 \notag \\
& \lesssim \|W^2(t=0,\cdot)\|_1 \lesssim \|f^2 \|_{\infty}.
\end{align*}

 The estimate of \eqref{s7_lem1_e2} for $W^2$ is similar and therefore omitted.

We now turn to \eqref{s7_lem1_e3}. We only need to treat the case $i=1$. Set $\eta=\omega^1-W^1$. Then
\begin{align*}
\partial_t \eta + \Delta^{-1} \nabla^{\perp} \omega^1 \cdot \nabla \eta +\Delta^{-1}
\nabla^{\perp} \eta \cdot \nabla W^1 -V^{\op{ext}} \cdot \nabla W^1=0,
\end{align*}
Clearly
\begin{align*}
\partial_t (\| \eta\|_2 ) &\lesssim  \| \Delta^{-1} \nabla^{\perp} \eta \cdot \nabla W^1\|_2
+\| V^{\op{ext}} \cdot \nabla W^1\|_2 \notag \\
& \lesssim \| \Delta^{-1} \nabla^{\perp} \eta \|_{2+} \| \nabla{W^1}\|_{\infty-}
 + \| V^{\op{ext}}\|_{\infty } \|\nabla W^1\|_{2} \notag \\
& \lesssim_{R_0} \| \eta\|_2  \| W^1 \|_{H^2(\mathbb R^2)}  + \|V^{\op{ext}}\|_{\infty} \| \nabla W^1\|_{2}.
\end{align*}
In the last inequality above we have used the fact that $\| \Delta^{-1} \nabla^{\perp} \eta\|_{2+} \lesssim
\| \eta\|_{1+} \lesssim \|\eta\|_2$ since $\eta$ is compactly supported.
By the analysis in \eqref{s7_lem1_p1_e3}, easy to see that (below $R=|x_W|$)
\begin{align*}
\| V^{\op{ext}} \|_{\infty} \lesssim_{R_0} R^{-1} \|f^2\|_1
\end{align*}
Since $\eta(t=0)=0$, we get
\begin{align*}
\max_{0\le t\le 1} \| \eta(t,\cdot)\|_2 \lesssim_{R_0} \frac 1 R C(\|f^1\|_{H^3},\|f^2\|_{H^3}).
\end{align*}
Interpolating with \eqref{s7_lem1_e2} (note that $\omega^i$ obeys the similar Sobolev bounds as $W^i$) and
taking $R=|x_W|$ sufficiently large then
gives the result.

\end{proof}

\begin{prop}[Local to global, gluing of almost non-interacting patches] \label{s7_prop1}
  Let $\{ f^j \}_{j=1}^{\infty}$ be a sequence of functions in $C_c^{\infty} (B(0,1))$ and
  satisfy the following condition:
  \begin{align} \label{s7_pr1_1}
    \sum_{j=1}^{\infty} \| f^j \|_{L^1} +
    \sup_{j} \| f^j \|_{L^{\infty}}
   \le C_1^{\prime} <\infty.
  \end{align}

Denote $C_1=C_1^{\prime}+1$. Let $k_0\ge 4$ be a fixed integer.
Then there exist centers $x_j \in \mathbb R^2$ whose mutual distances are sufficiently large (i.e. $|x_j-x_k|\gg 1$ if $j\ne k$) such
that the following hold:

\begin{enumerate}
 \item Take the initial data
 \begin{align*}
  \omega_0(x) = \sum_{j=1}^{\infty} f^j(x-x_j),
 \end{align*}
then $\omega_0 \in L^1 \cap L^{\infty} \cap C^{\infty}$. Furthermore for any $j\ne k$
\begin{align} \label{s7_pr1_2}
 B(x_j, 100A_1C_1) \cap B(x_k, 100 A_1C_1) = \varnothing.
\end{align}
Here $A_1>0$ is an absolute constant.

\item With $\omega_0$ as initial data, there exists a unique solution $\omega$ to the Euler equation
\begin{align*}
 \partial_t \omega + \Delta^{-1} \nabla^{\perp} \omega \cdot \nabla \omega =0
\end{align*}
on the time interval $[0,1]$ satisfying $\omega \in L^1\cap L^{\infty} \cap C^{\infty}$, $u=\Delta^{-1} \nabla^{\perp} \omega \in C^{\infty}$.
Moreover for any $0\le t \le1$,
\begin{align}
 \operatorname{supp} ( \omega(t,\cdot) ) \subset \bigcup_{j=1}^{\infty} B(x_j, 3A_1C_1), \label{s7_pr1_3}
\end{align}
and $\omega(t,x)$ can be decomposed accordingly as:
\begin{align*}
\omega(t,x) = \sum_{j=1}^{\infty} \omega^j(t,x-x_j),
\end{align*}
where $\omega^j (t,\cdot) \in C_c^{\infty}( B(0,3A_1 C_1))$ for all $0\le t\le 1$. Furthermore
for any $k\ge 3$,
\begin{align}
\max_{0\le t\le 1} \| \omega^j(t,\cdot)\|_{H^k} \le C_2\cdot \| f^j\|_{H^k}, \label{s7_pr1_3a}
\end{align}
where $C_2>0$ is a constant depending only on $k$ and $C_1^{\prime}$.

\item For any $j\ge 1$,
\begin{align}
 \max_{0\le t \le 1} \| \omega^j(t,\cdot) - \tilde \omega^j(t,\cdot)\|_{H^{k_0+100} } <2^{-j}. \label{s7_pr1_4}
\end{align}
Here $\tilde \omega^j$ is the solution solving the equation
\begin{align*}
 \begin{cases}
  \partial_t \tilde \omega^j + \Delta^{-1} \nabla^{\perp} \tilde \omega^j \cdot \nabla \tilde \omega^j =0,
  \quad 0<t\le 1,\, x \in \mathbb R^2; \\
  \tilde \omega^j(t=0,x)= f^j(x), \quad x \in \mathbb R^2.
 \end{cases}
\end{align*}

\item For any integer $1\le k\le k_0$, there is a constant $C_k>0$ such that
\begin{align} \label{s7_pr1_5}
\sup_{0\le t\le 1} \| (D^k u)(t,x) - \sum_{j=1}^{\infty} (D^k \Delta^{-1} \nabla^{\perp} \tilde \omega^j)(t,x-x_j)
\|_{L_x^\infty(\mathbb R^2)} <C_k.
\end{align}

\item For any $j\ge 1$, there exists $r_j>0$  such that
\begin{align} \label{s7_pr1_6}
\sup_{\substack{0\le t\le 1\\ 1\le k\le k_0\\ |x|>r_j}} | (D^k \Delta^{-1} \nabla^{\perp} \tilde \omega^j)(t,x)|<
\frac 1 {2^j}.
\end{align}
Therefore
for any $1\le k \le k_0$,
 \begin{align*}
  & \sum_{j=1}^{\infty} (D^k \Delta^{-1} \nabla^{\perp} \tilde \omega^j)(t,x-x_j) \notag \\
= & \tilde \eta_k (x)+
 \sum_{j=1}^{\infty} \chi_{<1}(\frac {x-x_j} {r_j}) \cdot (D^k \Delta^{-1} \nabla^{\perp} \tilde \omega^j)(t,x-x_j),
 \end{align*}
where $\|\tilde \eta_k\|_{\infty} <2$ and $\chi_{<1}$ is a smooth cut-off function such that
$\chi_{<1}(x)=1$ for $|x|<1$ and $\chi_{<1}(x)=0$ for $|x|\ge 2$.

Consequently by choosing the centers $x_j$ sufficiently far away from each other, we can have
\begin{align}
&\spp \Bigl( \chi_{<1}(\frac {x-x_j} {r_j}) \cdot (D^k \Delta^{-1} \nabla^{\perp} \tilde \omega^j)(t,x-x_j) \Bigr)
\notag \\
&\bigcap \spp
\Bigl( \chi_{<1}(\frac {x-x_l} {r_l}) \cdot (D^k \Delta^{-1} \nabla^{\perp} \tilde \omega^l)(t,x-x_l) \Bigr)
= \varnothing, \label{s7_pr1_7}
\end{align}
for any $j\ne l$.

\end{enumerate}

\end{prop}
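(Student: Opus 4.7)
The plan is to choose the centers $x_j$ inductively so that the two-patch estimate of Lemma \ref{s7_lem1} can be bootstrapped to infinitely many patches, and then to read off items (1)--(5) from uniform Sobolev decoupling bounds. Set $x_1=0$, and having chosen $x_1,\dots,x_{j-1}$, pick $x_j$ far from every previous $x_k$ so that simultaneously: (a) $B(x_j,100A_1C_1)\cap\bigcup_{k<j}B(x_k,100A_1C_1)=\varnothing$, which secures \eqref{s7_pr1_2}; (b) the distance $d_j=\min_{k\ne j}|x_j-x_k|$ exceeds a threshold $R_j^{(1)}=R_j^{(1)}(f^1,\dots,f^j,C_1,k_0)$ making the Lemma \ref{s7_lem1}-type interaction error below $2^{-j}$; (c) $d_j$ exceeds $2(r_j+\max_{k<j}r_k)$, where $r_j$ is the far-field radius defined in item (5) for the isolated solution $\tilde\omega^j$. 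Each condition rules out only a bounded region, so such $x_j$ exists.

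With $\omega_0=\sum_j f^j(\cdot-x_j)\in L^1\cap L^\infty\cap C^\infty$ (disjoint supports make the sum locally finite), Yudovich's theory on $\mathbb R^2$ provides a unique solution $\omega\in C^0_t([0,1];L^1\cap L^\infty)$ with persistent $C^\infty$ regularity. The velocity obeys $\|u\|_\infty\le A_1(\|\omega\|_1+\|\omega\|_\infty)\le A_1C_1$, so the Lagrangian flow displaces points by at most $A_1C_1$ in unit time; combined with (a), this yields \eqref{s7_pr1_3} and the decomposition $\omega(t,x)=\sum_j\omega^j(t,x-x_j)$, each $\omega^j(t,\cdot)$ being the pushforward of $f^j$ by the full flow. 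For \eqref{s7_pr1_3a} I would run an $H^k$ energy estimate on
\begin{equation*}
\partial_t\omega^j+\Delta^{-1}\nabla^\perp\omega^j\cdot\nabla\omega^j+V^{(\neq j)}\cdot\nabla\omega^j=0,
\end{equation*}
where $V^{(\neq j)}=\Delta^{-1}\nabla^\perp\sum_{k\ne j}\omega^k(\cdot-x_k+x_j)$; by the kernel argument of \eqref{s7_lem1_p1_e3}, all derivatives of $V^{(\neq j)}$ are uniformly bounded by $C(k,C_1')$ on the ball $B(0,3A_1C_1)$, leading to Gronwall bounds that depend only on $k$, $C_1'$, and $\|f^j\|_{H^k}$.

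For the closeness estimate \eqref{s7_pr1_4}, set $\eta^j=\omega^j-\tilde\omega^j$. Subtracting the defining equations gives
\begin{equation*}
\partial_t\eta^j+\Delta^{-1}\nabla^\perp\omega^j\cdot\nabla\eta^j+\Delta^{-1}\nabla^\perp\eta^j\cdot\nabla\tilde\omega^j=-V^{(\neq j)}\cdot\nabla\tilde\omega^j,\qquad\eta^j(0)=0.
\end{equation*}
The essential point is that $V^{(\neq j)}$, restricted to the bounded ball containing $\spp(\tilde\omega^j(t,\cdot))$, satisfies $\|V^{(\neq j)}\|_{H^{k_0+101}}\le C(k_0,C_1')\,d_j^{-1}$, because each summand is a Biot--Savart convolution with a compactly supported source located at distance $\ge d_j$, so its derivatives decay like $|x|^{-2}$ and higher on the target region. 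A standard $H^{k_0+100}$ energy estimate (with Gronwall absorbing the two self-interaction terms, bounded via the result of Step 2 and Sobolev multiplication by $C(k_0,C_1',\|f^j\|_{H^{k_0+103}})$) yields $\max_{0\le t\le 1}\|\eta^j(t)\|_{H^{k_0+100}}\le C(k_0,C_1',\|f^j\|_{H^{k_0+103}})\cdot d_j^{-1}$, which is $<2^{-j}$ as soon as $d_j\ge R_j^{(1)}$. \emph{The main obstacle} is precisely this high-order Sobolev Gronwall with quantitative decay in the separation parameter $d_j$; one must propagate control of up to $k_0+100$ derivatives of $V^{(\neq j)}$ while tracking the linear dependence on $d_j^{-1}$, and this is what fixes $R_j^{(1)}$ in step (b) of the induction.

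Finally, items (4), (5), (7) are straightforward consequences. Writing
\begin{equation*}
D^k u(t,x)-\sum_j(D^k\tilde u^j)(t,x-x_j)=\sum_j(D^k\Delta^{-1}\nabla^\perp\eta^j)(t,x-x_j),
\end{equation*}
each summand is bounded in $L^\infty$ by Sobolev embedding applied to $\Delta^{-1}\nabla^\perp\eta^j\in H^{k_0+101}$ (gaining one derivative on a compactly supported source) by $C\|\eta^j\|_{H^{k_0+100}}\le C\cdot 2^{-j}$, so the sum is bounded by some $C_k$, giving \eqref{s7_pr1_5}. For \eqref{s7_pr1_6}, since $\tilde\omega^j(t,\cdot)$ is compactly supported in a ball of radius $\lesssim A_1 C_1$, the Biot--Savart representation yields $|D^k\tilde u^j(t,x)|\lesssim \|f^j\|_1\,|x|^{-1-k}$ for $|x|\gg1$, so a suitable finite $r_j$ enforces $|D^k\tilde u^j|<2^{-j}$ outside $B(0,r_j)$; the telescoping decomposition then defines $\tilde\eta_k$ with $\|\tilde\eta_k\|_\infty<\sum_j 2^{-j}<2$. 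The support disjointness \eqref{s7_pr1_7} is immediate from condition (c) in the inductive choice of centers.
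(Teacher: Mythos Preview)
Your proposal is correct and follows essentially the paper's approach: both rely on the far-field decay of the Biot--Savart kernel to make the external velocity $V^{(\neq j)}$ small on each patch, then run a Sobolev energy/Gronwall argument; the paper phrases this as a recursive application of the two-patch Lemma~\ref{s7_lem1} together with a contraction, while you carry out the direct multi-patch estimate, but the mechanism is identical. One small slip: in your $\eta^j$ equation the forcing should be $-V^{(\neq j)}\cdot\nabla\omega^j$ (or, equivalently, keep $-V^{(\neq j)}\cdot\nabla\tilde\omega^j$ but transport $\eta^j$ by the full velocity $u$ rather than $\Delta^{-1}\nabla^{\perp}\omega^j$); this is harmless since $\omega^j$ and $\tilde\omega^j$ obey the same Sobolev bounds via \eqref{s7_pr1_3a}.
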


\begin{proof}[Proof of Proposition \ref{s7_prop1}]
Note that \eqref{s7_pr1_2} is fairly easy to achieve. The properties \eqref{s7_pr1_3}, \eqref{s7_pr1_3a} and \eqref{s7_pr1_4} follow
from recursively applying Lemma \ref{s7_lem1} (using \eqref{s7_pr1_1}), and a simple contraction argument.

The inequality \eqref{s7_pr1_5} follow from \eqref{s7_pr1_4}, Sobolev embedding and the triangle inequality:
\begin{align*}
   & \| D^k \Delta^{-1} \nabla^{\perp} \omega -\sum_{j=1}^{\infty} (D^k \Delta^{-1} \nabla^{\perp} \tilde \omega^j)
   (t,x-x_j) \|_{\infty} \notag \\
   \le & \; \sum_{j=1}^{\infty} \| D^k \Delta^{-1} \nabla^{\perp} (\omega^j -\tilde \omega^j ) \|_{\infty} \notag \\
   \lesssim_k & \sum_{j=1}^{\infty} \| \omega^j -\tilde \omega^j \|_{H^{k_0+10}} \le C_k.
   \end{align*}

The property \eqref{s7_pr1_6} obviously follows from the fact that $\tilde \omega^j$ is a smooth and
decaying function. The property \eqref{s7_pr1_7} justifies that the infinite sum in
\eqref{s7_pr1_5} is actually a locally finite summation.
\end{proof}

We are now ready to complete the
\begin{proof}[Proof of Theorem \ref{thm1} for 2D]
Without loss of generality we can assume $u^{(g)}\equiv 0$. The argument for nonzero $u^{(g)}$ is a small
modification.

For each $j\ge 1$, we can use the local constructions (for $C^1$ see Section 3 and for $C^m$, $m\ge 2$ see Section 5)
to find $g^j \in C_c^{\infty} (B(0,1/100))$ such that
\begin{itemize}
\item $\| g^j \|_1 + \|g^j \|_{H^m} + \|g^j \|_{C^m} <2^{-j-1} \epsilon$.

\item Let $\tilde u^j$ solve
\begin{align*}
\begin{cases}
\partial_t \tilde u^j + (\tilde u^j \cdot \nabla) \tilde u^j = -\nabla \tilde p^j,\\
\nabla \cdot \tilde u^j =0,\\
\tilde u^j \Bigr|_{t=0} = g^j.
\end{cases}
\end{align*}
Then for some $0<t_j^1<t_j^2<2^{-j}$, we have
\begin{align*}
\|(D^m \tilde u^j)(t,x) \|_{\infty} >j, \quad\forall\, t \in[t_j^1,t_j^2].
\end{align*}
Furthermore,
\begin{align*}
\sup_{0\le t\le 1} \| (D^m \tilde u^j)(t,x) \|_{L_x^{\infty}(|x|>1/2)} \le 1.
\end{align*}

\item Let $\tilde \omega^j = \nabla^{\perp} \cdot \tilde u^j$. Then
\begin{align*}
\max_{0\le t\le 1} \| D^{m-1} \tilde \omega^j(t,\cdot)\|_{\infty} \le 1.
\end{align*}

\end{itemize}

Define $f^j = \nabla^{\perp} \cdot g^j$. We then apply Proposition
\ref{s7_prop1} to conclude the proof.

\end{proof}

For the proof of Theorem \ref{thm2D_2Ck}, we need a simple perturbation lemma.

\begin{lem} \label{s7_lem2}
Let $f^i\in C_c^{\infty}(B(0,100))$, $i=1,2$.
Let $\omega^a$ and $\omega$ be smooth solutions to the
2D Euler equations in vorticity form:
\begin{align} \label{s7_lem2_t1}
\begin{cases}
\partial_t {\omega^a} + (u^a \cdot \nabla) {\omega^a}
 =0, \quad 0<t\le 1, \, x\in\mathbb R^2, \\
u^a=\Delta^{-1} \nabla^{\perp} \omega^a, \\
\omega^a \Bigr|_{t=0} =f^1.
\end{cases}
\end{align}
\begin{align} \label{s7_lem2_t2}
\begin{cases}
\partial_t \omega + (u \cdot \nabla)\omega
 =0,\quad 0<t\le 1,\, x\in \mathbb R^2, \\
u=\Delta^{-1} \nabla^{\perp} \omega, \\
\omega \Bigr|_{t=0} =f^1+f^2.
\end{cases}
\end{align}

For any $\epsilon>0$, there exists $\delta=\delta(\epsilon,f^1)>0$
sufficiently small such that if
\begin{align*}
\| f^2\|_{\infty}<\delta
\end{align*}
then
\begin{align} \label{s7_lem2_t3}
\max_{0\le t\le 1}\| \omega^a(t,\cdot) -\omega(t,\cdot)\|_{\infty}
<\epsilon.
\end{align}

\end{lem}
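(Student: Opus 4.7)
The plan is to set $\eta = \omega - \omega^a$ and derive an $L^\infty$ stability estimate by treating the smooth unperturbed solution $\omega^a$ as a frozen ``background.'' Subtracting \eqref{s7_lem2_t1} from \eqref{s7_lem2_t2} yields
\[
\partial_t \eta + (u \cdot \nabla)\eta = -\bigl((u - u^a) \cdot \nabla\bigr) \omega^a, \qquad \eta(0) = f^2,
\]
and integrating along the forward characteristics $\phi(t,x)$ of $u$ gives the Duhamel representation
\[
\eta(t,\phi(t,x)) = f^2(x) - \int_0^t \bigl[(u-u^a)\cdot \nabla \omega^a\bigr](\tau,\phi(\tau,x))\,d\tau.
\]

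Since $f^1 \in C_c^{\infty}$, standard 2D Euler theory provides a finite constant $K_0=K_0(f^1)$ with
$\max_{0\le t\le 1}(\|\omega^a(t)\|_{W^{1,\infty}} + \|\nabla\omega^a(t)\|_1)\le K_0$, where the $L^1$ bound uses that $\op{supp}\omega^a(t)$ stays in a fixed ball, since $\|u^a\|_\infty \lesssim \|f^1\|_1 + \|f^1\|_\infty$. For the velocity difference I would use the 2D Biot-Savart bound
\[
\|u - u^a\|_\infty \lesssim \|\eta\|_1 + \|\eta\|_\infty,
\]
which is legitimate because $\eta$ is compactly supported in a ball whose radius is bounded uniformly on $[0,1]$, thanks to the a priori estimate $\|u\|_\infty + \|u^a\|_\infty \lesssim 1 + \delta$. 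Combining these ingredients with the volume-preserving property of $\phi$ (so that $L^p$ norms of $\eta$ equal those of $\eta\circ\phi$) produces the coupled Gronwall-type system
\[
\|\eta(t)\|_\infty \le \|f^2\|_\infty + K_0 \int_0^t \bigl(\|\eta(\tau)\|_1 + \|\eta(\tau)\|_\infty\bigr)d\tau,
\]
\[
\|\eta(t)\|_1 \le \|f^2\|_1 + K_0 \int_0^t \bigl(\|\eta(\tau)\|_1 + \|\eta(\tau)\|_\infty\bigr)d\tau.
\]

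Since $f^2 \in C_c^{\infty}(B(0,100))$, we have $\|f^2\|_1 \le C\|f^2\|_\infty < C\delta$, and the usual Gronwall lemma applied to the sum $\|\eta\|_1 + \|\eta\|_\infty$ delivers
\[
\max_{0\le t\le 1} \bigl(\|\eta(t)\|_1 + \|\eta(t)\|_\infty\bigr) \le C'(f^1)\,\delta,
\]
and choosing $\delta$ so that $C'(f^1)\delta <\epsilon$ yields \eqref{s7_lem2_t3}.

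The only subtle point is that $\|\eta\|_1$ is not automatically small at positive times even though it is small initially, so one cannot simply bound $\|u-u^a\|_\infty$ by $\|f^2\|_\infty$ directly. The device that closes the loop is to propagate $\|\eta\|_1$ and $\|\eta\|_\infty$ simultaneously via the coupled estimate above, using both the localization $\op{supp}(f^2) \subset B(0,100)$ to initialize $\|f^2\|_1 \lesssim \|f^2\|_\infty$, and the $W^{1,\infty}\cap W^{1,1}$ control of the reference solution $\omega^a$ on $[0,1]$ (inherited from the smoothness and compact support of $f^1$) to absorb the right-hand side.
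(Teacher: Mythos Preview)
Your proof is correct and follows essentially the same approach as the paper: subtract the two vorticity equations, use the transport structure along the characteristics of $u$, control $\|u-u^a\|_\infty$ via Biot--Savart and the compact support of $\eta$, and close with Gronwall. The only cosmetic difference is that the paper observes directly that $\op{supp}(\eta(t))\subset B(0,R_0)$ (with $R_0$ depending only on $f^1$) and hence $\|\Delta^{-1}\nabla^{\perp}\eta\|_\infty \lesssim_{R_0} \|\eta\|_\infty$, so it never needs to track $\|\eta\|_1$ separately; your coupled $L^1$--$L^\infty$ system is a slightly longer but equally valid route to the same conclusion.
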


\begin{proof}[Proof of Lemma \ref{s7_lem2}]
Let $\eta= \omega^a-\omega$. Then $\eta(t=0)=f^2$, and
\begin{align*}
\partial_t \eta + (\Delta^{-1} \nabla^{\perp} \eta) \cdot \nabla \omega^a + (u\cdot \nabla) \eta =0.
\end{align*}
By finite speed propagation (and choosing $\|f^2\|_{\infty}$ small if necessary), easy to show that for any
$0\le t\le 1$, we have $\spp (\eta(t,\cdot)) \subset B(0,R_0)$ for some $R_0>0$ depending only on $f^1$. Then
easy to check that
\begin{align*}
\| \Delta^{-1} \nabla^{\perp} \eta (t,\cdot) \|_{\infty} \lesssim_{R_0} \|\eta(t,\cdot) \|_{\infty}.
\end{align*}
By a simple energy estimate, we also have
\begin{align*}
\max_{0\le t\le 1} \| \nabla \omega^a(t,\cdot) \|_{\infty} \lesssim_{f^1} 1.
\end{align*}
Using the equation for $\eta$, we then have
\begin{align*}
\| \eta(t,\cdot) \|_{\infty} \lesssim_{ f^1} \| f^2\|_{\infty} + \int_0^t \| \eta(s,\cdot) \|_{\infty} ds.
\end{align*}

The desired conclusion now easily follows from Gronwall and choosing $\|f^2\|_{\infty}$ small.

\end{proof}

We are now ready to prove

\begin{proof}[Proof of Theorem \ref{thm2D_2Ck}]
We shall only sketch the proof for $C^1$ ($m=1$) case. For $m\ge 2$ one can just use the material
from Section 5 and proceed in a similar fashion as in Section 6. Note that the patching argument for
$C^m$, $m\ge 2$ is actually easier in view of the flow decoupling.

Without loss of generality we may assume $u^{(g)}=0$.

For $j\ge 1$, define $z_j=(0, 1+\frac 1j)$. The point $z_j$ will be the center of $j^{\op{th}}$ patch.
Define $x_*= \lim_{j\to \infty} z_j = (0,1)$.

By recursively applying Lemma \ref{lem_ha1}, we can find
stream functions $\psi^j_0 \in C_c^{\infty} (B(z_j,2^{-j-100}))$ such that
\begin{align*}
\| \psi^j_0 \|_{\infty} + \| D^2 \psi^j_0\|_{\infty} <2^{-j}
\end{align*}
and the corresponding $j^{\op{th}}$-patch develops $C^1$-norm inflation in a time interval $<2^{-j}$ (we omit
the laborious details here since it is essentially a re-statement of Lemma
\ref{lem_ha1} with more explicit constants).

We then take initial stream function in the form
\begin{align*}
\psi_0 = \sum_{j=1}^{\infty} \psi^j_0,
\end{align*}
and $u^0= \nabla^{\perp} \psi_0$, $\omega_0 = \Delta \psi_0$. By using Lemma \ref{lem_ha1} together with
Lemma \ref{s7_lem2}  it is not difficult
to extract the needed regularity properties and prove the inflation statement. We omit further routine
details.

\end{proof}

\section{$3D$ $C^m$, $m\ge 2$ case: flow decoupling for axisymmetric flows without swirl}
We begin by reviewing a little bit the theory of axisymmetric flows on $\mathbb R^3$.
We call a scalar function $f=f(x_1,x_2,z):\, \mathbb R^3 \to \mathbb R$ axisymmetric if
$f=f(r,z)$, $r=\sqrt{x_1^2+x_2^2}$. An axisymmetric vector field $u$ on $\mathbb R^3$ has the
form
\begin{align*}
u(x_1,x_2,z)= u^r (r,z) e_r + u^{\theta}(r,z) e_{\theta} + u^z (r,z) e_z,
\end{align*}
where
\begin{align*}
e_r = \frac 1 r (x_1,x_2,z), \; e_{\theta}=\frac 1r (-x_2,x_1,0),\;
e_z=(0,0,1).
\end{align*}
If $u^{\theta} \equiv 0$, we then call $u$ an axisymmetric flow without swirl. In this case,
the vorticity $\omega=\nabla \times u$ becomes parallel to $e_{\theta}$:
\begin{align*}
\omega(x_1,x_2,z) = \omega^{\theta}(r,z) e_{\theta} = (\partial_{z} u^r - \partial_r u^z) e_{\theta},
\end{align*}
and the vorticity stretching term simplifies as well:
\begin{align*}
(\omega \cdot \nabla) u & = \frac 1 r u^r \omega^{\theta} e_{\theta} = \frac 1 r u^r \omega \notag \\
& = \frac 1 r (u\cdot e_r) \omega.
\end{align*}
From a technical point of view, the change $\nabla u \to \frac 1 r u^r$ brings a lot of simplification
in the perturbation theory as one can (for example) freely move the metric factor $\frac 1r$ to $\omega$ whenever
needed.

The vorticity equation then takes the form
\begin{align}
\partial_t \omega + (u\cdot \nabla) \omega &= \frac 1 r u^r \omega \notag \\
&= \frac 1r (u\cdot e_r ) \omega,
\label{ge_e1}
\end{align}
or more compactly,
\begin{align} \label{ge_e2}
\partial_t \bigl( \frac {\omega} r \bigr) + (u\cdot \nabla) \bigl( \frac {\omega} r \bigr)=0.
\end{align}
We shall frequently switch between the expressions \eqref{ge_e1} and \eqref{ge_e2} below without explicit
mentioning. The form \eqref{ge_e1} has the advantage that it reacts well with the usual Cartesian derivatives.
On the other hand the form \eqref{ge_e2} can be used to deduce (easily) the conservation laws.

We recall the following two lemmas which we will often use without explicit mentioning.

\begin{lem}[$L^{p,q}$-preservation] \label{lem_Lpq_1}
Let $1\le p,q\le\infty$. Suppose $u$ is a given smooth divergence-free vector field on $\mathbb R^d$, $d\ge 2$.
Let $h$ be the smooth solution to the transport equation
\begin{align*}
\begin{cases}
\partial_t h + (u \cdot \nabla) h =f, \\
h\Bigr|_{t=0}=h_0.
\end{cases}
\end{align*}
Then for any $t>0$, we have
\begin{align*}
\| h(t) \|_{L^{p,q}(\mathbb R^d)} \le \|h_0 \|_{L^{p,q}(\mathbb R^d)} +
 \int_0^t \| f(\tau)\|_{L^{p,q}(\mathbb R^d)}
d \tau.
\end{align*}
If $f\equiv 0$, then
\begin{align*}
\| h(t) \|_{L^{p,q}(\mathbb R^d)} = \|h_0\|_{L^{p,q}(\mathbb R^d)}.
\end{align*}
\end{lem}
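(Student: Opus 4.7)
The plan is to exploit the volume-preserving nature of the Lagrangian flow induced by $u$, together with the rearrangement-invariant definition of the Lorentz (quasi)norms. Introduce the forward characteristic map $\Phi_t:\mathbb R^d\to\mathbb R^d$ defined by
\begin{align*}
 \partial_t \Phi_t(x)=u(t,\Phi_t(x)),\qquad \Phi_0(x)=x.
\end{align*}
Because $u$ is smooth and divergence-free, Liouville's formula $\partial_t \det(D\Phi_t)=(\nabla\cdot u)(t,\Phi_t)\det(D\Phi_t)$ gives $\det(D\Phi_t)\equiv 1$, so each $\Phi_t$ is a $C^\infty$ diffeomorphism of $\mathbb R^d$ preserving Lebesgue measure.

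Next I would apply the method of characteristics. Since $\partial_\tau\bigl(h(\tau,\Phi_\tau(x))\bigr)=f(\tau,\Phi_\tau(x))$, integrating from $0$ to $t$ and substituting $y=\Phi_t(x)$ yields
\begin{align*}
 h(t,y)=h_0(\Phi_t^{-1}(y)) +\int_0^t f(\tau,\Phi_\tau(\Phi_t^{-1}(y)))\,d\tau.
\end{align*}
The key observation is that for any measure-preserving bijection $\Psi:\mathbb R^d\to\mathbb R^d$ and any measurable $g$, the distribution function satisfies $\op{Leb}(\{|g\circ\Psi|>s\})=\op{Leb}(\{|g|>s\})$, so the nonincreasing rearrangements coincide: $(g\circ\Psi)^*=g^*$. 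Since both $\Phi_t^{-1}$ and $\Phi_\tau\circ\Phi_t^{-1}$ are measure-preserving, it follows that
\begin{align*}
 \|h_0\circ\Phi_t^{-1}\|_{L^{p,q}}=\|h_0\|_{L^{p,q}},\qquad \|f(\tau,\cdot)\circ\Phi_\tau\circ\Phi_t^{-1}\|_{L^{p,q}}=\|f(\tau,\cdot)\|_{L^{p,q}}.
\end{align*}

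If $f\equiv 0$ this identity already gives $\|h(t)\|_{L^{p,q}}=\|h_0\|_{L^{p,q}}$ directly. For the general case I would invoke the Minkowski-type inequality for Lorentz (quasi)norms, namely $\bigl\|\int_0^t g(\tau,\cdot)\,d\tau\bigr\|_{L^{p,q}}\le \int_0^t\|g(\tau,\cdot)\|_{L^{p,q}}\,d\tau$, applied to $g(\tau,y)=f(\tau,\Phi_\tau(\Phi_t^{-1}(y)))$. Combining with the triangle inequality in $L^{p,q}$ then produces
\begin{align*}
 \|h(t)\|_{L^{p,q}}\le \|h_0\|_{L^{p,q}}+\int_0^t\|f(\tau)\|_{L^{p,q}}\,d\tau,
\end{align*}
as required.

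The one delicate point is the Minkowski/triangle inequality for $L^{p,q}$ when $q>p$, since in that range $\|\cdot\|_{L^{p,q}}$ defined through $f^*$ is only a quasinorm. The standard remedy, which I would invoke, is to pass to the equivalent norm built from the maximal average $f^{**}(t)=t^{-1}\int_0^t f^*(s)\,ds$ (available for $1<p\le\infty$, $1\le q\le\infty$); this yields a genuine Banach norm on $L^{p,q}$ for which both the triangle inequality and the integral Minkowski inequality hold, and reverting to $f^*$ loses only a harmless multiplicative constant. For the endpoint cases $p=q$ (ordinary $L^p$) and $p=\infty$ the statement is elementary. Aside from this purely functional-analytic point, the argument is essentially the Liouville/characteristics calculation.
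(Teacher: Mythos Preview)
Your argument via the measure-preserving Lagrangian flow and rearrangement invariance of the Lorentz quasinorms is exactly the standard one; the paper does not supply its own proof but simply cites Danchin \cite{Danchin07} and Abidi--Hmidi--Keraani \cite{AHK10}, where this same approach is used. You have also correctly isolated the one genuine subtlety, namely that for $q>p$ the $f^*$-based Lorentz functional is only a quasinorm, and your $f^{**}$ remedy is the standard fix (at the cost of a constant $C_{p,q}$ in place of the sharp constant $1$). Since the paper only ever invokes this lemma for $L^{3,1}$, where $q\le p$ and the $f^*$ functional is already a genuine norm, this caveat is immaterial for the application.
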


\begin{proof}[Proof of Lemma \ref{lem_Lpq_1}]
See for example Proposition 2 on p484 of Danchin \cite{Danchin07} or Prop 2.2 of Abidi-Hmidi-Keraani \cite{AHK10}.
\end{proof}

\begin{lem}[Axisymmetric Biot-Savart law: estimate of $u^r$] \label{lem_ur_r}
There exists an absolute constant $C>0$ such that
\begin{align*}
\left\| \frac{u^r} r \right\|_{L^{\infty}(\mathbb R^3)} \le C \left\| \frac{\omega^{\theta}}r
\right\|_{L^{3,1}(\mathbb R^3)},
\end{align*}
where $u=u^r e_r+u^z e_z$, $\omega=\nabla \times u= \omega^{\theta} e_{\theta}$.
\end{lem}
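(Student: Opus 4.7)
The plan is to realize $u^r/r$ as an integral operator acting on $\omega^\theta/r$ with kernel controlled pointwise by $|x-y|^{-2}$, and then invoke O'Neil's convolution inequality in Lorentz spaces together with the elementary fact that $|x|^{-2}\in L^{3/2,\infty}(\mathbb R^3)$ and the pairing $L^{3/2,\infty}\cdot L^{3,1}\hookrightarrow L^\infty$.

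First I would derive the kernel by direct computation from the 3D Biot--Savart law $u(x)=\frac{1}{4\pi}\int_{\mathbb R^3}\frac{\omega(y)\times(x-y)}{|x-y|^3}\,dy$. Substituting $\omega(y)=\omega^\theta(y)e_\theta(y)$ with $e_\theta(y)=(-y_2,y_1,0)/r(y)$ and dotting with $e_r(x)=(x_1,x_2,0)/r(x)$ gives the algebraic identity $[e_\theta(y)\times(x-y)]\cdot e_r(x)=(x_3-y_3)(x_h\cdot y_h)/(r(x)r(y))$ where $x_h=(x_1,x_2,0)$, yielding
\begin{align*}
\frac{u^r(x)}{r(x)}=\int_{\mathbb R^3} K(x,y)\,\frac{\omega^\theta(y)}{r(y)}\,dy,\qquad
K(x,y)=\frac{(x_3-y_3)(x_h\cdot y_h)}{4\pi\,r(x)^2\,|x-y|^3}.
\end{align*}

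Next I would establish the pointwise bound $|K(x,y)|\le C|x-y|^{-2}$. Splitting $x_h\cdot y_h=r(x)^2+x_h\cdot(y-x)_h$ gives $K=K_1+K_2$ with $K_1=(x_3-y_3)/(4\pi|x-y|^3)$, so trivially $|K_1|\le(4\pi|x-y|^2)^{-1}$. For $K_2=(x_3-y_3)\,x_h\cdot(y-x)_h/(4\pi r(x)^2|x-y|^3)$ the naive bound only gives $1/(r(x)|x-y|)$, which does not suffice. However, since $K_2$ is paired only with the axisymmetric function $\omega^\theta/r$, only its axisymmetric-in-$y$ part contributes. Averaging over the horizontal angle $\beta$ of $y$ (with $r',z'$ fixed) and using $\int_0^{2\pi}\cos\beta\,d\beta=0$ together with the elliptic-integral identity
\begin{align*}
\int_0^{2\pi}\frac{\cos\beta\,d\beta}{(A-B\cos\beta)^{3/2}}=\frac{2}{rr'\sqrt{A+B}}\Bigl[\frac{A\,E(k)}{A-B}-K(k)\Bigr],\quad k^2=\frac{2B}{A+B},
\end{align*}
where $A=r^2+r'^2+(z-z')^2$ and $B=2rr'$, and then analyzing the two regimes $B\ll A$ (small $k$) and $k\to 1$ (near the diagonal $r\sim r'$, $z\sim z'$) through the standard expansions of $K(k),E(k)$, one checks that the symmetrized $K_2$ also satisfies $|K_2^{\mathrm{axi}}(x,y)|\le C|x-y|^{-2}$ uniformly.

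A conceptually cleaner route to the same kernel bound, which I would actually carry out, is the Stokes stream function reformulation. Writing $u=\nabla\times(\psi e_\theta)$ gives $u^r=-\partial_z\psi$ and $\omega^\theta=-(\Delta-r^{-2})\psi$. Setting $\phi=\psi/r$ and computing $(\Delta-r^{-2})(r\phi)=r\Delta_5\phi$, where $\Delta_5=\partial_r^2+3r^{-1}\partial_r+\partial_z^2$ is the 5D Laplacian acting on functions depending only on $r=\sqrt{x_1^2+\dots+x_4^2}$ and $z=x_5$, one obtains the clean identities
\begin{align*}
\frac{u^r}{r}=-\partial_z\phi,\qquad -\Delta_5\phi=\frac{\omega^\theta}{r}\quad\text{on }\mathbb R^5.
\end{align*}
Reducing the resulting 5D convolution against the kernel $c\,x_5/|x|^5$ via the $S^3$-angular integral back to a 3D kernel, and using the cancellation $\int_{S^3}\sigma_1\,d\sigma=0$, reproduces exactly the bound $|K(x,y)|\le C|x-y|^{-2}$ on $\mathbb R^3$. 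Finally, given this kernel bound, O'Neil's convolution inequality for Lorentz spaces yields
\begin{align*}
\left\|\frac{u^r}{r}\right\|_{L^\infty(\mathbb R^3)}\le C\,\bigl\||x|^{-2}\bigr\|_{L^{3/2,\infty}(\mathbb R^3)}\,\left\|\frac{\omega^\theta}{r}\right\|_{L^{3,1}(\mathbb R^3)}\le C\left\|\frac{\omega^\theta}{r}\right\|_{L^{3,1}(\mathbb R^3)},
\end{align*}
as desired. The main obstacle is precisely the kernel estimate for $K_2$: the naive pointwise bound is off by a factor $r(x)|x-y|$ and the axisymmetric symmetrization (by either elliptic-integral cancellation or the 5D Laplacian reformulation) is what recovers the correct $|x-y|^{-2}$ decay.
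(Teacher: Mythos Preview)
Your argument is correct and reaches the same endpoint as the paper --- a pointwise majorization of the (angle-averaged) kernel by $C|x-y|^{-2}$, followed by the Lorentz pairing $L^{3/2,\infty}*L^{3,1}\hookrightarrow L^\infty$ --- but the route to the kernel bound is organized differently. The paper simply invokes the Shirota--Yanagisawa estimate
\[
|u^r(x)|\lesssim \int_{|y-x|\le r}\frac{|\omega(y)|}{|x-y|^2}\,dy \;+\; r\int_{|y-x|\ge r}\frac{|\omega(y)|}{|x-y|^3}\,dy,
\]
i.e.\ a near/far field split at scale $r(x)$; after dividing by $r$ and writing $|\omega|=r'\,|\omega^\theta/r'|$, the observations $r'\le 2r$ (near field) and $r'\le 2|x-y|$ (far field) collapse both pieces to $\int|x-y|^{-2}|\omega^\theta/r'|\,dy$, and O'Neil finishes. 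Your decomposition $K=K_1+K_2$ is instead algebraic, separating the ``harmless'' piece $K_1=(x_3-y_3)/(4\pi|x-y|^3)$ from the piece $K_2$ that genuinely requires axisymmetric cancellation; the $5$D Laplacian reformulation $u^r/r=-\partial_z(-\Delta_5)^{-1}(\omega^\theta/r)$ is an elegant way to encode that cancellation. What the paper's route buys is brevity (the hard step is outsourced to \cite{SY94,AHK10}); what your route buys is a self-contained derivation and a conceptual explanation via the $5$D lift. One caution: the passage ``reducing the $5$D convolution back to a $3$D kernel'' is not free --- the $5$D bound $|K_5|\le C|\tilde x-\tilde y|^{-4}$ lives against the measure $r'^3\,dr'\,dz'$, while the target lives against $r'\,dr'\,dz'$, so you still need the elementary comparison
\[
|z-z'|\,r'^2\int_{S^3}\frac{d\sigma}{(A-B\omega_1)^{5/2}}\;\lesssim\;\int_0^{2\pi}\frac{d\beta}{A-B\cos\beta},
\]
which is straightforward to verify in both regimes $B\ll A$ and $B\to A^-$ but should be stated rather than absorbed into ``reproduces exactly''.
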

\begin{proof}[Proof of Lemma \ref{lem_ur_r}]
See Proposition 3.1 of \cite{AHK10}. The key idea is to use the kernel
estimate (see Lemma 1 from \cite{SY94}) of the form
\begin{align*}
|u^r (x) | \lesssim \int_{|y-x|\le r } \frac{|\omega(y)|}{|x-y|^2} dy
+ r \int_{|y-x| \ge r } \frac{|\omega(y)|}{|x-y|^3} dy.
\end{align*}
\end{proof}

Analogous to the $2D$ case, we shall prove a perturbation lemma for decoupling the flow map. In order not
to obscure the main ideas, we shall just
state and prove the case for $C^2$. The general case $m\ge 2$ is a simple modification of numerology which
we leave it to interested readers.

Consider the following systems:
\begin{align*}
 \begin{cases}
  \partial_t \bigl( \frac {\omega^{(l)}} r \bigr) + (u^{(l)} \cdot \nabla) \bigl(
  \frac {\omega^{(l)}} r \bigr) =0, \\
  u^{(l)} = -\Delta^{-1} \nabla \times \omega^{(l)}, \\
  \omega^{(l)}\Bigr|_{t=0}=\omega_0^{(l)}.
 \end{cases}
\end{align*}

\begin{align*}
 \begin{cases}
  \partial_t \bigl(\frac{\omega}r\bigr) + (u\cdot \nabla) \bigl( \frac{\omega} r \bigr)=0, \\
  u= -\Delta^{-1} \nabla \times  \omega,\\
  \omega \Bigr|_{t=0}= \omega_0^{(l)} + \omega_0^{(h)};
 \end{cases}
\end{align*}

Let $\tilde \omega$ solve the \emph{linear} system
\begin{align*}
 \begin{cases}
  \partial_t \bigl( \frac{\tilde \omega} r\bigr) + (u^{(l)} \cdot \nabla) \bigl( \frac{\tilde \omega}r \bigr) =0, \\
  \tilde \omega \Bigr|_{t=0}= \omega_0=\omega_0^{(l)} + \omega_0^{(h)}.
 \end{cases}
\end{align*}

Let $\tilde u= -\Delta^{-1} \nabla \times \tilde \omega $ be the velocity corresponding to $\tilde \omega$. Let
$u_0^{(l)}$, $u_0^{(h)}$ be the velocities corresponding to $\omega_0^{(l)}$, $\omega_0^{(h)}$ respectively.
We assume $0<T_0 \lesssim 1$, and for some $r_1>0$,
\begin{align*}
 & \operatorname{supp}( \omega_0^{(l)}) \subset\{(r,z): \; r_1<r \lesssim 1, \, |z| \lesssim 1 \}, \\
& \operatorname{supp}( \omega_0^{(h)}) \subset\{(r,z): \; r_1<r  \lesssim 1, \, |z| \lesssim 1 \}, \\
& \| u_0^{(l)}\|_2 + \| u_0^{(h)} \|_2 + \| \frac{\omega_0^{(l)}} r \|_{\infty} + \|
 \frac{\omega_0^{(h)}} r \|_{\infty} \lesssim 1.
\end{align*}
These conditions guarantee that on the time interval $[0,T_0]$,
%$\operatorname{supp}(\omega^{(l)})$,
%$\operatorname{supp}(\omega)$, $\operatorname{supp}(\tilde \omega)$ all remain compactly supported and
%stay away from the $r=0$ axis, and
\begin{align*}
 &\| u(t) \|_{H^k} \lesssim C_k \| u(0) \|_{H^k}, \qquad k\ge 3,\\
& \|D\omega(t) \|_{\infty} \lesssim 1+ \| D\omega_0 \|_{\infty},\\
& \| D^2 \omega (t) \|_{\infty} \lesssim 1+\| D^2 \omega_0 \|_{\infty},
\end{align*}
where $C_k$ is some constant depending on $k$.

\begin{lem} \label{lem_ge1}
\begin{align}
 & \max_{0\le t\le T_0} \| D^2 u(t,\cdot) - D^2 \tilde u(t,\cdot)\|_{\infty} \notag \\
 \lesssim & \Bigl(\log(3+ \| u_0^{(h)}\|_{H^{10}} + \| u_0^{(l)} \|_{H^{10}})\Bigr) \cdot
 e^{10 T_0 \| D u^{(l)} \|_{L_t^{\infty} L_x^{\infty} ([0,T_0])} } \notag \\
 & \quad \cdot\Bigl( (1+\|D\omega_0\|_{\infty} )
 \| u_0^{(h)} \|_2^{\frac 2 7}  ( \max_{0\le t \le T_0}
 \| D^2 u(t) \|_{\infty} + \max_{0\le t \le T_0} \| D^2 u^{(l)} (t) \|_{\infty} )^{\frac 57} \notag \\
& \quad +(1+\|D^2\omega_0\|_{\infty} )
 \| u_0^{(h)} \|_2^{\frac 4 7}  ( \max_{0\le t \le T_0}
 \| D^2 u(t) \|_{\infty} + \max_{0\le t \le T_0} \| D^2 u^{(l)} (t) \|_{\infty} )^{\frac 37}\Bigr). \notag
 \end{align}
\end{lem}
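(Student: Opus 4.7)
The plan is to adapt the proof of Lemma \ref{lem_gd1} to the three-dimensional axisymmetric-without-swirl setting by substituting the modulated vorticity $\omega/r$ for the 2D scalar vorticity -- in this geometry $\partial_t(\omega/r)+u\cdot\nabla(\omega/r)=0$, so $\omega/r$ plays the role of a transported scalar. The asymmetric exponents $2/7$, $3/7$, $4/7$, $5/7$ appearing in the conclusion come from the three-dimensional $L^\infty$ Gagliardo--Nirenberg interpolations
\[
\|f\|_\infty \lesssim \|f\|_2^{4/7}\|D^2f\|_\infty^{3/7},\qquad \|Df\|_\infty \lesssim \|f\|_2^{2/7}\|D^2f\|_\infty^{5/7},
\]
which follow by a standard Littlewood--Paley/Bernstein optimization (cutoff at $\lambda_0=(\|D^2f\|_\infty/\|f\|_2)^{2/7}$), and whose powers are forced by the scaling constraints $j=-\tfrac{3}{2}\alpha_j+2(1-\alpha_j)$ in dimension $d=3$. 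The first step is an $L^2$ energy estimate on $u-u^{(l)}$, pressure gradients dropping out by incompressibility, yielding $\max_{0\le t\le T_0}\|u(t)-u^{(l)}(t)\|_2 \le e^{T_0\|Du^{(l)}\|_\infty}\|u_0^{(h)}\|_2$.

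The heart of the proof is the bound on $\|D(\omega-\tilde\omega)\|_\infty$. Setting $\eta:=\omega/r-\tilde\omega/r$, subtraction of the two transport equations yields
\[
\partial_t\eta + u^{(l)}\cdot\nabla\eta = (u^{(l)}-u)\cdot\nabla(\omega/r),
\]
so that differentiating once in $x$ and applying the maximum principle \eqref{infty_mp_e1} produces an integrating factor $e^{T_0\|Du^{(l)}\|_\infty}$ together with a source bounded by $\|D(u-u^{(l)})\|_\infty\|D(\omega/r)\|_\infty + \|u-u^{(l)}\|_\infty\|D^2(\omega/r)\|_\infty$. Inserting the standing a priori bounds $\|D^k(\omega/r)(t)\|_\infty\lesssim 1+\|D^k\omega_0\|_\infty$ for $k=1,2$ (which follow by transport of $\omega/r$ and the log-Gronwall argument from the postulated Sobolev bounds), and then applying the two interpolations above to $\|D(u-u^{(l)})\|_\infty$ and $\|u-u^{(l)}\|_\infty$ respectively, produces exactly the two summands in the conclusion: the first term pairs $(1+\|D\omega_0\|_\infty)\|u_0^{(h)}\|_2^{2/7}$ with the $5/7$-th power factor, while the second pairs $(1+\|D^2\omega_0\|_\infty)\|u_0^{(h)}\|_2^{4/7}$ with the $3/7$-th power factor.

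Finally, a single logarithmic Calder\'on--Zygmund bound
\[
\|D^2(u-\tilde u)\|_\infty \lesssim \|u-\tilde u\|_2 + \|D(\omega-\tilde\omega)\|_\infty \log\bigl(3+\|u\|_{H^{10}}+\|\tilde u\|_{H^{10}}\bigr),
\]
combined with the Sobolev estimates $\|u(t)\|_{H^{10}}+\|\tilde u(t)\|_{H^{10}}\lesssim 1+\|u_0^{(l)}\|_{H^{10}}+\|u_0^{(h)}\|_{H^{10}}$ available on $[0,T_0]$ from the standing assumptions, converts the previous bound into the asserted estimate for $\|D^2(u-\tilde u)\|_\infty$. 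The main technical obstacle will be the careful bookkeeping of the several multiplicative exponentials $e^{T_0\|Du^{(l)}\|_\infty}$ emerging along the way -- one from the $L^2$ energy inequality, one from the transport integrating factor in the $D\eta$-estimate, and further fractional-power contributions from raising these bounds to the $3/7$ and $5/7$-th powers -- which together account for the generous constant $10$ in the exponent of the statement. A secondary point is verifying that replacing $\omega$ by $\omega/r$ introduces no issues, which holds because the bi-Lipschitz axisymmetric flow preserves the standing geometric condition $\{r_1<r\lesssim 1\}$ on $[0,T_0]$, keeping all modulated and unmodulated quantities uniformly equivalent.
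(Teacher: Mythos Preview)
Your approach is correct and closely parallels the paper's, but with one genuine difference worth noting. You work with the modulated quantity $\eta=\omega/r-\tilde\omega/r$, which satisfies a \emph{pure} transport equation (no stretching term), and then convert back to $\omega-\tilde\omega$ at the end using the support condition $r\gtrsim r_1$. The paper instead sets $\eta=\omega-\tilde\omega$ directly and carries the stretching term $\tfrac{1}{r}(u\cdot e_r)\,\omega$ through the differentiation; the resulting $1/r$ factors are then handled not via any support assumption but through the intrinsic pointwise bound $\|f/r\|_\infty\le\|\partial_r f\|_\infty$, valid for any smooth axisymmetric $f$ with $f(0,z)\equiv 0$ (applied to $(u-u^{(l)})^r$, $\omega^\theta$, $\eta^\theta$, and so on). Your route yields a cleaner equation for $D\eta$ but requires the extra geometric input that the flow keeps the vorticity support away from the axis on $[0,T_0]$ (this does hold, since $|\partial_t\log\phi^r|\le\|u^r/r\|_\infty\le\|Du\|_\infty$, but it must be said); the paper's route has more terms to track after differentiation but is ``support-free.'' The $L^2$ energy step, the two Gagliardo--Nirenberg interpolations, and the final logarithmic Calder\'on--Zygmund step are identical in both arguments.
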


\begin{proof}[Proof of Lemma \ref{lem_ge1}]
We start with
\begin{align*}
\begin{cases}
\partial_t \omega + u \cdot \nabla \omega = \frac 1 r (u\cdot e_r) \omega, \\
\partial_t \tilde {\omega} + u^{(l)} \cdot \nabla \tilde \omega
=\frac 1 r (u^{(l)} \cdot e_r) \tilde \omega, \\
\omega \Bigr|_{t=0} = \tilde \omega \Bigr|_{t=0} = \omega_0^{(l)} + \omega_0^{(h)}.
\end{cases}
\end{align*}

Set $\eta= \omega-\tilde \omega$. Then
\begin{align*}
&\partial_t \eta + (u-u^{(l)}) \cdot \nabla \omega + u^{(l)} \cdot \nabla \eta \notag \\
&\quad = \frac 1 r ( (u-u^{(l)})\cdot e_r) \omega + \frac 1 r (u^{(l)} \cdot e_r) \eta.
\end{align*}
Taking the derivative (here $\partial$ denotes any one of the derivatives $\partial_{x_1}$,
$\partial_{x_2}$ or $\partial_z$) gives
\begin{align*}
&\partial_t \partial \eta + \partial( u-u^{(l)}) \cdot \nabla \omega
+ (u-u^{(l)}) \cdot \nabla \partial \omega + \partial u^{(l)} \cdot \nabla \eta
+ (u^{(l)} \cdot \nabla) (\partial \eta) \notag \\
& \quad = \partial( \frac 1 r (u-u^{(l)}) \cdot e_r ) \omega +
\frac 1 r ((u-u^{(l)}) \cdot e_r) \partial \omega \notag \\
& \qquad + \partial( \frac 1 r (u^{(l)}\cdot e_r) )  \eta +
\frac 1 r ( u^{(l)} \cdot e_r) \partial \eta.
\end{align*}

Note that for any axisymmetric function $f$ with
 $f(0,z)\equiv 0$, we have
\begin{align*}
\| \frac f r \|_{\infty} = \| \frac { f(r,z) -f(0,z)} r \|_{\infty}
 \le \| \partial_r f \|_{\infty} \le \| D f \|_{\infty}.
\end{align*}

Then\footnote{Note that if $v(x_1,x_2,z)=v^r(r,z)e_r+v^z(r,z)e_z$
and $v$ is smooth, then $v^r(0,z)\equiv 0$ since otherwise $v$ will
not be smooth at $(0,0,z)$. Similarly if
$\omega(x_1,x_2,z)=\omega^{\theta}(r,z)e_{\theta}$ is smooth, then
$\omega(0,0,z)\equiv 0$. These facts were used in the derivations.}
clearly,
\begin{align*}
&\max_{0\le t \le T_0} \| D \eta \|_{\infty} \notag \\
\lesssim & e^{8 T_0 \| D u^{(l)} \|_{L_t^{\infty}L_x^{\infty} ([0,T_0])} }
\max_{0\le t\le T_0}
\bigl( \| D (u-u^{(l)}) \|_{\infty} \|D \omega\|_{\infty}
+ \| u - u^{(l)} \|_{\infty} \| D^2 \omega \|_{\infty} \bigr).
\end{align*}

Now since
\begin{align*}
\partial_t (u-u^{(l)}) + (u-u^{(l)}) \cdot \nabla u^{(l)} + u \cdot \nabla (u-u^{(l)})
=-\nabla (p-p^{(l)}),
\end{align*}
we get
\begin{align*}
\max_{0\le t \le T_0} \| u-u^{(l)} \|_2 \lesssim e^{ T_0\| D u^{(l)}\|_{L_t^{\infty}L_x^{\infty}([0,T_0])}}
\| u_0^{(h)} \|_2.
\end{align*}

By using the interpolation inequalities (applied to $u-u^{(l)}$)
\begin{align*}
&\| D f\|_{L_x^{\infty}(\mathbb R^3)} \lesssim \| f \|_{L_x^2(\mathbb R^3)}^{\frac 27}
  \| D^2 f\|_{L_x^{\infty}(\mathbb R^3)}^{\frac 57}, \\
& \| f \|_{L_x^{\infty}(\mathbb R^3)}
 \lesssim \| f \|_{L_x^2(\mathbb R^3)}^{\frac 47}  \| D^2 f \|_{L_x^{\infty}}^{\frac 37},
 \end{align*}
we get
\begin{align*}
  & \max_{0\le t\le T_0} \| D( u - u^{(l)} ) \|_{\infty} \notag \\
  \lesssim & \max_{0\le t\le T_0}
  \Bigl(  \| u- u^{(l)} \|_2^{\frac 27}
   ( \| D^2 u \|_{\infty} + \| D^2 u^{(l)} \|_{\infty} )^{\frac 57} \Bigr) \notag \\
    \lesssim & e^{\frac 2 7 T_0 \| Du^{(l)} \|_{L_t^{\infty}L_x^{\infty}([0,T_0])}}
    \| u_0^{(h)}\|_2^{\frac 27}
    ( \max_{0\le t\le T_0} \|D^2 u(t) \|_{\infty}
    + \max_{0\le t\le T_0} \| D^2 u^{(l)} (t) \|_{\infty} )^{\frac 57};
    \end{align*}
and
\begin{align*}
 & \max_{0\le t \le T_0} \| u - u^{(l)} \|_{\infty} \notag \\
 \lesssim & e^{\frac 4 7 T_0 \| D u^{(l)} \|_{L_t^{\infty} L_x^{\infty} ([0,T_0])}}
 \| u_0^{(h)} \|_2^{\frac 47}
 (\max_{0\le t\le T_0} \| D^2 u(t)\|_{\infty}
 + \max_{0\le t\le T_0} \| D^2 u^{(l)}(t) \|_{\infty} )^{\frac 37}.
 \end{align*}

Hence the result follows from the usual log interpolation inequality (to bound $\|D^2 (u-\tilde u)\|_{\infty}$ in terms
of the product of $\|D\eta\|_{\infty}$ and a log-term).

\end{proof}

\section{Local $C^m$, $m\ge 2$ norm inflation: argument for the $3D$ case}
To suppress numerology, we shall just give the details for the case $m=2$. The general $m\ge 2$ is
 a simple modification (similar to what was done at the end of Section \ref{sec_local_2Dcm}).

We start with a general derivation.  To produce $C^2$-norm inflation, we will examine the quantity
$(\partial_{zz} u^{z})(t,0)$ which belongs to
one of the entries of $D^2 u(t)$. By using $\omega^{\theta}= \partial_r u^z - \partial_z u^r$ and
the incompressibility condition
\begin{align*}
\frac 1 r \partial_r (r u^r) + \partial_z u^z =0,
\end{align*}
it is easy to derive
\begin{align}
u^z = \Delta^{-1} \bigl( \frac 1 r \partial_r (r \omega^{\theta}) \bigr), \label{ge_e50a}
\end{align}
where $\Delta^{-1}=-\operatorname{const}|x|^{-1}*$ ($*$ denotes the usual convolution).

Similar to the 2D case we shall choose initial data $\omega_0$ with support away from the $r=0$ axis.
%This property is then
%propagated in time. This property will be used in the computation below (in particular one does not need to worry about
%the issue of divergence of integrals due to possible singularities at $r=0$).
By differentiating \eqref{ge_e50a} twice, we get (below $C_1>0$ is
an absolute constant)
\begin{align}
(\partial_{zz} u^z)(t,0)&= C_1 \int \frac {r^2 -2z^2} {(r^2+z^2)^{\frac 52}} \frac 1r
\partial_r ( r \omega^{\theta} ) \cdot r dr dz \notag \\
& = C_1 \int \frac {3 r (r^2 - 4 z^2)} {(r^2 + z^2)^{\frac 72}}  \omega^{\theta}(t,r,z) r dr dz. \label{ge_e50b}
\end{align}

Note in the above derivation we only used the Biot-Savart law in axisymmetric form
(to express $u$ in terms of $\omega^{\theta}$) and no dynamics is used yet.  Now assume that $\omega^{\theta}$ obeys
the equation:
\begin{align*}
\begin{cases}
\partial_t \bigl( \frac {\omega^{\theta}} r \bigr) + (V \cdot \nabla)\bigl( \frac {\omega^{\theta}} r \bigr)=0, \\
\omega^{\theta} \Bigr|_{t=0} =\omega_0^{\theta}.
\end{cases}
\end{align*}
Here $V=V^r(t,r,z) e_r + V^z(t,r,z) e_z$ is a \emph{given} velocity field. Define the axisymmetric
characteristics lines  $\phi(t,r,z)=(\phi^r(t,r,z), \phi^z(t,r,z))$, such that
\begin{align*}
\begin{cases}
\partial_t \phi (t,r,z) = (V^r(t, \phi(t,r,z)), V^z(t,\phi(t,r,z)) ), \\
\phi (0,r,z) =(r,z).
\end{cases}
\end{align*}
Then
\begin{align*}
 \frac { \omega^{\theta}(t, \phi(t,r,z))} {\phi^r(t,r,z)} = \frac{\omega_0^{\theta}(r,z)} r, \quad \forall\, t\ge 0, \, r>0, \, z\in \mathbb R.
\end{align*}
Using the above relation, we then make a change of variable\footnote{Note that the map $(r,z) \to \phi(t,r,z)$ preserves
the measure $rdrdz$.} $(r,z) \to \phi(t,r,z)$ in \eqref{ge_e50b} to get,
\begin{align}
 (\partial_{zz} u^z)(t,0) &= C_1 \int F_1( \phi(t,r,z) ) \frac{\omega_0^{\theta}(r,z)} r  rdr dz, \notag \\
 &= C_1 \int F_1( \phi(t,r,z) ) {\omega_0^{\theta}(r,z)}  dr dz, \label{ge_e50c}
\end{align}
where
\begin{align}
 F_1(r,z) =  \frac {3 r^2 (r^2 - 4 z^2)} {(r^2 + z^2)^{\frac 72}}.  \notag
\end{align}
We shall use the above formula in the computation below.

We now specify the form of initial data for producing $C^2$-norm inflation in the 3D Euler equation.
Take initial (axisymmetric) stream function\footnote{Note that the velocity-stream relation is
different from  2D. Due to the incompressibility condition $\partial_r( r u^r) + \partial_z (r u^z)=0$,
the velocity-stream relation
takes the form $u^r = \frac 1 r (-\partial_z) \psi$, $u^z= \frac 1 r \partial_r \psi$, i.e. there is a new metric
factor $\frac 1r$ in front of the differentiation.} of the form
\begin{align*}
\psi_0(r,z)= \psi_0^{(l)}(r,z) + \psi_0^{(h)}(r,z),
\end{align*}
where $\psi_0^{(l)}$ will produce the Lagrangian deformation and $\psi_0^{(h)}$ has the
expansion\footnote{Because of the new velocity-stream relation and the metric factor $\frac 1r$, we have
here $2^{-4i}$ instead of $2^{-3i}$.}
\begin{align*}
\psi_0^{(h)}(r,z) = \sum_{M\le j\le M+\sqrt M} 2^{-4j} a_0(2^j(r,z)).
\end{align*}
More assumptions on $\psi_0^{(l)}$ and $a_0$ will be clearly
specified later. For the moment we just assume $a_0$ is smooth and
compactly supported on $\{(r,z): \, \rho_1<r<\rho_2,
\rho_3<r+|z|<\rho_4 \}$ for some $0<\rho_1<\rho_2<2\rho_1$,
$0<\rho_3<\rho_4<2\rho_3$.  This way the replicates $a_0(2^j(r,z))$
will have non-overlapping supports. We also assume
$\omega_0^{\theta,(l)}$ is compactly supported away from the $r=0$
axis.

Then recalling  $u^r =\frac 1 r (-\partial_z)\psi$, $u^z = \frac 1 r
\partial_r \psi$, and $\omega^{\theta}= \partial_z u^r-\partial_r
u^z$, we get
\begin{align*}
&u_0^{r,(h)}(r,z) = \sum_{M\le j\le M+\sqrt M} 2^{-3j} \frac 1 r (-\partial_z a_0)(2^j(r,z)),\\
&u_0^{z,(h)}(r,z) = \sum_{M\le j\le M+\sqrt M} 2^{-3j} \frac 1 r (\partial_r a_0)(2^j (r,z)),\\
&\omega_0^{\theta,(h)} (r,z) = \sum_{M\le j\le M+\sqrt M} 2^{-j} b_0(2^j(r,z)),
\end{align*}
where
\begin{align*}
b_0(r,z)=\frac 1 r \bigl(- \partial_{zz} a_0 -\partial_{rr} a_0 +
\frac 1 r \partial_r a_0 \bigr).
\end{align*}

Let $u_0=u_0^{(l)}+u_0^{(h)}$ and denote $\tilde u$ as in the setting of Lemma \ref{lem_ge1}. By Lemma
\ref{lem_ge1}, it is not difficult to check that we the main part of $(\partial_{zz} u^z)(t,0)$ is
given by the quantity (see formula \eqref{ge_e50c})
\begin{align}
&C_1 \sum_{M\le j\le M+\sqrt M} \int F_1 (\phi^{(l)}(t,r,z) ) 2^{-j} b_0(2^j(r,z)) dr dz \notag \\
= & C_1 \sum_{M \le j\le M+\sqrt M} \int F_1( 2^j \phi^{(l)}(t,2^{-j} r, 2^{-j}z) ) b_0(r,z) dr dz. \notag
\end{align}

As $\lambda \to \infty$, we have\footnote{Similar to the 2D case, we
may (for simplicity of presentation) assume $\phi^{(l)}(t,0,0)\equiv 0$. Otherwise, one can just
shift to the point $\phi^{(l)}(t,0,0)$ and consider $(\partial_{zz}
u^z)(t,\phi^{(l)}(t,0,0))$.}
\begin{align}
 &\int F_1 ( \lambda \phi^{(l)} (t , \lambda^{-1} r, \lambda^{-1} z ) )  b_0(r,z) dr dz \notag \\
\to & \int F_1 ( A(t) \begin{pmatrix} r \\ z \end{pmatrix} ) b_0(r,z) dr dz, \label{ge_e60a}
\end{align}
where $A(t) = D \phi^{(l)}(t,0)$ (here $D \phi^{(l)}$ denotes the Jacobian matrix in $\partial_r$ and $\partial_z$).
Now we only need to specify $\psi^{(l)}_0$, $a_0$ and $t$ such that \eqref{ge_e60a} is nonzero.
The simplest choice of $\psi^{(l)}_0$ is such that the dynamics near $(r=0,z=0)$ is hyperbolic, namely
\begin{align*}
\partial_t (D \phi^{(l)})(t,0) =
\begin{pmatrix}
(\partial_r u^{r,(l)} )(t,0,0) \quad 0\\
0 \quad (\partial_z u^{z,(l)} )(t,0,0)
\end{pmatrix} D\phi^{(l)}(t,0).
\end{align*}
From the incompressibility condition $\frac 1 r\partial_r (r u^{r,(l)}  )+ \partial_z u^{z,(l)} =0$, we have
$$2\partial_r u^{r,(l)} (t,0,0) + \partial_z u^{z,(l)}(t,0,0)=0.$$  Therefore one can choose $\psi^{(l)}_0$ such that
\begin{align*}
A(t) \begin{pmatrix} r \\z
\end{pmatrix}
= \begin{pmatrix}
\lambda(t) r \\
\frac 1 {\lambda(t)^2} z
\end{pmatrix},
\end{align*}
where $\lambda(t)>1$, $\lambda(t)=1+$ for $t=0+$. Consider now for $\lambda>1$, the integral
\begin{align*}
 &\int F_1(\lambda r, \lambda^{-2} z) b_0(r,z) dr dz \notag \\
= & \int F_1 (\lambda r, \lambda^{-2} z) \frac 1 r \bigl(
-\partial_{zz} a_0 - \partial_{rr} a_0 +\frac 1 r \partial_r a_0
\bigr) dr dz.
\end{align*}
It is not difficult to check that
\begin{align*}
 & \int \partial_{\lambda} \Bigl( F_1 (\lambda r, \lambda^{-2} z) \Bigr)\Bigr|_{\lambda=1}
\frac 1 r \bigl( -\partial_{zz} a_0 - \partial_{rr} a_0 +\frac 1 r \partial_r a_0 \bigr) dr dz\\
= & \int F_2(r,z) a_0(r,z)dr dz,
\end{align*}
where
\begin{align*}
F_2(r,z)= -270 \frac { r (r^4 - 12 r^2 z^2 + 8 z^4)} {(r^2 +
z^2)^{\frac {11}2}}.
\end{align*}
Now clearly we only need to choose $a_0(r,z)$ to be supported in a sufficiently small neighborhood
around the point $(r=1,z=0)$. With such a choice, we have for $M$ sufficiently large,
\begin{align*}
|\partial_{zz} u^z(t,0)| > \operatorname{const} \sqrt M,
\end{align*}
producing the desired (local) $C^2$-norm inflation.

\section{Local $C^1$-norm inflation for $3D$ case} \label{sec_3DC1}
We shall again work with axisymmetric flows without swirl. We will retain similar notations as in the previous
section and only sketch the construction since it strongly parallels with the 2D case.
We shall examine
the quantity $\partial_z u^z(t,0)$. Recall that $u^z=\Delta^{-1}(\frac 1 r \partial_r (r\omega^{\theta}) )$.
Easy to check then
\begin{align*}
(\partial_z u^z)(t,0)
&= C_2 \int \frac{z}{ (r^2+z^2)^{\frac 32}} \frac 1 r \partial_r (r\omega^{\theta}) r dr dz \notag \\
&= 3C_2 \int \frac {r^2 z}{(r^2+z^2)^{\frac 52}} \frac {\omega^{\theta}(t,r,z)} r
 r dr dz,
 \end{align*}
 where $C_2>0$ is an absolute constant.
% and we again assume here $\omega^{\theta}(t,r,z)$ is compactly supported away
% from the $r=0$ axis.
  By using the conservation of $\omega^{\theta} /r $ on characteristic lines, we then have
 \begin{align}
 (\partial_z u^z)(t,0) = 3 C_2 \int F_3( \phi(t,r,z) ){\omega_0(r,z)} dr dz, \label{ge_65a}
 \end{align}
where
\begin{align*}
F_3(r,z)=\frac {r^2 z}{(r^2+z^2)^{\frac 52}}.
\end{align*}

Now take initial axisymmetric stream function in the form
\begin{align} \label{ge_66aa}
\psi_0(r,z) = \sum_{100\le j\le M} 2^{-3j} a_3(2^j(r,z)),
\end{align}
where $a_3$ is supported on $r\sim 1$, $r+|z|\sim 1$ such that $a_3(2^j(r,z))$ have non-overlapping supports.
More conditions on $a_3$
 will be specified later. In what follows, to simplify notation we will often write $\sum_{100\le j\le M}$
simply as $\sum_{j\le M}$. Now
\begin{align*}
&u_0^r(r,z) = \sum_{j\le M} 2^{-2j} \frac 1 r ( -\partial_z a_3)(2^j(r,z)), \\
&u_0^z(r,z) = \sum_{j\le M} 2^{-2j} \frac 1 r (\partial_r a_3)(2^j(r,z)).
\end{align*}
Consequently
\begin{align*}
\omega_0^{\theta}(r,z) = -\sum_{j\le M} b_3(2^j(r,z)),
\end{align*}
where
\begin{align*}
b_3(r,z)=\frac 1 r \bigl( \partial_{zz} a_3 + \partial_{rr} a_3- \frac 1 r \partial_r a_3 \bigr).
\end{align*}

The formula \eqref{ge_65a} then becomes
\begin{align}
- \frac 1 {3C_2} (\partial_z u^z)(t,0)
& = \sum_{j\le M} \int F_3(\phi(t,r,z)) b_3(2^j(r,z)) dr dz \notag \\
& = \sum_{j\le M} \int F_3( 2^j \phi(t, 2^{-j}r, 2^{-j} z)) b_3(r,z) dr dz. \label{ge_66a}
\end{align}

Now to avoid problems of estimating the pressure in axisymmetric coordinates, we switch to Euclidean Characteristic
lines. Define
\begin{align*}
\begin{cases}
\partial_t \Phi(t,x_1,x_2,z)= u(t, \Phi(t,x_1,x_2,z)),\\
\Phi(0,x)=x.
\end{cases}
\end{align*}

For $x=(x_1,x_2,z)$, we denote $x^{\prime}=(x_1,x_2,0)$. Then
\begin{align}
\text{RHS of \eqref{ge_66a}} = \sum_{j\le M} \int F_3(2^j \Phi(t,2^{-j} x) ) \frac{b_3(x)}{|x^{\prime}|} dx.
\label{ge_66b}
\end{align}
Here and below, we will slightly abuse the notation and denote any axisymmetric function $f=f(r,z)$ simultaneously
by $f=f(x)=f(x_1,x_2,z)$.

For simplicity of presentation, assume that\footnote{This can be easily achieved by choosing $\psi_0(r,z)$
to be an odd function
of $z$. Of course this kind of property is no longer available
 in the patching argument later. There one need to shift to the point $\Phi(t,0)$ similar to what was done  in the
2D case.}
 $u(t,x=0)\equiv 0$  and denote $$\|D u(t)\|_{L_t^{\infty}L_x^{\infty}([0,1])}=A \ge 1.$$
  Then by essentially repeating the
proof of Lemma \ref{lem_ga10} and Lemma \ref{lem_ga10a},
we get for any $\lambda\ge 3$, $0\le t \le \frac 1 A$, $|x| \lesssim 1$,
\begin{align*}
  &  |\lambda \Phi(t,\frac 1 {\lambda} x) - x - t \lambda u_0(\frac 1 {\lambda } x ) | \notag \\
  \lesssim & \; A^2 t^2 e^{tA}  \log(3+ \|u_0\|_{H^6}).
 \end{align*}
Also in place of Lemma \ref{lem_ga20}, we have
\begin{align*}
  &  \Bigl|\lambda \Phi(t,\frac 1 {\lambda} x) - x - t \lambda u_0(\frac 1 {\lambda } x )-\lambda \int_0^t (t-\tau)
  (\nabla p)(\tau, \frac {x} \lambda) d\tau \Bigr| \notag \\
  \lesssim &\; t^3 A^3 \log(3+\|u_0\|_{H^6}).
 \end{align*}
For $\lambda =2^j$, $j\le M$, and $x \in \operatorname{supp}(a_3)$, easy to check that
\begin{align*}
\lambda u_0 ( \frac 1 {\lambda} x) =
- \frac 1 r (\partial_z a_3)(r,z) e_r + \frac 1 r (\partial_r a_3)(r,z) e_z =:H(x).
\end{align*}

Now similar to the 2D case, we can Taylor expand the integrand $F_3(\cdot)$ in \eqref{ge_66b}
around the point $x$. This will introduce the main term (for producing norm inflation)
\begin{align}
\sum_{j\le M} t \int (\nabla F_3)(x) \cdot H(x) \frac{b_3(x)}{|x^{\prime}|} dx, \label{ge_68a}
\end{align}
and the pressure error term
\begin{align}
\sum_{j \le M} \int \nabla F_3(x) \cdot \int_0^t (t-\tau) 2^j(\nabla p)(\tau, 2^{-j}x) d\tau
\frac{b_3(x)}{|x^{\prime}|} dx, \label{ge_68b}
\end{align}
and negligible error terms (provided we choose $t=N/M$, with $1\ll N\ll M$).

Now similar to the 2D case, we can bound \eqref{ge_68b} by $\operatorname{const}\cdot M A^2 t^2$, provided
we satisfy the condition
\begin{align*}
\Delta^{-1} \nabla \cdot ( \nabla F_3(x) \frac{b_3(x)} {|x^{\prime}|} ) \in L_x^1(\mathbb R^3).
\end{align*}
Note that for any vector function $g=(g_1,g_2,g_3) \in C_c^{\infty}(\mathbb R^3)$,
$\Delta^{-1} \nabla \cdot g \in L_x^1(\mathbb R^3)$ if
\begin{align}
& \int_{\mathbb R^3} g(y) dy =0, \label{ge_67a} \\
& \int_{\mathbb R^3} (y\cdot g(y)) dy =0,  \\
& \int_{\mathbb R^3} (x\cdot y) (x\cdot g(y)) dy =0, \quad \forall\, x \in \mathbb R^3.
\label{ge_67c}
\end{align}
The conditions \eqref{ge_67a}--\eqref{ge_67c} are equivalent to
\begin{align}
 & \int_{\mathbb R^3} g(y) dy =0,  \notag \\
 & \int_{\mathbb R^3} y_k g_k(y) dy=0, \quad \forall\, 1\le k\le 3, \notag \\
 & \int_{\mathbb R^3} (y_k g_l(y)+y_l g_k(y)) dy =0, \quad \forall\, k \ne l. \notag
\end{align}
For $g(x)=\nabla F_3 \frac{b_3(x)}{|x^{\prime}|}$, recalling that $F_3(x)=r^2 z/ (r^2+z^2)^{5/2}$,
$b_3=\frac 1 r (\partial_{zz} a_3 + \partial_{rr} a_3 - \frac 1 r \partial_r a_3)$ and $a_3$ is an odd function of $z$,
we then obtain the equivalent condition (after a tedious computation)
\begin{align}
& \int \frac{ z (3 r^3 - 4 r z^2)}  {(r^2 + z^2)^{\frac 92}} a_3(r,z)  dr dz=0. \label{ge_69_8.9}
\end{align}
We need to find $a_3$ such that the above integral vanishes.
We also need the integral in \eqref{ge_68a} to be nonzero. Easy to check that
\begin{align*}
 \nabla F_3 \cdot H = \frac 1 r ( - \partial_r F_3 \partial_z a_3 + \partial_r a_3 \partial_z F_3).
\end{align*}
Then
\begin{align}
 & \int \nabla F_3 (x) \cdot H(x) \frac{b_3(x)} {|x^{\prime}|} dx \notag \\
 = & \; \int \frac 1 {r^2} (- \partial_r F_3 \partial_z a_3 + \partial_r a_3 \partial_z F_3)  \cdot
 (\partial_{zz}a_3 + \partial_{rr} a_3 - \frac 1 r \partial_r a_3) dr dz. \notag
\end{align}
After several integration by parts, we obtain
\begin{align}
 & \int \nabla F_3 (x) \cdot H(x) \frac{b_3(x)} {|x^{\prime}|} dx \notag \\
= & \int (\partial_r a_3)^2\cdot  \bigl( - \frac 1 {r^3} \partial_z F_3 - \frac 12 \partial_r ( \frac 1 {r^2} \partial_z F_3)
-\frac 12 \partial_z ( \frac 1 {r^2} \partial_r F_3) \bigr) dr dz \notag \\
& \quad + \int (\partial_z a_3)^2 \cdot \bigl( \frac 12 \partial_z(\frac 1 {r^2} \partial_r F_3) + \frac 12 \partial_r (\frac 1{r^2}
\partial_z F_3) \bigr) dr dz \notag \\
& \quad + \int \partial_z a_3 \partial_r a_3 \bigl( \frac 1 {r^3}
\partial_r F_3 - \frac 1 {r^2} \partial_{zz} F_3 + \partial_r ( \frac 1 {r^2} \partial_r F_3) \bigr) dr dz.
\notag \\
= & \int (\partial_r a_3)^2 \cdot \frac {3 r^4 - 24 r^2 z^2 + 8 z^4} { r (r^2 + z^2)^{\frac 92} } dr dz \notag \\
&\quad + \int (\partial_z a_3)^2 \cdot \frac{ -4 r^4 + 27 r^2 z^2 - 4 z^4 }{ r (r^2 + z^2)^{\frac 9 2}}        drdz        \notag \\
& \quad + \int \partial_z a_3 \partial_r a_3
\frac{10 z(3r^2-4z^2)}  { (r^2 + z^2)^{\frac 92} } dr dz.
\label{ge_69_8.10}
\end{align}

Now Let $\varphi_0 \in C_c^{\infty}(\mathbb R)$ be such that $0\le \varphi_0(z) \le 1$,
$\varphi_0(z)=1$ for $|z| \le 1$ and $\varphi_0(z)=0$ for $|z|>2$.
For $0\le s \le 1$, consider the function
\begin{align*}
h(s,r,z) =(1-s)  \underbrace{z\varphi_0( \frac{r -\frac 2 {\sqrt 3}} {\delta}) \varphi_0( \frac{|z|-(1-\epsilon)} {\epsilon^2} )}_{h_0(r,z)}
+s \underbrace{z\varphi_0( \frac{r -\frac 2 {\sqrt 3}} {\delta}) \varphi_0( \frac{|z|-(1+\epsilon)} {\epsilon^2} )}_{h_1(r,z)}.
\end{align*}

We shall choose $0<\delta\ll \epsilon$.  Here the special value $r_0=\frac 2 {\sqrt 3}$ is such that $3r_0^3=4r_0$ in the
integrand of \eqref{ge_69_8.9}. Easy to check that for $\delta\ll \epsilon \ll 1$,
\begin{align*}
& \int \frac{ z (3 r^3 - 4 r z^2)}  {(r^2 + z^2)^{\frac 92}} h_0(r,z)  dr dz>0, \\
& \int \frac{ z (3 r^3 - 4 r z^2)}  {(r^2 + z^2)^{\frac 92}} h_1(r,z)  dr dz<0. \\
\end{align*}
Hence by the  Intermediate Value Theorem, there exists $s_0 \in (0,1)$ such that
\begin{align*}
 & \int \frac{ z (3 r^3 - 4 r z^2)}  {(r^2 + z^2)^{\frac 92}} h(s_0, r,z)  dr dz=0,
\end{align*}
i.e. \eqref{ge_69_8.9} is fulfilled by choosing $a_3(r,z) = h(s_0,r,z)$.

Now return to  \eqref{ge_69_8.10}. Easy to check that uniformly in $0\le s\le 1$, we have
\begin{align*}
&\Bigl| \int |(\partial_r h(s,r,z))|^2 \frac {3 r^4 - 24 r^2 z^2 + 8 z^4} { r (r^2 + z^2)^{\frac 92} } dr dz \Bigr|\notag\\
\ge & \operatorname{const} \cdot \delta^{-1} \epsilon^2.
\end{align*}
The other two terms in \eqref{ge_69_8.10} are $O(1)$ (in terms of $\delta$). Therefore by choosing $\delta$ sufficiently small we
can make the integral \eqref{ge_69_8.10} non-zero.

Collecting all the estimates, we conclude that for $t= N/M$,  $1\ll N\ll M$, we have the bound
\begin{align*}
 |\partial_z u^z (t,0)| \ge \operatorname{const} \cdot t M = \op{const} \cdot N \gg 1.
\end{align*}
This produces the desired $C^1$-norm inflation for the $3D$ case.

\section{patching for 3D $C^1$ case}
Let $U^{\op{ext}}=U^{\op{ext}}(t,x_1,x_2,z)$ be a given smooth velocity field on $\mathbb R^3$
which is axisymmetric without swirl, i.e.
\begin{align*}
U^{\op{ext}}(t,x_1,x_2,z)= U^{\op{ext},r}(t,r,z) e_r + U^{\op{ext},z}(t,r,z) e_z.
\end{align*}
Consider 3D Euler in vorticity form
\begin{align} \label{hc0_e1}
\begin{cases}
\partial_t \omega + \bigl( (u+U^{\op{ext}})\cdot \nabla \bigr) \omega
= (\omega \cdot \nabla)(U^{\op{ext}}+u), \\
u=-\Delta^{-1} \nabla \times \omega,\\
\omega\Bigr|_{t=0} =\omega_0.
\end{cases}
\end{align}
Let $u_0 =-\Delta^{-1} \nabla \times \omega_0$ and we assume $u_0$ is axisymmetric without swirl.
Note that the system \eqref{hc0_e1} can be written more compactly as
\begin{align*}
\partial_t \bigl( \frac{\omega} r \bigr) + (u+U^{\op{ext}})\cdot \nabla \bigl( \frac{\omega} r \bigr)=0,
\end{align*}
which highlights the axisymmetry of the system.

Define the characteristic line
\begin{align} \label{lem_hc0_e30}
\begin{cases}
\partial_t \phi(t,x) = ( u+ U^{\op{ext}})(t,\phi(t,x)),\\
\phi(0,x)=x.
\end{cases}
\end{align}
Assume on some time interval $[0,T_0]$, $T_0\le 1$, we have
\begin{itemize}
\item $\max_{0\le t \le T_0} (1+\| Du(t,\cdot)\|_{\infty}) =  A \ge 1$;
\item $\max_{0\le t\le T_0} ( \| U^{\op{ext}}(t,\cdot)\|_{H^{10}} + \| D \partial_t U^{\op{ext}}(t,\cdot) \|_{\infty}
)\lesssim 1$;
\end{itemize}

Now we specify initial data $\omega_0$ for \eqref{hc0_e1}. We shall
borrow the notation from Section \ref{sec_3DC1}, and choose initial
axisymmetric stream function in the form (see the derivation therein
after formula \eqref{ge_65a})
\begin{align*}
& \psi_0(r,z) = \sum_{\sqrt M \le j \le M} 2^{-3j} a_3(2^j(r,z)),\\
& \omega_0 (r,z) = \omega_0^{\theta} e_{\theta},\\
& \omega_0^{\theta}(r,z) = -\sum_{\sqrt M \le j \le M} b_3 (2^j(r,z)),
\end{align*}
where
\begin{align*}
b_3(r,z) = \frac 1 r ( \partial_{zz} a_3 + \partial_{rr} a_3 - \frac 1 r \partial_r a_3).
\end{align*}
Note that
\begin{align*}
u(t,r,z) = u^r(t,r,z) e_r + u^z(t,r,z) e_z.
\end{align*}
Then
\begin{lem} \label{lem_hc2}
Assume for some $\rho>0$,
\begin{align*}
U^{\op{ext}}(0,x) =0, \quad \text{for $|x|<\rho$}.
\end{align*}
Then for any $0\le t\le T_0$, we have
\begin{align*}
|(\partial_z u^z)(t,\phi(t,0) )| \gtrsim Mt -M t^2 A^2 - M (A^2t^2 +A^4 t^4)e^{2tA}.
\end{align*}
\end{lem}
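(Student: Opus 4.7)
The plan is to mirror the proof of the 2D patching lemma \ref{lem_ha0_1}, replacing the 2D singular integral representation by the 3D axisymmetric Biot-Savart identity \eqref{ge_e50a}--\eqref{ge_e50b}. After translating to $\phi(t,0)$, the conservation of $\omega^\theta/r$ along the $(u+U^{\op{ext}})$-characteristics and the change of variables $x \mapsto \phi(t,x)$ (which preserves $r\,dr\,dz$) yield
\begin{align*}
(\partial_z u^z)(t,\phi(t,0)) = 3C_2 \sum_{\sqrt M \le j \le M} \int F_3\bigl(2^j\Phi(t,2^{-j}x)\bigr)\, \frac{b_3(x)}{|x'|}\, dx,
\end{align*}
where $\Phi(t,x) := \phi(t,x) - \phi(t,0)$ and $b_3$ is the profile of $\omega_0^\theta/r$ from Section \ref{sec_3DC1}. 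I will then apply Lemma \ref{lem_hc0} to the shifted combined velocity $U(t,x) := (u+U^{\op{ext}})(t,x+\phi(t,0)) - (u+U^{\op{ext}})(t,\phi(t,0))$, for which $\|DU\|_\infty \lesssim A$; this gives
\begin{align*}
\lambda \Phi(t,\lambda^{-1}x) = x + t\lambda U(0,\lambda^{-1}x) + \lambda\!\int_0^t\!(t-\tau)(\partial_\tau U)(\tau,\lambda^{-1}x)\,d\tau + O(A^2 t^2 e^{tA}).
\end{align*}

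Next, by choosing $M$ large enough that $\op{supp}(a_3(2^j\cdot)) \subset B(0,\rho/2)$, the hypothesis $U^{\op{ext}}(0,\cdot)\equiv 0$ on $B(0,\rho)$ ensures that the $U(0,\lambda^{-1}x)$ term reduces to the contribution of $u_0$ alone. Taylor-expanding $F_3$ around $x \in \op{supp}(b_3)$ and summing in $j$, the zeroth-order term of this expansion is killed (it matches the computation of $(\partial_z u^z)$ at $t=0$ after re-centering, which is zero for the odd-in-$z$ profile), while the first-order term involving $u_0$ produces, verbatim from the computation at \eqref{ge_69_8.10} in Section \ref{sec_3DC1}, the main contribution
\begin{align*}
\gtrsim M t.
\end{align*}
The Taylor remainder in the $F_3$-expansion, together with the $O(A^2 t^2 e^{tA})$ term from Lemma \ref{lem_hc0} and the weak bound \eqref{lem_hc0_st_e2}, contributes $\lesssim M(A^2 t^2 + A^4 t^4)e^{2tA}$.

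It remains to control the first-order Taylor contribution from the $\int_0^t(t-\tau)\lambda(\partial_\tau U)$ piece. Split $U = U_1 + U_2$ with $U_1$ the $u$-part and $U_2$ the $U^{\op{ext}}$-part. The $U_2$ contribution is bounded by $\lesssim Mt^2$ via the argument of Lemma \ref{lem_ha0}, using $|\partial_\tau \phi(\tau,0)|\lesssim 1$ and the hypothesis $\|D\partial_\tau U^{\op{ext}}\|_\infty \lesssim 1$. For $U_1$, I will use the velocity equation $\partial_t u = -((u+U^{\op{ext}})\cdot\nabla)u + (u\cdot\nabla)U^{\op{ext}} - \nabla p$ derived from $\omega = \nabla\times u$ and the vorticity system \eqref{hc0_e1}, which decomposes $\partial_\tau U_1$ into transport-type pieces ($\lesssim M A^2 t^2$), $U^{\op{ext}}$-stretching pieces ($\lesssim M A t^2$, absorbable), and pressure pieces at $x+\phi(\tau,0)$ and at $\phi(\tau,0)$.

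The main obstacle is the pressure term at $x+\phi(\tau,0)$: brute force only gives $\|\nabla p\|_\infty \lesssim A^2 \log M$, which would spoil the bound by a logarithm. The resolution, exactly as in Lemma \ref{lem_gb10}, is to integrate by parts in $x$ and move $\Delta^{-1}\nabla\cdot$ onto $\nabla F_3(x)\,b_3(x)/|x'|$. This transfer is legitimate provided $\Delta^{-1}\nabla\cdot(\nabla F_3 \,b_3/|x'|) \in L^1_x(\mathbb R^3)$, which by the 3D moment identities \eqref{ge_67a}--\eqref{ge_67c} reduces to the vanishing condition \eqref{ge_69_8.9} together with the automatic parity cancellations inherent in the choice of $a_3$ (odd in $z$). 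Since these are precisely the conditions engineered in Section \ref{sec_3DC1}, the pressure contribution is $\lesssim M A^2 t^2$. Combining the main lower bound $\gtrsim Mt$ with the error bounds $\lesssim M t^2 A^2$ and $\lesssim M(A^2 t^2 + A^4 t^4)e^{2tA}$ gives the stated estimate.
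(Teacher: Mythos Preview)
Your overall strategy matches the paper's proof closely, but there is one genuine gap. The velocity equation you write for $u$,
\[
\partial_t u = -\bigl((u+U^{\op{ext}})\cdot\nabla\bigr)u + (u\cdot\nabla)U^{\op{ext}} - \nabla p,
\]
is not correct. From $u=-\Delta^{-1}\nabla\times\omega$ and the vorticity system \eqref{hc0_e1} one instead obtains (as the paper derives)
\[
\partial_t u + \bigl((u+U^{\op{ext}})\cdot\nabla\bigr)u = -\nabla p - \Delta^{-1}\nabla\times\bigl(\partial U^{\op{ext}}\,\partial u\bigr),
\]
where $\partial U^{\op{ext}}\,\partial u$ stands symbolically for bilinear terms $\partial_j U^{\op{ext}}_k\,\partial_l u_m$. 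The extra term is \emph{nonlocal}, not the local stretching $(u\cdot\nabla)U^{\op{ext}}$ you wrote; consequently it cannot be disposed of by a pointwise $O(MAt^2)$ bound. It must be treated exactly like the pressure: after moving $\Delta^{-1}\nabla\times$ onto $\nabla F_3(x)\,b_3(x)/|x'|$, one needs $\Delta^{-1}\nabla\times\bigl(\nabla F_3\, b_3/|x'|\bigr)\in L^1_x(\mathbb R^3)$, which reduces to the moment conditions $\int g=0$ and $\int y_j g_k=0$ for all $j,k$. The paper checks that these again collapse to \eqref{ge_69_8.9}, so no \emph{new} constraint on $a_3$ is needed---but this verification is part of the proof, and your write-up omits it because your velocity equation hides the term.

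Two smaller omissions, both handled in the paper: the pressure (and the nonlocal $\Delta^{-1}\nabla\times$ piece) evaluated at $\phi(\tau,0)$ contribute zero because $\int \partial_j F_3(x)\,\tfrac{b_3(x)}{|x'|}\,dx=0$ for each $j$ (by parity in $z$ and in $x_1,x_2$); you mention these terms but do not explain why they drop out. With the corrected velocity equation and these two observations added, your argument coincides with the paper's.
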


\begin{proof}[Proof of Lemma \ref{lem_hc2}]
By a similar derivation as in \eqref{ge_66b}, we only need to work with the quantity
\begin{align*}
Q_1=\sum_{\sqrt M \le j\le M} \int F_3(2^j (\phi(t,2^{-j} x)-\phi(t,0)) ) \frac {b_3(x)}{|x^{\prime}|} dx,
\end{align*}
where $\phi(t,x)$ is given in \eqref{lem_hc0_e30}, and we recall that
$F_3(x_1,x_2,z)=F_3(r,z)=r^2 z/(r^2+z^2)^{5/2}$, $x^{\prime}=(x_1,x_2,0)$.

By Lemma \ref{lem_hc0} and Taylor expanding $F_3(\cdot)$ around the point $x$, we get
\begin{align}
Q_1 & \ge \sum_{\sqrt M\le j \le M}
t \int (\nabla F_3)(x) \cdot 2^j u_0(2^{-j} x) \frac{b_3(x)}{|x^{\prime}|} dx \label{lem_hc2_e3a}\\
&\; + \sum_{\sqrt M \le j\le M}
\int \nabla F_3(x) \cdot \int_0^t
(t-\tau) 2^j (\partial_{\tau} U_1)(\tau, 2^{-j} x) d\tau \frac{b_3(x)} {|x^{\prime}|} dx
\label{lem_hc2_e3b}\\
&\; + \sum_{\sqrt M \le j\le M}
\int \nabla F_3(x) \cdot \int_0^t
(t-\tau) 2^j (\partial_{\tau} U_2)(\tau, 2^{-j} x) d\tau \frac{b_3(x)} {|x^{\prime}|} dx
\label{lem_hc2_e3b_00}\\
 & \;+ \op{error}, \notag
\end{align}
where
\begin{align*}
U_1(\tau,x) &= u(\tau,x+\phi(\tau,0)) -
u (\tau, \phi(\tau,0)), \notag \\
U_2(\tau,x) & = U^{\op{ext}}(\tau, x+\phi(\tau,0)) - U^{\op{ext}}(\tau, \phi(\tau,0));
\end{align*}
and

\begin{align*}
\| \op{error}\|_{\infty} \lesssim M \cdot (A^2 t^2 +A^4 t^4) e^{2tA}.
\end{align*}

Note that \eqref{lem_hc2_e3a} can be handled in the same way as \eqref{ge_68a}. It gives the main
order $Mt$.

By Lemma \ref{lem_ha0} (note that the argument therein is independent of the dimension), we have
\begin{align*}
| \eqref{lem_hc2_e3b_00}| \lesssim M t^2
\end{align*}
which is acceptable.

For \eqref{lem_hc2_e3b}, we note that by using \eqref{hc0_e1}, $\partial_t u$ satisfies the equation
\begin{align*}
\partial_t u + \bigl((u+ U^{\op{ext}})\cdot \nabla \big) u = - \nabla p -\Delta^{-1} \nabla \times
( \partial U^{\op{ext}} \partial u),
\end{align*}
where we have used the symbolic notation $\partial U^{\op{ext}} \partial u$ to denote generic terms
of the type $\partial_j U^{\op{ext}}_k \partial_l u_m$. The actual form does not matter in the estimates.

Now note
\begin{align}
(\partial_{\tau} U_1)(\tau, x) &= (\partial_{\tau} u)(\tau, x+\phi(\tau,0) ) +
\partial_{\tau} \phi(\tau,0) \cdot  (\nabla u)(\tau, x+\phi(\tau,0)) \notag \\
&\quad  -(\partial_{\tau} u)(\tau, \phi(\tau,0) ) -
\partial_{\tau} \phi(\tau,0) \cdot  (\nabla u)(\tau, \phi(\tau,0)). \notag\\
& = \bigl( (u+U^{\op{ext}})(\tau, \phi(\tau,0)) - (u+U^{\op{ext}})(\tau, x+ \phi(\tau,0))
\bigr)\cdot (\nabla u)(\tau,x+\phi(\tau,0)) \label{lem_hc2_e20a} \\
& \quad - (\nabla p)(\tau, x+\phi(\tau,0)) \label{lem_hc2_e20aa}\\
&\quad +(\nabla p)(\tau, \phi(\tau,0))  \label{lem_hc2_e20b} \\
& \quad +\Bigl(-\Delta^{-1} \nabla \times
( \partial U^{\op{ext}} \partial u)
\Bigr)(\tau, x+\phi(\tau,0)) \label{lem_hc2_e20c}\\
& \quad -\Bigl( -\Delta^{-1} \nabla \times
( \partial U^{\op{ext}} \partial u)
\Bigr)(\tau, \phi(\tau,0)). \label{lem_hc2_e20d}
\end{align}

Now similar to the situation in the 2D Lemma \ref{lem_ha0_1}, the contribution of the terms
\eqref{lem_hc2_e20a} and \eqref{lem_hc2_e20aa} to \eqref{lem_hc2_e3b} are
\begin{align*}
\lesssim M t^2 A^2,
\end{align*}
which is acceptable.

The contribution of \eqref{lem_hc2_e20b} and \eqref{lem_hc2_e20d} to \eqref{lem_hc2_e3b} are zero.
This follows from the fact that
\begin{align}
\int \partial_j F_3(x) \frac{b_3(x)} {|x^{\prime}|} dx =0, \quad \partial_j = \partial_{x_1},
\partial_{x_2} \text{ or } \partial_z. \label{lem_hc2_e20b_fact_1}
\end{align}
To prove \eqref{lem_hc2_e20b_fact_1}, one just recall that
$F_3(r,z)=r^2 z/ (r^2+z^2)^{\frac 52}$ and $b_3(r,z)$ is odd in $z$. If $j=1,2$, then since
$\nabla_{j} F_3(r,z)= \frac {x_j} r \partial_r F_3(r,z)$, the integral then obviously vanishes (by using
oddness in $x_j$).
If $\partial_j =\partial_z$, then $\partial_z F_3$ is even in $z$ and the integrand is odd in $z$
which also vanishes.

We only need to focus on the term \eqref{lem_hc2_e20c}. It will have the same bound $Mt^2 A^2$ provided we verify the
condition
\begin{align*}
\Delta^{-1} \nabla \times (\nabla F_3(x) \frac{b_3(x)} {|x^{\prime}|} ) \in L_x^1(\mathbb R^3).
\end{align*}
Now for a vector function $g=(g_1,g_2,g_3) \in C_c^{\infty}(\mathbb R^3)$, easy
to check that $\Delta^{-1} \nabla \times g \in L_x^1(\mathbb R^3)$ if
\begin{align*}
& \int g(y) dy =0, \\
%& \int (x\cdot y) (x_j g_k(y) ) dy=0, \quad \forall\, x \in \mathbb R^3, \, 1\le j,k,\le 3,\\
& \int y_j g_k(y) dy =0, \quad \forall\, 1\le j,k\le 3.
\end{align*}
After a tedious computation, it is not difficult to check that it is equivalent to
the condition \eqref{ge_69_8.9} which is already satisfied by the same choice of $a_3$ as in Section
\ref{sec_3DC1}. No additional work is needed.
\end{proof}

\begin{lem}\label{lem_hd1}
Let $f \in C_c^{\infty}(B(0,100))$, $g\in C_c^{\infty}(B(0,100))$ be axisymmetric functions on $\mathbb R^3$
which take the form:
\begin{align*}
f(x) = f^{\theta}(r,z) e_{\theta}, \;
g(x)= g^{\theta}(r,z) e_{\theta},\;
x=(x_1,x_2,z), \; r=\sqrt{x_1^2+x_2^2},
\end{align*}
where $f^{\theta}$ and $g^{\theta}$ are scalar-valued and vanish near $r=0$, i.e. for
some $r_0>0$,
\begin{align*}
&\op{supp}(f^{\theta}) \subset \{(r,z):\, r>r_0\},\\
&\op{supp}(g^{\theta}) \subset \{(r,z):\, r>r_0\}.
\end{align*}

Let $\omega^{a}$ and $\omega$ be smooth solutions to the following axisymmetric without swirl Euler
equations:
\begin{align} \notag
\begin{cases}
\partial_t( \frac{\omega^a}r ) + (u^a \cdot \nabla)( \frac{\omega^a} r) =0, \\
u^a = -\Delta^{-1} \nabla \times \omega^a,\\
\omega^a \Bigr|_{t=0}=f.
\end{cases}
\end{align}

\begin{align} \notag
\begin{cases}
\partial_t (\frac{\omega} r) + (u\cdot \nabla) ( \frac{\omega} r) =0,\\
u = - \Delta^{-1} \nabla \times \omega,\\
\omega\Bigr|_{t=0} =f +g.
\end{cases}
\end{align}

There exist a finite positive constant $C_f>0$ depending only on the data $f$, and an absolute constant
$C_1>0$, such that for any $2\le p \le \infty$,
\begin{align*}
\max_{0\le t\le 1} \| \omega(t) -\omega^a(t)\|_p  \le \; C_f
\cdot e^{C_1 \| \frac g r \|_{L^{3,1}(\mathbb R^3)}} ( \| g \|_p+ \| |\nabla|^{-1} g \|_2).
\end{align*}

Let
\begin{align*}
\theta(p)= \log\log (p+100).
\end{align*}

For any $\epsilon>0$, there exists $\delta=\delta(\epsilon,f)>0$ sufficiently small such that
if
\begin{align}
\sup_{2\le p<\infty}
\Bigl( \frac 1 {\theta(p)}
\cdot e^{C_1 \| \frac g r \|_{L^{3,1}(\mathbb R^3)}} ( \| g \|_p+ \| |\nabla|^{-1} g \|_2) \Bigr) <
\delta (\epsilon,f), \label{lem_hd1_e1a}
\end{align}
then
\begin{align}
\sup_{0\le t\le 1}\;
\sup_{2\le p<\infty} \frac{\| \omega(t) -\omega^a(t) \|_p} {\theta(p)} <\epsilon. \label{lem_hd1_e2}
\end{align}

\end{lem}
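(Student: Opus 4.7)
My plan is to exploit the conservation of $\omega^\theta/r$ along the Euler flow. Setting $\beta=\omega^\theta/r-(\omega^a)^\theta/r$ and $\alpha^a=(\omega^a)^\theta/r$, one derives from the compact form \eqref{ge_e2} the inhomogeneous transport equation
\begin{align*}
\partial_t\beta+u\cdot\nabla\beta=(u^a-u)\cdot\nabla\alpha^a,\qquad\beta(0)=g^\theta/r.
\end{align*}
Since $f$ is smooth and supported in $\{r>r_0\}$, standard axisymmetric-without-swirl theory yields that $(\omega^a)^\theta$, $\nabla(\omega^a)^\theta$, $\alpha^a$, and $\nabla\alpha^a$ remain uniformly bounded on $[0,1]$ in every $L^p$ and $L^{p,q}$ norm by some constant $C_f$, and $Du^a$ is bounded in $L^\infty$. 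Applying Lemma \ref{lem_Lpq_1} to the full scalar $\omega^\theta/r$ (which is transported by $u$) and then Lemma \ref{lem_ur_r}, I obtain the drift bound $\|u^r/r\|_\infty\le C_f+C\|g^\theta/r\|_{L^{3,1}}$; after time-integration in Gronwall this produces the announced exponential factor $e^{C_1\|g/r\|_{L^{3,1}}}$.

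Next, $\eta^\theta=\omega^\theta-(\omega^a)^\theta=r\beta$ satisfies
\begin{align*}
\partial_t\eta^\theta+(u\cdot\nabla)\eta^\theta-\frac{u^r}{r}\eta^\theta=\frac{(u-u^a)^r}{r}(\omega^a)^\theta-(u-u^a)\cdot\nabla(\omega^a)^\theta,
\end{align*}
and the standard $L^p$ energy estimate (using $\nabla\cdot u=0$ to kill the transport term) yields
\begin{align*}
\frac{d}{dt}\|\eta^\theta\|_p\le\|u^r/r\|_\infty\|\eta^\theta\|_p+C_f\big(\|(u-u^a)^r/r\|_\infty+\|u-u^a\|_\infty\big).
\end{align*}
Lemma \ref{lem_ur_r} applied to $\eta$ gives $\|(u-u^a)^r/r\|_\infty\le C\|\beta\|_{L^{3,1}}$, while Lemma \ref{lem_Lpq_1} applied to the $\beta$-equation in $L^{3,1}$ gives $\|\beta(t)\|_{L^{3,1}}\le\|g^\theta/r\|_{L^{3,1}}+C_f\int_0^t\|u-u^a\|_\infty\,ds$.

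For $\|u-u^a\|_\infty$ I plan to combine two inputs: the Lorentz-space Biot-Savart bound $\|u-u^a\|_\infty\lesssim\|\eta\|_{L^{3,1}}\lesssim_f\|\beta\|_{L^{3,1}}$ (from $|x|^{-2}\in L^{3/2,\infty}(\mathbb R^3)$ and Lorentz duality, using that supports stay bounded on $[0,1]$), and the standard $L^2$ velocity-difference energy estimate $\|u-u^a\|_2\le C_f\||\nabla|^{-1}g\|_2$, combined with the 3D Gagliardo--Nirenberg interpolation $\|v\|_\infty\lesssim\|v\|_2^{2/5}\|\nabla\times v\|_\infty^{3/5}$. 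A Young-inequality absorption in a self-consistent Gronwall loop then closes the three coupled unknowns $(\|\eta^\theta\|_\infty,\|\beta\|_{L^{3,1}},\|u-u^a\|_2)$ and, after feeding back into the $L^p$ inequality, produces the claimed bound $\|\eta(t)\|_p\le C_f e^{C_1\|g/r\|_{L^{3,1}}}(\|g\|_p+\||\nabla|^{-1}g\|_2)$. Part (b) is then immediate: dividing by $\theta(p)=\log\log(p+100)$ and taking the supremum over $p\in[2,\infty)$, the hypothesis \eqref{lem_hd1_e1a} forces the right-hand side below $\epsilon$ provided $\delta(\epsilon,f)$ is chosen small. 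The main obstacle is precisely the clean closure of this coupled Gronwall system, ensuring that $e^{C_1\|g/r\|_{L^{3,1}}}$ appears only as a multiplicative prefactor and the second factor depends solely on $\|g\|_p+\||\nabla|^{-1}g\|_2$, with no leakage of $\|g\|_\infty$ or $\|g/r\|_{L^{3,1}}$ into that factor; this is handled by first bootstrapping the $p=\infty$ estimate self-consistently via Gagliardo--Nirenberg and then using the (Lorentz-space) Biot--Savart bound, which is uniform in $p$, to drive the general-$p$ Gronwall.
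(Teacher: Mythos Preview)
Your coupled-Gronwall scheme has a genuine leak that you yourself flag but do not actually close. By applying H\"older so that $\|u-u^a\|_\infty$ (rather than $\|u-u^a\|_p$) sits in the forcing of the $L^p$ estimate, the forcing becomes $p$-independent, and every route you propose for bounding it drags in a ``bad'' quantity:
\begin{itemize}
\item If you control $\|(u-u^a)^r/r\|_\infty$ via Lemma~\ref{lem_ur_r} and the $L^{3,1}$ transport estimate for $\beta$, the initial data $\|\beta(0)\|_{L^{3,1}}=\|g/r\|_{L^{3,1}}$ enters the \emph{additive} factor, not just the exponent.
\item If you instead bootstrap $\|u-u^a\|_\infty\lesssim\|u-u^a\|_2^{2/5}\|\eta\|_\infty^{3/5}$ and close the $p=\infty$ case first, the initial data $\|\eta(0)\|_\infty=\|g\|_\infty$ appears in the additive factor when you feed back into general $p$.
\end{itemize}
Neither $\|g/r\|_{L^{3,1}}$ nor $\|g\|_\infty$ is controlled by the hypothesis \eqref{lem_hd1_e1a}: the supremum there is over $p<\infty$ with weight $1/\theta(p)\to 0$, so it gives no bound on $\|g\|_\infty$, and $\|g/r\|_{L^{3,1}}$ only appears in the exponent. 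Hence your argument does not yield the stated inequality $\|\eta\|_p\le C_f e^{C_1\|g/r\|_{L^{3,1}}}(\|g\|_p+\||\nabla|^{-1}g\|_2)$, and part (b) does not follow.

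The paper sidesteps the whole coupled system with one line: apply H\"older the other way,
\[
\|(u-u^a)\cdot\nabla\omega^a\|_p\le\|u-u^a\|_p\|\nabla\omega^a\|_\infty\le C_f\|u-u^a\|_p,\qquad
\|(u-u^a)^r\tfrac{\omega^a}{r}\|_p\le C_f\|u-u^a\|_p,
\]
and then use the elementary frequency-splitting bound
\[
\|u-u^a\|_p\lesssim\|u-u^a\|_2+\|\eta\|_p\qquad(2\le p<\infty),
\]
valid because $u-u^a=-\Delta^{-1}\nabla\times\eta$ (low frequencies controlled by $L^2$, high frequencies by $\|\eta\|_p$ via bounded Riesz transforms). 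The $\|\eta\|_p$ piece is absorbed into the Gronwall coefficient and the forcing collapses to $C_f\||\nabla|^{-1}g\|_2$, giving the clean bound immediately; $p=\infty$ follows by letting $p\to\infty$ in the final inequality. No $L^\infty$ estimate on $u-u^a$, no Gagliardo--Nirenberg, no $\beta$-equation is needed.
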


\begin{rem}
Our choice of $\theta(p) \sim \log\log p$ is certainly an
``overkill" here. We chose such $\theta(p)$ just to be on the safe
side. Alternatively one can choose $\theta(p)=\log p$ which
corresponds to the requirement in Yudovich's uniqueness
theorem\footnote{As was already pointed out by Vishik in
\cite{Vishik99} (see the footnote on P770 of \cite{Vishik99}
 therein), ``Not much stronger than linear" on p.28 of
\cite{Yu95} is a misprint and $\log p$ growth of $\|\omega\|_p$ is what is actually covered by \cite{Yu95}.}
\cite{Yu95}. In our application later, we shall take the function $g(x) = g^{\theta}(r,z) e_{\theta}$
approximately of the form
(neglecting some additional prefactors)
\begin{align*}
g^{\theta}(x) = \sum_{ M-N \le j \le M} \phi(2^j x),
\end{align*}
where $\phi(x)=\phi(x_1,x_2,z)=\phi(r,z)$
is a smooth axisymmetric function supported on $r\sim 1$, $|z| \sim 1$. Easy to check that
\begin{align*}
& \| \frac g r \|_{L^{3,1}(\mathbb R^3)} \lesssim N, \notag \\
& \| g \|_{p} \lesssim N \cdot 2^{-\frac 3p (M-N)}, \notag \\
& \| |\nabla|^{-1} g \|_2 \lesssim 2^{-\frac 52 (M-N)}.
\end{align*}
Then for $1\ll N\ll M$, we have (below $C_2>0$ is an absolute constant)
\begin{align*}
& \frac 1 {\theta(p)} e^{C_1 \|\frac g r \|_{L^{3,1}}} (\|g \|_p +\| |\nabla|^{-1} g\|_2) \notag \\
\le & \frac 1 {\log\log(p+100)} \cdot e^{C_2\cdot N - \frac 1 p M}.
\end{align*}
If $p< \frac M {2 C_2 N}$, then the above quantity is less than $e^{-C_2 N}$ which can be made
arbitrarily small. If $p>\frac M {2 C_2 N}$, then the above quantity is less than
\begin{align*}
\frac 1 {\log\log ( \frac M{2C_2 N} +100)} \cdot e^{C_2 \cdot N},
\end{align*}
which can also be made arbitrarily small by choosing $M$ sufficiently large.
\end{rem}

\begin{proof}[Proof of Lemma \ref{lem_hd1}]
In this proof we shall denote by $C_f$ any finite positive constant which depends only on $f$. The value
of $C_f$ can change from line to line. For example by standard wellposedness theory for axisymmetric
without swirl flows, we have\footnote{Here the axisymmetric without swirl assumption is used to guarantee that
the solution is global in time. In particular it exists on $[0,1]$.}
\begin{align*}
\max_{0\le t\le 1} \| u^a(t,\cdot)\|_{H^{10}(\mathbb R^3)} \le C_f.
\end{align*}
For any two quantities $X$ and $Y$, we shall use the usual notation $X \lesssim Y$ if $X\le C Y$ where $C>0$
is some harmless absolute constant.

We proceed in two steps.

\texttt{Step 1}. Estimate of $\|u^a-u\|_2$. Since $u^a$ and $u$ satisfies the equations:
\begin{align*}
&\partial_t u + (u\cdot \nabla) u = - \nabla p, \\
& \partial_t u^a + (u^a \cdot \nabla) u^a = - \nabla p^a,
\end{align*}
we get
\begin{align*}
\partial_t ( u-u^a) +  (u\cdot \nabla) (u-u^a) +\bigl((u-u^a) \cdot \nabla \bigr) u^a = - \nabla (p-p^a).
\end{align*}
Then clearly
\begin{align*}
\max_{0\le t\le 1} ( \| (u-u^a)(t,\cdot) \|_2) &\le \| (u-u^a)(0,\cdot) \|_2 \cdot e^{ \max_{0\le t\le 1}\|
\nabla u^a(t,\cdot) \|_{\infty} } \notag \\
&\lesssim \| |\nabla|^{-1} g \|_2 \cdot C_f.
\end{align*}

\texttt{Step 2}. Estimate of $\| \omega - \omega^a \|_p$.  Set $\eta= \omega -\omega^a$. Then since
\begin{align*}
&\partial_t \omega + (u\cdot \nabla) \omega = \frac{u^r} r \omega,\\
&\partial_t \omega^a + (u^a \cdot \nabla) \omega^a = \frac{(u^a)^r} r \omega^a,
\end{align*}
we get
\begin{align*}
&\partial_t \eta + (u\cdot \nabla ) \eta + \bigl( (u-u^a) \cdot \nabla \bigr) \omega^a \notag \\
& \qquad \quad = \frac {u^r} r \eta + (u-u^a)^r \frac{\omega^a}r.
\end{align*}
Therefore for any $1<p<\infty$, we have
\begin{align}
\frac 1 p \partial_t ( \| \eta \|_p^p ) &\le \| u-u^a\|_p \| \eta\|_p^{p-1} \| \nabla \omega^a \|_{\infty}
+\| \frac{u^r } r \|_{\infty} \| \eta\|_p^p \notag \\
& \qquad  + \|u-u^a \|_p \| \eta\|_p^{p-1} \| \frac{\omega^a} r \|_{\infty} \notag \\
& \le\; C_f \|u-u^a \|_p \|\eta\|_p^{p-1} + \| \frac{u^r} r\|_{\infty} \| \eta\|_p^p. \label{lem_hd1_e5}
\end{align}

Now by conservation of $L^{3,1}$-norm of $\omega/r$, we have
\begin{align*}
\| \frac {u^r(t)} r \|_{\infty} &\lesssim \| \frac{\omega(t)} r \|_{L^{3,1}} \notag \\
& \lesssim \| \frac{\omega(0)} r \|_{L^{3,1}} \notag \\
& \lesssim C_f + \| \frac g r \|_{L^{3,1}}.
\end{align*}

On the other hand, for $p\ge 2$, we have
\begin{align*}
\| u-u^a \|_p &\lesssim \| u-u^a \|_2 + \| \eta \|_p \notag\\
& \lesssim C_f \| |\nabla|^{-1} g \|_2 + \| \eta\|_p.
\end{align*}

Plugging the above estimates into \eqref{lem_hd1_e5}, we get for any $2\le p<\infty$,
\begin{align*}
\partial_t \| \eta\|_p \lesssim C_f \| |\nabla|^{-1} g \|_2 + (C_f + \| \frac g r \|_{L^{3,1}}) \| \eta\|_p.
\end{align*}
Integrating in time then gives (for $2\le p<\infty$)
\begin{align*}
\max_{0\le t\le 1} \| \eta(t,\cdot)\|_p \le C_f e^{C_1 \| \frac g r \|_{L^{3,1}}} (\|g\|_p + \||\nabla|^{-1} g\|_2),
\end{align*}
where $C_1>0$ is an absolute constant. Note that by taking $p\to \infty$, the above inequality also
holds for $p=\infty$.

Finally the estimate \eqref{lem_hd1_e2} is a simple consequence of the above inequality.
\end{proof}

We are now ready to state a proposition which gives the existence and uniqueness of solutions to the
3D axisymmetric without swirl Euler equation for a special class of initial (vorticity) data. Roughly speaking
the constructed solution $\omega=\omega(t)$ have the property that $\|\omega \|_p \lesssim
\log\log p$ for $p$ large.

\begin{prop} \label{prop_hd10}
 Suppose $\{g_i\}_{i=1}^{\infty}$ is a sequence of axisymmetric
 functions on $\mathbb R^3$ satisfying the following conditions:
 \begin{itemize}
  \item For each $i\ge 1$, $g_i \in C_c^{\infty}(B(0,100))$ and has the form $g_i(x)=g_i^{\theta}(r,z) e_{\theta}$, where
  $g_i^{\theta}$ is scalar-valued and vanishes near $r=0$:
  \begin{align*}
   \operatorname{supp}(g_i^{\theta}) \subset \{(r,z): \, r>r_i\},\quad \text{for some $r_i>0$}.
  \end{align*}
 \item For each $i\ge 2$, denote $f_i =\sum_{j=1}^{i-1} g_j$, then (recall $\theta(p)=\log\log(p+100)$)
 \begin{align*}
 \sup_{2\le p<\infty}
 \Bigl( \frac 1 {\theta(p)} e^{C_1 \| \frac {g_i} r \|_{L^{3,1}}} (\|g_i\|_p
 + \||\nabla|^{-1} g_i \|_2 ) \Bigr) <\delta_{i},
 \end{align*}
where $\delta_i=\delta(2^{-i}, f_i)$ as defined in \eqref{lem_hd1_e1a}.

 \end{itemize}
Let
\begin{align*}
 g= \sum_{i=1}^{\infty} g_i
\end{align*}
and consider the system
\begin{align} \label{prop_hd10_e1}
 \begin{cases}
  \partial_t \omega + (u\cdot \nabla)\omega=(\omega\cdot \nabla)u, \quad 0<t\le 1; \\
  u=-\Delta^{-1} \nabla \times \omega, \\
  \omega \Bigr|_{t=0}=g.
 \end{cases}
\end{align}

Then there exists a unique solution $\omega$ to \eqref{prop_hd10_e1} with the following properties:
\begin{enumerate}
 \item $\omega$ is compactly supported:
 \begin{align*}
  \operatorname{supp}(\omega(t,\cdot)) \subset B(0,R_0), \quad \forall\, 0<t\le 1.
 \end{align*}
Here $R_0>0$ is an absolute constant.

\item $\omega$ obeys the bound:
\begin{align*}
\sup_{0\le t\le 1} \; \sup_{2\le p<\infty} \frac{\| \omega(t) \|_p} {\theta(p)} <\infty.
\end{align*}

\item For any $2\le p<\infty$, $\omega \in C_t^0 L_x^p([0,1] \times {B(0,R_0)})$; Also
$u \in C_t^0 L_x^2 \cap L_t^{\infty} L_x^{\infty} ([0,1]\times \mathbb R^3)$.
 In fact $u\in C_t^0 C_x^{\alpha}([0,1]\times \mathbb R^3)$
for any $0<\alpha<1$. Also $\omega \in C([0,1],X_{\theta} )$, where $X_{\theta}$ is the Banach space endowed
with the norm
\begin{align} \label{prop_hd1_e3a}
\| \omega \|_{X_{\theta}} := \sup_{2\le p<\infty} \frac{\| \omega \|_p} {\theta(p)}.
\end{align}

\end{enumerate}

\end{prop}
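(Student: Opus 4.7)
The plan is to construct $\omega$ as a limit of smooth approximating solutions and use Lemma \ref{lem_hd1} iteratively to obtain a Cauchy sequence in $C([0,1]; X_\theta)$. For each $N\ge 1$, let $\omega^{(N)}$ denote the smooth solution to 3D Euler on $[0,1]$ with initial vorticity $f_{N+1} = \sum_{i=1}^{N} g_i$. Since each $f_{N+1}$ is a smooth compactly supported axisymmetric vorticity (parallel to $e_\theta$) vanishing near $r=0$, and the corresponding flow is axisymmetric without swirl, the classical global wellposedness theory (combined with the conservation of $\omega/r$ along Lagrangian trajectories) provides a smooth global-in-time solution. In particular $\omega^{(N)}(t,\cdot) \in C^\infty_c(\mathbb R^3)$ for all $t\in[0,1]$.

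The core of the argument is to apply Lemma \ref{lem_hd1} at each step with $\omega^a \leftrightarrow \omega^{(N)}$ (data $f_{N+1}$) and $\omega \leftrightarrow \omega^{(N+1)}$ (data $f_{N+2} = f_{N+1}+g_{N+1}$). Our hypothesis on $\delta_{N+1} = \delta(2^{-(N+1)}, f_{N+1})$ together with \eqref{lem_hd1_e1a} gives
\begin{align*}
\sup_{0\le t\le 1}\; \sup_{2\le p<\infty} \frac{\|\omega^{(N+1)}(t)-\omega^{(N)}(t)\|_p}{\theta(p)} < 2^{-(N+1)}.
\end{align*}
Summing in $N$, the sequence $\{\omega^{(N)}\}$ is Cauchy in $C([0,1]; X_\theta)$, hence converges to some $\omega \in C([0,1]; X_\theta)$. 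The uniform bound $\|\omega(t)\|_{X_\theta} \lesssim \|\omega^{(1)}\|_{X_\theta} + \sum 2^{-i} < \infty$ follows immediately.

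For the compact support and velocity regularity (items (1) and (3)), I would use the axisymmetric Biot-Savart estimate of Lemma \ref{lem_ur_r} together with conservation of $\|\omega/r\|_{L^{3,1}}$ along the flow (Lemma \ref{lem_Lpq_1}): this gives a uniform $L^\infty$ bound on each $u^{(N)}$ on $[0,1]$, so finite propagation speed yields an absolute $R_0>0$ with $\op{supp}(\omega^{(N)}(t,\cdot)) \subset B(0,R_0)$, passed to the limit. Having $\omega(t)$ compactly supported with $\|\omega(t)\|_p \lesssim \log\log p$, the representation $u = -\Delta^{-1}\nabla\times\omega$ combined with standard singular integral / logarithmic interpolation estimates produces $u \in L^\infty \cap C^\alpha$ for every $0<\alpha<1$, and continuity in time is inherited from $\omega \in C([0,1]; X_\theta)$ and $u^{(N)} \in C_t^0 L_x^2$.

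For uniqueness, since our solution satisfies $\|\omega(t)\|_p \le C \log\log(p+100)$ uniformly on $[0,1]$, which is dominated by $\log p$, the extended Yudovich uniqueness theorem from \cite{Yu95} (as recalled in the remark following Theorem \ref{thm3D_1}) applies and gives uniqueness within the class of bounded-energy weak solutions whose vorticity lies in $\bigcap_{p_0\le p<\infty} L^p$ with the stated growth. The main technical point to verify carefully is that the constants $\delta_i = \delta(2^{-i}, f_i)$ produced by Lemma \ref{lem_hd1} remain compatible across $i$; this is guaranteed since $f_i$ is constructed inductively from the $g_j$'s, and $\delta_i$ depends only on the (finite) approximating datum $f_i$ already fixed before $g_i$ is chosen. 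No circular dependence arises.
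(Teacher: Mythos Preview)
Your overall strategy matches the paper's: define approximants $\omega^{(N)}$ with data $\sum_{i\le N} g_i$, apply Lemma \ref{lem_hd1} iteratively to get a Cauchy sequence in $C([0,1];X_\theta)$, pass to the limit, and invoke Yudovich for uniqueness. That part is fine and essentially identical to the paper.

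There is, however, a genuine gap in your justification of item (1) (uniform compact support). You claim that Lemma \ref{lem_ur_r} together with conservation of $\|\omega/r\|_{L^{3,1}}$ yields a uniform-in-$N$ bound on $\|u^{(N)}\|_\infty$. Two problems: first, Lemma \ref{lem_ur_r} only controls $\|u^r/r\|_\infty$, not $\|u\|_\infty$. Second, and more seriously, the hypotheses of the proposition do \emph{not} give a uniform bound on $\|\omega^{(N)}(0)/r\|_{L^{3,1}} = \|\sum_{i\le N} g_i/r\|_{L^{3,1}}$. The assumption only controls the product $e^{C_1\|g_i/r\|_{L^{3,1}}}(\|g_i\|_p+\||\nabla|^{-1}g_i\|_2)$, and in the intended application (see the Remark following Lemma \ref{lem_hd1}) one has $\|g_i/r\|_{L^{3,1}}\sim N_i\to\infty$. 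So $L^{3,1}$-conservation cannot produce a bound uniform in $N$.

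The paper fixes this differently: once the Cauchy estimate in $X_\theta$ is in hand, it already gives (at $p=4$, say) $\sup_N\sup_{0\le t\le 1}\|\omega^{(N)}(t)\|_4\lesssim 1$. Combined with energy conservation $\|u^{(N)}(t)\|_2=\|u^{(N)}(0)\|_2\lesssim 1$ and the elementary embedding $\|u\|_\infty\lesssim \|u\|_2+\|\omega\|_4$, this yields the uniform velocity bound and hence a fixed $R_0$ by finite propagation. So the compact support follows \emph{from} the $X_\theta$-Cauchy step, not from the axisymmetric $L^{3,1}$ machinery.
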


\begin{proof}[Proof of Proposition \ref{prop_hd10}]
For each $m\ge 2$, let $\omega^m$ be the solution to the system
\begin{align*}
\begin{cases}
\partial_t ( \frac{\omega^m} r ) + (u^m \cdot \nabla) ( \frac{\omega^m} r ) =0, \quad 0<t\le 1,\\
u^m = -\Delta^{-1} \nabla \times \omega^m,\\
\omega^m \Bigr|_{t=0} = \sum_{i=1}^m g_i.
\end{cases}
\end{align*}

By Lemma \ref{lem_hd1} and our assumptions on $g_i$, we get
\begin{align*}
\sup_{0\le t\le 1} \; \sup_{2\le p <\infty}
\frac{\| \omega^{m+1}(t) -\omega^m(t) \|_p} {\theta (p)} < 2^{-m}.
\end{align*}
Clearly we can then extract a limiting solution $\omega$ in the Banach space $C([0,1], X_{\theta})$ (see \eqref{prop_hd1_e3a}).

By using energy conservation, we have $\| u^m(t) \|_2 = \|u^m(0)\|_2 \lesssim 1$.
Since
\begin{align*}
\sup_{m\ge 2} \sup_{0\le t\le 1} \|\omega^m(t) \|_{L^4(\mathbb R^3)} \lesssim 1,
\end{align*}
it follows
that
\begin{align*}
\sup_{m\ge 2} \max_{0\le t\le 1} \| u^m(t) \|_{\infty} &\lesssim  \sup_{m\ge 2} \max_{0\le t\le 1} (\|\omega^m(t)\|_4
+ \|u^m(t)\|_2) \notag \\
& \lesssim 1.
\end{align*}
Then for some constant $R_0>0$, we have $\op{supp}(\omega^m(t) ) \subset B(0,R_0/2)$ for all $m$ and $0\le t\le 1$.
This implies that the limiting
solution $\omega$ is compactly supported in $B(0,R_0)$. The other regularity properties of $\omega$ (and $u$) can
be easily checked. We omit the details.
\end{proof}

The next proposition is the key to our patching of solutions for the
3D $C^1$ case. The overall statement of the proposition is a bit
long due to some technical complications pertaining to the 3D
situation. In short summary the main body of the proposition should
read as ``\,Let $u_{-1} \in C_c^{\infty}(B(0,100))$ ... Then for any
$0<\epsilon<\epsilon_0$, we can find a smooth ... with the
properties ... and $\delta_0$...,   such that for any $\omega_j$
with the properties ...,  the following hold true: ....".

\begin{prop} \label{prop_hd20}
Let $u_{-1} \in C_c^{\infty}(B(0,100) )$ be a given axisymmetric without swirl velocity field on $\mathbb R^3$ such that
\begin{align*}
u_{-1} = u_{-1}^r e_r + u_{-1}^z e_z.
\end{align*}
Denote the corresponding vorticity
 $\omega_{-1}=\omega_{-1}^{\theta} e_{\theta}$,  where $\omega_{-1}^{\theta}=\partial_z u_{-1}^r -\partial_r u_{-1}^z$.
Assume for some $r_{-1}>0$, $0<R_0<\frac 1 {100}$,
\begin{align*}
\operatorname{supp}(\omega_{-1}^{\theta}) \subset\{(r,z):\; r>r_{-1},\, z\le -4R_0\}.
\end{align*}
Denote
\begin{align*}
u_{-1}^*=\|D u_{-1} \|_{\infty}.
\end{align*}

 Then for any $0<\epsilon<\epsilon_0$ with $\epsilon_0=\epsilon_0(u_{-1})\ll R_0$  sufficiently small,
 we can find a smooth axisymmetric without swirl velocity field $u_0=u_0^r e_r +u_0^z e_z$
 (depending only on $(\epsilon,u_{-1})$)
 with the properties:
\begin{itemize}
 \item  $u_0\in C_c^{\infty}(B(0,100))$ and for some $r_0>0$,
\begin{align} \label{p_hd20_e3}
 \operatorname{supp}(u_0) \subset\{(r,z):\, r_0<r<\epsilon, \,  -\epsilon <z<\epsilon \}.
\end{align}
Also
\begin{align} \label{p_hd20_e5}
\| D u_0\|_{\infty} <\epsilon u_{-1}^*<\frac 14 u_{-1}^*;
\end{align}
\item  denote $\omega_0 = \nabla \times u_0$, then (see \eqref{lem_hd1_e1a})
\begin{align} \label{p_hd20_e5a}
\op{sup}_{2\le p<\infty} \Bigl( \frac 1 {\theta(p)}
e^{C_1 \|\frac {\omega_0} r \|_{L^{3,1}}} (\|\omega_0\|_p
+\| |\nabla|^{-1} \omega_0\|_2 ) \Bigr) <\delta(\epsilon^{10}, \omega_{-1});
\end{align}

\end{itemize}

\noindent
and $\delta_0=\delta_0(\epsilon,\omega_{-1},\omega_0)\ll \epsilon $ sufficiently small
 such that for any smooth axisymmetric functions $\omega_j=\omega_j^{\theta}e_{\theta}$,
 $1\le j\le N$ (here $N\ge 1$ is arbitrary but finite)
  with the properties:
\begin{itemize}
\item $\omega_j = \nabla \times u_j$, where $u_j$ is axisymmetric without swirl,
$u_j\in C_c^{\infty}(B(0,100))$ and
\begin{align*}
\|D u_j \|_{\infty} < \frac {\epsilon} {2^{j+1}} u_{-1}^*;
\end{align*}
\item $\operatorname{supp}(u_j) \subset\{(r,z):\, r>r_j, z>2R_0\}$ for some $r_j>0$;
\item for each $j\ge 1$, denote $f_j=\Bigl(
\omega_{-1}^{\theta}+ \omega_0^{\theta}+\sum_{i=1}^{j-1} \omega_i^{\theta} \Bigr) e_{\theta}$,
then
\begin{align*}
\op{sup}_{2\le p<\infty} \Bigl( \frac 1 {\theta(p)}
e^{C_1 \|\frac {\omega_j} r \|_{L^{3,1}}} (\|\omega_j\|_p
+\| |\nabla|^{-1} \omega_j\|_2 ) \Bigr)
<\delta_j,
\end{align*}
where $\delta_j=\delta(2^{-3j}\delta_0, f_j)$ as defined in \eqref{lem_hd1_e1a};
\end{itemize}

 the following hold true:

 Let  $\omega$ be the smooth solution to the axisymmetric system
\begin{align} \notag
 \begin{cases}
  \partial_t \left( \frac {\omega }r \right) + (u \cdot \nabla ) \left( \frac {\omega} r  \right) =0,
  \quad 0<t\le 1, \\
  u=-\Delta^{-1} \nabla \times \omega, \\
  \omega \Bigr|_{t=0} =(\omega_{-1}^{\theta} +\omega_0^{\theta} +\sum_{j=1}^N \omega_j^{\theta}) e_{\theta},
 \end{cases}
 \end{align}
then
\begin{enumerate}

\item $\omega$ is compactly supported: for some absolute constant $R_1>0$,
 \begin{align} \label{p_hd20_t0a}
  \operatorname{supp}(\omega(t,\cdot)) \subset B(0,R_1), \quad \forall\, 0<t\le 1.
 \end{align}

\item $\omega$ obeys the uniform bound: for some constant $C_2>0$ ($C_2$ is independent of
$N$),
\begin{align} \label{p_hd20_t0b}
\sup_{0\le t\le 1} \; \sup_{2\le p<\infty} \frac{\| \omega(t) \|_p} {\theta(p)} <C_2.
\end{align}

\item for any $0\le t \le \epsilon$, we have the decomposition
\begin{align} \label{p_hd20_t1}
\omega(t)=\omega^A(t)+\omega^B(t)+\omega^C(t),
\end{align}
where
\begin{align*}
&\operatorname{supp}(\omega^A(t)) \subset\{(r,z):\, z\le -4R_0+\sqrt{\epsilon} \}; \\
&\operatorname{supp}(\omega^B(t)) \subset\{x:\; |x| \le \frac 12 \sqrt{\epsilon}  \};\\
&\operatorname{supp}(\omega^C(t)) \subset\{(r,z):\, z\ge 2R_0-\sqrt{\epsilon} \};
\end{align*}
and $\omega^A(t=0)=\omega_{-1}$, $\omega^{B}(t=0)=\omega_0^{\theta} e_{\theta}$,
$\omega^C(t=0)=(\sum_{j=1}^N \omega_j^{\theta}) e_{\theta}$.

\item the $C^1$ norm of initial data $u(t=0)=-\Delta^{-1} \nabla \times \omega(t=0)$ has the bound:
\begin{align} \label{p_hd20_t2}
\| D u(t=0)\|_{\infty} \le 2u_{-1}^*.
\end{align}

\item the $C^1$ norm of $u$ is inflated rapidly on the time interval $[0,\epsilon]$ and in the region
$|x| \le \sqrt {\epsilon}$:
there exists $0<t_0^1=t_0^1(\epsilon,u_{-1},u_0)<\epsilon$, $0<t_0^2=t_0^2(\epsilon,u_{-1},
u_0)<\epsilon$, such that
\begin{align}
 &\| D \Delta^{-1} \nabla \times
 \omega^B(t)\|_{L_x^{\infty}(|x|\le \sqrt{\epsilon})}  >\frac 1{\epsilon}, \quad \text{for any $t_0^1\le t \le t_0^2$;}
 \label{p_hd20_t3}
\end{align}
on the other hand, for any $0\le t \le t_0^2$,
\begin{align} \label{p_hd20_t3a}
%\| D \Delta^{-1} \nabla \times \omega_A(t) \|_{L_x^{\infty}(|x|\le R_0/100)}
&\| D \Delta^{-1} \nabla \times \omega^C(t) \|_{L_x^{\infty}(|x|\le R_0/100)}<\sqrt{\epsilon}. \notag \\
&\| D \Delta^{-1} \nabla \times \omega^A(t) \|_{L_x^{\infty}(|x|\le R_0/100)} \lesssim_{R_0,u_{-1}} 1.
\end{align}

and
\begin{align} \label{p_hd20_t3b}
\| D \Delta^{-1} \nabla \times \omega^B(t)\|_{L_x^{\infty}(|x|>\sqrt{\epsilon})} < \epsilon.
\end{align}

\item all $H^k$, $k\ge 2$ norms of $\omega^B$ can be bounded purely in terms of initial
data $\omega_0$ on the time interval $[0,\epsilon]$:
for any $k\ge 2$,
\begin{align} \label{p_hd20_t4}
\max_{0\le t \le \epsilon} \| \omega^B(t) \|_{H^k} \le C(k,R_0,u^*_{-1}) \|\omega_0\|_{H^k}.
\end{align}
Note here the bound of $\|\omega^B\|_{H^k}$ is ``almost local" in
the sense that it depends only on $u^*_{-1}$ but not on other higher
Sobolev norms of $\omega^A$ or $\omega^C$. Similarly we have
\begin{align} \label{p_hd20_t5}
\max_{0\le t \le \epsilon} \| \omega^A(t) \|_{H^k} \le C(k,R_0,u^*_{-1}) \| \omega_{-1} \|_{H^k}, \quad \forall\, k\ge 2.
\end{align}
\end{enumerate}
\end{prop}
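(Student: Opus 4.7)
\medskip

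\noindent\textbf{Plan of proof.} The plan is to combine the local $C^1$ inflation mechanism from Section \ref{sec_3DC1} with the existence/uniqueness theory from Proposition \ref{prop_hd10} and the perturbation-type inflation Lemma \ref{lem_hc2}, using a transport decomposition of $\omega$ into three pieces aligned with the three supports. First, I would choose the middle piece $u_0$ exactly as in Section \ref{sec_3DC1}, taking the axisymmetric stream $\psi_0(r,z)=\sum_{\sqrt M \le j\le M} 2^{-3j} a_3(2^j(r,z))$ with $a_3$ the function satisfying \eqref{ge_69_8.9} and the non-degeneracy \eqref{ge_69_8.10}, and with $M=M(\epsilon,u_{-1})$ chosen large. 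Direct scaling computations (as in the remark after Proposition \ref{prop_hd10}) give $\|Du_0\|_\infty\lesssim M 2^{-\sqrt M}$, $\||\nabla|^{-1}\omega_0\|_2\lesssim 2^{-5\sqrt M/2}$, and $\|\omega_0\|_p\lesssim \sqrt M\cdot 2^{-3\sqrt M/p}$, while $\|\omega_0/r\|_{L^{3,1}}\lesssim\sqrt M$; balancing these against $\theta(p)=\log\log(p+100)$ shows \eqref{p_hd20_e5}--\eqref{p_hd20_e5a} hold for $M$ large. The support statement \eqref{p_hd20_e3} is automatic from the choice of $a_3$ supported on $r\sim 1, |z|\lesssim 1$.

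Second, existence and uniqueness of the full solution $\omega$ follows directly from Proposition \ref{prop_hd10} applied to the sequence $g_1=\omega_{-1}$, $g_2=\omega_0^\theta e_\theta$, $g_{j+2}=\omega_j$ for $j\ge 1$; the hypothesis on the $g_i$ is precisely \eqref{p_hd20_e5a} and the smallness of each $\omega_j$ stated in the proposition, so we immediately obtain \eqref{p_hd20_t0a} and \eqref{p_hd20_t0b}. In particular $\|u(t,\cdot)\|_\infty\lesssim 1$ uniformly on $[0,1]$ by Sobolev-type embedding combined with the $L^2$ energy conservation. Third, using the axisymmetric active-scalar form $\partial_t(\omega/r)+u\cdot\nabla(\omega/r)=0$, I define $\omega^A, \omega^B, \omega^C$ as solutions of the linear transport equations
\begin{align*}
\partial_t(\omega^I/r)+u\cdot\nabla(\omega^I/r)=0,\quad \omega^I\bigr|_{t=0}=\omega^I_0,\qquad I\in\{A,B,C\},
\end{align*}
with $\omega^A_0=\omega_{-1}$, $\omega^B_0=\omega_0^\theta e_\theta$, $\omega^C_0=\sum_{j=1}^N\omega_j^\theta e_\theta$. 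By linearity of the transport and uniqueness we have $\omega=\omega^A+\omega^B+\omega^C$, and by the uniform $L^\infty$ bound on $u$ together with finite speed of propagation, on $[0,\epsilon]$ supports grow by at most $C\epsilon$, giving \eqref{p_hd20_t1} for $\epsilon$ small enough. Estimate \eqref{p_hd20_t2} is immediate from the triangle inequality applied to the Biot–Savart law and the smallness of $\|Du_0\|_\infty$ and $\sum_j\|Du_j\|_\infty$.

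Third, for the inflation \eqref{p_hd20_t3}--\eqref{p_hd20_t3b}, I would write $u=u^B+U^{\mathrm{ext}}$, where $u^B=-\Delta^{-1}\nabla\times\omega^B$ and $U^{\mathrm{ext}}=-\Delta^{-1}\nabla\times(\omega^A+\omega^C)$, and localize by a cut-off $\chi$ equal to $1$ on $|x|\le\frac{1}{2}R_0$ and zero outside $|x|\le R_0$ (as in Lemma \ref{lem_ha1}); since $\omega^A(t)$ is supported in $z\le -4R_0+\sqrt\epsilon$ and $\omega^C(t)$ in $z\ge 2R_0-\sqrt\epsilon$, the cut-off does not affect the equation for $\omega^B$ on its support. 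The equation $\partial_t(\omega^B/r)+(u^B+\chi U^{\mathrm{ext}})\cdot\nabla(\omega^B/r)=0$ matches the setting of Lemma \ref{lem_hc2}, and the hypotheses $\|\chi U^{\mathrm{ext}}\|_{H^{10}}+\|D\partial_t(\chi U^{\mathrm{ext}})\|_\infty\lesssim_{R_0,u_{-1}} 1$ follow from the distance $\gtrsim R_0$ between origin and the supports of $\omega^A,\omega^C$, combined with the $L^1$ control on these pieces (with $\omega^C$ small by hypothesis). The vanishing condition $U^{\mathrm{ext}}(0,x)=0$ near origin is enforced by further subtracting from $U^{\mathrm{ext}}(0,x)$ its value and linear Taylor part at $x=0$ and absorbing that affine piece into a redefinition of $\phi^B(t,0)$ and $Du_0$; the residual obeys the required vanishing with $\rho$ as small as needed (so $\rho<2^{-M}$ suffices since $\omega_0^B$ is supported on the annulus $|x|\sim 2^{-j}$). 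Applying Lemma \ref{lem_hc2} then gives $|\partial_z u^z_B(t,\phi^B(t,0))|\gtrsim Mt$ for $t\in[t_0^1,t_0^2]$ with $t_0^1\sim 1/M$, which yields \eqref{p_hd20_t3} once $M$ is chosen large enough. For \eqref{p_hd20_t3a}, the Biot–Savart kernel $|x-y|^{-2}$ evaluated on $|x|\le R_0/100$ and $y\in\spp\omega^A\cup\spp\omega^C$ is bounded by $C R_0^{-3}$, and integrating against $\|\omega^A\|_1\lesssim_{u_{-1}} 1$ or $\|\omega^C\|_1\le\sum\|\omega_j\|_1<\sqrt\epsilon$ gives the stated bounds. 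For \eqref{p_hd20_t3b}, use that $\omega^B(t)$ is supported in $|x|\lesssim\sqrt\epsilon$ and has $L^1$ norm bounded by $\|\omega_0\|_1$ which itself scales like a small power of $\epsilon$.

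Fourth, the Sobolev bounds \eqref{p_hd20_t4}--\eqref{p_hd20_t5} follow from standard energy estimates: apply $D^\beta$ for $|\beta|\le k$ to the transport equation for $\omega^I/r$, integrate against $D^\beta(\omega^I/r)$, and use commutator (Kato–Ponce) estimates combined with the axisymmetric Biot–Savart bound (Lemma \ref{lem_ur_r}) to close via log-Gronwall, with the drift velocity decomposed into $u^B$ (controlled by $\|\omega^B\|_{H^k}$ itself, absorbed on the left) and $\chi U^{\mathrm{ext}}$ (controlled purely by $u_{-1}^*$ and the far-field distance $R_0$). The main obstacle in this plan is the verification of Lemma \ref{lem_hc2}'s technical hypotheses in the presence of the non-trivial external field generated by $\omega^A$: specifically, arranging the vanishing $U^{\mathrm{ext}}(0,x)=0$ near origin by an affine renormalization and tracking that the induced shift of the evaluation point does not degrade the lower bound $\gtrsim Mt$. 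Once this is handled, the remaining issues — smallness of cross-patch interactions, uniform $\log\log p$ norm preservation, and compactness of supports — are all directly inherited from Proposition \ref{prop_hd10} and the 2D-style patching arguments of Section \ref{sec_2D_patch}.
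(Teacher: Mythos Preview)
Your overall architecture---use Proposition \ref{prop_hd10} for existence and the uniform $X_\theta$ bound, define $\omega^A,\omega^B,\omega^C$ as passive transports by the full velocity $u$, then feed $\omega^B$ into Lemma \ref{lem_hc2} with the cut-off external field---matches the paper exactly. The genuine problem is your explicit choice of $u_0$ and the scaling computations you claim for it.

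With $\psi_0=\sum_{\sqrt M\le j\le M}2^{-3j}a_3(2^j(r,z))$ and no prefactor, on the $j$-th dyadic shell (where $r\sim 2^{-j}$) one has $u_0\sim 2^{-j}$ and hence $Du_0\sim 1$; since the shells are disjoint this gives $\|Du_0\|_\infty\sim 1$, not $M\cdot 2^{-\sqrt M}$. So \eqref{p_hd20_e5} already fails. Worse, $\omega_0/r$ has value $\sim 2^j$ on a set of measure $\sim 2^{-3j}$, and the $L^{3,1}$ norm of such a dyadic superposition is proportional to the \emph{number} of shells, i.e.\ $\|\omega_0/r\|_{L^{3,1}}\sim M-\sqrt M\sim M$, not $\sqrt M$. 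Then in \eqref{p_hd20_e5a} the factor $e^{C_1\|\omega_0/r\|_{L^{3,1}}}\sim e^{CM}$ cannot be beaten by $\|\omega_0\|_p\sim 2^{-3\sqrt M/p}$ once $p\gtrsim\sqrt M$, so the supremum over $p$ diverges. The paper resolves both issues simultaneously by taking
\[
\psi_0=\frac{1}{\log N_1}\sum_{M_1-N_1\le j\le M_1}2^{-3j}a_3(2^j(r,z)),\qquad \log^{(6)}M_1=N_1,
\]
so that there are only $N_1$ shells (giving $\|\omega_0/r\|_{L^{3,1}}\lesssim N_1/\log N_1$), the prefactor forces $\|Du_0\|_\infty\lesssim 1/\log N_1$, and the shells sit at extremely high frequency $\sim 2^{M_1}$ so that $\|\omega_0\|_p\lesssim 2^{-M_1/p}$ kills the exponential for moderate $p$ while $\theta(p)$ handles large $p$. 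The inflation lower bound from Lemma \ref{lem_hc2} then becomes $\sim (N_1/\log N_1)\,t$ rather than $\sim Mt$, which is still large enough.

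A secondary point: your ``affine renormalization'' to force $U^{\mathrm{ext}}(0,x)=0$ near the origin is not how this is handled, and subtracting the linear part of $U^{\mathrm{ext}}$ at $t=0$ would introduce an $O(1)$ contribution per shell to the main term \eqref{lem_hc2_e3a} that competes directly with the inflation coefficient. One should instead observe that the extra term $2^j\bigl(U^{\mathrm{ext}}(0,2^{-j}x)-U^{\mathrm{ext}}(0,0)\bigr)$ contributes, after integration against $\nabla F_3\cdot b_3/|x'|$, a quantity bounded by $\|DU^{\mathrm{ext}}(0,\cdot)\|_\infty\lesssim_{u_{-1}}1$ per shell; summing gives $O(N_1)$, which is dominated by the main $\sim (N_1/\log N_1)\cdot t \cdot N_1$ term only after one tracks constants carefully (or, in the paper's setup, is simply absorbed into the error since the coefficient in front of the genuine inflation integral is fixed and nonzero while this piece carries the small factor $\|DU^{\mathrm{ext}}\|_\infty$).
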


\begin{proof}[Proof of Proposition \ref{prop_hd20}]
We shall sketch the details. The main point is to specify the choice of $u_0$ to achieve \eqref{p_hd20_t3}.
Following the notation as in
\eqref{ge_66aa}, we take the axisymmetric stream function in the form
\begin{align*}
\psi_0(r,z)= \frac 1 {\log N_1} \sum_{M_1-N_1 \le j \le M_1} 2^{-3j} a_3(2^j(r,z)),
\end{align*}
where $\log\log\log\log\log\log M_1=N_1$ and we shall take $N_1$ sufficiently large. We then have
\begin{align*}
u_0 = u_0^r e_r +u_0^z e_z,
\end{align*}
where
\begin{align*}
&u_0^r(r,z) = \frac 1 {\log N_1} \sum_{M_1-N_1\le j\le M_1} 2^{-j} a_3^r(2^j(r,z)),\quad
a_3^r(r,z)=\frac 1 r (-\partial_z a_3)(r,z);\\
&u_0^z(r,z) = \frac 1 {\log N_1} \sum_{M_1-N_1\le j\le M_1} 2^{-j} a_3^z(2^j(r,z)),\quad
a_3^z(r,z)=\frac 1 r (\partial_r a_3)(r,z).
\end{align*}
Also $\omega_0= \omega_0^{\theta} e_{\theta}$ with
\begin{align*}
\omega_0^{\theta}(r,z)=- \frac 1 {\log N_1} \sum_{M_1-N_1\le j\le M_1}
b_3(2^j(r,z)).
\end{align*}
Clearly
\begin{align*}
\| Du_0 \|_{\infty} \lesssim \frac 1 {\log N_1}
\end{align*}
and \eqref{p_hd20_e3}--\eqref{p_hd20_e5} can be easily satisfied (by taking $N_1$ large).

For \eqref{p_hd20_e5a}, we note that
\begin{align*}
& \| \frac{\omega_0} r \|_{L^{3,1}} \lesssim \frac 1 {\log N_1} N_1,\\
& \| \omega_0 \|_p \lesssim \frac 1 {\log N_1} 2^{-(M_1-N_1) \frac 3 p} \le 2^{-\frac 1 p M_1},\\
& \| |\nabla|^{-1} \omega_0\|_2 \le 2^{-M_1},
\end{align*}
and
\begin{align}
 &\op{sup}_{2\le p<\infty} \Bigl( \frac 1 {\theta(p)}
e^{C_1 \|\frac {\omega_0} r \|_{L^{3,1}}} (\|\omega_0\|_p
+\| |\nabla|^{-1} \omega_0\|_2 ) \Bigr)  \notag \\
\le & \; \sup_{2\le p<\infty} \Bigl( \frac 1 {\log\log(p+100)} e^{N_1-\frac 1 p M_1} \Bigr) \notag\\
\le & \; \max\{ e^{-N_1}, \; \frac 1 {\log\log(M_1/N_1)} \}. \notag
\end{align}
Obviously by taking $N_1$ large, \eqref{p_hd20_e5a} is satisfied.

The properties \eqref{p_hd20_t0a}--\eqref{p_hd20_t0b} follows from our assumptions on $\omega_j$ and
Proposition \ref{prop_hd10}.

Define $\omega^A$, $\omega^B$, and $\omega^C$ as solutions to the \emph{linear} systems:
\begin{align*}
\begin{cases}
\partial_t ( \frac{\omega^A} r ) + (u\cdot \nabla) (\frac{\omega^A} r)=0, \\
\omega^A \Bigr|_{t=0} =\omega_{-1};
\end{cases}\\
\begin{cases}
\partial_t ( \frac{\omega^B} r ) + (u\cdot \nabla) (\frac{\omega^B} r)=0, \\
\omega^B \Bigr|_{t=0} =\omega_{0};
\end{cases}\\
\begin{cases}
\partial_t ( \frac{\omega^C} r ) + (u\cdot \nabla) (\frac{\omega^C} r)=0, \\
\omega^C \Bigr|_{t=0} =(\sum_{j=1}^N \omega_j^{\theta}) e_{\theta}.
\end{cases}
\end{align*}
The decomposition \eqref{p_hd20_t1} then follows from the definition. Note that the separation of the
supports of $\omega^A$, $\omega^B$ and $\omega^C$ easily follow from the finite speed propagation.
The bound \eqref{p_hd20_t2} is trivial. The estimates \eqref{p_hd20_t4}--\eqref{p_hd20_t5} follow from
local energy estimates using the separation of support. The bound \eqref{p_hd20_t3a} comes from the
fact that $\omega_A$ and $\omega_C$ are supported away from $|x|\le R_0/100$.

It remains for us to check \eqref{p_hd20_t3b} and \eqref{p_hd20_t3}. Note that on the time interval $[0,\epsilon]$,
$\omega^B$ satisfies the equation
\begin{align*}
\begin{cases}
\partial_t ( \frac{\omega^B} r) + (( u^B+U^{\op{ext}}) \cdot  \nabla )( \frac{\omega^B} r) =0,\\
u^B= -\Delta^{-1} \nabla \times \omega^B,\\
\omega^B \Bigr|_{t=0}=\omega_0,
\end{cases}
\end{align*}
where $U^{\op{ext}}(t,x) = \tilde U(t,x) \,\chi_{|x|<R_0/100}$ (here $\chi_{|x|<R_0/100}$ is a smooth
cut-off function localized to $|x|<R_0/100$) and
\begin{align*}
\tilde U = u^A +u^C=-\Delta^{-1} \nabla \times \omega^A - \Delta^{-1} \nabla \times \omega^C.
\end{align*}

By using the fact that $\op{supp}(\omega^B) \subset \{ |x| \le \frac 1 2 \sqrt{\epsilon} \}$
and the decay of the Riesz kernel, we get
\begin{align*}
 & \| D \Delta^{-1} \nabla \times \omega^B(t) \|_{L_x^{\infty}(|x|> \sqrt {\epsilon})} \notag \\
 = & \| \mathcal R_{ij} ( \chi_{|y|<\frac 12 \sqrt{\epsilon}} \omega^B(t,y) ) \|_{L_x^{\infty}
 (|x|>\sqrt{\epsilon})} \notag \\
 \lesssim & \epsilon^{-C} \| \omega^B(t) \|_{1} \notag \\
 \lesssim & \epsilon^{-C} \| \frac{\omega_0} r \|_{1} <\epsilon,
 \end{align*}
 where to achieve the last inequality above, we need to take the parameter $N_1$ in the definition
 of $\omega_0$ sufficiently large. Therefore \eqref{p_hd20_t3b} is proved. Note that by \eqref{p_hd20_t3b},
 to prove \eqref{p_hd20_t3}, we only need to show
 \begin{align*}
 \| D \Delta^{-1} \nabla \times \omega^B(t) \|_{\infty} > \frac 2 {\epsilon}.
 \end{align*}
We are now in a position to apply Lemma \ref{lem_hc2} (with simple changes in numerology). For this we
need to check the condition
\begin{align} \label{p_hd20_cond10}
\| U^{\op{ext}} \|_{H^{10}} + \| D \partial_t U^{\op{ext}} \|_{\infty} \lesssim_{R_0,u_{-1}} 1.
\end{align}
We just need to check the contribution of $\omega^C$ to $\| D \partial_t U^{\op{ext}} \|_{\infty}$.
The other estimates (and the contribution of $\omega^A$) are simpler and therefore omitted. By
definition we just need to show
\begin{align*}
\| D \Delta^{-1} \nabla \times \partial_t \omega^C \|_{L_x^{\infty}(|x|<R_0/100)} \lesssim_{R_0,u_{-1}} 1.
\end{align*}
Since $\op{supp}(\omega^C) \subset \{R_0<|x|<100\}$, and
\begin{align*}
\partial_t \omega^C = - (u\cdot \nabla) \omega^C + u^r \frac{\omega^C}r,
\end{align*}
obviously we have (note that $(u\cdot \nabla)\omega^C = \nabla \cdot ( u \otimes \omega^C)$),
\begin{align*}
 & \| D \Delta^{-1} \nabla \times \partial_t \omega^C \|_{L_x^{\infty}(|x|<R_0/100)}  \notag \\
 \lesssim_{R_0} &\; \| u\|_{\infty} (\|\omega^C \|_{1} +\| \frac{\omega^C} r \|_{1}) \notag \\
 \lesssim_{R_0} & \; \|u\|_{\infty} \| \frac{\omega^C(t=0)} r \|_{1} \lesssim_{R_0,u_{-1} } 1. \\
 \end{align*}
Therefore \eqref{p_hd20_cond10} is proven and \eqref{p_hd20_t3} follows from Lemma \ref{lem_hc2} by
taking the parameter $N_1$ (in the definition of $\omega_0$) sufficiently large.

\end{proof}

\section{proof of Theorem \ref{thm1} for $d=3$, proof of Theorem \ref{thm3D_1} and Theorem \ref{thm3D_k}}
We begin with a simple lemma which is the 3D analogue of Lemma \ref{s7_lem1}. It is effectively
a re-statement of Lemma 7.13 from \cite{BL13} with minor expositional changes.

\begin{lem} \label{s12_lem1}
 Let $\omega^1$, $\omega^2$ be given smooth solutions to the 3D Euler equations in vorticity form:
 \begin{align} \notag
  \begin{cases}
   \partial_t \omega^j+ (u^j \cdot \nabla )\omega^j = (\omega^j \cdot \nabla) u^j, \qquad 0<t\le 1, \\
    u^j =-\Delta^{-1} \nabla \times \omega^j, \\
   \omega^j \Bigr|_{t=0}=f^j \in C_c^{\infty}(\mathbb R^3), \quad j=1,2.
  \end{cases}
 \end{align}
Here the lifespan of each solution $\omega^j$ is assumed to be at least $[0,1]$.

Define
\begin{align} \label{s12_lem1_1}
 u_*= \max_{j=1,2} \max_{0\le t\le 1} \| u^j(t,\cdot)\|_{\infty}.
\end{align}

Consider the problem
\begin{align} \label{s12_lem1_2}
 \begin{cases}
  \partial_t W + (U\cdot\nabla )W = (W\cdot \nabla)U, \\
  U= -\Delta^{-1} \nabla \times W,\\
  W\Bigr|_{t=0} =W_0,
 \end{cases}
\end{align}
where
\begin{align*}
 W_0(x) =f^1(x) + f^2(x-x_W),
\end{align*}
and $x_W\in \mathbb R^3$ is a vector parameter which controls the mutual distances between $f^1$ and (translated) $f^2$.

For any $\epsilon>0$ and any integer $k_0\ge 4$, there exists
$$R_{\epsilon}
=R_{\epsilon}(\epsilon, k_0,\max_{j=1,2}\max_{0\le t \le 1} \|u^j
(t)\|_{H^{k_0+4}})> 100 u_*$$
sufficiently large, such that if
$|x_{W}|\ge R_{\epsilon}$, then the following hold:

\begin{enumerate}
 \item There exists a unique smooth solution $W$ to \eqref{s12_lem1_2} on the time interval $[0,1]$. Furthermore for any $0\le t \le1$ it
 has a smooth decomposition
 \begin{align}
  W(t)=W^{1}(t,x) + W^2 (t,x-x_W), \label{s12_lem1_3}
 \end{align}
where
\begin{align*}
 &\operatorname{supp}(W^{i} (t,\cdot)) \subset B(\spp (f^i), r_0+\epsilon), \quad i=1,2.
\end{align*}

\item The flow $W^i$ is uniformly close to $\omega^i$:
\begin{align} \label{s12_lem1_4}
 & \max_{0\le t\le 1}\| W^i(t,\cdot) -\omega^i(t,\cdot)\|_{H^{k_0}(\mathbb R^3)} <\epsilon, \quad i=1,2.
\end{align}

\item All higher Sobolev norms of $W^i$ can be controlled in terms of $f^i$:  Let
\begin{align*}
 M_*=\max_{i=1,2}\Bigl( \max_{0\le t\le 1} \| \omega^i(t,\cdot) \|_{\infty} +
 \|\Delta^{-1} \nabla \times f^i\|_2\Bigr).
\end{align*}
Then for any $k\ge 4$, $i=1,2$,
\begin{align} \label{s12_lem1_5}
 &\max_{0\le t \le 1 } \| W^i(t,\cdot) \|_{H^k} \le C(k, \| f^i\|_{H^k}, M_*)<\infty.
\end{align}

\end{enumerate}

\end{lem}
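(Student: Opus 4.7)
The plan closely mirrors Lemma \ref{s7_lem1}, with the vortex stretching term being the only genuinely new ingredient. First I would construct $W$ together with its two patches by the following coupled formulation: for a candidate velocity $U$, let $W^{1}$ and $\tilde W^{2}$ solve the \emph{linear} transport--stretching systems
\begin{align*}
\begin{cases}
\partial_t W^{1} + (U\cdot\nabla) W^{1} = (W^{1}\cdot\nabla) U,\\
W^{1}\bigr|_{t=0}=f^{1},
\end{cases}
\qquad
\begin{cases}
\partial_t \tilde W^{2} + (U\cdot\nabla) \tilde W^{2} = (\tilde W^{2}\cdot\nabla) U,\\
\tilde W^{2}\bigr|_{t=0}=f^{2}(\cdot-x_{W}),
\end{cases}
\end{align*}
and impose the consistency relation $U=-\Delta^{-1}\nabla\times(W^{1}+\tilde W^{2})$. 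By linearity and uniqueness, if this fixed point problem has a solution then $W:=W^{1}+\tilde W^{2}$ solves \eqref{s12_lem1_2}, and we set $W^{2}(t,x):=\tilde W^{2}(t,x+x_{W})$.

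To run a contraction on $[0,1]$ I would use the reference pair $(\omega^{1},\omega^{2}(\cdot-x_{W}))$ as the initial guess and show that iterates remain inside a small ball around it in $C_t^{0} H_x^{k_0+4}$. The key a priori input is the separation of supports: since $\|U\|_{\infty}\lesssim \|W\|_{H^{k_0+4}}$ stays bounded by $2M_{*}$ along iterates (bootstrap), finite speed propagation via the characteristic flow forces
\[
\operatorname{supp}(W^{1}(t))\subset B(\operatorname{supp}(f^{1}),r_{0}+\epsilon),\qquad
\operatorname{supp}(\tilde W^{2}(t))\subset B(\operatorname{supp}(f^{2}(\cdot-x_{W})),r_{0}+\epsilon),
\]
so for $|x_{W}|\ge R_{\epsilon}\gg u_{*}$ these supports stay disjoint. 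Splitting $U=U^{1}+\tilde U^{2}$ with $U^{i}=-\Delta^{-1}\nabla\times W^{i}$, the cut-off of $\tilde U^{2}$ to $\operatorname{supp}(W^{1})$ is a \emph{smooth} external field whose derivatives of every order decay like $|x_{W}|^{-2-k}\|f^{2}\|_{L^1}$ via the Biot--Savart kernel estimate (the same calculation as in \eqref{s7_lem1_p1_e3}); symmetrically for the effect of $U^{1}$ on $\tilde W^{2}$. This smallness closes the contraction and provides the decomposition \eqref{s12_lem1_3}.

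With existence in hand, the comparison estimate \eqref{s12_lem1_4} comes from setting $\eta^{1}=W^{1}-\omega^{1}$ and writing
\begin{align*}
\partial_{t}\eta^{1}+(u^{1}\cdot\nabla)\eta^{1}-(\eta^{1}\cdot\nabla)u^{1}
&=-(\tilde U^{2}\cdot\nabla)W^{1}+(W^{1}\cdot\nabla)\tilde U^{2}\\
&\quad -\bigl((W^{1}-\omega^{1})\cdot\nabla\bigr)(u^{1}-U^{1})+\cdots,
\end{align*}
and the symmetric equation for $\eta^{2}$. The forcing on the right-hand side is $O(|x_{W}|^{-2})$ in $H^{k_{0}}$ on $\operatorname{supp}(W^{1})$ because $\tilde U^{2}$ and all its derivatives are uniformly small there. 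A standard $H^{k_{0}}$ energy estimate with Kato--Ponce commutators (applicable because $k_{0}\ge 4>3/2+1$), followed by Gronwall on $[0,1]$ using the $H^{k_{0}+4}$ bound on $W^{1}$ just established, produces $\|\eta^{1}\|_{H^{k_{0}}}\lesssim R_{\epsilon}^{-2}C(k_{0},M_{*},\cdots)<\epsilon$ once $R_{\epsilon}$ is taken large enough. Finally, the Sobolev bounds \eqref{s12_lem1_5} for arbitrary $k\ge 4$ follow from a localized $H^{k}$ energy estimate on the $W^{i}$ equation, exploiting that the only coupling to the other patch is through the smooth, slowly varying external velocity $\tilde U^{2}$ (or $U^{1}$), whose $C^{k}$ norm on the support in question is controlled by $\|f^{j}\|_{L^{1}}$ and hence by $M_{*}$ independently of $\|f^{j}\|_{H^{k}}$.

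The main obstacle I anticipate is the vortex stretching term $(W^{i}\cdot\nabla)U$, which in 3D prevents a direct $L^{\infty}$ maximum principle of the 2D flavor; one must carry the high-Sobolev bootstrap simultaneously with the support control, and verify that the constants in the stretching estimate can be absorbed using the uniform $H^{k_{0}+4}$ bound on $\omega^{i}$ (provided by the hypothesis that each $\omega^{i}$ lives on $[0,1]$). Once the a priori $H^{k_{0}+4}$ bound is secured, the remaining steps are essentially identical to the 2D argument in Lemma \ref{s7_lem1}, and I would omit the routine energy computations.
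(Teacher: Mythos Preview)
Your proposal is correct and follows essentially the same route the paper indicates: adapt the 2D argument of Lemma \ref{s7_lem1} (linear transport of each patch in the full velocity, separation of supports, kernel decay for the cross-interaction, energy estimates plus Gronwall), with the vortex stretching term handled by a high-Sobolev bootstrap in place of the 2D $L^\infty$ maximum principle. The paper itself omits the proof entirely, pointing to Lemma \ref{s7_lem1} and to Lemma 7.13 of \cite{BL13}, so your write-up in fact gives more detail than the paper does.

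One small slip worth flagging: when you bound the cross-interaction velocity $\tilde U^{2}$ on $\operatorname{supp}(W^{1})$ by $|x_W|^{-2-k}\|f^{2}\|_{L^1}$, note that in 3D the $L^1$ norm of vorticity is \emph{not} conserved (because of stretching), so you cannot invoke $\|W^{2}(t)\|_{L^1}\le \|f^{2}\|_{L^1}$ as in the 2D proof. Instead use $\|W^{2}(t)\|_{L^2}$ or any $L^p$ with $p<3$, controlled via the Sobolev bootstrap you already have (or via energy conservation $\|U\|_{2}=\|U_0\|_{2}$ for the velocity). This does not affect the structure of your argument, only the bookkeeping of which norm appears in the far-field estimate.
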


\begin{proof}[Proof of Lemma \ref{s12_lem1}]
The proof is an adaptation of the argument for Lemma \ref{s7_lem1}. We omit it here.
For more details one can refer to the proof of Lemma 7.13 in \cite{BL13}.
\end{proof}

\begin{prop} \label{s12_prop1}
Assume $\{\omega^j\}_{j=1}^{\infty}$ is a sequence of smooth functions each of which solves the 3D incompressible Euler equation
(in vorticity form)
\begin{align*}
 \begin{cases}
  \partial_t \omega^j + (u^j \cdot \nabla ) \omega^j = (\omega^j \cdot \nabla )u^j, \quad 0<t\le 1,\\
  u^j=-\Delta^{-1}\nabla \times \omega^j, \\
  \omega^j \Bigr|_{t=0} = f^j \in C_c^{\infty}(\mathbb R^3).
 \end{cases}
\end{align*}
Here we assume the lifespan of each solution $\omega^j$ is at least $[0,1]$.
For each $j\ge 1$,  assume that $\operatorname{supp}(\omega^j(t)) \subset B(0,2^{-10j})$
for any $0\le t\le 1$ and
\begin{align} \label{s12_prop1_1}
 \max_{0\le t\le 1} (\| \omega^j(t) \|_{\infty}
+\|u^j(t)\|_{\infty})\le 2^{-10j}.
\end{align}

Let $k_0\ge 4$ be a fixed integer. Then there exist centers $x_j \in \mathbb R^3$ whose mutual distances
are sufficiently large (i.e. $|x_j-x_k|\gg 1$ if $j\ne k$) such that
the following hold:

\begin{enumerate}
 \item Take the initial data (vorticity)
 \begin{align*}
  W_0(x) = \sum_{j=1}^{\infty} f^j(x-x_j),
 \end{align*}
then $W_0 \in L^1 \cap L^{\infty} \cap
C^{\infty}$. The corresponding initial velocity $U_0 \in L^2 \cap L^{\infty}\cap C^{\infty}$. Furthermore for
any $j\ne l$
\begin{align} \notag %\label{s12_prop1_2}
 B(x_j, 100) \cap B(x_l, 100 ) = \varnothing.
\end{align}

\item With $W_0$ as initial data, there exists a unique smooth solution $W$ to the Euler
equation (in vorticity form)
\begin{align*}
\begin{cases}
 \partial_t W + (U\cdot \nabla)W=(W\cdot \nabla)U,\\
 U=-\Delta^{-1} \nabla \times W,\\
 W\Bigr|_{t=0} =W_0.
 \end{cases}
\end{align*}
on the time interval $[0,1]$ satisfying $W \in L_t^{\infty}L_x^1\cap L_t^{\infty}L_x^{\infty}$,
$U \in C_t^0 L_x^2$.  Moreover for each $0\le t\le 1$,
  $W(t,\cdot) \in C^\infty_x$ and $U(t,\cdot)\in C^{\infty}_x$.

\item For any $0\le t \le1$,
\begin{align}
 \operatorname{supp} (W(t,\cdot) ) \subset \bigcup_{j=1}^{\infty} B(x_j, 1). \label{s12_prop1_3}
\end{align}
$W(t)$ can be decomposed accordingly as
\begin{align*}
W(t,x) =\sum_{j=1}^{\infty} W^j(t,x-x_j),
\end{align*}
where $W^j \in C_c^{\infty} (B(0,1))$. Furthermore for any $k\ge 4$,
\begin{align*}
\max_{0\le t\le 1} \| W^j(t,\cdot) \|_{H^k} \le  C_2 \cdot \| f^j \|_{H^k},
\end{align*}
where $C_2>0$ is a constant depending only on $k$.

\item For any $j\ge 1$,
\begin{align}
 \max_{0\le t \le 1} \| W^j(t,\cdot) - \omega^j(t,\cdot)\|_{H^{k_0+100}} <2^{-j}.
 \label{s12_prop1_4}
\end{align}

\item For any integer $1\le k\le k_0$, there is a constant $C_k>0$ such that
\begin{align} \label{s12_prop1_5}
\sup_{0\le t\le 1} \| (D^k U)(t,x)  -\sum_{j=1}^{\infty} (D^k u^j)(t,x-x_j)
\|_{L_x^\infty(\mathbb R^3)} <C_k.
\end{align}

\item For any $j\ge 1$, there exists $r_j>0$  such that
\begin{align} \label{s12_prop1_6}
\sup_{\substack{0\le t\le 1\\ 1\le k\le k_0\\ |x|>r_j}} | (D^k u^j)(t,x)|<
\frac 1 {2^j}.
\end{align}
Therefore
for any $1\le k \le k_0$,
 \begin{align*}
  & \sum_{j=1}^{\infty} (D^k u^j)(t,x-x_j) \notag \\
= & \tilde \eta_k (x)+
 \sum_{j=1}^{\infty} \chi_{<1}(\frac {x-x_j} {r_j}) \cdot (D^k u^j)(t,x-x_j),
 \end{align*}
where $\|\tilde \eta_k\|_{\infty} <2$ and $\chi_{<1}$ is a smooth cut-off function such that
$\chi_{<1}(x)=1$ for $|x|<1$ and $\chi_{<1}(x)=0$ for $|x|\ge 2$.

Consequently by choosing the centers $x_j$ sufficiently far away from each other, we can have
\begin{align}
&\spp \Bigl( \chi_{<1}(\frac {x-x_j} {r_j}) \cdot (D^k u^j)(t,x-x_j) \Bigr)
\notag \\
&\bigcap \spp
\Bigl( \chi_{<1}(\frac {x-x_l} {r_l}) \cdot (D^k u^l)(t,x-x_l) \Bigr)
= \varnothing, \label{s12_prop1_7}
\end{align}
for any $j\ne l$.

\end{enumerate}

\end{prop}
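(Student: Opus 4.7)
The plan is to mimic the scheme of Proposition \ref{s7_prop1}, replacing Lemma \ref{s7_lem1} with its 3D counterpart Lemma \ref{s12_lem1}, and exploiting the very fast geometric decay in \eqref{s12_prop1_1} to gain summability in the recursion. The rapid decay $\|u^j\|_\infty+\|\omega^j\|_\infty+\mathrm{diam}(\mathrm{supp}\,\omega^j)\lesssim 2^{-10j}$ will provide enough room to absorb all the perturbation errors produced when one patch is glued to another.

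First I would fix the integer $k_0\ge 4$ and choose the centers $\{x_j\}$ inductively. At stage $N$, assume centers $x_1,\dots,x_{N-1}$ are already chosen and let $W_{(N-1)}$ denote the smooth Euler solution with data $\sum_{j=1}^{N-1} f^j(\cdot-x_j)$, together with its decomposition $W_{(N-1)}=\sum_{j=1}^{N-1} W^j_{(N-1)}(\cdot-x_j)$ provided by successive applications of Lemma \ref{s12_lem1}. One then applies Lemma \ref{s12_lem1} with the pair $(W_{(N-1)},\omega^N)$ playing the role of $(\omega^1,\omega^2)$, and picks $|x_N|$ so large that the associated $R_\varepsilon$-threshold is met for $\varepsilon=2^{-N-10}$; this requires $|x_N-x_j|\to\infty$ sufficiently fast in $N$, depending only on the (already controlled) Sobolev norms of $W_{(N-1)}$ and of $\omega^N$. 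The separation condition $B(x_j,100)\cap B(x_l,100)=\varnothing$ for $j\ne l$ is easily accommodated since \eqref{s12_prop1_1} forces all supports to remain inside $B(x_j,2^{-10j})\subset B(x_j,1)$.

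Next, with centers fixed, the sequence $\{W_{(N)}\}$ of partial solutions is Cauchy in $C_t^0 H^{k_0+100}_x$ on each patch because Lemma \ref{s12_lem1}(2) gives $\|W^j_{(N)}-W^j_{(N-1)}\|_{H^{k_0+100}}<2^{-N}$, and the supports are disjoint. Summing a geometric series yields a limit $W\in L^\infty_t(L^1_x\cap L^\infty_x)$, together with the decomposition $W(t,x)=\sum_j W^j(t,x-x_j)$ with $W^j$ satisfying \eqref{s12_prop1_4}. The support property \eqref{s12_prop1_3} and the Sobolev bound on each $W^j$ follow from Lemma \ref{s12_lem1}(1) and (3) upon passing to the limit, since by \eqref{s12_prop1_1} the ``$u_*$-constant'' in \eqref{s12_lem1_1} for all pairs is uniformly bounded by a fixed absolute constant. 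Uniqueness on $[0,1]$ is standard once the solution is known to be $C^1$ in space, which follows from the $H^{k_0+100}$ bound and Sobolev embedding. Smoothness of $W(t,\cdot)$ and $U(t,\cdot)$ for each fixed $t$ comes from propagating higher Sobolev regularity on each individual patch, which is handled patch-by-patch as in Lemma \ref{s12_lem1}(3) with $k$ arbitrarily large.

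To deduce \eqref{s12_prop1_5}, I would write
\[
D^kU(t,x)-\sum_{j=1}^\infty (D^k u^j)(t,x-x_j)=\sum_{j=1}^\infty D^k\bigl(-\Delta^{-1}\nabla\times(W^j-\omega^j)\bigr)(t,x-x_j),
\]
and estimate each term by $\|W^j-\omega^j\|_{H^{k_0+10}}\lesssim 2^{-j}$ via Sobolev embedding (using $k\le k_0$). The bound \eqref{s12_prop1_6} is essentially free: since $\omega^j$ is supported in $B(0,2^{-10j})$ with $\|\omega^j\|_\infty\le 2^{-10j}$, the kernel decay of $D^k\Delta^{-1}\nabla\times$ gives $|D^ku^j(t,x)|\lesssim|x|^{-3-k}\|\omega^j\|_1$ for $|x|\gtrsim 1$, hence we may find $r_j$ so that $|D^ku^j|<2^{-j}$ outside $B(0,r_j)$ for all $1\le k\le k_0$ and $0\le t\le 1$. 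Finally, the disjointness \eqref{s12_prop1_7} is achieved by enlarging the centers $|x_j|$ at the recursive step so that $|x_j-x_l|>2(r_j+r_l)$. The main technical obstacle is ensuring that the centers can simultaneously meet all the constraints (the $R_\varepsilon$-threshold from Lemma \ref{s12_lem1}, the disjointness of the $2r_j$-balls, and the separation from patch supports); this is resolved by the induction order, since at stage $N$ all quantities governing the required distance are already determined by data $\{f^j\}_{j\le N}$ and hence are finite, so $|x_N|$ may always be taken large enough.
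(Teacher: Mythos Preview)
Your proposal is correct and follows essentially the same approach as the paper: the paper's proof simply states that one recursively applies Lemma \ref{s12_lem1} (the 3D analogue of Lemma \ref{s7_lem1}) while choosing the centers $x_j$ sufficiently far apart, exactly as in the proof of Proposition \ref{s7_prop1}, and omits the details. Your write-up fleshes out precisely these omitted details---the inductive choice of centers, the Cauchy-in-$H^{k_0+100}$ argument for the partial solutions, and the derivation of \eqref{s12_prop1_5}--\eqref{s12_prop1_7} from \eqref{s12_prop1_4} via Sobolev embedding and kernel decay---in the expected way.
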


\begin{proof}[Proof of Proposition \ref{s12_prop1}]
This is similar to the proof of Proposition \ref{s7_prop1}. One just need to apply recursively
Lemma \ref{s12_lem1} (and taking into consideration \eqref{s12_prop1_7}) and choose $x_j$ sufficiently
far apart from each other. We omit the routine details.
\end{proof}

\begin{proof}[Proof of Theorem \ref{thm1} for $d=3$]
This is similar to the 2D case. One just needs to repeat the local construction (for $C^1$ use Section 10 and
for $C^m$, $m\ge 2$ use Section 9) to create a sequence of profiles $u^j$. After that apply Proposition
\ref{s12_prop1}.
\end{proof}

\begin{proof}[Proof of Theorem \ref{thm3D_1} and Theorem \ref{thm3D_k}]
We shall only sketch the proof for $C^1$ ($m=1$) case. For $m\ge 2$ one can just use the material
from Section 5 and proceed in a similar fashion as in Section 6. Note that the patching argument for
$C^m$, $m\ge 2$ is actually easier in view of the flow decoupling.

Without loss of generality we may assume $u^{(g)}=0$.

For $j\ge 1$, define $z_j=(0, 0,1+\frac 1j)$. The point $z_j$ will be the center of $j^{\op{th}}$ patch.
Define $x_*= \lim_{j\to \infty} z_j = (0,0, 1)$.

By recursively applying Proposition \ref{prop_hd20}, we can find
(axisymmetric without swirl) stream functions $\psi^j_0 \in C_c^{\infty} (B(z_j,2^{-j-100}))$
such that
\begin{align*}
\| \psi^j_0 \|_{\infty} + \| D^2 \psi^j_0\|_{\infty} <2^{-j}
\end{align*}
and the corresponding $j^{\op{th}}$-patch develops $C^1$-norm inflation in a time interval $<2^{-j}$ (again we omit
the laborious details here since it is essentially a re-statement of Proposition \ref{prop_hd20}
 with more explicit constants).

We then take initial stream function in the form
\begin{align*}
\psi_0 = \sum_{j=1}^{\infty} \psi^j_0,
\end{align*}
and define the corresponding axisymmetric without swirl velocity $u_0$ and $\omega_0$ respectively.
It is then routine to verify the regularity property and the inflation statement of the corresponding solution.
We omit details.
\end{proof}

\end{document}